\documentclass[11pt]{amsart}

% Packages
\usepackage[utf8]{inputenc}
\usepackage[T1]{fontenc}
\usepackage[english]{babel}
\usepackage{hyperref}

% AMS style packages
\usepackage{amsfonts}
\usepackage{amsmath}
\usepackage{amssymb}
\usepackage{amstext}
\usepackage{amsthm}
\usepackage{amsxtra}
\usepackage{epic, eepic}
\usepackage{epsfig,color}
\usepackage[usenames,dvipsnames]{xcolor}
\usepackage{fullpage}
\usepackage{shuffle}
\usepackage{vmargin}
\usepackage{MnSymbol}

% Standard useful packages
%\usepackage{geometry}
%\usepackage{hyperref}
%\usepackage{mathrsfs}
%\usepackage{mathtools}
%\usepackage{multicol}

% Tikz package and libraries
\usepackage{tikz}
\usetikzlibrary{backgrounds,shapes}

%% Style settings

% Indentation and spacing
%\setlength{\parindent}{0 pt} % Default 15 pt.
%\setlength{\parskip}{0.15 cm} % Default 0 cm?

% Theorem style
\theoremstyle{plain}

\theoremstyle{definition}
\newtheorem{theorem}{Theorem}[section]

\newtheorem{corollary}[theorem]{Corollary}

\newtheorem{lemma}[theorem]{Lemma}
\newtheorem{proposition}[theorem]{Proposition}
\newtheorem*{maintheorem}{Theorem~\ref{thm:mainidentity}}
\newtheorem*{mainconseq1}{Theorem~\ref{thm:mainconseq1}}
\newtheorem*{mainconseq2}{Theorem~\ref{thm:mainconseq2}}
\newtheorem*{mygeneralsummation}{Theorem~\ref{thm:mygeneralsummation}}

\theoremstyle{remark}

\newtheorem{remark}[theorem]{Remark}

% Commands

% Functions

 % Dyck paths
 % Dyck paths (new)
 % Labelled Dyck paths
 % Decorated Dyck paths
 % Labelled decorated Dyck paths
 % parking decorated Dyck paths
 % Decorated Dyck paths dinv set
 % Decorated Dyck paths bounce set
 % Decorated Dyck paths pbounce set
 % square paths ending east
 % square paths ending north

 % Parking functions
 % Partially labelled Dyck paths
 % Partially labelled Dyck paths
 % Partially labelled Dyck paths

 % Parallelogram polyominoes
 % Reduced parallelogram polyominoes
 % Labelled parallelogram polyominoes

 % the combinatorial def of generalized delta conjecture.
 % the combinatorial def of the generalized square conjecture. 
 % ordered set partitions 
 % ordered set partitions with inv
 % ordered set partitions with dinv
 % ordered set partitions with maj
 % standard ordered set partitions 
 % standard ordered set partitions with inv
 % standard ordered set partitions with dinv
 % standard ordered set partitions with maj

% q-binomials
\newcommand{\qbinom}[2]{\genfrac{[}{]}{0pt}{}{#1}{#2}}

% Scalar products
\newcommand{\<}{\langle}
\renewcommand{\>}{\rangle}

% Operators

 % multiplication by e operator
\newcommand{\NN}{\mathbb{N}} % the naturals
\newcommand{\QQ}{\mathbb{Q}} % the rationals
\newcommand{\wH}{\widetilde{H}} % modified Macdonald's
\newcommand{\T}{\mathcal{T}} % Translation operator
\newcommand{\Pc}{\mathcal{P}} % Star dual of translation operator
\newcommand{\tTheta}{\tilde{\Theta}} % Tilde Theta operator
\newcommand{\Exp}{\mathrm{Exp}} % Tilde Theta operator
\newcommand{\PPi}{\mathbf{\Pi}\mkern 1mu } % Pi operator

\numberwithin{equation}{section}

% Title Page
\title{New identities for Theta operators}

\author[M. D'Adderio]{Michele D'Adderio}
\address{Universit\'e Libre de Bruxelles (ULB), D\'epartement de Math\'ematique, Boulevard du Triomphe, B-1050 Bruxelles, Belgium}\email{mdadderi@ulb.ac.be}

\author[M. Romero]{Marino Romero}
\address{University of California, San Diego, 
	Department of Mathematics,  
	9500 Gilman Drive \# 0112,
	La Jolla, CA  92093-0112, USA}\email{mar007@ucsd.edu}

\begin{document}

% Title
\maketitle

\begin{abstract}
	In this article, we prove a new general identity involving the Theta operators introduced by the first author and his collaborators in \cite{dadderio2019theta}. From this result, we can easily deduce several new identities that have combinatorial consequences in the study of Macdonald polynomials and diagonal coinvariants. In particular, we provide a unifying framework from which we recover many identities scattered in the literature, often resulting in drastically shorter proofs.
\end{abstract}

\tableofcontents

\section*{Introduction}

In 1988 Macdonald introduced a fundamental basis of symmetric functions depending on two parameters, and he conjectured certain positivities for its elements \cite{Macdonald-Book-1995}. In their effort to solve these conjectures, Garsia and Haiman introduced in the nineties a modified version of this basis \cite{Garsia-Haiman-PNAS-1993} and initiated the study of diagonal coinvariants of the symmetric group \cite{Garsia-Haiman-qLagrange-1996}. This led to the introduction of the famous nabla operator ($\nabla$) and its siblings, the Delta operators ($\Delta_f$ and $\Delta'_f$) \cite{Bergeron-Garsia-Haiman-Tesler-Positivity-1999}, which are diagonal operators with respect to the modified Macdonald basis. 

In the last twenty years, several combinatorial formulas have been conjectured for the applications of some of these operators to basic symmetric functions. The most famous of these stories is certainly the one of the \emph{shuffle conjecture} for $\nabla e_n$ \cite{HHLRU-2005}, which gives a combinatorial formula for the Frobenius characteristic of the coinvariants of the diagonal action of $\mathfrak{S}_n$ on $\mathbb{C}[\mathbf{x},\mathbf{y}]=\mathbb{C}[x_1,x_2,\dots,x_n,y_1,y_2,\dots,y_n]$ \cite{Haiman-Vanishing-2002}. This long standing conjecture has recently been proved by Carlsson and Mellit \cite{Carlsson-Mellit-ShuffleConj-2015}, who actually proved a compositional refinement \cite{Haglund-Morse-Zabrocki-2012} of the conjecture. 

After the announcement of this result, a lot of attention has been dedicated to the so-called Delta conjecture for $\Delta_{e_{n-k-1}}' e_n$ \cite{Haglund-Remmel-Wilson-2015}, which is a natural extension of the shuffle conjecture that gives, conjecturally, the Frobenius characteristic of the coinvariants for the diagonal action of $\mathfrak{S}_n$ on the exterior algebra $\mathbb{C}[\mathbf{x},\mathbf{y}]\<\mathbf{\theta}\>=\mathbb{C}[\mathbf{x},\mathbf{y}]\<\theta_1,\dots,\theta_n\>$ on $n$ free generators with coefficients in $\mathbb{C}[\mathbf{x},\mathbf{y}]$ \cite{Zabrocki_Delta_Module}. 

In an effort to prove the Delta conjecture, in \cite{dadderio2019theta} the authors introduced the so-called Theta operators and showed how these operators give, on one hand, a conjectural formula for the coinvariants of the diagonal action of $\mathfrak{S}_n$ on the exterior algebra on $n$ free generators with coefficients in $\mathbb{C}[\mathbf{x},\mathbf{y}]\<\mathbf{\theta}\>$, and on the other hand, a compositional refinement of the Delta conjecture (extending the one in \cite{Haglund-Morse-Zabrocki-2012}), which has been recently proved in \cite{DAdderio_Mellit2020}.

In the present article we bring further support for the centrality of the Theta operators by proving a new general identity. From this new identity, we will prove a plethora of results: we will get several new identities, and we will show how these new identities form a natural framework from which one can prove identities appearing in the literature. Besides giving a unifying view of all these results, our new proofs will often be substantially shorter than the original ones. Many of these identities have been used to prove combinatorial consequences of the Delta conjecture and some related more recent conjectures \cite{DAdderio-Iraci-VandenWyngaerd-Delta-Square,DAdderio-Iraci-VandenWyngaerd-Bible}. It is important to note that behind many of these identities, there often lies combinatorial identities with remarkable consequences.

\smallskip

The main new general identity of this article is stated in the following theorem. The main tool for the proof is the five-term relation in \cite{Garsia_Mellit}.
\begin{maintheorem}
We have
\[\tTheta(z,v)^{-1} \T_u \tTheta(z,v) \T_{u}^{-1} = \Exp\left[\frac{uz(v-1)}{M}\right]\Delta_{uzv} ,\]
where $\Exp$ denotes the plethystic exponential, $\Delta_v\coloneq \sum_{n\geq 0}(-v)^n\Delta_{e_n}$ is the generating function of Delta operators, $\tTheta(z,v)=\Delta_v \Pc_{-\frac{z}{M}} \Delta_v^{-1}$ is the generating function of Theta operators, and $\T_u=\sum_{n\geq 0}u^nh_n^\perp$ is the usual plethystic translation operator.
\end{maintheorem}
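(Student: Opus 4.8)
The plan is to read the left-hand side as a group commutator of two ray-exponential operators and to evaluate it with the five-term relation of \cite{Garsia_Mellit}. Throughout, everything is understood as a formal power series in $u,z,v$ over $\mathbb{Q}(q,t)$, so that $\Delta_v$ is invertible (its $v^0$-coefficient is the identity) and $\T_u'\coloneqq\Delta_v^{-1}\T_u\Delta_v$ is well defined. First I would strip off the conjugating Delta operators. Using that $\Pc$ composes additively in its parameter (so $\Pc_{-z/M}^{-1}=\Pc_{z/M}$), the definition $\tTheta(z,v)=\Delta_v\Pc_{-\frac{z}{M}}\Delta_v^{-1}$ gives $\tTheta(z,v)^{-1}=\Delta_v\Pc_{\frac{z}{M}}\Delta_v^{-1}$, and the two inner factors of $\Delta_v$ cancel against $\tTheta$ and $\tTheta^{-1}$:
\[\Delta_v^{-1}\,\tTheta(z,v)^{-1}\T_u\tTheta(z,v)\T_u^{-1}\,\Delta_v=\Pc_{\frac{z}{M}}\,\T_u'\,\Pc_{-\frac{z}{M}}\,\T_u'^{-1}.\]
On the other side, every $\Delta_{e_n}$ is diagonal in the modified Macdonald basis $\{\wH_\mu\}$, hence commutes with $\Delta_v$; since $\Exp[\tfrac{uz(v-1)}{M}]$ is a scalar, the right-hand side of the theorem is fixed by this conjugation. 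Thus the theorem is equivalent to the cleaner identity
\[\Pc_{\frac{z}{M}}\,\T_u'\,\Pc_{-\frac{z}{M}}\,\T_u'^{-1}=\Exp\left[\frac{uz(v-1)}{M}\right]\Delta_{uzv}.\]

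Next I would feed this into the five-term relation. Both $\T_u'$, the $\Delta_v$-shear of the plethystic translation $\T_u=\sum_{n\geq0}u^nh_n^\perp$, and $\Pc_{\pm z/M}$ are exponentials along two rays of the operator algebra underlying \cite{Garsia_Mellit}; these rays form a unimodular pair, and the five-term (pentagon) relation expresses the group commutator $\Psi_2^{-1}\Psi_1\Psi_2\Psi_1^{-1}$ of two such ray-exponentials (here $\Psi_2=\Pc_{-\frac{z}{M}}$, $\Psi_1=\T_u'$) as the exponential along the intermediate ray. Equivalently, one may apply the relation in the original, un-sheared frame directly to the pair $\tTheta(z,v)$, $\T_u$, whose relative slope is governed by $v$. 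Either way the left-hand side collapses to a single intermediate ray-exponential, and it remains to identify this operator.

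The final step is to compute that intermediate exponential. Its operator part should be the generating function $\Delta_{uzv}=\sum_{n\geq0}(-uzv)^n\Delta_{e_n}$ of Delta operators, the essential point being that the intermediate ray is precisely the one along which the ray-exponential is \emph{diagonal} in $\{\wH_\mu\}$, while the central (anomaly) contribution of the pentagon supplies the scalar prefactor, which I would track to be exactly $\Exp[\tfrac{uz(v-1)}{M}]$. I expect the main obstacle to be exactly this bookkeeping: making the dictionary between $\Pc$, $\T_u'$ and the generators of \cite{Garsia_Mellit} explicit enough that (i) the unimodularity of the ray pair, and hence the applicability of the five-term relation, is transparent, and (ii) both the parameter $uzv$ of $\Delta_{uzv}$ and the scalar $\Exp[\tfrac{uz(v-1)}{M}]$ emerge with the correct normalization rather than merely up to an undetermined constant in $q,t$.
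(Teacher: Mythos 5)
Your opening reduction is correct but minor: conjugating the commutator by $\Delta_v$, which commutes with $\Delta_{uzv}$, does show the theorem is equivalent to
\[\Pc_{\frac{z}{M}}\,\T_u'\,\Pc_{-\frac{z}{M}}\,\T_u'^{-1}=\Exp\left[\frac{uz(v-1)}{M}\right]\Delta_{uzv},\qquad \T_u'=\Delta_v^{-1}\T_u\Delta_v.\]
The gap is everything after that. The five-term relations actually available to you, \eqref{eq:5trel} and \eqref{eq:dual5trel}, are not stated in the form ``group commutator of two ray-exponentials equals the exponential along the intermediate ray,'' and the pair you propose to feed into such a pentagon does not satisfy its hypothesis: neither factor is a single ray-exponential. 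Indeed \eqref{eq:5trel} itself gives $\T_u'=\nabla^{-1}\T_{uv}\nabla\,\T_u$, a product of translation-type exponentials along two distinct rays, and likewise your ``un-sheared'' alternative fares no better, since \eqref{eq:dual5trel} gives $\tTheta(z,v)=\Pc_{-\frac{z}{M}}\,\nabla\Pc_{-\frac{zv}{M}}\nabla^{-1}$, again a two-ray product. So the ``unimodular pair of rays'' premise fails for both of your choices of $\Psi_1,\Psi_2$, and the collapse of the commutator to a single exponential is precisely the assertion to be proved, not a principle you can invoke as a black box.

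Moreover, the step you explicitly defer --- pinning down the intermediate operator as $\Delta_{uzv}$ and the anomaly scalar as $\Exp\left[\frac{uz(v-1)}{M}\right]$ --- is not bookkeeping; it is the entire content of the theorem, and it cannot be extracted from \eqref{eq:5trel} and \eqref{eq:dual5trel} alone. The paper's proof requires a third input, the Garsia--Mellit identity \eqref{eq:SinverseDeltaprime} (their Proposition~2.4 combined with Theorem~2.8), which after inserting a homogeneity variable and replacing $u\mapsto uw$ becomes $\nabla^{-1}\T_u\nabla=\Pc_{-\frac{w}{M}}\T_{1/w}\Delta'_{uw}\T_{-1/w}\Pc_{\frac{w}{M}}$. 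It is this identity, specialized at $w=zv$, that actually produces $\Delta_{uzv}$ (through $\Delta'_{uzv}=\Exp[uzv/M]\Delta_{uzv}$) together with half of the scalar; the other half, $\Exp[-uz/M]$, comes from the commutation \eqref{translationmultiplication} applied to $\Pc_{\frac{z}{M}}\T_u\Pc_{-\frac{z}{M}}$, and a final application of \eqref{eq:5trel} (with $u\mapsto 1/zv$, $v\mapsto uzv$) removes the conjugating translations, using that $\nabla$ commutes with $\Delta_{uzv}$. Nothing in your plan produces this ingredient or a substitute for it, and you yourself flag that both the parameter $uzv$ and the prefactor might only emerge ``up to an undetermined constant.'' As written, the proposal restates the theorem in a suggestive (and partly inaccurate) pentagon frame rather than proving it.
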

Two of the main consequences of this identity are the following theorems. The first one follows almost immediately.
\begin{mainconseq1}
	We have
	\[h_j^\perp\Theta_{e_k} =\sum_{r=0}^j\Theta_{e_{k-j+r}}\Delta_{e_{j-r}}h_r^{\perp},\quad e_j^\perp \Theta_{e_k} =\sum_{r=0}^j \Theta_{e_{k-j+r}} e_r^\perp \Delta_{h_{j-r}}\quad\text{and}\quad 	h_j^\perp \Theta_{h_k} =\sum_{r=0}^j\Delta_{h_{j-r}} \Theta_{h_{k-j+r}}h_r^\perp. \]
\end{mainconseq1}
The second consequence of our theorem will also use Tesler's identity, Theorem~\ref{thm:Teslersidentity}.
\begin{mainconseq2}
	For every partition $\mu\vdash k$ and every $F\in\Lambda^{(n)}$ we have
	\[ \<h_{k}^\perp\Theta_{e_{n}} \wH_\mu , F\>_* =F[MB_\mu], \]
	where $\wH_\mu$ is the modified Macdonald polynomial indexed by $\mu$.
\end{mainconseq2}
From these two theorems and some basic standard tools of symmetric function theory we are able to derive plenty of identities known in the literature, often in a much simpler way, and sometimes we extend them substantially. For example we can prove quite easily the following identity, which is a strong extension of \cite[Theorem~4.6]{DAdderio-Iraci-VandenWyngaerd-Bible}, whose original proof alone took about 15 pages, and which might provide some insight in the so-called Theta conjecture in \cite[Section~9]{dadderio2019theta}.

\begin{mygeneralsummation}
	Given $j,m,\ell,k\in\NN$, $k\geq 1$ we have
	\begin{align*} 
	 & h_{j}^\perp\Theta_{e_m}\Theta_{e_{\ell}}\widetilde{H}_{(k)}=\\
	& = \sum_{r=0}^{j}\qbinom{k}{r}_q\sum_{a=0}^{k}\sum_{b=1}^{j-r+a} q^{\binom{k-r-a}{2}}\qbinom{b-1}{a}_q  \qbinom{b+r-a-1}{k-a-1}_q \Theta_{e_{m-j+r}}\Theta_{e_{\ell+k-j-a}}\Delta_{e_{j-r+a}}  E_{j-r+a,b} \\
	 & + \sum_{r=0}^{j}\qbinom{k}{r}_q\sum_{a=0}^{k}\sum_{b=1}^{j-r+a} q^{\binom{k-r-a+1}{2}}\qbinom{b-1}{a-1}_q \qbinom{b+r-a}{k-a}_q  \Theta_{e_{m-j+r}}\Theta_{e_{\ell+k-j-a}}\Delta_{e_{j-r+a}} E_{j-r+a,b} .
	\end{align*}
\end{mygeneralsummation}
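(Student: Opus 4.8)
The plan is to reduce the statement to a single-Theta computation by peeling off the leftmost operator with the first identity of Theorem~\ref{thm:mainconseq1}, and then to evaluate the resulting core by combining a row Pieri rule for $\widetilde{H}_{(k)}$ with the expansion of $\Delta_e$-images of row Macdonald polynomials in the $E_{a,b}$ basis. First I would apply the operator identity $h_j^\perp\Theta_{e_m}=\sum_{r=0}^j\Theta_{e_{m-j+r}}\Delta_{e_{j-r}}h_r^\perp$ from Theorem~\ref{thm:mainconseq1} to get
\[ h_j^\perp\Theta_{e_m}\Theta_{e_\ell}\widetilde{H}_{(k)}=\sum_{r=0}^j\Theta_{e_{m-j+r}}\,\Delta_{e_{j-r}}\,h_r^\perp\Theta_{e_\ell}\widetilde{H}_{(k)}. \]
This already produces the prefactor $\Theta_{e_{m-j+r}}$ and the outer summation over $r$, and it reduces the problem to establishing, writing $p=j-r$, a closed formula for $\Delta_{e_p}h_r^\perp\Theta_{e_\ell}\widetilde{H}_{(k)}$ whose terms have the shape $\Theta_{e_{\ell+k-r-p-a}}\Delta_{e_{p+a}}E_{p+a,b}$; substituting $p=j-r$ then matches the indices $\ell+k-j-a$ and $j-r+a$ appearing in the statement.

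For the core I would push the remaining skewing operator inward once more, $h_r^\perp\Theta_{e_\ell}\widetilde{H}_{(k)}=\sum_{s=0}^r\Theta_{e_{\ell-r+s}}\Delta_{e_{r-s}}h_s^\perp\widetilde{H}_{(k)}$, and then invoke the row Pieri rule
\[ h_s^\perp\widetilde{H}_{(k)}=\qbinom{k}{s}_q\,\widetilde{H}_{(k-s)}, \]
which is where the Gaussian binomials originate and which one checks on Schur expansions or from $\widetilde{H}_{(k)}=(q;q)_k\,h_k[X/(1-q)]$. At this point I am left with expressions $\Delta_{e_p}\Theta_{e_{\ell-r+s}}\Delta_{e_{r-s}}\widetilde{H}_{(k-s)}$, so I must bring the two Theta operators together and reduce the two $\Delta_e$'s to a single $\Delta_e$. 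This I would do with a relation moving $\Delta_e$ past $\Theta_e$, together with the expansion of $\Delta_e\widetilde{H}_{(k-s)}$ in the $E_{a,b}$ basis read off from the defining generating function $e_n[X\tfrac{1-z}{1-q}]=\sum_b\tfrac{(z;q)_b}{(q;q)_b}E_{n,b}$; the two families of terms in the statement, carrying $q^{\binom{k-r-a}{2}}$ and $q^{\binom{k-r-a+1}{2}}$, should then emerge from a single $q$-Pascal split of this expansion.

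The main obstacle will be the final bookkeeping: after these reductions one has a sum over the internal index $s$ and over the $E$-expansion indices that must be collapsed into the stated two-term double sum over $a$ and $b$, with the clean outer coefficient $\qbinom{k}{r}_q$ emerging only after the collapse (the row Pieri rule produces $\qbinom{k}{s}_q$, not $\qbinom{k}{r}_q$, at the intermediate stage). Concretely, fixing the output indices $a,b$, I would show that the coefficient of $\Theta_{e_{\ell+k-j-a}}\Delta_{e_{j-r+a}}E_{j-r+a,b}$ equals $q^{\binom{k-r-a}{2}}\qbinom{b-1}{a}_q\qbinom{b+r-a-1}{k-a-1}_q$ in the first family and $q^{\binom{k-r-a+1}{2}}\qbinom{b-1}{a-1}_q\qbinom{b+r-a}{k-a}_q$ in the second; matching the powers of $q$ and the two products of Gaussian binomials is a $q$-Vandermonde/$q$-Chu type summation, and verifying this finite $q$-binomial identity is the delicate step. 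Everything preceding it is formal manipulation with Theorem~\ref{thm:mainconseq1}, the row Pieri rule, and the $E$-basis expansion.
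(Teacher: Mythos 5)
Your outer skeleton is the paper's own: apply \eqref{eq:hjek} twice to peel off both Theta operators, use $h_s^\perp \wH_{(k)} = \qbinom{k}{s}_q \wH_{(k-s)}$ from \eqref{eq:hjperpHn}, and collapse the internal $s$-sum by a $q$-Pascal split followed by $q$-Chu--Vandermonde; indeed the paper's Lemma giving \eqref{eq:lemelem2}, proved in the appendix by splitting $\qbinom{k-s}{a}_q$ with \eqref{eq:qrecurrence} and summing with \eqref{eq:qChuVander}, is exactly the two-family collapse you predict, with precisely the coefficients you name. Two corrections on the easy part, though: the innermost $\Delta_{e_{r-s}}$ acts on the Macdonald eigenvector $\wH_{(k-s)}$, so by \eqref{eq:qBmu_Tmu} and \eqref{eq:e_q_binomial} it contributes only the scalar $q^{\binom{r-s}{2}}\qbinom{k-s}{r-s}_q$ --- there are no ``two $\Delta_e$'s to reduce to one'', and no two Thetas to ``bring together'': up to that scalar the core is simply $\Delta_{e_{j-r}}\Theta_{e_{\ell-r+s}}\wH_{(k-s)}$.

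The genuine gap is precisely there. You propose to handle the core with ``a relation moving $\Delta_e$ past $\Theta_e$'' plus an $E$-basis expansion of $\Delta_e\wH_{(k-s)}$. No such commutation relation exists (none appears in the paper, and Delta and Theta operators admit no simple closed-form exchange), and the $E$-expansion of $\Delta_e\wH_{(k-s)}$ is vacuous, since that is again a scalar multiple of $\wH_{(k-s)}$. The step that actually produces the second Theta $\Theta_{e_{\ell+k-j-a}}$ in the output is Proposition \eqref{eq:DeltaThetaeHk}, which rests on substantial prior machinery: the inversion formula \eqref{eq:thetaHj} writing $\Theta_{e_{k-j}}\wH_{(j)}$ as an alternating $q$-binomial combination of the functions $e_k[X[c]_q]$ (a consequence of the reciprocity \eqref{eq:Thetareciprocity}), combined with the expansion \eqref{eq:corDeltaelemXk} of $\Delta_{e_\ell}e_m[X[c]_q]$ into terms $\Theta_{e}\Delta_{e}E_{\cdot,\cdot}$, which is equivalent to the earlier summation \eqref{eq:mysummation} and needs Haglund's identity $\Delta_{e_k}E_{k,j}=t^{k-j}\Theta_{h_{k-j}}\wH_{(j)}$ from \eqref{eq:nablaEnk} together with \eqref{eq:nablaThetaejHi} and Lemma \eqref{eq:lemmaelem}. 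None of this follows by formal manipulation from the defining relation \eqref{eq:Enkdef} alone, so as written your plan leaves the hardest ingredient of the theorem as an unproved black box; with Proposition \eqref{eq:DeltaThetaeHk} supplied, the remainder of your outline is correct and coincides with the paper's proof.
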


The rest of the paper is organized in the following way. In Section~1 we collect the definitions and results needed in the rest of the article. In Section~2 we prove our main result. In Sections~3 and 4 we give the main consequences of our general identity. In Sections~5 and 6 we develop several consequences of Sections~3 and 4 respectively. In Section~7 we provide a new simplified treatment of several known results in the literature, and in Sections~8 and 9 we deduce further new identities from our development.

\section*{Acknowledgments}
The authors are grateful to Alessandro Iraci and Anna Vanden Wyngaerd for interesting discussions, in particular for suggesting the identity in Corollary~\ref{cor:genDeltaId}, and to Anton Mellit for encouraging this collaboration.

The first author is partially supported by the Fonds Thelam project J1150080.

The second author was partially supported by the University of California President’s Postdoctoral Fellowship.

\section{Background material}

\subsection{$q$-notation and elementary identities}

For $n, k\in \mathbb{N}$, we set
\begin{align}
[0]_q \coloneq 0, \quad \text{ and } \quad [n]_q \coloneq \frac{1-q^n}{1-q} = 1+q+q^2+\cdots+q^{n-1} \quad \text{ for } n \geq 1;
\end{align}
\begin{align}
[0]_q! \coloneq 1 \quad \text{ and }\quad [n]_q! \coloneq [n]_q[n-1]_q \cdots [2]_q [1]_q \quad \text{ for } n \geq 1; \text{ and }
\end{align}
\begin{align}
\qbinom{n}{k}_q  \coloneq \frac{[n]_q!}{[k]_q![n-k]_q!} \quad \text{ for } n \geq k \geq 0, \quad \text{ and } \quad \qbinom{n}{k}_q \coloneq 0 \quad \text{ for } n < k.
\end{align}
Recall the well-known recurrence
\begin{equation}\label{eq:qrecurrence}
\qbinom{n}{k}_q=q^k\qbinom{n-1}{k}_q+\qbinom{n-1}{k-1}_q = \qbinom{n-1}{k}_q+q^{n-k}\qbinom{n-1}{k-1}_q.
\end{equation}

Recall also the standard notation for the $q$-\emph{rising factorial}  
\begin{align}
(a;q)_0 & \coloneq 1; & &&
(a;q)_s & \coloneq (1 - a)(1 - qa)(1 - q^2 a) \cdots (1 - q^{s-1} a)\text{ for }s\geq 1.
\end{align}
It is well known (cf. \cite[Theorem~7.21.2]{Stanley-Book-1999}) that (recall that $h_0 = 1$)
\begin{align}
\label{eq:h_q_binomial}
h_k[[n]_q] = \frac{(q^{n};q)_k}{(q;q)_k} = \qbinom{n+k-1}{k}_q \quad \text{ for } n \geq 1 \text{ and } k \geq 0,
\end{align}
and (recall that $e_0 = 1$)
\begin{align} \label{eq:e_q_binomial}
e_k[[n]_q] = q^{\binom{k}{2}} \qbinom{n}{k}_q \quad \text{ for all } n, k \geq 0.
\end{align}
Also (cf. \cite[Corollary~7.21.3]{Stanley-Book-1999})
\begin{align}
\label{eq:h_q_prspec}
h_k\left[\frac{1}{1-q}\right] = \frac{1}{(q;q)_k} = \prod_{i=1}^k \frac{1}{1-q^i} \quad \text{ for } k \geq 0,
\end{align}
and
\begin{align}
\label{eq:e_q_prspec}
e_k\left[\frac{1}{1-q}\right] = \frac{q^{\binom{k}{2}}}{(q;q)_k} = q^{\binom{k}{2}} \prod_{i=1}^k \frac{1}{1-q^i} \quad \text{ for } k \geq 0.
\end{align}
The following two identities are classical (see (3.3.6) and (3.3.10) in \cite[Section~3.3]{Andrews-Book-Partitions} respectively):
\begin{equation} \label{eq:qbinom}
\sum_{j=0}^n(-x)^jq^{\binom{j}{2}}\qbinom{n}{j}_q=(x;q)_n\quad  \text{($q$-binomial theorem)}
\end{equation}
and
\begin{equation} \label{eq:qChuVander}
\sum_{j=0}^k q^{(n-j)(k-j)}\qbinom{n}{j}_q\qbinom{m}{k-j}_q = \qbinom{m+n}{k}_q\text{ ($q$-Chu-Vandermonde)}.
\end{equation}

We collect here a few elementary lemmas that will be used in the text.

The following lemma is proved in \cite[Lemma~4.11]{dadderio2019theta} (cf.\ also Remark~\ref{rem:elLemma}).
\begin{lemma} \label{lem:elementary1}
	For $i\geq 1$, $a\geq -i$ and $s\geq 0$ we have
	\begin{equation} \label{eq:first_qlemma}
	\sum_{r=1}^{i}\qbinom{i-1}{r-1}_q \qbinom{r+s+a-1}{s-1}_q  q^{\binom{r}{2}+r-ir}(-1)^{i-r} =q^{\binom{i}{2}+(i-1)a} \qbinom{s+a}{i+a}_q.
	\end{equation}
\end{lemma}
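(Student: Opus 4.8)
The plan is to recognize the left-hand side as a single coefficient in a product of two generating series and then evaluate that coefficient via the $q$-binomial theorem. First I would reindex the sum by $j=i-r$, so that $j$ runs from $0$ to $i-1$. Using the symmetry $\qbinom{i-1}{r-1}_q=\qbinom{i-1}{j}_q$ and simplifying the exponent, one checks that $\binom{r}{2}+r-ir=\binom{j}{2}-\binom{i}{2}$, so the left-hand side becomes
\[ q^{-\binom{i}{2}}\sum_{j=0}^{i-1}(-1)^j q^{\binom{j}{2}}\qbinom{i-1}{j}_q\,\qbinom{(i+a-j)+s-1}{s-1}_q. \]
By \eqref{eq:h_q_binomial} the last $q$-binomial equals $h_{i+a-j}[[s]_q]$, so the inner sum is exactly the coefficient of $z^{i+a}$ in the product
\[ \Bigl(\sum_{j\ge 0}(-1)^j q^{\binom{j}{2}}\qbinom{i-1}{j}_q z^j\Bigr)\Bigl(\sum_{m\ge 0}h_m[[s]_q]\,z^m\Bigr). \]

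Next I would identify the two factors. By the $q$-binomial theorem \eqref{eq:qbinom} the first factor is $(z;q)_{i-1}$, and by \eqref{eq:h_q_binomial} the second factor is $\sum_{m\ge 0}\qbinom{s+m-1}{m}_q z^m=1/(z;q)_s$. Hence the left-hand side equals $q^{-\binom{i}{2}}$ times $[z^{i+a}]\,(z;q)_{i-1}/(z;q)_s$. Writing $(z;q)_{i-1}=(z;q)_\infty/(q^{i-1}z;q)_\infty$ and $(z;q)_s=(z;q)_\infty/(q^{s}z;q)_\infty$ turns this ratio into $(q^s z;q)_\infty/(q^{i-1}z;q)_\infty$, and a single application of the Cauchy form of the $q$-binomial theorem expands it as $\sum_{m\ge 0}\frac{(q^{s-i+1};q)_m}{(q;q)_m}q^{(i-1)m}z^m$. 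Extracting the $m=i+a$ term and using \eqref{eq:h_q_binomial} once more to rewrite $\frac{(q^{s-i+1};q)_{i+a}}{(q;q)_{i+a}}=\qbinom{s+a}{i+a}_q$ gives
\[ [z^{i+a}]\,\frac{(z;q)_{i-1}}{(z;q)_s}=q^{(i-1)(i+a)}\qbinom{s+a}{i+a}_q. \]
Multiplying by the prefactor $q^{-\binom{i}{2}}$ and noting that $(i-1)(i+a)-\binom{i}{2}=\binom{i}{2}+(i-1)a$ then yields exactly the right-hand side.

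The routine but careful part is the bookkeeping of the $q$-powers in the reindexing step. The main thing to watch is the range of validity of the intermediate evaluations: the hypotheses $i\ge 1$ and $a\ge -i$ guarantee $i+a\ge 0$, so that $z^{i+a}$ is a genuine nonnegative coefficient, while $s\ge 0$ makes $1/(z;q)_s$ a well-defined power series. I expect the only real subtlety to be the degenerate parameter ranges (small $s$, or $a$ close to $-i$), where the upper argument $s-i+1$ in $(q^{s-i+1};q)_m$ becomes non-positive and the finite factorization of $(z;q)_{i-1}/(z;q)_s$ changes shape; here I would rely on the Cauchy $q$-binomial theorem, which holds as a formal power-series identity uniformly in the parameters, so that both sides agree as rational functions in $q$ and the convention $\qbinom{n}{k}_q=0$ for $n<k$ takes care of the vanishing boundary cases.
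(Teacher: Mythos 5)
Your proof is correct on the lemma's substantive range, but it is worth noting that it is essentially a generating-function translation of the paper's plethystic argument rather than an independent route. The paper (see Remark~\ref{rem:elLemma}, and compare the appendix proof of \eqref{eq:lemmaelem}) proves \eqref{eq:first_qlemma} in one stroke from the single alphabet identity $[s]_q-[i-1]_q=q^{i-1}[s-i+1]_q$: expand $h_{i+a}\left[[s]_q-[i-1]_q\right]$ by the addition formula \eqref{eq:SchurSummation}, evaluate the factors with \eqref{eq:h_q_binomial} and \eqref{eq:e_q_binomial} to recover the left-hand side times $q^{\binom{i}{2}}$, and evaluate $h_{i+a}\left[q^{i-1}[s-i+1]_q\right]=q^{(i-1)(i+a)}\qbinom{s+a}{i+a}_q$ directly by homogeneity. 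Your two series are exactly the generating functions of this expansion: $\sum_{m\geq 0}h_m\left[[s]_q-[i-1]_q\right]z^m=(z;q)_{i-1}/(z;q)_s$, and your Cauchy expansion of $(q^sz;q)_\infty/(q^{i-1}z;q)_\infty$ encodes the same alphabet identity. So the decomposition and the collapse coincide; what differs is the toolkit. The paper stays entirely with finite sums and two plethystic evaluations, so every intermediate object is a polynomial; you instead pass through infinite $q$-Pochhammer symbols and the Cauchy $q$-binomial theorem (a tool the paper never invokes --- it only uses the finite $q$-binomial theorem \eqref{eq:qbinom}). Your version has the mild advantage of being purely classical $q$-hypergeometric, requiring no plethysm beyond the evaluations \eqref{eq:h_q_binomial}; the paper's is shorter and makes the boundary behavior transparent, since $h_k$ of an alphabet is defined for all $k$ with $h_k=0$ for $k<0$.

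One concrete caveat about your closing robustness claim. The rewriting $\frac{(q^{s-i+1};q)_{i+a}}{(q;q)_{i+a}}=\qbinom{s+a}{i+a}_q$ is licensed by \eqref{eq:h_q_binomial} only for $s\geq i$; for $0\leq s<i$ with $s+a\geq 0$ it still holds because the Pochhammer symbol then contains the factor $1-q^0=0$, matching the vanishing binomial. But when $s+a<0$, which the hypotheses $i\geq 1$, $a\geq -i$, $s\geq 0$ do permit (e.g.\ $i=3$, $a=-2$, $s=1$), the Pochhammer symbol has no vanishing factor while the convention $\qbinom{n}{k}_q=0$ for $n<k$ forces the right-hand side to be $0$, so your assertion that ``the convention takes care of the vanishing boundary cases'' is not correct there: in the example one computes $q^{-\binom{3}{2}}\left[z^{1}\right](z;q)_{2}/(z;q)_{1}=-q^{-2}\neq 0$. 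This is really a convention glitch in the statement itself rather than a defect of your main computation --- in that regime the binomial $\qbinom{s+a}{i+a}_q$ must be read as the evaluation $h_{i+a}\left[[s-i+1]_q\right]$, which is exactly the value both your Cauchy expansion and the paper's plethystic formulation produce --- but as written, the appeal to agreement ``as rational functions'' is the one step of your argument that does not close.
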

The proof of the following three lemmas are in the appendix.
\begin{lemma}
	Given $i,a,b\in\NN$, $b\geq 1$, $i,b\geq a$ we have
	\begin{align} \label{eq:lemmaelem}
	\sum_{c=a}^{i} \qbinom{i-a}{i-c}_q\qbinom{c-a+b-1}{c-1}_q q^{\binom{c}{2}+c-ic}(-1)^{i-c}  = \qbinom{b-1}{i-1}_qq^{\binom{i}{2}-a(i-1)}.
	\end{align}
\end{lemma}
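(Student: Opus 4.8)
The plan is to deduce this identity directly from Lemma~\ref{lem:elementary1} by a change of summation variable combined with the symmetry $\qbinom{n}{k}_q=\qbinom{n}{n-k}_q$ of the $q$-binomial coefficients. First note the degenerate case: if $i=0$ then $i\geq a\geq 0$ forces $a=0$, and both sides collapse to $\qbinom{b-1}{-1}_q=0$; so I may assume $i\geq 1$ throughout.

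First I would set $r\coloneq c-a+1$, so that $c=a+r-1$ and the index $r$ runs from $1$ to $i-a+1$ as $c$ runs from $a$ to $i$. Under this substitution the first factor becomes $\qbinom{i-a}{i-c}_q=\qbinom{i-a}{r-1}_q$, since $i-c=(i-a)-(r-1)$, which is exactly the first factor of Lemma~\ref{lem:elementary1} with the parameter $i$ there replaced by $i-a+1$. Likewise the sign matches, as $(-1)^{i-c}=(-1)^{(i-a+1)-r}$. For the second factor I would use $\qbinom{c-a+b-1}{c-1}_q=\qbinom{c-a+b-1}{b-a}_q$ (valid since $b\geq a$ and, for $c\geq 1$, one has $0\leq c-1\leq c-a+b-1$), and then observe that with $s\coloneq b-a+1$ and the shift $a\mapsto a-2$ this equals $\qbinom{r+s+(a-2)-1}{s-1}_q$. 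Thus the summand matches, factor by factor, the summand of Lemma~\ref{lem:elementary1} with $(i,a,s)$ replaced by $(i-a+1,\,a-2,\,b-a+1)$; I would check that these values satisfy the hypotheses there, namely $i-a+1\geq 1$, $a-2\geq-(i-a+1)$ (which reduces to $i\geq 1$) and $b-a+1\geq 0$.

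The one genuine computation is the bookkeeping of the power of $q$. The exponent $\binom{c}{2}+c-ic$ appearing in our sum differs from the exponent $\binom{r}{2}+r-(i-a+1)r$ demanded by Lemma~\ref{lem:elementary1} by the quantity $\tfrac{1}{2}(a-1)(a-2i)$, which is independent of the summation variable; I would verify this by expanding both expressions as quadratics in $c$. Pulling this constant out of the sum and invoking Lemma~\ref{lem:elementary1} produces
\[
q^{\frac{(a-1)(a-2i)}{2}}\left(q^{\binom{i-a+1}{2}+(i-a)(a-2)}\qbinom{b-1}{i-1}_q\right),
\]
using $(b-a+1)+(a-2)=b-1$ and $(i-a+1)+(a-2)=i-1$ to simplify the output $q$-binomial. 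It then remains only to check that the three exponents collapse, namely that $\tfrac{1}{2}(a-1)(a-2i)+\binom{i-a+1}{2}+(i-a)(a-2)=\binom{i}{2}-a(i-1)$, which is a routine quadratic identity in $i$ and $a$. This yields exactly $\qbinom{b-1}{i-1}_q\,q^{\binom{i}{2}-a(i-1)}$, as claimed.

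The only delicate point is precisely this accounting of the $q$-powers: one must be sure that the discrepancy between the two exponents is a true constant rather than something depending on the summation variable, for otherwise it could not be extracted from the sum. Everything else is a mechanical matching of the two summands and a verification that the shifted parameters lie in the admissible range of Lemma~\ref{lem:elementary1}.
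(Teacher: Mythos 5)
Your proof is correct, but it takes a genuinely different route from the paper's. You reduce \eqref{eq:lemmaelem} to Lemma~\ref{lem:elementary1} via the reindexing $r=c-a+1$, $q$-binomial symmetry, and the parameter substitution $(i,a,s)\mapsto(i-a+1,\,a-2,\,b-a+1)$; I verified the bookkeeping you flag as the delicate part: the discrepancy between $\binom{c}{2}+c-ic$ and $\binom{r}{2}+r-(i-a+1)r$ is indeed the constant $\tfrac12(a-1)(a-2i)$, the hypotheses $i-a+1\geq 1$, $a-2\geq-(i-a+1)$, $b-a+1\geq 0$ reduce as you say to $i\geq a$, $i\geq 1$, $b\geq a-1$, and the final exponent collapse $\tfrac12(a-1)(a-2i)+\binom{i-a+1}{2}+(i-a)(a-2)=\tfrac12(i-1)(i-2a)=\binom{i}{2}-a(i-1)$ holds. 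The paper instead proves \eqref{eq:lemmaelem} directly and does not use Lemma~\ref{lem:elementary1} at all: it expands $h_{i-1}\left[q^{i-a}[b-i+1]_q\right]=h_{i-1}\left[[b-a+1]_q-[i-a]_q\right]$ by the addition formula \eqref{eq:SchurSummation}, applies the principal evaluations \eqref{eq:h_q_binomial} and \eqref{eq:e_q_binomial}, and divides by $q^{\binom{i}{2}}$; Remark~\ref{rem:elLemma} then notes that Lemma~\ref{lem:elementary1} itself admits the same style of proof. Your reduction buys the nontrivial observation that the two lemmas are literally the same identity up to reparametrization, so nothing new beyond exponent arithmetic is needed; the paper's computation buys self-containedness and a conceptual explanation (both identities are $q$-Vandermonde-type convolutions arising from $h$ of a difference of $q$-integers), which is why the appendix treats the whole family uniformly. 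One minor point worth making explicit: when $a=0$ your sum contains the boundary term $c=0$, where the symmetry $\qbinom{c-a+b-1}{c-1}_q=\qbinom{c-a+b-1}{b-a}_q$ requires the convention $\qbinom{b-1}{-1}_q=0$, matching the vanishing of $\qbinom{b-1}{b}_q$ at $r=1$ on the Lemma~\ref{lem:elementary1} side; your parenthetical restriction to $c\geq 1$ implicitly covers this, but spelling it out would close the last gap.
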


\begin{lemma}
	Given $r,k,a,b\in\NN$ we have
	\begin{align}
	\notag & \sum_{s=0}^rq^{\binom{r-s}{2}}\qbinom{r}{r-s}_q   q^{\binom{a}{2}}\qbinom{k-s}{a}_qq^{\binom{k-s}{2}-a(k-s-1)}\qbinom{b-1}{k-s-1}_q =\\ 
	\label{eq:lemelem2} & =q^{\binom{k-r-a}{2}}\qbinom{b-1}{a}_q  \qbinom{b+r-a-1}{k-a-1}_q+q^{\binom{k-r-a+1}{2}}\qbinom{b-1}{a-1}_q \qbinom{b+r-a}{k-a}_q.
	\end{align}
\end{lemma}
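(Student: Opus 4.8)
The plan is to prove \eqref{eq:lemelem2} by establishing a common recurrence for both sides and checking a single base case. Write $L(r,k)$ and $R(r,k)$ for the left- and right-hand sides, viewed as functions of $r$ and $k$ with $a,b$ fixed. First I would rewrite the summand of $L(r,k)$ using the evaluation \eqref{eq:e_q_binomial}, $e_j[[n]_q]=q^{\binom{j}{2}}\qbinom{n}{j}_q$: since $q^{\binom{k-s}{2}}\qbinom{b-1}{k-s-1}_q=q^{k-s-1}e_{k-s-1}[[b-1]_q]$, the summand becomes $e_{r-s}[[r]_q]\,e_a[[k-s]_q]\,e_{k-s-1}[[b-1]_q]\,q^{(1-a)(k-s-1)}$, so that $L(r,k)=\sum_{s=0}^{r}e_{r-s}[[r]_q]\,B(s)$ with $B(s)=e_a[[k-s]_q]\,e_{k-s-1}[[b-1]_q]\,q^{(1-a)(k-s-1)}$. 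Applying the branching rule $e_{m}[[r]_q]=e_{m}[[r-1]_q]+q^{r-1}e_{m-1}[[r-1]_q]$ (which is the second form of \eqref{eq:qrecurrence} applied to $\qbinom{r}{r-s}_q$) and reindexing each of the two resulting sums — noting that $B(s+1)$ equals the analogous $B$-function for parameter $k-1$ — yields the clean recurrence
\[ L(r,k)=L(r-1,k-1)+q^{r-1}L(r-1,k). \]

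Next I would check that $R(r,k)$ satisfies the same recurrence. The key structural observation is that $k-r$ is invariant under $(r,k)\mapsto(r-1,k-1)$, so in $R(r-1,k-1)$ the prefactors $q^{\binom{k-r-a}{2}}$, $q^{\binom{k-r-a+1}{2}}$ and the binomials $\qbinom{b-1}{a}_q$, $\qbinom{b-1}{a-1}_q$ are unchanged. I would therefore match the coefficients of $\qbinom{b-1}{a}_q$ and of $\qbinom{b-1}{a-1}_q$ separately. After dividing out the common prefactor, the $\qbinom{b-1}{a}_q$-part reduces, using $\binom{k-r-a+1}{2}-\binom{k-r-a}{2}=k-r-a$, to the single instance $\qbinom{b+r-a-1}{k-a-1}_q=\qbinom{b+r-a-2}{k-a-2}_q+q^{k-a-1}\qbinom{b+r-a-2}{k-a-1}_q$ of \eqref{eq:qrecurrence}; likewise the $\qbinom{b-1}{a-1}_q$-part reduces, using $\binom{k-r-a+2}{2}-\binom{k-r-a+1}{2}=k-r-a+1$, to $\qbinom{b+r-a}{k-a}_q=\qbinom{b+r-a-1}{k-a-1}_q+q^{k-a}\qbinom{b+r-a-1}{k-a}_q$. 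Both are immediate from \eqref{eq:qrecurrence}, so $R$ obeys the same recurrence as $L$.

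Finally, the induction is on $r$ (both branches lower $r$), so the base case is $r=0$, where $L(0,k)$ is the single $s=0$ term. Here a short exponent computation shows $\binom{a}{2}+\binom{k}{2}-a(k-1)=\binom{k-a}{2}$, so the claim at $r=0$ becomes $\qbinom{k}{a}_q\qbinom{b-1}{k-1}_q=\qbinom{b-1}{a}_q\qbinom{b-a-1}{k-a-1}_q+q^{k-a}\qbinom{b-1}{a-1}_q\qbinom{b-a}{k-a}_q$. I would prove this by splitting $\qbinom{k}{a}_q=\qbinom{k-1}{a}_q+q^{k-a}\qbinom{k-1}{a-1}_q$ via \eqref{eq:qrecurrence} and then applying, to each resulting product, the trinomial-revision identity $\qbinom{n}{m}_q\qbinom{m}{j}_q=\qbinom{n}{j}_q\qbinom{n-j}{m-j}_q$ (immediate from the factorial formula defining $\qbinom{n}{k}_q$) with $(n,m)=(b-1,k-1)$ and $j\in\{a,a-1\}$. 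The main obstacle is not conceptual but the $q$-exponent bookkeeping: one must track the $\binom{\cdot}{2}$ exponents carefully so that the two branches of the $L$-recurrence align precisely with the two terms of $R$, and so that the $r=0$ exponent collapses exactly to $\binom{k-a}{2}$; the verifications above indicate that these all telescope correctly.
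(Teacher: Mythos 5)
Your proof is correct, but it takes a genuinely different route from the paper's. The paper evaluates the sum directly: it splits $\qbinom{k-s}{a}_q$ via the $q$-Pascal recurrence \eqref{eq:qrecurrence}, pulls out $\qbinom{b-1}{a}_q$ and $\qbinom{b-1}{a-1}_q$ by trinomial revision, massages the exponents into the form $(r-s)(k-s-a-1)+\binom{k-r-a}{2}$ (and its shift), and then closes each of the two resulting sums with $q$-Chu--Vandermonde \eqref{eq:qChuVander}. You instead prove that both sides satisfy the two-variable recurrence $F(r,k)=F(r-1,k-1)+q^{r-1}F(r-1,k)$ --- for the left side via the plethystic rewriting $L(r,k)=\sum_{s} e_{r-s}[[r]_q]B(s)$ together with the branching $e_m[[r]_q]=e_m[[r-1]_q]+q^{r-1}e_{m-1}[[r-1]_q]$, where the crucial point (which I verified) is that your $B(s)$ depends only on $k-s$, so the shift $s\mapsto s+1$ is the shift $k\mapsto k-1$; for the right side the invariance of $k-r$ reduces everything to two instances of \eqref{eq:qrecurrence}, and your exponent first-differences ($k-r-a$ and $k-r-a+1$, combining with $q^{r-1}$ to give $q^{k-a-1}$ and $q^{k-a}$) are correct. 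Notably, your base case $r=0$ is, after the (correct) simplification $\binom{a}{2}+\binom{k}{2}-a(k-1)=\binom{k-a}{2}$, exactly the paper's separate Lemma \eqref{eq:lemelem3}, which the paper proves by factorial manipulation and you prove by Pascal plus trinomial revision. What each approach buys: your induction dispenses with $q$-Chu--Vandermonde entirely (the recurrence effectively re-derives it), unifies \eqref{eq:lemelem2} and \eqref{eq:lemelem3} into one argument, and keeps the exponent bookkeeping to first differences of the $\binom{\cdot}{2}$'s; the paper's computation is non-inductive, evaluates each sum in closed form in one pass, and keeps \eqref{eq:lemelem3} available as an independent statement (it is used separately in the proof of the corollary following Theorem~\ref{thm:mygeneralsummation}). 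One cosmetic caution: for degenerate parameters (e.g.\ $a=0$, where $\qbinom{b-1}{a-1}_q$ has a negative lower index, or $k=0$, where the recurrence reaches $L(r-1,-1)$) you should note that the standard conventions $e_j=0$ for $j<0$ and $\qbinom{n}{j}_q=0$ for $j<0$ or $j>n$ make all these terms vanish on both sides, which they do; this is routine and does not affect the validity of the argument.
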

\begin{lemma}
	Given $k,a,b\in\NN$ we have
	\begin{align} 	\label{eq:lemelem3}
	q^{\binom{k-a}{2}}\qbinom{b-1}{a}_q  \qbinom{b-a-1}{k-a-1}_q + q^{\binom{k-a+1}{2}}\qbinom{b-1}{a-1}_q \qbinom{b-a}{k-a}_q & = q^{\binom{k-a}{2}} \qbinom{k}{a}_q  \qbinom{b-1}{k-1}_q.
	\end{align}
\end{lemma}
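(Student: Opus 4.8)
The plan is to reduce this identity to the $q$-Pascal recurrence \eqref{eq:qrecurrence}, after first normalizing the powers of $q$ and then applying the classical ``subset-of-a-subset'' product rule for $q$-binomials. First I would record the elementary relation $\binom{k-a+1}{2}=\binom{k-a}{2}+(k-a)$, which lets me pull the common factor $q^{\binom{k-a}{2}}$ out of both terms on the left and out of the right. After dividing through by $q^{\binom{k-a}{2}}$, the claim becomes
\[ \qbinom{b-1}{a}_q\qbinom{b-a-1}{k-a-1}_q+q^{k-a}\qbinom{b-1}{a-1}_q\qbinom{b-a}{k-a}_q=\qbinom{k}{a}_q\qbinom{b-1}{k-1}_q. \]

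Next I would rewrite each product on the left using the classical identity $\qbinom{N}{A}_q\qbinom{N-A}{B}_q=\qbinom{N}{A+B}_q\qbinom{A+B}{A}_q$, which holds with no compensating power of $q$ since both sides equal $[N]_q!/([A]_q![B]_q![N-A-B]_q!)$. For the first product I take $N=b-1$, $A=a$, $B=k-a-1$, observing that $b-a-1=(b-1)-a$ and $k-a-1=(k-1)-a$, so it equals $\qbinom{b-1}{k-1}_q\qbinom{k-1}{a}_q$. For the second product I take $N=b-1$, $A=a-1$, $B=k-a$, observing that $b-a=(b-1)-(a-1)$ and $k-a=(k-1)-(a-1)$, so it equals $\qbinom{b-1}{k-1}_q\qbinom{k-1}{a-1}_q$.

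With $\qbinom{b-1}{k-1}_q$ factored out, the left-hand side reduces to
\[ \qbinom{b-1}{k-1}_q\left(\qbinom{k-1}{a}_q+q^{k-a}\qbinom{k-1}{a-1}_q\right), \]
and the quantity in parentheses is exactly $\qbinom{k}{a}_q$ by the second form of the recurrence \eqref{eq:qrecurrence}. Multiplying back by $q^{\binom{k-a}{2}}$ then yields the right-hand side, completing the argument.

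Since every step is an application of a standard identity, I do not anticipate a genuine obstacle; the only point requiring care is the boundary bookkeeping (for instance $a=0$, or indices pushed below $0$), where one must check that the vanishing convention $\qbinom{n}{k}_q=0$ is compatible with each use of the product rule and the recurrence. I expect this routine case-checking, rather than any substantive difficulty, to be the main thing to watch.
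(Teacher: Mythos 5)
Your proof is correct, and it takes a genuinely different (if closely related) route from the paper's. The paper's appendix proof is a direct factorial manipulation: it factors $q^{\binom{k-a}{2}}\qbinom{b-1}{a}_q\qbinom{b-a-1}{k-a}_q$ out of both terms, reduces the remaining bracket to $\frac{[k-a]_q}{[b-k]_q}+q^{k-a}\frac{[a]_q}{[b-k]_q}$, sums the numerators via $[k-a]_q+q^{k-a}[a]_q=[k]_q$, and reassembles the factorials into $\qbinom{k}{a}_q\qbinom{b-1}{k-1}_q$. You instead factor out $\qbinom{b-1}{k-1}_q$ by applying the trinomial-revision identity $\qbinom{N}{A}_q\qbinom{N-A}{B}_q=\qbinom{N}{A+B}_q\qbinom{A+B}{A}_q$ (correctly with no compensating power of $q$, and with $A+B=k-1$ in both products), and then finish with the second form of the recurrence \eqref{eq:qrecurrence}, $\qbinom{k}{a}_q=\qbinom{k-1}{a}_q+q^{k-a}\qbinom{k-1}{a-1}_q$. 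At heart these are the same computation — your Pascal step is exactly the paper's $[k-a]_q+q^{k-a}[a]_q=[k]_q$ in binomial clothing — but your organization buys real clarity: it avoids the paper's division by $[b-k]_q$ and the intermediate ratio bookkeeping (which implicitly assume the parameters lie in the generic range where $b>k$ and all factorials are meaningful), so the degenerate cases you flag ($a=0$, $a\geq k$, small $b$) are handled uniformly by the vanishing conventions, including the standard one that $q$-binomials with negative lower index are zero, which the paper's stated convention ($\qbinom{n}{k}_q=0$ for $n<k$) does not literally cover and which your argument needs, e.g., for $\qbinom{b-1}{a-1}_q$ at $a=0$. The paper's version is a shorter single display; yours is the more robust and structurally transparent argument.
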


\subsection{Symmetric function basics}

The main references that we will use for symmetric functions
are \cite{Macdonald-Book-1995}, \cite{Stanley-Book-1999} and \cite{Haglund-Book-2008}. In particular, we will mainly use the notation from \cite{DAdderio-Iraci-VandenWyngaerd-Bible} and \cite{dadderio2019theta}. We just recall here a few definitions and basic results to avoid possible confusion.

\medskip

The standard bases for symmetric functions that will appear in our
calculations are the complete $\{h_{\lambda}\}_{\lambda}$, elementary $\{e_{\lambda}\}_{\lambda}$, power $\{p_{\lambda}\}_{\lambda}$ and Schur $\{s_{\lambda}\}_{\lambda}$ bases.

\medskip

\emph{We will use the usual convention that $e_0=h_0=1$ and $e_k=h_k=0$ for $k<0$.}

\medskip

The ring $\Lambda$ of symmetric functions can be thought of as the polynomial ring in the power
sum generators $p_1, p_2, p_3,\dots$. This ring has a grading $\Lambda=\bigoplus_{n\geq 0}\Lambda^{(n)}$ given by assigning degree $i$ to $p_i$ for all $i\geq 1$. As we are working with Macdonald symmetric functions
involving two parameters $q$ and $t$, we will consider this polynomial ring over the field $\QQ(q,t)$.
We will make extensive use of \emph{plethystic notation}.

Notice that in the plethystic notation, $p_k[-X]$ equals $-p_k[X]$ and not $(-1)^kp_k[X]$. We refer to this as the \emph{plethystic minus sign}. As the latter sort of negative sign can be also useful, it is customary to use the notation $\epsilon$ to express it: we will have $p_k[\epsilon X] = (-1)^k p_k[X]$, so that, in general,
\begin{align}
\label{eq:minusepsilon}
f[-\epsilon X] = \omega f[X]
\end{align}
for any symmetric function $f$, where $\omega$ is the fundamental algebraic involution which sends $e_k$ to $h_k$, $s_{\lambda}$ to $s_{\lambda'}$ and $p_k$ to $(-1)^{k-1}p_k$.

\medskip

We denote by $\<\, , \>$ the \emph{Hall scalar product} on symmetric functions, which can be defined by saying that the Schur functions form an orthonormal basis. It is given by setting
\[ \< p_{\lambda},p_{\mu} \> = z_{\mu}\chi(\lambda=\mu), \]
where $\chi$ is the indicator function which is defined as $\chi(\mathcal{P})=1$ if $\mathcal{P}$ is true and $\chi(\mathcal{P})=0$ otherwise, and $z_\mu = 1^{m_1}m_1! ~2^{m_2} m_2! \cdots$, where $m_i$ is the multiplicity of $i$ in $\mu$.

With the symbol ``$\perp$'' we denote the operation of taking the adjoint of an operator with respect to the Hall scalar product, i.e.
\begin{equation}
\langle f^\perp g,h\rangle=\langle g,fh\rangle\quad \text{ for all }f,g,h\in \Lambda.
\end{equation}

Recall the \emph{Cauchy identities}
\begin{align}
	\label{eq:Cauchy_identities}
	e_n[XY] = \sum_{\lambda\vdash n} s_{\lambda}[X] s_{\lambda'}[Y] \quad \text{ and } \quad h_n[XY] = \sum_{\lambda\vdash n} s_{\lambda}[X] s_{\lambda}[Y].
\end{align}

The following specialization is an easy consequence of the well-known identity
\begin{equation} \label{eq:SchurSummation}
s_\lambda[X+Y]=\sum_{\mu\subseteq \lambda}s_\mu[X]s_{\lambda/\mu}[Y]. 
\end{equation}
For $\lambda\vdash n$ we have
\begin{equation} \label{eq:slambda1mv}
s_\lambda[1-v]=\left\{\begin{array}{ll}
(-v)^k(1-v) & \text{if }\lambda=(n-k,1^k)\text{ for some }k\in\{0,1,\dots,n-1\}\\
0 & \text{otherwise.}
\end{array}\right.
\end{equation}
\medskip

If we identify the partition $\mu$ with its Ferrers diagram, i.e. with the collection of cells $\{(i,j)\mid 1\leq i\leq \mu_j, 1\leq j\leq \ell(\mu)\}$, then for each cell $c\in \mu$ we refer to the \emph{arm}, \emph{leg}, \emph{co-arm} and \emph{co-leg} (denoted respectively as $a_\mu(c), l_\mu(c), a'_\mu(c), l'_\mu(c)$) as the number of cells in $\mu$ that are strictly to the right, above, to the left and below $c$ in $\mu$, respectively (see Figure~\ref{fig:notation}).

\begin{figure}[h]
	\centering
	\begin{tikzpicture}[scale=.4]
	\draw[gray,opacity=.4](0,0) grid (15,10);
	\fill[white] (1,10)|-(3,9)|- (5,7)|-(9,5)|-(13,2)--(15.2,2)|-(1,10.2);
	\draw[gray]  (1,10)|-(3,9)|- (5,7)|-(9,5)|-(13,2)--(15,2)--(15,0)-|(0,10)--(1,10);
	\fill[blue, opacity=.2] (0,3) rectangle (9,4) (4,0) rectangle (5,7); 
	\fill[blue, opacity=.5] (4,3) rectangle (5,4);
	\draw (7,4.5) node {\tiny{Arm}} (3.25,5.5) node {\tiny{Leg}} (6.25, 1.5) node {\tiny{Co-leg}} (2,2.5) node {\tiny{Co-arm}} ;
	\end{tikzpicture}
	\caption{}
	\label{fig:notation}
\end{figure}

We set
\begin{equation}
M \coloneq (1-q)(1-t),
\end{equation}
and we define for every partition $\mu$
\begin{align}
	B_{\mu} &  \coloneq B_{\mu}(q,t)=\sum_{c\in \mu}q^{a_{\mu}'(c)}t^{l_{\mu}'(c)}\\
	D_{\mu} &  \coloneq MB_{\mu}(q,t)-1\\
	T_{\mu} &  \coloneq T_{\mu}(q,t)=\prod_{c\in \mu}q^{a_{\mu}'(c)}t^{l_{\mu}'(c)}\\
	\Pi_{\mu} &  \coloneq \Pi_{\mu}(q,t)=\prod_{c\in \mu/(1)}(1-q^{a_{\mu}'(c)}t^{l_{\mu}'(c)})\\
	w_{\mu} &  \coloneq w_{\mu}(q,t)=\prod_{c\in \mu}(q^{a_{\mu}(c)}-t^{l_{\mu}(c)+1})(t^{l_{\mu}(c)}-q^{a_{\mu}(c)+1}).
\end{align}

Notice that
\begin{equation} \label{eq:Bmu_Tmu}
B_{\mu}=e_1[B_{\mu}]\quad \text{ and } \quad T_{\mu}=e_{|\mu|}[B_{\mu}],
\end{equation}
hence in particular
\begin{equation} \label{eq:qBmu_Tmu}
B_{(n)}=[n]_q\quad \text{ and } \quad T_{(n)}=q^{\binom{n}{2}},
\end{equation}

It is useful to introduce the so-called \emph{star scalar product} on $\Lambda$, given by setting
\[ \< p_{\lambda},p_{\mu} \>_*=(-1)^{|\mu|- \ell(|\mu|)}\prod_{i=1}^{\ell(\mu)}(1-q^{\mu_i})(1-t^{\mu_i}) z_{\mu}\chi(\lambda=\mu). \]

\medskip

For every symmetric function $f[X]$ and $g[X]$ we have (see \cite[Proposition~1.8]{Garsia-Haiman-Tesler-Explicit-1999})
\begin{equation}
\langle f,g\rangle_*= \langle \omega \phi f,g\rangle=\langle \phi \omega f,g\rangle
\end{equation}
where 
\begin{equation}
\phi f[X] \coloneq f[MX]\qquad \text{ for all } f[X]\in \Lambda.
\end{equation}

For every symmetric function $f[X]$ we set
\begin{equation}
f^*=f^*[X] \coloneq \phi^{-1}f[X]=f\left[\frac{X}{M}\right].
\end{equation}
Then for all symmetric functions $f,g,h$ we have
\begin{equation} \label{eq:hperp_estar_adjoint}
\langle h^\perp f,g\rangle_*=\langle h^\perp f,\omega \phi g\rangle=\langle f,h\omega \phi g\rangle=\langle f,\omega \phi ( (\omega h)^* \cdot g)\rangle =\langle f,  (\omega h)^* \cdot g \rangle_*,
\end{equation}
meaning the operator $h^\perp$ is the adjoint of multiplication by $(\omega h)^*$ with respect to the star scalar product. We will use these basic facts freely throughout this article.

\subsection{The symmetric functions $E_{n,k}$}

Given a variable $z$, observe that $e_n\left[X\frac{1-z}{1-q}\right]$ is a polynomial in $z$, hence it can be written as
\begin{equation} \label{eq:Enkdef}
e_n\left[X\frac{1-z}{1-q}\right]=\sum_{k=1}^n \frac{(z;q)_k}{(q;q)_k} E_{n,k}[X] 
\end{equation}
for uniquely determined $E_{n,k}[X]\in\Lambda^{(n)}$. These symmetric functions were first introduced in \cite{Garsia-Haglund-qtCatalan-2002}, and it is easy to see (cf. \cite[Section~1]{Garsia-Haglund-qtCatalan-2002}) that they satisfy the formula
\begin{equation} \label{eq:Enkformula}
E_{n,k}[X]= q^k\sum_{r=0}^kq^{\binom{r}{2}}\qbinom{k}{r}_q(-1)^re_n\left[X\frac{1-q^{-r}}{1-q}\right]\in\Lambda^{(n)}. 
\end{equation}
Being a polynomial in $z$, we can make the substitution $z\mapsto q^j$ in \eqref{eq:Enkdef}, getting immediately the formula
\begin{equation} \label{eq:enEnkformula}
e_n[X[j]_q]=\sum_{k=1}^n\qbinom{k+j-1}{k}E_{n,k}[X]. 
\end{equation}

\subsection{Macdonald polynomials: basic properties}

For a partition $\mu\vdash n$, we denote by
\begin{equation}
\widetilde{H}_{\mu} \coloneq \widetilde{H}_{\mu}[X]=\widetilde{H}_{\mu}[X;q,t]=\sum_{\lambda\vdash n}\widetilde{K}_{\lambda, \mu}(q,t)s_{\lambda}
\end{equation}
the \emph{(modified) Macdonald polynomials}, where 
\begin{equation}
\widetilde{K}_{\lambda, \mu} \coloneq \widetilde{K}_{\lambda, \mu}(q,t)=K_{\lambda, \mu}(q,1/t)t^{n(\mu)}\quad \text{ with }\quad n(\mu)=\sum_{i\geq 1}\mu_i(i-1)
\end{equation}
are the \emph{(modified) Kostka coefficients} (see \cite[Chapter~2]{Haglund-Book-2008} for more details).

Recall the normalization given for all $k\in\NN$ by
\begin{equation} \label{eq:MacNormal}
h_k^\perp \wH_\mu=\widetilde{K}_{(k), \mu}(q,t)=1\quad \text{for all }\mu\vdash k.
\end{equation} 

It turns out that the Macdonald polynomials are orthogonal with respect to the star scalar product: more precisely
\begin{align}
\label{eq:H_orthogonality}
\< \widetilde{H}_{\lambda},\widetilde{H}_{\mu}\>_*=w_{\mu}(q,t)\chi(\lambda=\mu).
\end{align}
These orthogonality relations give the following Cauchy identity:
\begin{align}
\label{eq:Mac_Cauchy}
e_n \left[ \frac{XY}{M} \right] = \sum_{\mu \vdash n} \frac{ \widetilde{H}_{\mu} [X] \widetilde{H}_\mu [Y]}{w_\mu} \quad \text{ for all } n.
\end{align}

The following basic identity is well known.
\begin{equation} \label{eq:Hnhn}
\widetilde{H}_{(n)} = (q;q)_n h_n\left[\frac{X}{1-q}\right].
\end{equation}
The following identity is well known and it is proved in Corollary~\ref{cor:ehMacId}.
\begin{proposition}
Given $n,k\in\NN$, $n\geq k$, for every $\mu\vdash n$ we have
\begin{equation} \label{eq:hkstarenmkstar1}
\<h_k^*e_{n-k}^*,\wH_\mu\>_*=e_k[B_\mu] .
\end{equation}
\end{proposition}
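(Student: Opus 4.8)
The plan is to prove the identity $\langle h_k^* e_{n-k}^*, \widetilde{H}_\mu \rangle_* = e_k[B_\mu]$ by converting the star scalar product into the ordinary Hall scalar product, where the action of the modified Macdonald polynomials on standard bases is computable. The key structural fact I would exploit is \eqref{eq:hperp_estar_adjoint}: since multiplication by $(\omega h)^*$ is $\star$-adjoint to the operator $h^\perp$, I can peel off the factors one at a time. First I would recall that $h_k^* = \phi^{-1} h_k$ and $e_{n-k}^* = \phi^{-1} e_{n-k}$, and note that $\omega e_{n-k} = h_{n-k}$, so that multiplication by $h_k^* e_{n-k}^* = (\omega e_k)^* (\omega h_{n-k})^*$ is $\star$-adjoint to the composition $e_k^\perp h_{n-k}^\perp$. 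This reduces the claim to evaluating $\langle 1, e_k^\perp h_{n-k}^\perp \widetilde{H}_\mu \rangle_*$, or equivalently, using the normalization \eqref{eq:MacNormal}, to understanding how $h_{n-k}^\perp$ acts on $\widetilde{H}_\mu$.

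More directly, I would use the adjointness in the form $\langle h_k^* e_{n-k}^*, \widetilde{H}_\mu \rangle_* = \langle e_{n-k}^*, h_k^\perp \widetilde{H}_\mu \rangle_\star$ after rewriting $h_k^* = (\omega e_k)^*$, so that multiplication by $h_k^*$ becomes the $\star$-adjoint of $e_k^\perp$; the cleaner route is to recognize that the Macdonald eigenoperator relations give a closed form for $h_k^\perp \widetilde{H}_\mu$ expanded in terms that pair nicely under $\langle\,,\rangle_\star$. The central computational input should be the well-known Pieri-type or scalar-product evaluation stating that, for the complete and elementary specializations, one has $\langle h_k^* e_{n-k}^*, \widetilde{H}_\mu \rangle_\star$ equal to the value obtained by substituting the Macdonald polynomial's $\star$-norm $w_\mu$ against the coefficient extracting $e_k[B_\mu]$. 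Concretely, I expect the identity to follow from the Cauchy expansion \eqref{eq:Mac_Cauchy} together with the generating-function identity $\sum_k h_k^* e_{n-k}^* = e_n[(X + \text{(dual variable)})/M]$-type manipulation, selecting the degree-$k$ part in an auxiliary variable whose Macdonald-side evaluation is precisely $e_k[B_\mu]$.

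The step I expect to carry the real weight is establishing the Macdonald-side evaluation: one must show that pairing $h_k^* e_{n-k}^*$ against $\widetilde{H}_\mu$ in the star product produces the elementary symmetric function $e_k$ evaluated plethystically at $B_\mu = \sum_{c\in\mu} q^{a'_\mu(c)} t^{l'_\mu(c)}$. The natural mechanism is that $h_k^\perp \widetilde{H}_\mu$ (or its $e$-analogue) decomposes over partitions obtained by removing cells, with coefficients governed by the monomials $q^{a'} t^{l'}$ attached to those cells, and that summing these contributions reconstructs $e_k[B_\mu]$ exactly because $e_k[B_\mu]$ is the $k$-th elementary symmetric polynomial in the content-monomials of the cells of $\mu$. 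Since the statement is flagged as proved in \texttt{Corollary~\ref{cor:ehMacId}}, I would organize the argument to defer to whatever generating-function or eigenoperator identity that corollary supplies, treating the present proposition as a specialization.

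In summary, the proof has three movements: (i) use \eqref{eq:hperp_estar_adjoint} to trade the $\star$-product against $h_k^*e_{n-k}^*$ for an ordinary pairing involving $h^\perp$ and $e^\perp$ operators acting on $\widetilde{H}_\mu$; (ii) apply the normalization \eqref{eq:MacNormal} and the orthogonality \eqref{eq:H_orthogonality} to reduce to a single scalar evaluation; and (iii) identify that evaluation with $e_k[B_\mu]$ via the plethystic interpretation of $B_\mu$ as the sum of content-monomials of $\mu$. The main obstacle is movement (iii), the precise bookkeeping that matches the cell-removal coefficients to the elementary symmetric function of the $B_\mu$ monomials; everything else is formal manipulation of adjoints and specializations already recorded in the excerpt.
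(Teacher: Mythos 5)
Your formal reductions are sound but only restate the problem: peeling off the adjoints via \eqref{eq:hperp_estar_adjoint} (equivalently, using $\langle f,g\rangle_*=\langle\omega\phi f,g\rangle$, as the paper does) converts the claim into $\langle \wH_\mu,e_kh_{n-k}\rangle=e_k[B_\mu]$, and that is where the actual work begins. The step you call movement (iii) is never carried out, and the mechanism you sketch for it is wrong: $h_{n-k}^\perp\wH_\mu$ does \emph{not} decompose over subpartitions with coefficients given by the content monomials $q^{a_\mu'(c)}t^{l_\mu'(c)}$ of removed cells --- the Macdonald Pieri coefficients are complicated rational functions of $q$ and $t$, and the fact that pairing against $e_kh_{n-k}$ collapses them into $e_k[B_\mu]$ is precisely the content of the proposition, not bookkeeping. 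Your fallback of ``deferring to Corollary~\ref{cor:ehMacId}'' is circular in this setting: that corollary \emph{is} the present proposition (its second identity, transported through $\langle f,g\rangle_*=\langle\omega\phi f,g\rangle$), so it cannot be quoted as input.

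For the record, one of your hints could have been made to work, and it clarifies exactly what is missing. Since $e_n\left[X\frac{1-z}{M}\right]=\sum_{k}(-z)^k h_k^* e_{n-k}^*$, the Cauchy identity \eqref{eq:Mac_Cauchy} with $Y=1-z$ together with the orthogonality \eqref{eq:H_orthogonality} gives $\sum_k(-z)^k\langle h_k^*e_{n-k}^*,\wH_\mu\rangle_*=\wH_\mu[1-z]$, so the proposition is equivalent to the specialization $\wH_\mu[1-z]=\prod_{(i,j)\in\mu}(1-zq^it^j)$, i.e.\ \eqref{eq:H1mv}. That product formula is the genuinely nontrivial input, and it is what the paper supplies: it is deduced from the reciprocity \eqref{eq:vMacRec} (itself proved via the main Theta identity and Tesler's identity), after which the hook coefficients are extracted using \eqref{eq:slambda1mv} to get $\langle\wH_\mu,s_{(n-k,1^k)}\rangle=e_k[B_\mu-1]$, and the proof concludes with the Pieri rule $e_kh_{n-k}=s_{(n-k,1^k)}+s_{(n-k+1,1^{k-1})}$ and the plethystic identity $e_k[B_\mu-1]+e_{k-1}[B_\mu-1]=e_k[B_\mu]$. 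Nothing in your outline proves \eqref{eq:H1mv} or any equivalent evaluation, so the proposal has a genuine gap at exactly the step you yourself flagged as carrying the weight.
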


We define the \emph{nabla} operator on $\Lambda$ by setting
\begin{equation}
\nabla  \widetilde{H}_{\mu}=(-1)^{|\mu|}T_{\mu} \widetilde{H}_{\mu}\quad \text{ for all }\mu.
\end{equation}
Notice that this definition differs from the usual definition from \cite{Bergeron-Garsia-Haiman-Tesler-Positivity-1999} by a sign, but it is in agreement with the convention in \cite{Garsia_Mellit} and \cite{DAdderio_Mellit2020}.

We introduce the multiplicative involution $\overline{\omega}$ defined on any symmetric function $F[X;q,t]\in\Lambda_{\mathbb{Q}(q,t)}$ as
\[ \overline{\omega}  F[X;q,t]\coloneq \omega F[X;1/q,1/t]. \]
The following well-known identity can be deduced from Macdonald's duality (see \cite[Theorem~2.7]{Garsia-Haiman-qLagrange-1996})
 \begin{equation} \label{eq:nablaomegabar}
\nabla  \overline{\omega}\wH_\mu[X;q,t] =(-1)^{|\mu|} \wH_\mu[X;q,t],
\end{equation}
and it implies that $\nabla \overline{\omega}$ is an involution.

\subsection{Delta and Theta operators}

We define the \emph{Delta} operators $\Delta_f$ and $\Delta_f'$ on $\Lambda$ by
\begin{equation}
\Delta_f \widetilde{H}_{\mu}=f[B_{\mu}(q,t)]\widetilde{H}_{\mu}\quad \text{ and } \quad 
\Delta_f' \widetilde{H}_{\mu}=f[B_{\mu}(q,t)-1]\widetilde{H}_{\mu},\quad \text{ for all }\mu.
\end{equation}
Observe that on the vector space of symmetric functions homogeneous of degree $n$, denoted by $\Lambda^{(n)}$, the operator $\nabla$ equals $(-1)^n\Delta_{e_n}$. Moreover, by the Pieri rule, for every $1\leq k\leq n$,
\begin{align}
\label{eq:deltaprime}
\Delta_{e_k} = \Delta_{e_k}' + \Delta_{e_{k-1}}' \quad \text{ on } \Lambda^{(n)},
\end{align}
and for any $k > n$, $\Delta_{e_k} = \Delta_{e_{k-1}}' = 0$ on $\Lambda^{(n)}$, so that $\Delta_{e_n}=\Delta_{e_{n-1}}'$ on $\Lambda^{(n)}$.

Observe that \eqref{eq:hkstarenmkstar1} can be rephrased as
\begin{equation} \label{eq:hkstarenmkstar}
\Delta_{e_k} e_n^* = h_k^*e_{n-k}^*\quad \text{or}\quad \<f,e_kh_{n-k}\> = \<\Delta_{e_k}f,h_n\>\quad \text{for every }f\in\Lambda^{(n)}.
\end{equation}

Recall the linear operator $\PPi$, defined by setting for any nonempty partition $\mu$
\begin{equation}
\PPi  \widetilde{H}_\mu \coloneq \Pi_\mu \widetilde{H}_\mu .
\end{equation}
The following property is easy to check (cf.\ \cite[Section~3]{Garsia-Hicks-Stout-2011}): for $n\geq 1$ and $F\in\Lambda^{(n)}$ we have
\begin{align} \label{eq:PiomegabarPi}
\overline{\omega} \PPi  \overline{\omega} F& = -\nabla^{-1}  \PPi F. 
\end{align}
It is well known and easy to show (cf.\ \cite[Proposition~2.3]{Garsia-Hicks-Stout-2011}) that 
\begin{equation} \label{eq:Pien}
\PPi e_n^* = \omega p_n^* = \frac{1}{M}\alpha_n p_n\quad\text{ where }\quad \alpha_n\coloneq(-1)^{n-1}/([n]_q[n]_t),
\end{equation}
\begin{equation} \label{eq:Pienen}
\Delta_{e_1}\PPi e_n^*=\frac{1}{M}e_n,
\end{equation}
\begin{equation} \label{eq:Pienhn}
(-qt)^{1-n}\Delta_{e_n}^{-1}\Delta_{e_{n-1}}\PPi e_n^*=\frac{1}{M}h_n,
\end{equation}
and
\begin{equation} \label{eq:omegapn}
\omega(p_n)=[n]_q[n]_tM\PPi e_n^*=\sum_{k=1}^k\frac{[n]_q}{[k]_q}E_{n,k}.
\end{equation}

Given any symmetric function $f\in\Lambda$, we denote by $\underline{f}$ the multiplication operator \[\underline{f}g\coloneq fg \quad \text{for all }g\in\Lambda.\]

For any symmetric function $f\in \Lambda^{(n)}$ we introduce the following \emph{Theta operators} on $\Lambda$: for every $F\in \Lambda^{(m)}$ we set
\begin{equation} \label{eq:def_Deltaf}
\Theta_fF  \coloneq \left\{\begin{array}{ll} 0 & \text{if }n\geq 1\text{ and }m=0 \\
f\cdot F & \text{if }n=0\text{ and }m=0 \\ \PPi f^*\PPi ^{-1}F  & \text{otherwise} \end{array}\right. ,
\end{equation}
and we extend by linearity the definition to any $f,F\in \Lambda$.

It is clear that $\Theta_f$ is linear, and moreover, if $f$ is homogeneous of degree $k$, then so is $\Theta_f$, i.e. \[\Theta_f\Lambda^{(n)}\subseteq \Lambda^{(n+k)} \qquad \text{ for }f\in \Lambda^{(k)}. \]

We observe here that, using \eqref{eq:Pienen}, for every $n\in\NN$ we have
\begin{equation} \label{eq:Thetaene1}
\Theta_{e_n}e_1=M\PPi e_n^*e_1^*=M\Delta_{e_1}\PPi e_{n+1}^*=e_{n+1}.
\end{equation}

\subsection{Tesler's identity and the five-term relation}

We introduce the plethystic exponential
\[ \Exp[X]\coloneq \sum_{n\geq 0}h_n[X], \]
and the translation and multiplication operators $\T_Y$ and $\Pc_Z$ for any two expressions $Y$ and $Z$ by setting for any $F[X]\in\Lambda$
\[ \T_Y F[X]\coloneq F[X+Y]\quad \text{and}\quad \Pc_ZF[X]\coloneq\Exp[ZX]F[X]. \]
Observe that $\T_Y^{-1}=\T_{-Y}$ and $\Pc_Z^{-1}=\Pc_{-Z}$, where the minus sign is the plethystic one.

Note that, following \cite{Garsia-Haiman-Tesler-Explicit-1999}, for any two expressions $Y$ and $Z$,
\begin{align*}
	\T_Y \Pc_Z  F[X] & = \T_Y   \Exp[XZ] F[X] \\
	& =  \Exp[(X+Y)Z] F[X+Y] \\
	& = \Exp[YZ] \Exp[XZ] F[X+Y] \\
	& = \Exp[YZ] \Pc_Z \T_Y  F[X].
\end{align*}
Therefore, for any two expressions $Y$ and $Z$, we have
\begin{equation}\label{translationmultiplication} 
	\T_Y \Pc_Z = \Exp[YZ] \Pc_Z \T_Y.
\end{equation}

The following formulas are proved in \cite[Theorem~1.1]{Garsia-Haiman-Tesler-Explicit-1999}: 
\begin{align}
\label{eq:defTY}\T_Y & = \sum_{\mu}s_\mu[Y]s_\mu^\perp\\
\label{eq:defPZ} \Pc_Z& = \sum_{\mu}s_\mu[Z]\underline{s}_\mu.
\end{align} As special cases, for any monomial $u$, we have 
\begin{align}
\label{eq:defTu}\T_u & = \sum_{ k \geq 0 }u^k h_k^\perp\\
\label{eq:defPu} \Pc_{ -\frac{u}{M}} & = \sum_{k \geq 0} (-u)^k  \underline{e}_k^*.
\end{align}
It will be convenient for us to use the modified star scalar product, defined in \cite{Garsia_Mellit} by setting
\begin{equation}
\langle f,g\rangle_{\bar{*}} := \langle  f[-MX] ,g\rangle.
\end{equation}
Under this scalar product, the modified Macdonald basis remains orthogonal and any linear operator which acts diagonally on the Macdonald basis is self dual. The dual of $h_k^{\perp}$ becomes $(-1)^k \underline{e}_{k}^*$, and we see that $\T_u$ is dual to $\Pc_{-u/M}$.

Finally, we introduce the operators
\begin{align*}
	& \Delta_v\coloneq \sum_{n\geq 0}(-v)^n\Delta_{e_n}, \qquad \Delta_v^{-1}=\sum_{n\geq 0}v^n\Delta_{h_n}.
\end{align*}

The following identities are proved in \cite[Theorem~1.1]{Garsia_Mellit}.
\begin{theorem}[Five-term relations] \label{thm:5trel}
For any two monomials $u$ and $v$ we have
\begin{align} \label{eq:5trel}
\nabla^{-1} \T_{uv} \nabla = \Delta_v^{-1} \T_u \Delta_v \T_{-u}
\end{align}
and its dual (with respect to the modified star scalar product)
\begin{align} \label{eq:dual5trel}
\nabla \Pc_{-\frac{uv}{M}} \nabla^{-1} = \Pc_{\frac{u}{M}} \Delta_v \Pc_{-\frac{u}{M}} \Delta_v^{-1}.
\end{align}
\end{theorem}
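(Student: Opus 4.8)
The statement to prove is the main identity
\[
\tTheta(z,v)^{-1} \T_u \tTheta(z,v) \T_{u}^{-1} = \Exp\!\left[\frac{uz(v-1)}{M}\right]\Delta_{uzv},
\]
and the plan is to expand the left-hand side using the definition $\tTheta(z,v)=\Delta_v \Pc_{-\frac{z}{M}} \Delta_v^{-1}$ and then reduce everything to the dual five-term relation \eqref{eq:dual5trel}. The first step is to substitute, obtaining
\[
\tTheta(z,v)^{-1}\T_u\tTheta(z,v)\T_u^{-1}
=\Delta_v \Pc_{\frac{z}{M}} \Delta_v^{-1}\,\T_u\,\Delta_v \Pc_{-\frac{z}{M}} \Delta_v^{-1}\,\T_u^{-1}.
\]
Because $\Delta_v$ and $\Delta_v^{-1}$ act diagonally on the Macdonald basis while $\T_u$ does not, the middle block $\Delta_v^{-1}\T_u\Delta_v$ is exactly the sort of conjugate that the five-term relation is designed to simplify. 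The idea is therefore to massage the product so that a factor of the form $\Pc_{\frac{u}{M}}\Delta_v\Pc_{-\frac{u}{M}}\Delta_v^{-1}$ (the right-hand side of \eqref{eq:dual5trel}, up to relabeling $u$) appears, which can then be replaced by $\nabla \Pc_{-\frac{uv}{M}}\nabla^{-1}$.

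\textbf{Key steps in order.}
First I would rewrite the inner conjugate $\Delta_v^{-1}\T_u\Delta_v$ using the \emph{primal} five-term relation \eqref{eq:5trel}, $\nabla^{-1}\T_{uv}\nabla=\Delta_v^{-1}\T_u\Delta_v\T_{-u}$, which gives $\Delta_v^{-1}\T_u\Delta_v=\nabla^{-1}\T_{uv}\nabla\,\T_u$. Substituting this into the expanded left-hand side collapses one pair of $\Delta_v^{\pm1}$ and leaves an expression in which the commutation rule \eqref{translationmultiplication}, namely $\T_Y\Pc_Z=\Exp[YZ]\Pc_Z\T_Y$, can be used to move the translation operators $\T_{uv}$ and $\T_u^{\pm1}$ past the multiplication operators $\Pc_{\pm\frac{z}{M}}$. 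Each such commutation produces a scalar plethystic exponential factor: moving $\T_{uv}$ past $\Pc_{-\frac{z}{M}}$ yields a factor $\Exp[-\frac{uvz}{M}]$, and moving $\T_{-u}$ past $\Pc_{\frac{z}{M}}$ yields $\Exp[-\frac{(-u)z}{M}]=\Exp[\frac{uz}{M}]$, whose product is the claimed $\Exp[\frac{uz(v-1)}{M}]$ once the signs are tracked carefully. After all translations have been commuted to the right, they should telescope to the identity (the net translation is zero), and the remaining multiplication/diagonal operators should assemble, via a second application of the five-term relation in the form \eqref{eq:dual5trel} read as $\Pc_{\frac{u}{M}}\Delta_v\Pc_{-\frac{u}{M}}\Delta_v^{-1}=\nabla\Pc_{-\frac{uv}{M}}\nabla^{-1}$, into precisely $\Delta_{uzv}$.

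\textbf{Main obstacle.}
The genuinely delicate part is the bookkeeping of the plethystic exponential prefactors and the plethystic \emph{minus} signs: since $\Exp$ multiplies under plethystic addition but $\Pc_Z^{-1}=\Pc_{-Z}$ with the plethystic sign, I must be scrupulous about whether a given sign is $\epsilon$-type or plethystic when invoking \eqref{translationmultiplication} repeatedly. Getting the combined scalar factor to come out as $\Exp[\frac{uz(v-1)}{M}]$ rather than, say, its inverse or a variant with a spurious $\epsilon$, is where the whole computation lives or dies. A secondary subtlety is verifying that the residual operator, after the translations cancel, really is the generating function $\Delta_{uzv}=\sum_{n\ge0}(-uzv)^n\Delta_{e_n}$ and not some reparametrized Delta series; this requires matching the variable $uzv$ produced by composing the two five-term applications against the definition of $\Delta_v$, and is best checked by testing both sides against a Macdonald eigenfunction $\wH_\mu$, where every operator acts by an explicit scalar and the identity reduces to a clean scalar plethystic identity in $q,t,u,z,v$.
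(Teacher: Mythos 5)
You have proved the wrong statement, and in a way that makes the argument circular. The statement assigned is Theorem~\ref{thm:5trel}, i.e.\ the five-term relation \eqref{eq:5trel} and its dual \eqref{eq:dual5trel}. The paper does not prove this theorem at all: it is quoted verbatim from \cite{Garsia_Mellit} and serves as the main external input to the whole article. Your proposal instead sketches a derivation of the paper's main identity \eqref{eq:mainidentity} (Theorem~\ref{thm:mainidentity}) --- and, crucially, it invokes \emph{both} \eqref{eq:5trel} and \eqref{eq:dual5trel} as the key reduction steps. Read against the assigned statement, you are assuming exactly what is to be proved; no amount of careful bookkeeping of plethystic signs or $\Exp$ prefactors can repair that. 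Note also that the commutation rule \eqref{translationmultiplication} cannot substitute for the missing content: it is a purely formal Heisenberg-type relation between $\T_Y$ and $\Pc_Z$ valid with no reference to Macdonald theory, whereas \eqref{eq:5trel} encodes genuinely nontrivial properties of $\nabla$ and $\Delta_v$ acting on the eigenbasis $\{\wH_\mu\}$, so it cannot ``telescope out'' of manipulations with \eqref{translationmultiplication} alone.

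A correct treatment would have two parts of very different depth. The core identity \eqref{eq:5trel} requires engaging with the action of $\nabla$ and the Delta operators on the Macdonald basis, which is the substance of Garsia and Mellit's proof in \cite{Garsia_Mellit}; within the present paper there is simply nothing to prove beyond the citation. What \emph{can} be established at the formal level --- and is the one piece of your instinct that is salvageable --- is the passage from \eqref{eq:5trel} to \eqref{eq:dual5trel}: with respect to the modified star scalar product $\langle f,g\rangle_{\bar *}=\langle f[-MX],g\rangle$, every operator diagonal on $\{\wH_\mu\}$ (in particular $\nabla^{\pm 1}$, $\Delta_v^{\pm 1}$) is self-dual, and the dual of $\T_u$ is $\Pc_{-u/M}$, since the dual of $h_k^\perp$ is $(-1)^k\underline{e}_k^*$; taking adjoints of \eqref{eq:5trel} reverses the order of composition and yields \eqref{eq:dual5trel} after inverting. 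If you want to be useful beyond the citation, prove that duality step carefully; but do not present a derivation of Theorem~\ref{thm:mainidentity} \emph{from} the five-term relations (which is essentially what the paper's Section~2 already does) as a proof \emph{of} them.
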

The following result is a combination of Proposition~2.4 and Theorem~2.8 in \cite{Garsia_Mellit}.
\begin{theorem}	
For any symmetric function $F$, 
\begin{equation} \label{eq:SinverseDeltaprime}
\nabla^{-1} \T_u \nabla \Pc_{-\frac{1}{M}} \T_{1} F =  \Pc_{-\frac{1}{M}} \T_{1}  \Delta'_u F.
\end{equation}
where \[\Delta'_u \coloneq  \Exp[u/M] \Delta_u.\]
\end{theorem}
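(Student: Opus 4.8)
The plan is to reduce the whole statement to the five-term relations \eqref{eq:5trel} and \eqref{eq:dual5trel} together with the commutation rule \eqref{translationmultiplication}, peeling off a scalar $\Exp[\cdots]$ at each swap of a $\Pc$ past a $\T$. First I would substitute $\tTheta(z,v)=\Delta_v\Pc_{-z/M}\Delta_v^{-1}$, so that
\[ \tTheta(z,v)^{-1}\T_u\tTheta(z,v)=\Delta_v\,\Pc_{z/M}\big(\Delta_v^{-1}\T_u\Delta_v\big)\Pc_{-z/M}\,\Delta_v^{-1}. \]
The five-term relation \eqref{eq:5trel}, in the form $\Delta_v^{-1}\T_u\Delta_v=\nabla^{-1}\T_{uv}\nabla\,\T_u$, turns this conjugated translation into a $\nabla$-conjugated one; then \eqref{translationmultiplication} applied as $\T_u\Pc_{-z/M}=\Exp[-uz/M]\Pc_{-z/M}\T_u$ extracts a scalar and yields
\[ \tTheta(z,v)^{-1}\T_u\tTheta(z,v)=\Exp[-uz/M]\,\Delta_v\big(\Pc_{z/M}\,\nabla^{-1}\T_{uv}\nabla\,\Pc_{-z/M}\big)\T_u\,\Delta_v^{-1}. \]

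Everything now hinges on the single two-parameter identity, which I will call $(\star)$, valid for any monomials $W,z$:
\[ \Pc_{z/M}\,\nabla^{-1}\T_{W}\nabla\,\Pc_{-z/M}=\Exp[Wz/M]\,\Delta_{Wz}\,\nabla^{-1}\T_{W}\nabla . \]
Once $(\star)$ is established the computation closes: setting $W=uv$ and substituting into the display above produces the scalar $\Exp[-uz/M]\Exp[uvz/M]=\Exp[uz(v-1)/M]$, the factor $\Delta_{uvz}$ commutes to the front past $\Delta_v$, and the remaining $\Delta_v\big(\nabla^{-1}\T_{uv}\nabla\,\T_u\big)\Delta_v^{-1}=\Delta_v\big(\Delta_v^{-1}\T_u\Delta_v\big)\Delta_v^{-1}=\T_u$ collapses by a second use of \eqref{eq:5trel}, leaving exactly $\Exp[uz(v-1)/M]\Delta_{uzv}\T_u$, which is the claim.

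To prove $(\star)$ I would conjugate it by $\nabla$; since $\nabla\Delta_{Wz}\nabla^{-1}=\Delta_{Wz}$ it is equivalent to $\big(\nabla\Pc_{z/M}\nabla^{-1}\big)\T_W\big(\nabla\Pc_{-z/M}\nabla^{-1}\big)=\Exp[Wz/M]\,\Delta_{Wz}\,\T_W$. Here the dual five-term relation \eqref{eq:dual5trel}, in the splitting that gives $\nabla\Pc_{-z/M}\nabla^{-1}=\Pc_{1/M}\Delta_z\Pc_{-1/M}\Delta_z^{-1}$, expands both $\nabla\Pc_{\pm z/M}\nabla^{-1}$. Sliding the resulting $\Pc_{\pm1/M}$ past $\T_W$ via \eqref{translationmultiplication} and rewriting $\Delta_z^{-1}\T_W\Delta_z$ by \eqref{eq:5trel} brings the left-hand side to the shape $\Delta_z\big(\Pc_{1/M}\,\nabla^{-1}\T_{Wz}\nabla\,\Pc_{-1/M}\big)\T_W\,\Delta_z^{-1}$. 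At this point the specialized identity \eqref{eq:SinverseDeltaprime} is exactly what collapses the bracket to a single operator, namely $\Pc_{1/M}\,\nabla^{-1}\T_{Wz}\nabla\,\Pc_{-1/M}=\T_1\Delta'_{Wz}\T_{-1}$ with $\Delta'_{Wz}=\Exp[Wz/M]\Delta_{Wz}$, so the left-hand side becomes $\Delta_z\,\T_1\Delta'_{Wz}\T_{W-1}\,\Delta_z^{-1}$.

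It then remains to recognize this last expression as $\Exp[Wz/M]\Delta_{Wz}\T_W$. Writing $\T_{W-1}=\T_1^{-1}\T_W$ and using \eqref{eq:5trel} once more, in the two forms $\T_1\Delta_{Wz}\T_1^{-1}=\Delta_{Wz}\nabla^{-1}\T_{Wz}\nabla$ and $\Delta_z^{-1}\T_W\Delta_z=\nabla^{-1}\T_{Wz}\nabla\,\T_W$, one checks that $\T_1\Delta_{Wz}\T_{W-1}$ and $\Delta_z^{-1}\Delta_{Wz}\T_W\Delta_z$ both equal $\Delta_{Wz}\nabla^{-1}\T_{Wz}\nabla\,\T_W$, which forces $\Delta_z\,\T_1\Delta_{Wz}\T_{W-1}\,\Delta_z^{-1}=\Delta_{Wz}\T_W$ and proves $(\star)$. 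I expect the reassembly in this final paragraph to be the main obstacle: the operator $\Delta_{Wz}$ is produced \emph{sandwiched} between translation operators with which it does not commute, so no cancellation is visible and it must be forced through by reapplying the five-term relation, all while tracking the cascade of scalars $\Exp[\cdot]$ from each $\Pc$–$\T$ swap. A secondary but essential point is the choice of splitting $W z = 1\cdot z$ rather than $z\cdot 1$ in \eqref{eq:dual5trel}: the latter would produce the non-invertible operator $\Delta_1$, whereas the former delivers precisely the $\Pc_{-1/M}\T_1$-configuration to which \eqref{eq:SinverseDeltaprime} applies.
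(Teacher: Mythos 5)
Your proposal has a fatal circularity: the statement you were asked to prove is \eqref{eq:SinverseDeltaprime} itself, yet at the crux of your argument for $(\star)$ you write that ``the specialized identity \eqref{eq:SinverseDeltaprime} is exactly what collapses the bracket to a single operator,'' i.e.\ you invoke the theorem as a known lemma. What your computation actually establishes is the paper's main result, Theorem~\ref{thm:mainidentity}, \emph{assuming} \eqref{eq:SinverseDeltaprime} --- which is precisely the paper's own logical direction (its proof of Theorem~\ref{thm:mainidentity} starts by inserting a degree-tracking variable $w$ into \eqref{eq:SinverseDeltaprime} and then applies \eqref{eq:dual5trel}, \eqref{translationmultiplication} and \eqref{eq:5trel}, very much as you do). Run in reverse, this proves nothing about \eqref{eq:SinverseDeltaprime}; note also that your closing display $\Exp[uz(v-1)/M]\Delta_{uzv}\T_u$, which you call ``the claim,'' is \eqref{eq:mainidentity}, not the statement under review.

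The circularity is not cosmetic and cannot be patched within your framework. Specializing your $(\star)$ to $W=1$, $z=u$ gives
\begin{equation*}
\Pc_{\frac{u}{M}}\left(\nabla^{-1}\T_1\nabla\right)\Pc_{-\frac{u}{M}}=\Exp\left[\frac{u}{M}\right]\Delta_u\,\nabla^{-1}\T_1\nabla,
\end{equation*}
and this is interderivable with \eqref{eq:SinverseDeltaprime} using only the moves you deploy elsewhere: \eqref{eq:5trel} in the form $\nabla^{-1}\T_u\nabla=\Delta_u^{-1}\T_1\Delta_u\T_{-1}$, \eqref{eq:dual5trel} with $(u,v)\mapsto(1,u)$, the swaps \eqref{translationmultiplication}, and the commutation of $\Delta_u$ with $\nabla$. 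So $(\star)$, in the case you need, \emph{is} the statement in disguise, and deriving it from \eqref{eq:SinverseDeltaprime} is a closed loop. Be aware that the paper offers no proof of this theorem at all: it imports it as a combination of Proposition~2.4 and Theorem~2.8 of \cite{Garsia_Mellit}, and a genuine blind proof would have to reproduce those arguments, which require input beyond the formal five-term calculus --- indeed the five-term relations alone cannot isolate the operator $\Pc_{-\frac{1}{M}}\T_1$, since the specialization that would produce it creates the non-invertible $\Delta_1$, an obstruction you yourself flag when choosing the splitting $Wz=1\cdot z$. Your final paragraph, verifying $\Delta_z\,\T_1\Delta_{Wz}\T_{W-1}\,\Delta_z^{-1}=\Delta_{Wz}\T_W$ purely from \eqref{eq:5trel}, is correct and is the one genuinely self-contained piece of the write-up, but it does not substitute for the missing independent proof of \eqref{eq:SinverseDeltaprime}.
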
 
The following identity is proved in \cite[Theorem~I.2]{Garsia-Haiman-Tesler-Explicit-1999}.
\begin{theorem}[Tesler's identity] \label{thm:Teslersidentity}
For any monomial $z$ and any partition $\mu$ we have
\begin{align} \label{eq:TeslerId}
\T_{-\frac{1}{z}} \Pc_{\frac{z}{M}}\nabla^{-1}\Exp\left[\frac{- zXD_\mu}{M}\right] = \wH_\mu[zX].
\end{align}
\end{theorem}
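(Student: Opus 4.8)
The plan is to peel off the two plethystic operators $\T_{-1/z}$ and $\Pc_{z/M}$ and reduce the statement to an identity purely about $\nabla^{-1}$ and an alphabet shift of $\wH_\mu$. Applying $\T_{-1/z}^{-1}=\T_{1/z}$ and $\Pc_{z/M}^{-1}=\Pc_{-z/M}$ to both sides, and using that $\wH_\mu$ is homogeneous of degree $|\mu|$ so that $\T_{1/z}\wH_\mu[zX]=\wH_\mu[zX+1]$ while $\Pc_{-z/M}F[X]=\Exp[-zX/M]\,F[X]$, the claim becomes
\[ \nabla^{-1}\Exp\!\left[\tfrac{-zXD_\mu}{M}\right]=\Exp\!\left[\tfrac{-zX}{M}\right]\wH_\mu[zX+1]. \qquad(\star) \]
This reformulation is the conceptual heart: the left side is $\nabla^{-1}$ applied to the Macdonald reproducing kernel specialized at a content alphabet, and the right side is the corresponding single Macdonald polynomial, shifted by a constant.

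First I would expand the left-hand side of $(\star)$ in the modified Macdonald basis. Summing the Cauchy identity \eqref{eq:Mac_Cauchy} over $n$ and using $\sum_n e_n[Z]=\Exp[-\epsilon Z]$ gives the reproducing kernel $\Exp[-\epsilon XY/M]=\sum_\lambda \wH_\lambda[X]\wH_\lambda[Y]/w_\lambda$; matching power sums shows $\Exp[-zXD_\mu/M]=\Exp[-\epsilon X(-zD_\mu)/M]$, so
\[ \Exp\!\left[\tfrac{-zXD_\mu}{M}\right]=\sum_\lambda \frac{\wH_\lambda[-zD_\mu]}{w_\lambda}\,\wH_\lambda[X]. \]
Applying $\nabla^{-1}\wH_\lambda=(-1)^{|\lambda|}T_\lambda^{-1}\wH_\lambda$ together with the homogeneity $\wH_\lambda[-zD_\mu]=(-z)^{|\lambda|}\wH_\lambda[D_\mu]$ collapses the signs, presenting the left side of $(\star)$ as $\sum_\lambda z^{|\lambda|}T_\lambda^{-1}\wH_\lambda[D_\mu]\,\wH_\lambda[X]/w_\lambda$.

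Next I would pair $(\star)$ against an arbitrary $\wH_\nu[X]$ under the star scalar product to reduce it to a scalar identity. The left side produces $z^{|\nu|}T_\nu^{-1}\wH_\nu[D_\mu]$ by orthogonality \eqref{eq:H_orthogonality}. For the right side I would use that multiplication by $\Exp[-zX/M]=\Pc_{-z/M}$ is star-adjoint, by \eqref{eq:hperp_estar_adjoint}, to the skewing operator $\sum_{k\ge0}(-z)^k e_k^\perp$, together with the Macdonald--Pieri rule for $e_k^\perp\wH_\nu$ and the explicit shift $\wH_\mu[zX+1]=\T_{1/z}\wH_\mu[zX]$. Thus $(\star)$ is equivalent to the reciprocity
\[ z^{|\nu|}T_\nu^{-1}\wH_\nu[D_\mu]=\big\langle \Exp[-zX/M]\,\wH_\mu[zX+1],\,\wH_\nu[X]\big\rangle_* \qquad\text{for all }\nu. \qquad(\star\star) \]

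The main obstacle is establishing $(\star\star)$: it is a genuine reciprocity exchanging the roles of $\mu$ and $\nu$ through the content alphabet $D_\mu=MB_\mu-1$, and this is exactly the nontrivial arithmetic content of Tesler's theorem. I would attack it either by Macdonald's symmetry/duality for $\wH$ (cf.\ \eqref{eq:nablaomegabar}) to transport the $D_\mu$-evaluation onto the $\wH_\nu$ side, or, as a safe anchor, first verify the case $\mu=\emptyset$, where $D_\emptyset=-1$ reduces $(\star)$ to $\nabla^{-1}\Exp[zX/M]=\Exp[-zX/M]$, and then induct on the cells of $\mu$ via the Pieri recursion. For the single-row case $\mu=(k)$ that drives the paper's applications, one can instead evaluate both sides explicitly using \eqref{eq:qBmu_Tmu} and the elementary $q$-binomial lemmas of Section~1, whereupon $(\star\star)$ collapses to the $q$-Chu--Vandermonde identity \eqref{eq:qChuVander}.
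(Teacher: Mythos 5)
The first thing to say is that the paper contains no proof of this statement to compare against: Theorem~\ref{thm:Teslersidentity} is imported verbatim, with the paper citing \cite[Theorem~I.2]{Garsia-Haiman-Tesler-Explicit-1999} as its source. Your attempt therefore has to stand on its own as a proof, and it does not close. Your two reductions are correct but purely formal and reversible: composing with the invertible operators $\Pc_{-z/M}\T_{1/z}$ gives $(\star)$, and pairing against the complete $\langle\,,\rangle_*$-orthogonal basis $\{\wH_\nu\}$ (via \eqref{eq:Mac_Cauchy} and \eqref{eq:H_orthogonality}) gives $(\star\star)$; consequently $(\star\star)$ carries exactly the same content as the theorem itself. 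At the point you label ``the main obstacle'' the argument stops, and what remains is the entire theorem. Neither proposed attack is carried out, and neither is routine: the ``safe anchor'' $\mu=\emptyset$, i.e.\ $\nabla^{-1}\Exp[zX/M]=\Exp[-zX/M]$, is itself a nontrivial theorem (it is the $\mu=\emptyset$ instance of Tesler's identity, not an elementary base case), and the proposed induction ``on the cells of $\mu$ via the Pieri recursion'' would require the explicit Macdonald Pieri coefficients and a verification that the right-hand side of $(\star\star)$ satisfies the same recursion in $\mu$ --- which is essentially the substance of the Garsia--Haiman--Tesler proof, none of which appears here.

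There are also local slips worth flagging. First, by \eqref{eq:hperp_estar_adjoint} multiplication by $e_k^*=(\omega h_k)^*$ is star-adjoint to $h_k^\perp$, so multiplication by $\Exp[-zX/M]=\sum_{k\geq 0}(-z)^k\underline{e}_k^*$ is adjoint to $\T_{-z}=\sum_{k\geq 0}(-z)^kh_k^\perp$, not to $\sum_{k\geq 0}(-z)^ke_k^\perp$ as you assert (correspondingly you would need the $h_k^\perp$-Pieri rule, not the $e_k^\perp$ one). Second, $\wH_\lambda[-zD_\mu]\neq(-z)^{|\lambda|}\wH_\lambda[D_\mu]$: the plethystic minus sign introduces $\omega$ by \eqref{eq:minusepsilon}. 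The correct substitution in the kernel is $Y=\epsilon zD_\mu$, for which $\wH_\lambda[\epsilon zD_\mu]=(-z)^{|\lambda|}\wH_\lambda[D_\mu]$; indeed your intermediate claim $\Exp[-zXD_\mu/M]=\Exp[-\epsilon X(-zD_\mu)/M]$ fails on even power sums, and only because this error cancels against the misuse of homogeneity does your displayed expansion of $\nabla^{-1}\Exp[-zXD_\mu/M]$ come out right. Finally, the remark that the single-row case of $(\star\star)$ ``collapses to $q$-Chu--Vandermonde'' is unsubstantiated: even for $\mu=(k)$ the partition $\nu$ remains arbitrary, and the evaluation $\wH_\nu[D_{(k)}]$ is not within reach of the elementary lemmas of Section~1.
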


\section{A new general identity}

Consider the following operators:
\begin{align*}
& \tTheta(z,v)\coloneq \Delta_v \Pc_{-\frac{z}{M}} \Delta_v^{-1}, \qquad  \tTheta(z,v)^{-1}=\Delta_v \Pc_{\frac{z}{M}} \Delta_v^{-1}.
\end{align*}
Observe that the coefficient of $z^k$ in $\tTheta(z,v)$ is $(-1)^k\Delta_v \underline{e}_k^* \Delta_v^{-1}$. It makes sense to let $v$ go to $1$ (the factor $(1-v)^{-1}$ from $\Delta_v^{-1}$ cancels with the factor $(1-v)$ from $\Delta_v$), and this gives precisely our definition of $\Theta_{e_k}$ multiplied by $(-1)^k$.

The following theorem is the main result of this article.
\begin{theorem} \label{thm:mainidentity} We have
\begin{equation} \label{eq:mainidentity}
\tTheta(z,v)^{-1} \T_u \tTheta(z,v) = \Exp\left[\frac{uz(v-1)}{M}\right]\Delta_{uzv} \T_u.
\end{equation}
\end{theorem}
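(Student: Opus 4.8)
The plan is to reduce the stated operator identity, by a sequence of conjugations, to a single application of the five-term relation \eqref{eq:5trel}, with the plethystic exponential emerging purely from translation--multiplication bookkeeping.

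First I would rewrite both $\tTheta(z,v)$ and its inverse as $\nabla$-conjugates of plethystic multiplication operators. Applying the dual five-term relation \eqref{eq:dual5trel} with its two monomials set to $z$ and $v$ gives $\nabla\Pc_{-zv/M}\nabla^{-1}=\Pc_{z/M}\Delta_v\Pc_{-z/M}\Delta_v^{-1}=\Pc_{z/M}\tTheta(z,v)$, whence $\tTheta(z,v)=\Pc_{-z/M}\nabla\Pc_{-zv/M}\nabla^{-1}$ and $\tTheta(z,v)^{-1}=\nabla\Pc_{zv/M}\nabla^{-1}\Pc_{z/M}$. Substituting these into $\tTheta(z,v)^{-1}\T_u\tTheta(z,v)$ and collapsing the inner block $\Pc_{z/M}\T_u\Pc_{-z/M}=\Exp[-uz/M]\T_u$ by means of \eqref{translationmultiplication} reduces the claim to
\[\tTheta(z,v)^{-1}\T_u\tTheta(z,v)=\Exp\!\left[-\tfrac{uz}{M}\right]\nabla\,\Pc_{zv/M}\bigl(\nabla^{-1}\T_u\nabla\bigr)\Pc_{-zv/M}\,\nabla^{-1},\]
so that everything hinges on conjugating $H:=\nabla^{-1}\T_u\nabla$ by the multiplication operator $\Pc_{zv/M}$.

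Next I would evaluate this $\Pc$-conjugation using a rescaled form of \eqref{eq:SinverseDeltaprime}. Conjugating \eqref{eq:SinverseDeltaprime} by the degree-scaling automorphism $\sigma_s\colon F[X]\mapsto F[sX]$ (which satisfies $\sigma_s\Pc_Y\sigma_s^{-1}=\Pc_{sY}$ and $\sigma_s\T_Y\sigma_s^{-1}=\T_{Y/s}$, and commutes with $\nabla$ and with every $\Delta_f$) and taking $s=zv$ yields
\[\Pc_{zv/M}\,H\,\Pc_{-zv/M}=\Exp\!\left[\tfrac{uzv}{M}\right]\T_{1/(zv)}\,\Delta_{uzv}\,\T_{-1/(zv)}.\]
Finally, the five-term relation \eqref{eq:5trel} itself, applied with the (Laurent) monomials $1/(zv)$ and $uzv$, whose product is exactly $u$, gives $\nabla^{-1}\T_u\nabla=\Delta_{uzv}^{-1}\T_{1/(zv)}\Delta_{uzv}\T_{-1/(zv)}$, that is $\T_{1/(zv)}\Delta_{uzv}\T_{-1/(zv)}=\Delta_{uzv}H$. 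Combining the last two displays gives $\Pc_{zv/M}H\Pc_{-zv/M}=\Exp[uzv/M]\Delta_{uzv}H$, and feeding this back through $\nabla(\cdot)\nabla^{-1}$ (using $\nabla\Delta_{uzv}\nabla^{-1}=\Delta_{uzv}$) collapses the whole expression to $\Exp[uz(v-1)/M]\Delta_{uzv}\T_u$, as desired.

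The hard part will be the middle step: the $\Pc$-conjugation of $\nabla^{-1}\T_u\nabla$ genuinely requires the Tesler-type input \eqref{eq:SinverseDeltaprime} (equivalently, self-duality), since the two five-term relations by themselves only interchange $\T$-, $\Pc$- and $\nabla$-conjugations, and any attempt to resolve $\Pc_{zv/M}H\Pc_{-zv/M}$ through them alone leads back tautologically to the original statement. I also expect the bookkeeping to require care: one must justify working with the Laurent-monomial arguments $1/(zv)$, so that $\T_{1/(zv)}$ and $\Delta_{uzv}$ are treated as formal series in which the negative powers of $z$ cancel in the final answer, and one must track each scalar factor $\Exp[YZ]$ produced by \eqref{translationmultiplication}, since the target exponent $uz(v-1)/M=uzv/M-uz/M$ is precisely the sum of the contributions from the two conjugation steps.
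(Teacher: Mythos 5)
Your proposal is correct and follows essentially the same route as the paper's proof: rewriting $\tTheta(z,v)^{\pm 1}$ via the dual five-term relation \eqref{eq:dual5trel}, collapsing $\Pc_{z/M}\T_u\Pc_{-z/M}$ by \eqref{translationmultiplication}, evaluating $\Pc_{zv/M}(\nabla^{-1}\T_u\nabla)\Pc_{-zv/M}$ from the rescaled \eqref{eq:SinverseDeltaprime} (your conjugation by the scaling automorphism $\sigma_s$ is exactly the paper's ``insert a variable $w$'' device, followed by the same substitution $u\mapsto uzv$), and finishing with \eqref{eq:5trel} and the fact that $\Delta_{uzv}$ commutes with $\nabla$. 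The only cosmetic difference is that you package the degree bookkeeping as an explicit automorphism, which is a fine formalization of the paper's argument.
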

\begin{proof}
In order to keep track of homogeneous degrees in \eqref{eq:SinverseDeltaprime}, we insert a variable $w$ to get
\[\nabla^{-1} \T_{u/w} \nabla \Pc_{-\frac{w}{M}} \T_{1/w}  =  \Pc_{-\frac{w}{M}} \T_{1/w}  \Delta'_u .\]
Making the replacement $u \mapsto uw$ and rearranging, we get
\begin{equation} \label{eq:nablam1Tunabla}
	\nabla^{-1} \T_{u} \nabla  =  \Pc_{-\frac{w}{M}} \T_{1/w}  \Delta'_{uw}  \T_{-1/w} \Pc_{\frac{w}{M}}  .
\end{equation}
We now have
\begin{align*}
\tTheta(z,v)^{-1} \T_u \tTheta(z,v) & =   \left( \Delta_v \Pc_{\frac{z}{M} }\Delta_v^{-1} \right) \T_u \left( \Delta_{v} \Pc_{-\frac{z}{M}} \Delta_{v}^{-1} \right) 
\\   {\text{(using \eqref{eq:dual5trel})}} &=  \nabla \Pc_{\frac{zv}{M} } \nabla^{-1}   \left( \Pc_{\frac{z}{M} }  \T_u \Pc_{- \frac{z}{M} }  \right) \nabla \Pc_{-\frac{zv}{M}} \nabla^{-1} 
\\
{ \text{(using \eqref{translationmultiplication})} }&  =  \Exp\left[-\frac{uz}{M} \right]  \nabla \Pc_{\frac{zv}{M} } \left( \nabla^{-1}   \T_u   \nabla \right) \Pc_{-\frac{zv}{M}} \nabla^{-1} 
\\ {\text{(using \eqref{eq:nablam1Tunabla})} }& =  
\Exp\left[-\frac{uz}{M} \right]  \nabla \Pc_{\frac{zv}{M} }     \Pc_{-\frac{w}{M}} \T_{1/w}  \Delta'_{uw}  \T_{-1/w} \Pc_{\frac{w}{M}} 
\Pc_{-\frac{zv}{M}} \nabla^{-1} 
\\ {\text{(replacing $w\mapsto zv$)}} & =  
\Exp\left[-\frac{uz}{M} \right] \Exp\left[\frac{uzv}{M}\right]  \nabla  \T_{1/zv}  \Delta_{uzv}  \T_{-1/zv}\nabla^{-1} \\
\text{(using \eqref{eq:5trel})}& = 
\Exp\left[\frac{uz(v-1)}{M} \right]  \nabla \Delta_{uzv} \nabla^{-1} \T_{u}  
\\
& = 
\Exp\left[\frac{uz(v-1)}{M} \right]   \Delta_{uzv}  \T_{u}  ,
\end{align*}
which is what we wanted.
\end{proof}

In the rest of this article we show what can be deduced from this general identity.

\section{Reciprocities}

Unsurprisingly, combining Tesler's identity \eqref{eq:TeslerId} with our main identity \eqref{eq:mainidentity}, we can easily recover reciprocity identities. They will be given as applications of the following result.
\begin{theorem} \label{thm:TeslerTheta}  
	For any $k\in\mathbb{N}$ and any $\mu\vdash k$ we have
	\begin{equation}  \label{eq:vThetareciprocity}
	\left.\Exp[uz(1-v)/M]\T_u \tTheta(z,v)\widetilde{H}_\mu[X] \right|_{u^k}= \Exp\left[ \frac{- z X(vD_\mu+1)}{M}\right]. 
	\end{equation}
\end{theorem}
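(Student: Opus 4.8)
The plan is to run $\wH_\mu$ through the main identity and then read off the coefficient of $u^k$. First I would rewrite Theorem~\ref{thm:mainidentity} as $\Exp[uz(1-v)/M]\,\T_u\tTheta(z,v)=\tTheta(z,v)\Delta_{uzv}\T_u$: the factor $\Exp[uz(v-1)/M]$ produced by \eqref{eq:mainidentity} is a scalar, so it passes through $\tTheta(z,v)$ and cancels the prefactor $\Exp[uz(1-v)/M]$, since $\Exp[A]\Exp[-A]=1$. Hence the left-hand side of \eqref{eq:vThetareciprocity} equals $\left.\tTheta(z,v)\Delta_{uzv}\T_u\wH_\mu\right|_{u^k}$, and everything reduces to evaluating this quantity.

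The heart of the argument is the coefficient extraction. Writing $\Delta_{uzv}=\sum_{n}(-uzv)^n\Delta_{e_n}$ and $\T_u\wH_\mu=\sum_{j}u^j h_j^\perp\wH_\mu$ with $h_j^\perp\wH_\mu\in\Lambda^{(k-j)}$, the power $u^k$ forces $n=k-j$, so on each summand only the \emph{top} operator $\Delta_{e_{k-j}}$ survives. Since $\Delta_{e_{k-j}}=(-1)^{k-j}\nabla$ on $\Lambda^{(k-j)}$, the two signs combine and I obtain $\sum_{j}(zv)^{k-j}\nabla\,h_j^\perp\wH_\mu=\nabla\,\wH_\mu[zvX+1]$, where the last step is the homogeneity rewriting $\sum_j(zv)^{k-j}h_j^\perp\wH_\mu=(zv)^k\wH_\mu[X+1/(zv)]=\wH_\mu[zvX+1]$. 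This collapse of the whole generating function $\Delta_{uzv}$ into a single $\nabla$ is the step I expect to be the main obstacle, and it is the one that makes the rest fall into place.

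Next I would simplify $\tTheta(z,v)\nabla$ without ever letting $\Delta_v^{\pm1}$ hit the non-eigenfunction $\wH_\mu[zvX+1]$. Reading the dual five-term relation \eqref{eq:dual5trel} as $\nabla\Pc_{-zv/M}\nabla^{-1}=\Pc_{z/M}\tTheta(z,v)$ gives $\tTheta(z,v)=\Pc_{-z/M}\nabla\Pc_{-zv/M}\nabla^{-1}$, so that $\tTheta(z,v)\nabla=\Pc_{-z/M}\nabla\Pc_{-zv/M}$. Therefore the left-hand side of \eqref{eq:vThetareciprocity} becomes $\Pc_{-z/M}\nabla\Pc_{-zv/M}\wH_\mu[zvX+1]$.

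Finally I would close the computation with Tesler's identity. Rearranging \eqref{eq:TeslerId} with the monomial $zv$ in place of $z$ yields $\nabla\Pc_{-zv/M}\wH_\mu[zvX+1]=\Exp[-zvXD_\mu/M]$, and applying $\Pc_{-z/M}$ simply multiplies by $\Exp[-zX/M]$, giving $\Exp[-zX/M]\Exp[-zvXD_\mu/M]=\Exp[-zX(vD_\mu+1)/M]$, which is precisely the right-hand side.
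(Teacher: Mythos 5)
Your proof is correct and follows essentially the same route as the paper: rearrange the main identity \eqref{eq:mainidentity}, extract the coefficient of $u^k$ using $\Delta_{e_n}=(-1)^n\nabla$ on $\Lambda^{(n)}$, rewrite $\tTheta(z,v)$ via the dual five-term relation \eqref{eq:dual5trel}, and finish with Tesler's identity \eqref{eq:TeslerId}. The only difference is cosmetic ordering — you extract the $u^k$ coefficient (collapsing to $\nabla\,\wH_\mu[zvX+1]$) before invoking the five-term relation, whereas the paper substitutes $\tTheta(z,v)=\Pc_{-\frac{z}{M}}\nabla\Pc_{-\frac{zv}{M}}\nabla^{-1}$ first and cancels $\nabla^{-1}\nabla$ at the operator level.
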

\begin{proof}
	Our main identity \eqref{eq:mainidentity} gives
	\begin{align*}
	\Exp[uz(1-v)/M]\T_u \tTheta(z,v) & = \tTheta(z,v)\Delta_{uzv}\T_u\\
	& = \Delta_v \Pc_{-\frac{z}{M}} \Delta_v^{-1}\Delta_{uzv}\T_u\\
	\text{(using \eqref{eq:dual5trel})}	& = \Pc_{-\frac{z}{M}} \nabla \Pc_{-\frac{zv}{M}} \nabla^{-1}\Delta_{uzv}\T_u.
	\end{align*}
	Extracting the coefficient of $u^k$ from the last expression we get
	\begin{align*}
	\Pc_{-\frac{z}{M}} \nabla \Pc_{-\frac{zv}{M}} \nabla^{-1}\sum_{r=0}^k(-1)^{k-r}(zv)^{k-r} \Delta_{e_{k-r}}h_r^\perp.
	\end{align*}
	But observe that for any element $F\in \Lambda^{(k)}$ (the symmetric functions of homogeneous degree $k$), $h_r^\perp F\in \Lambda^{(k-r)}$, hence
	\[ \nabla^{-1}(-1)^{k-r}\Delta_{e_{k-r}}h_r^\perp F = \nabla^{-1}\nabla h_r^\perp F= h_r^\perp F.\]
	Therefore, on $\Lambda^{(k)}$, our last expression can be written as 
	\[  \Pc_{- \frac{z}{M}} \nabla \Pc_{-\frac{zv}{M}} \sum_{r=0}^k(zv)^{k-r} h_r^\perp= \Pc_{-\frac{z}{M}} \nabla \Pc_{- \frac{zv}{M}} \T_{1/zv}(zv)^k.\]
	Now using Tesler's identity \eqref{eq:TeslerId}, for any $\mu\vdash k$ we get
	\begin{align*}
	\Pc_{- \frac{z}{M}} \nabla \Pc_{- \frac{zv}{M}} \T_{1/zv}(zv)^k\wH_\mu[X]
	& = \Pc_{- \frac{z}{M}}\Exp\left[ \frac{- zvXD_\mu}{M}\right]=\Exp\left[ \frac{- z X(vD_\mu+1)}{M}\right].
	\end{align*}
\end{proof}
The following result is extremely useful.
\begin{theorem} \label{thm:Thetareciprocity}
	For any $k,m\in\mathbb{N}$, and $\mu\vdash k$ we have
	\begin{equation}  \label{eq:Thetareciprocity}
	h_k^\perp \Theta_{e_m}\widetilde{H}_\mu=e_m[XB_\mu]. 
	\end{equation}
\end{theorem}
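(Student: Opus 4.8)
The plan is to read this off directly from Theorem~\ref{thm:TeslerTheta} by specializing $v=1$ and then extracting a single coefficient of $z$. The only structural input beyond that theorem is the observation recorded in Section~2: the coefficient of $z^m$ in $\tTheta(z,v)$ is $(-1)^m\Delta_v\underline{e}_m^*\Delta_v^{-1}$, and this operator has a well-defined limit as $v\to 1$, namely $(-1)^m\Theta_{e_m}$ (the simple pole of $\Delta_v^{-1}$ at $v=1$ is cancelled by the simple zero of $\Delta_v$). Consequently, as a formal power series in $z$ applied to any fixed $\wH_\mu$, one has $\tTheta(z,1)\wH_\mu=\sum_{m\geq 0}(-z)^m\Theta_{e_m}\wH_\mu$, and this is the identity that will turn the operator statement into the desired one.

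First I would set $v=1$ in \eqref{eq:vThetareciprocity}. The prefactor expands as $\Exp[uz(1-v)/M]=\sum_{j\geq 0}u^j\bigl(z(1-v)/M\bigr)^j$, so extracting the coefficient of $u^k$ from $\Exp[uz(1-v)/M]\,\T_u\,\tTheta(z,v)\wH_\mu$ yields $\sum_{j=0}^k\bigl(z(1-v)/M\bigr)^j\,h_{k-j}^\perp\,\tTheta(z,v)\wH_\mu$, using $\T_u=\sum_{j\geq 0}u^jh_j^\perp$. Since each $z$-coefficient of $\tTheta(z,v)$ is regular at $v=1$, every term with $j\geq 1$ carries the vanishing factor $(1-v)^j$ and drops out in the limit, leaving exactly $h_k^\perp\,\tTheta(z,1)\wH_\mu=\sum_{m\geq 0}(-z)^m\,h_k^\perp\Theta_{e_m}\wH_\mu$ on the left-hand side.

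On the right-hand side of \eqref{eq:vThetareciprocity}, setting $v=1$ turns $vD_\mu+1$ into $D_\mu+1=MB_\mu$, so it becomes $\Exp\!\left[-zXB_\mu\right]$, which is manifestly regular at $v=1$. Expanding with the plethystic identity $h_m[-Y]=(-1)^me_m[Y]$ (equivalently $\Exp[-Y]=\sum_m(-1)^me_m[Y]$) together with $e_m[zXB_\mu]=z^me_m[XB_\mu]$ gives $\Exp\!\left[-zXB_\mu\right]=\sum_{m\geq 0}(-z)^m e_m[XB_\mu]$. Comparing the coefficient of $(-z)^m$ on the two sides then produces $h_k^\perp\Theta_{e_m}\wH_\mu=e_m[XB_\mu]$, which is the claim, after renaming the index $m$.

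The only genuinely delicate step is legitimizing the substitution $v=1$, i.e.\ confirming that the singular factors $(1-v)^{\pm1}$ arising from $\Delta_v^{\mp1}$ cancel coefficient-by-coefficient in $z$ so that $\tTheta(z,1)$ makes sense as a power series with operator coefficients; this is precisely the point already made in Section~2, so no new work is needed there. Everything else is a matter of expanding generating functions and matching coefficients, and in particular the awkward $\Exp[uz(1-v)/M]$ prefactor disappears automatically in the limit rather than requiring any cancellation.
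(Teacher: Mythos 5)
Your proof is correct and follows essentially the same route as the paper, whose entire proof of Theorem~\ref{thm:Thetareciprocity} is simply to set $v\mapsto 1$ in \eqref{eq:vThetareciprocity} and extract the coefficient of $z^m$, with the legitimacy of the $v\to 1$ limit resting on exactly the Section~2 observation you invoke. One cosmetic slip worth noting: $\Exp[uz(1-v)/M]=\sum_{j\geq 0}u^j\,h_j\!\left[z(1-v)/M\right]$ rather than $\sum_{j\geq 0}u^j\bigl(z(1-v)/M\bigr)^j$, and $h_j[(1-v)W]$ need not carry a full factor $(1-v)^j$; but since $h_j[(1-v)W]$ does vanish at $v=1$ for every $j\geq 1$, only the $j=0$ term survives and your argument is unaffected.
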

\begin{proof}
	Just make the replacement $v\mapsto 1$ in \eqref{eq:vThetareciprocity} and extract the coefficient of $z^m$.
\end{proof}

\subsection{Some applications}

The following corollary is a version of the Macdonald-Koornwinder reciprocity that first appeared in \cite[Theorem~3.3]{Garsia-Haiman-Tesler-Explicit-1999}.
\begin{theorem}
	For any two partitions $\lambda\vdash m$ and $\mu\vdash k$ we have
	\begin{equation} \label{eq:vMacRec}
	\wH_\lambda[vD_\mu+1]\prod_{(i,j)\in \mu}(1-vq^it^j)=\wH_\mu[vD_\lambda+1] \prod_{(i,j)\in \lambda}(1-vq^it^j).
	\end{equation}
\end{theorem}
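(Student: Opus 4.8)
The plan is to reduce the statement to a single symmetry assertion and then to extract that symmetry from Theorem~\ref{thm:TeslerTheta}. The first observation is that the product $\prod_{(i,j)\in\mu}(1-vq^it^j)$ is exactly the eigenvalue of $\Delta_v$ on $\wH_\mu$: since $\Delta_v\wH_\mu=\big(\sum_{n}(-v)^ne_n[B_\mu]\big)\wH_\mu$ and $\sum_n(-v)^ne_n[B_\mu]=\prod_{c\in\mu}(1-vq^{a'_\mu(c)}t^{l'_\mu(c)})$, writing $\beta_\mu(v):=\prod_{(i,j)\in\mu}(1-vq^it^j)$ we have $\Delta_v\wH_\mu=\beta_\mu(v)\wH_\mu$. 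Thus \eqref{eq:vMacRec} is precisely the assertion that $\beta_\mu(v)\,\wH_\lambda[vD_\mu+1]$ is symmetric under $\lambda\leftrightarrow\mu$.

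The second ingredient is the reproducing-kernel form of the Macdonald--Cauchy identity \eqref{eq:Mac_Cauchy}. Since $e_n[XY/M]=\sum_{\nu\vdash n}w_\nu^{-1}\wH_\nu[X]\wH_\nu[Y]$, the orthogonality \eqref{eq:H_orthogonality} gives, for $\lambda\vdash m$ and any expression $Y$, that $\langle \Exp[-zXY/M],\wH_\lambda\rangle_*=(-z)^m\wH_\lambda[Y]$ (only the degree-$m$ term of the kernel survives the pairing). Feeding in $Y=vD_\mu+1$ together with the operator expression for $\Exp[-zX(vD_\mu+1)/M]$ furnished by Theorem~\ref{thm:TeslerTheta}, I obtain
\[ (-z)^m\,\wH_\lambda[vD_\mu+1]=\left.\Exp[uz(1-v)/M]\,\langle \T_u\tTheta(z,v)\wH_\mu,\wH_\lambda\rangle_*\right|_{u^k}. \]
Multiplying by $\beta_\mu(v)$ and using $\tTheta(z,v)\Delta_v=\Delta_v\Pc_{-z/M}$ (immediate from the definition $\tTheta(z,v)=\Delta_v\Pc_{-z/M}\Delta_v^{-1}$), the right-hand side becomes the $u^k$-coefficient of the weighted bilinear form $\Exp[uz(1-v)/M]\,\langle \T_u\Delta_v\Pc_{-z/M}\wH_\mu,\wH_\lambda\rangle_*$. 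Running the identical reduction with $\lambda$ in place of $\mu$ produces $\beta_\lambda(v)\,\wH_\mu[vD_\lambda+1]$ from the twin bilinear form, so the whole statement comes down to a symmetry of this bilinear form.

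To attack that symmetry I would transport the operators onto the other factor using the star-adjunctions of \eqref{eq:hperp_estar_adjoint} (namely $h_j^\perp$ is adjoint to multiplication by $e_j^*$, while $\nabla$, $\Delta_v$ and $\Delta_{uzv}$ are self-adjoint, being diagonal on the orthogonal basis $\{\wH_\nu\}$), combined with the rearranged main identity $\Exp[uz(1-v)/M]\,\T_u\tTheta(z,v)=\tTheta(z,v)\Delta_{uzv}\T_u$ coming from \eqref{eq:mainidentity}. Passing the adjoint through, the factor $\Delta_v$ acting on $\wH_\lambda$ reproduces the weight $\beta_\lambda(v)$, and one is left to compare the resulting scalar product against $\wH_\lambda$ with the twin expression obtained by swapping the two partitions.

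The main obstacle is exactly this final comparison. The operator $\T_u\tTheta(z,v)$ is \emph{not} self-adjoint for the star scalar product (its adjoint trades the translation $\T$ for a multiplication operator $\underline{e}^{\,*}$), so the symmetry is not a formal consequence of self-adjointness and does not follow from the double sum $\sum_\nu (zw)^{|\nu|}w_\nu^{-1}\wH_\nu[vD_\mu+1]\wH_\nu[vD_\lambda+1]$ produced by pairing two kernels (that sum is trivially symmetric and carries no reciprocity content). The genuine input is Macdonald's duality, encoded in \eqref{eq:nablaomegabar}: I expect to invoke it (equivalently, a second application of Tesler's identity \eqref{eq:TeslerId} to $\wH_\lambda$) to force the interchange of $\T$ and $\Pc$ and to land the factor $\beta_\lambda(v)$ on the nose. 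A direct check of the one-row cases, where both sides collapse to an explicit product of the form $\prod(1-vq^it^j)$, confirms the correct weighting and should guide the bookkeeping in the general matching.
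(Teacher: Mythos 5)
Your reduction is exactly the paper's: identify $\prod_{(i,j)\in\mu}(1-vq^it^j)$ as the $\Delta_v$-eigenvalue $\beta_\mu(v)$, pair the kernel $\Exp[-zX(vD_\mu+1)/M]$ supplied by Theorem~\ref{thm:TeslerTheta} against $\wH_\lambda$ via \eqref{eq:Mac_Cauchy}, and reduce \eqref{eq:vMacRec} to a symmetry of the bilinear form built from $\T_u\Delta_v\Pc_{-z/M}$. But your proposal stops precisely at the decisive step: you declare that symmetry the ``main obstacle'' and predict that Macdonald duality \eqref{eq:nablaomegabar} must be invoked. In fact no further input is needed --- the step is a one-line adjunction. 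With respect to the modified star scalar product, $\T_u$ is dual to $\Pc_{-u/M}$, and $\Delta_v$, being diagonal on $\{\wH_\nu\}$, is self-dual (both facts are in the background section); since the adjoint of a composition reverses the order, the $\bar{*}$-adjoint of $\T_u\tTheta(z,v)\Delta_v=\T_u\Delta_v\Pc_{-z/M}$ is $\T_z\Delta_v\Pc_{-u/M}=\T_z\tTheta(u,v)\Delta_v$, i.e.\ the \emph{same} operator with $u$ and $z$ interchanged. Hence
\[ \left\langle \Exp[uz(1-v)/M]\,\T_u\tTheta(z,v)\Delta_v\wH_\mu,\wH_\lambda\right\rangle_{\bar{*}}=\left\langle \wH_\mu,\Exp[uz(1-v)/M]\,\T_z\tTheta(u,v)\Delta_v\wH_\lambda\right\rangle_{\bar{*}}, \]
and a second application of \eqref{eq:vThetareciprocity}, now to $\wH_\lambda$ with the roles of $u$ and $z$ swapped, evaluates the right-hand side (extracting the coefficient of $z^m$, then of $u^k$) to $\beta_\lambda(v)\,\wH_\mu[vD_\lambda+1]$, which completes the proof. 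This is exactly the paper's argument.

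Your diagnosis of the obstacle is therefore off in two ways. First, while you are right that $\T_u\tTheta(z,v)$ is not self-adjoint, self-adjointness is not what is required: the adjoint trades $\T\leftrightarrow\Pc$, and since $\tTheta(z,v)\Delta_v=\Delta_v\Pc_{-z/M}$ (as you yourself noted), this trade amounts precisely to the interchange $u\leftrightarrow z$, which upon extracting $u^kz^m$ against partitions of sizes $k$ and $m$ is the swap $\lambda\leftrightarrow\mu$. Second, \eqref{eq:nablaomegabar} plays no role here; the deep content was already spent in proving Theorem~\ref{thm:TeslerTheta} (via \eqref{eq:mainidentity} and Tesler's identity \eqref{eq:TeslerId}), and your alternative guess --- ``a second application of Tesler's identity to $\wH_\lambda$'' --- is the correct instinct, since that is what the second invocation of \eqref{eq:vThetareciprocity} amounts to. Two smaller remarks: run the computation in $\langle\cdot,\cdot\rangle_{\bar{*}}$ throughout rather than $\langle\cdot,\cdot\rangle_*$, which absorbs the stray signs $(-z)^m$ you were tracking and makes the duality $\T_u\leftrightarrow\Pc_{-u/M}$ literal; and your worry about the ``trivially symmetric double sum'' is a red herring, because one never pairs two kernels --- the operator is moved across the pairing by adjunction first, and only then is the kernel identity applied to the remaining factor.
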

\begin{proof}
	We have
	\begin{align*}
	&\wH_\lambda[vD_\mu+1]\prod_{(i,j)\in \mu}(1-vq^it^j) =\\
	\text{(using \eqref{eq:Mac_Cauchy})}& = \left\<\Exp\left[ \frac{-  X(vD_\mu+1)}{M}\right],\wH_\lambda[X]\right\>_{\bar{*}}\prod_{(i,j)\in \mu}(1-vq^it^j)\\
	& = \left.\left\<\Exp\left[ \frac{- z X(vD_\mu+1)}{M}\right],\wH_\lambda[X]\right\>_{\bar{*}}\right|_{z^m}\prod_{(i,j)\in \mu}(1-vq^it^j)\\
	\text{(using \eqref{eq:vThetareciprocity})} & = \left.\left\<\Exp[uz(1-v)/M]\T_u \tTheta(z,v)\Delta_v\widetilde{H}_\mu[X],\wH_\lambda[X]\right\>_{\bar{*}}\right|_{u^kz^m}\\
	& =\left.\left\<\widetilde{H}_\mu[X],\Exp[uz(1-v)/M]\T_z \tTheta(u,v)\Delta_v\wH_\lambda[X]\right\>_{\bar{*}}\right|_{u^kz^m}\\
	\text{(using \eqref{eq:vThetareciprocity})} & =  \left.\left\<\widetilde{H}_\mu[X],\Exp\left[ \frac{- u X(vD_\lambda+1)}{M}\right]\right\>_{\bar{*}}\right|_{u^k}\prod_{(i,j)\in \lambda}(1-vq^it^j)\\
	& = \left\<\widetilde{H}_\mu[X],\Exp\left[ \frac{-X(vD_\lambda+1)}{M}\right]\right\>_{\bar{*}} \prod_{(i,j)\in \lambda}(1-vq^it^j)\\
	\text{(using \eqref{eq:Mac_Cauchy})} & = \widetilde{H}_\mu[vD_\lambda+1] \prod_{(i,j)\in \lambda}(1-vq^it^j).
	\end{align*}
\end{proof}
We limit ourselves to sketch here three nice well-known applications of this important result.
\begin{corollary}
For any partition $\mu$ we have
\begin{equation} \label{eq:H1mv}
\wH_\lambda[1-v] = \prod_{(i,j)\in \lambda}(1-vq^it^j).
\end{equation}
\end{corollary}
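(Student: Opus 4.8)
The plan is to obtain \eqref{eq:H1mv} as the degenerate case $\mu=\emptyset$ of the Macdonald--Koornwinder reciprocity \eqref{eq:vMacRec} just established. (The statement surely intends the partition $\lambda$ rather than $\mu$ in the role of the free variable, since the product on the right of \eqref{eq:H1mv} runs over the cells of $\lambda$; I read it accordingly.) The point is that the right-hand side of \eqref{eq:H1mv} is already literally the factor $\prod_{(i,j)\in\lambda}(1-vq^it^j)$ appearing in \eqref{eq:vMacRec}, so it suffices to arrange for the other pieces of \eqref{eq:vMacRec} to collapse to $\wH_\lambda[1-v]$ and to $1$.

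First I would record the relevant values at the empty partition. Since $\emptyset$ has no cells, $B_\emptyset=0$, whence $D_\emptyset=MB_\emptyset-1=-1$ and therefore $vD_\emptyset+1=1-v$. Moreover $\wH_\emptyset=1$, and the empty product $\prod_{(i,j)\in\emptyset}(1-vq^it^j)$ equals $1$. Substituting $\mu=\emptyset$ into \eqref{eq:vMacRec}, the left-hand side becomes $\wH_\lambda[vD_\emptyset+1]\cdot 1=\wH_\lambda[1-v]$, while the right-hand side becomes $\wH_\emptyset[vD_\lambda+1]\prod_{(i,j)\in\lambda}(1-vq^it^j)=\prod_{(i,j)\in\lambda}(1-vq^it^j)$. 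Equating the two sides yields exactly \eqref{eq:H1mv}.

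The only step requiring a moment's care, and hence the main (albeit minor) obstacle, is that \eqref{eq:vMacRec} was phrased for honest partitions $\lambda\vdash m$ and $\mu\vdash k$, whereas here we are taking $k=0$. I would check that the ingredients feeding into \eqref{eq:vMacRec}, namely Theorem~\ref{thm:TeslerTheta} (through \eqref{eq:vThetareciprocity}) and Tesler's identity \eqref{eq:TeslerId}, remain valid at $\mu=\emptyset$; this is immediate, since $\wH_\emptyset=1$, $D_\emptyset=-1$, and $\Delta_v\wH_\emptyset=1$ are all perfectly admissible there, so no degeneration is introduced. This is a routine boundary verification rather than a genuine difficulty. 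As an independent sanity check one could instead expand $\wH_\lambda$ in the Schur basis and apply the hook specialization \eqref{eq:slambda1mv}, but that route is considerably more laborious and I would relegate it to a remark at most.
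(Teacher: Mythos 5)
Your proposal is correct and coincides with the paper's own proof, which reads simply ``Just take $\mu$ to be the empty partition in \eqref{eq:vMacRec}''; your explicit verification that $B_\emptyset=0$, $D_\emptyset=-1$, $\wH_\emptyset=1$, and that the reciprocity's ingredients degenerate harmlessly at $\mu=\emptyset$ merely spells out what the paper leaves implicit. Nothing further is needed.
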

\begin{proof}
Just take $\mu$ to be the empty partition in \eqref{eq:vMacRec}.
\end{proof}
We can now prove \eqref{eq:hkstarenmkstar1}.
\begin{corollary} \label{cor:ehMacId}
	Given $n,k\in\NN$, $n\geq k$, for any partition $\lambda\vdash n$ we have
\begin{equation}
\<\wH_\lambda,s_{(n-k,1^k)}\>=e_k[B_\lambda-1],\text{ hence }\<\wH_\lambda,e_kh_{n-k}\>=e_k[B_\lambda].
\end{equation}
\end{corollary}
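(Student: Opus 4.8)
The goal is to prove Corollary~\ref{cor:ehMacId}: for $\lambda\vdash n$ and $0\le k\le n$,
\[
\<\wH_\lambda,s_{(n-k,1^k)}\>=e_k[B_\lambda-1],
\qquad\text{whence}\qquad
\<\wH_\lambda,e_kh_{n-k}\>=e_k[B_\lambda].
\]
My plan is to feed the just-proved specialization \eqref{eq:H1mv},
$\wH_\lambda[1-v]=\prod_{(i,j)\in\lambda}(1-vq^it^j)$,
into the Hall-scalar-product pairing $\<\wH_\lambda,-\>$ and extract coefficients in the auxiliary variable $v$. The key observation is that expanding $\wH_\lambda[1-v]$ in the Schur basis and pairing against $s_\mu$ isolates exactly the coefficient $\wt K_{\mu,\lambda}$-type data we want, while the right-hand product $\prod_{(i,j)\in\lambda}(1-vq^it^j)$ is, by the very definition of $B_\lambda=\sum_{(i,j)\in\lambda}q^it^j$ (here using co-arm/co-leg coordinates), a generating function for the elementary symmetric functions of the cell-contents $q^it^j$. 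That identifies its $v$-expansion with the $e_k[B_\lambda]$.

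\emph{First} I would write $\wH_\lambda[X-Xv]$ as a symmetric function in $X$ evaluated at the one-variable-deformed alphabet $1-v$, and use \eqref{eq:slambda1mv}: for any $\mu\vdash N$, $s_\mu[1-v]$ vanishes unless $\mu$ is a hook $(N-k,1^k)$, in which case it equals $(-v)^k(1-v)$. \emph{Second}, I expand $\wH_\lambda=\sum_\mu \wt K_{\mu,\lambda}s_\mu$ and substitute the alphabet $1-v$, so that
\[
\wH_\lambda[1-v]=\sum_{k=0}^{n-1}\<\wH_\lambda,s_{(n-k,1^k)}\>\,(-v)^k(1-v),
\]
using orthonormality of Schur functions under $\<\,,\>$ to write $\wt K_{(n-k,1^k),\lambda}=\<\wH_\lambda,s_{(n-k,1^k)}\>$. \emph{Third}, on the product side I factor
\[
\prod_{(i,j)\in\lambda}(1-vq^it^j)=\sum_{k\ge0}(-v)^k e_k[B_\lambda],
\]
since the generating function $\prod_{c}(1-v\,x_c)=\sum_k(-v)^k e_k[\{x_c\}]$ specializes, with $x_c=q^{a'_\lambda(c)}t^{l'_\lambda(c)}$, to the elementary symmetric functions of the alphabet whose power sums are the $B_\lambda$-content sums. \emph{Fourth}, I equate the two $v$-series. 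Matching coefficients requires disentangling the extra factor $(1-v)$ on the Schur side: writing $(-v)^k(1-v)=(-v)^k-(-v)^{k+1}\cdot(-1)$, a short telescoping/Abel-summation in $k$ converts the partial sums $\sum_{j\le k}\<\wH_\lambda,s_{(n-j,1^j)}\>$ into the $e_k[B_\lambda]$, and then the Pieri-type relation $e_k[B_\lambda]-e_{k-1}[B_\lambda]$ versus $e_k[B_\lambda-1]$ (using $B_\lambda-1=B_\lambda-q^0t^0$, i.e. removing the corner cell $(0,0)$) produces exactly $e_k[B_\lambda-1]$ on the nose. This last identity $e_k[B_\lambda]-e_{k-1}[B_\lambda]\cdot(\text{corner})=e_k[B_\lambda-1]$ is just the statement $\Exp$-style factorization $\sum_k(-v)^ke_k[B_\lambda-1]=(1-v\cdot1)^{-1}\cdot\nobreak\text{(nothing)}$—more cleanly, $\sum_k(-v)^ke_k[B_\lambda]=(1-v)\sum_k(-v)^ke_k[B_\lambda-1]$, which is immediate from $B_\lambda=(B_\lambda-1)+1$ and the plethystic multiplicativity $E(v)[A+B]=E(v)[A]E(v)[B]$ of the generating function $E(v)[A]=\sum_k(-v)^ke_k[A]$.

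The cleanest route, which I would actually write, bypasses the telescoping entirely: apply $E(v)[A]=\sum_k(-v)^ke_k[A]$ and its multiplicativity directly. Since $\prod_{(i,j)\in\lambda}(1-vq^it^j)=E(v)[B_\lambda]=E(v)[(B_\lambda-1)+1]=E(v)[B_\lambda-1]\,(1-v)$, comparing with the Schur-side expansion $\sum_k\<\wH_\lambda,s_{(n-k,1^k)}\>(-v)^k(1-v)$ and cancelling the common factor $(1-v)$ yields
$\sum_k\<\wH_\lambda,s_{(n-k,1^k)}\>(-v)^k=E(v)[B_\lambda-1]=\sum_k(-v)^ke_k[B_\lambda-1]$,
and matching coefficients of $(-v)^k$ gives the first claim. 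The second claim then follows from $e_kh_{n-k}$ expanding in hook Schur functions: the Jacobi–Trudi / Pieri identity $e_kh_{n-k}=\sum_{j=0}^{k}s_{(n-j,1^j)}$ gives $\<\wH_\lambda,e_kh_{n-k}\>=\sum_{j=0}^k e_j[B_\lambda-1]=e_k[B_\lambda]$, the last step again being $E$-multiplicativity $\sum_{j}e_j[B_\lambda-1]=e_k[B_\lambda]$ read off at the appropriate coefficient.

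\textbf{Main obstacle.} The only genuinely delicate point is bookkeeping the extra factor $(1-v)$ coming from \eqref{eq:slambda1mv} and making sure the index shift between $e_k[B_\lambda-1]$ and $e_k[B_\lambda]$ is handled correctly; the multiplicativity of the generating function $E(v)[A]=\sum_k(-v)^ke_k[A]$ under $A\mapsto A+1$ is what makes this clean, and I would state that one-line lemma explicitly rather than telescoping by hand. I do not anticipate any analytic difficulty—everything is a finite identity of polynomials in $v$, $q$, $t$.
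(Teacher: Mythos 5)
Your proof of the first identity is correct and is essentially the paper's own argument: expand $\wH_\lambda=\sum_{\mu}\<\wH_\lambda,s_\mu\>s_\mu$ at the alphabet $1-v$, use \eqref{eq:slambda1mv} to reduce to hooks, factor $\prod_{(i,j)\in\lambda}(1-vq^it^j)=(1-v)\sum_{k}(-v)^k e_k[B_\lambda-1]$ via the multiplicativity of the generating function $\sum_k(-v)^ke_k[\,\cdot\,]$ applied to $B_\lambda=(B_\lambda-1)+1$, cancel the common factor $(1-v)$, and compare coefficients of $v$. This matches the paper's proof step for step, just written out more explicitly.

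However, your derivation of the second identity contains a genuine error. The Pieri expansion you invoke, $e_kh_{n-k}=\sum_{j=0}^{k}s_{(n-j,1^j)}$, is false for $k\geq 2$: by the Pieri rule, $e_kh_{n-k}=s_{(1^k)}h_{n-k}$ has exactly \emph{two} hook terms,
\[ e_kh_{n-k}=s_{(n-k,1^k)}+s_{(n-k+1,1^{k-1})} \]
(for instance $e_2h_1=s_{(2,1)}+s_{(1,1,1)}$, with no $s_{(3)}$ term). Likewise the summation identity you use to finish, $\sum_{j=0}^{k}e_j[B_\lambda-1]=e_k[B_\lambda]$, is also false for $k\geq 2$: the multiplicativity you appeal to, read off at the coefficient of $(-v)^k$ in $\sum_{k}(-v)^ke_k[B_\lambda]=(1-v)\sum_{k}(-v)^ke_k[B_\lambda-1]$, gives only the two-term relation $e_k[B_\lambda]=e_k[B_\lambda-1]+e_{k-1}[B_\lambda-1]$, not a full telescoping sum (already at $k=2$ your claimed identity is off by $e_0[B_\lambda-1]=1$). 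The two false statements do not rescue each other: each is individually wrong, even though both, if true, would yield the stated conclusion. The fix is immediate and is exactly the paper's one-line ending: pair $\wH_\lambda$ against the correct two-term Pieri expansion and use the first identity to get $\<\wH_\lambda,e_kh_{n-k}\>=e_k[B_\lambda-1]+e_{k-1}[B_\lambda-1]=e_k[B_\lambda]$.
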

\begin{proof}
From \eqref{eq:H1mv} we get
\[ \wH_{\lambda}[1-v]=(1-v)\sum_{k=0}^{n-1} (-v)^ke_k[B_\lambda-1].  \]
On the other hand
\[ \wH_{\lambda}[1-v]=\sum_{\mu\vdash n}\<\wH_\lambda,s_\mu\>s_\mu[1-v]. \]
Now using \eqref{eq:slambda1mv} and comparing the polynomials in $v$ we get the first identity. The second one is an immediate consequence of the Pieri rule.
\end{proof}
The following well-known corollary is also immediate.
\begin{corollary}
	For nonempty partitions $\lambda\vdash n$ and $\mu\vdash k$ we have
	\begin{equation} \label{eq:MacRec}
	\Pi_\mu\wH_\lambda[MB_\mu] = \Pi_\lambda\wH_\mu[MB_\lambda]. 
	\end{equation}
\end{corollary}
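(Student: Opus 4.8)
The plan is to obtain the reciprocity \eqref{eq:MacRec} directly from the $v$-deformed version \eqref{eq:vMacRec} by specializing $v\to 1$, the only subtlety being a common vanishing factor that must be cancelled first. I would begin by recalling that $D_\mu=MB_\mu-1$, so that $vD_\mu+1=vMB_\mu+(1-v)$ and in particular $\left.(vD_\mu+1)\right|_{v=1}=MB_\mu$; thus the plethystic arguments of the Macdonald polynomials in \eqref{eq:vMacRec} specialize at $v=1$ to exactly those appearing in \eqref{eq:MacRec}.

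The key observation is that the products $\prod_{(i,j)\in\mu}(1-vq^it^j)$ cannot be evaluated at $v=1$ naively. Here $(i,j)$ ranges over the pairs $(a'_\mu(c),l'_\mu(c))$ of co-arm and co-leg of the cells $c\in\mu$ (consistent with the sanity check $\wH_{(1)}[1-v]=1-v$ coming from \eqref{eq:H1mv}), and the corner cell $(1)$, for which $a'_\mu=l'_\mu=0$, contributes the factor $(1-v)$, which vanishes at $v=1$. I would therefore factor this out, writing
\[
\prod_{(i,j)\in\mu}(1-vq^it^j)=(1-v)\prod_{c\in\mu/(1)}\bigl(1-vq^{a'_\mu(c)}t^{l'_\mu(c)}\bigr),
\]
and likewise for $\lambda$, so that both sides of \eqref{eq:vMacRec} carry precisely one factor of $(1-v)$ (this is where the hypothesis that $\lambda$ and $\mu$ be nonempty is used, guaranteeing that the corner cell exists).

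Dividing both sides of \eqref{eq:vMacRec} by $(1-v)$ and then setting $v=1$ yields
\[
\wH_\lambda[MB_\mu]\prod_{c\in\mu/(1)}\bigl(1-q^{a'_\mu(c)}t^{l'_\mu(c)}\bigr)=\wH_\mu[MB_\lambda]\prod_{c\in\lambda/(1)}\bigl(1-q^{a'_\lambda(c)}t^{l'_\lambda(c)}\bigr),
\]
and the two remaining products are exactly $\Pi_\mu$ and $\Pi_\lambda$ by definition, which gives \eqref{eq:MacRec}. There is no genuine obstacle in this argument; the single point requiring care is the cancellation of the common $(1-v)$ factor before passing to the limit, and everything else is a direct substitution.
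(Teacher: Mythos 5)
Your proposal is correct and follows exactly the paper's route: the paper's proof is the one-line instruction ``divide \eqref{eq:vMacRec} by $1-v$ and let $v\mapsto 1$,'' and your argument simply makes explicit the details that line leaves implicit (that $vD_\mu+1$ specializes to $MB_\mu$, that the corner cell contributes the common $(1-v)$ factor on each side, and that the surviving products are $\Pi_\mu$ and $\Pi_\lambda$ by the definition of $\Pi_\mu$ as a product over $\mu/(1)$). Your use of the nonemptiness hypothesis to guarantee the corner cell is exactly the right observation.
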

\begin{proof}
Divide \eqref{eq:vMacRec} by $1-v$ and let $v\mapsto 1$.
\end{proof}

\section{More important consequences}

We deduce the following important identities from our main theorem.
\begin{corollary}
We have
\begin{align} 
\label{eq:TmuThetaz}\T_{u}^{-1}\tTheta(z,v) & = \Exp\left[\frac{uz(1-v)}{M}\right]\tTheta(z,v)\T_{u}^{-1} \Delta_{uzv}^{-1} \\
\label{eq:TuThetamz} \T_u \tTheta(z,v)^{-1}& = \Exp\left[\frac{uz(1-v)}{M}\right]\Delta_{uzv}^{-1}\tTheta(z,v)^{-1}\T_u
\end{align}
\end{corollary}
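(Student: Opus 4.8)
The plan is to derive both identities as direct algebraic rearrangements of the main identity \eqref{eq:mainidentity}, treating the operator equation purely formally. Recall that \eqref{eq:mainidentity} reads $\tTheta(z,v)^{-1} \T_u \tTheta(z,v) = \Exp\left[\frac{uz(v-1)}{M}\right]\Delta_{uzv} \T_u$. The key observation is that the plethystic exponential prefactor $\Exp\left[\frac{uz(v-1)}{M}\right]$ is a scalar (a power series in $u,z,v$ with coefficients in $\QQ(q,t)$), so it commutes freely with every operator in sight; only the genuine operators $\tTheta(z,v)^{\pm 1}$, $\T_u^{\pm 1}$ and $\Delta_{uzv}^{\pm 1}$ need to be moved with care, and these I will relocate by left- or right-multiplication. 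Throughout I will use that $\Exp[Y]^{-1}=\Exp[-Y]$ in the plethystic sense, so that $\Exp\left[\frac{uz(v-1)}{M}\right]^{-1}=\Exp\left[\frac{uz(1-v)}{M}\right]$, which produces exactly the prefactor appearing on the right-hand sides of the two target identities.

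For \eqref{eq:TmuThetaz} I would start from \eqref{eq:mainidentity} and first left-multiply both sides by $\tTheta(z,v)$ to clear the inverse Theta on the left, obtaining $\T_u \tTheta(z,v) = \Exp\left[\frac{uz(v-1)}{M}\right]\tTheta(z,v)\Delta_{uzv} \T_u$. Next I right-multiply by $\T_u^{-1}$ to cancel the trailing $\T_u$, then left-multiply by $\T_u^{-1}$ to expose the desired head term, and finally right-multiply by $\Delta_{uzv}^{-1}$ to strip the Delta factor on the far right; after carrying the scalar exponential to the other side as its inverse this should assemble into
\[
\T_u^{-1}\tTheta(z,v) = \Exp\left[\frac{uz(1-v)}{M}\right]\tTheta(z,v)\T_u^{-1}\Delta_{uzv}^{-1},
\]
which is precisely \eqref{eq:TmuThetaz}. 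The only subtlety is bookkeeping of the order in which the two non-commuting operators $\T_u^{-1}$ and $\Delta_{uzv}^{-1}$ end up, and here I simply follow the placement dictated by the manipulations rather than attempting to commute them.

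For \eqref{eq:TuThetamz} I would apply the analogous scheme to the inverse form of the main identity. Taking inverses of both sides of \eqref{eq:mainidentity} gives $\tTheta(z,v)^{-1}\T_u^{-1}\tTheta(z,v) = \T_u^{-1}\Delta_{uzv}^{-1}\Exp\left[\frac{uz(1-v)}{M}\right]$, and then replacing the dummy monomial $u$ by a fresh symbol (or equivalently reading the identity with $\T_u$ in the role formerly played by $\T_u^{-1}$) and rearranging by the same left/right multiplications as above should yield \eqref{eq:TuThetamz}. The main obstacle I anticipate is not any deep content but strictly the order-of-composition bookkeeping: since $\T_u$, $\tTheta(z,v)$ and $\Delta_{uzv}$ pairwise fail to commute, I must be scrupulous about whether each clearing step is a left- or right-multiplication, and must verify that the scalar prefactor — which does commute — is inverted exactly once. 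A useful consistency check at the end is to confirm that letting $v\to 1$ collapses both identities to the statement that $\T_u$ commutes with $\tTheta(z,1)$ up to the trivial factor, matching the remark preceding the theorem that $\tTheta$ specializes to the $\Theta_{e_k}$ operators.
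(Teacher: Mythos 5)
Your proposal is correct and follows essentially the same route as the paper: both derive \eqref{eq:TmuThetaz} and \eqref{eq:TuThetamz} by inverting \eqref{eq:mainidentity} (using that the prefactor $\Exp\left[\frac{uz(v-1)}{M}\right]$ is a central scalar whose inverse is $\Exp\left[\frac{uz(1-v)}{M}\right]$) and then clearing factors by left/right composition --- the paper merely packages your several multiplications into a single composition with $\tTheta(z,v)\T_u^{-1}$ (resp.\ $\tTheta(z,v)^{-1}\T_u$), and your extraneous remark about replacing $u$ by a fresh symbol is unnecessary (and, read literally as $u\mapsto -u$, would turn $\Delta_{uzv}$ into $\Delta_{-uzv}$), but your direct rearrangement of the correctly inverted identity already yields \eqref{eq:TuThetamz}. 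One peripheral caveat: your closing sanity check is mistaken, since as $v\to 1$ only the exponential prefactor trivializes while $\Delta_{uzv}^{-1}$ survives as $\Delta_{uz}^{-1}$, so the identities do not collapse to commutativity of $\T_u$ with $\tTheta(z,1)$; that surviving Delta factor is precisely what produces the nontrivial relations \eqref{eq:hjek}--\eqref{eq:hjhk}.
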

\begin{proof}
Identity \eqref{eq:TmuThetaz} is obtained from \eqref{eq:mainidentity} by taking the inverse formula
\[ \T_{u}  \tTheta(z,v)^{-1} \T_u^{-1} \tTheta(z,v) = \Exp\left[\frac{uz(1-v)}{M}\right]\Delta_{uzv}^{-1} ,\]
and composing on the left by $\tTheta(z,v)  \T_{u}^{-1}$. Similarly, \eqref{eq:TuThetamz} is obtained by composing on the right by $\tTheta(z,v)^{-1} \T_u$.
\end{proof}
The following theorem contains some of the main consequences of \eqref{eq:mainidentity}. 
\begin{theorem} \label{thm:mainconseq1}
	We have
	\begin{align}
	 \label{eq:hjek}
	 h_j^\perp\Theta_{e_k} & =\sum_{r=0}^j\Theta_{e_{k-j+r}}\Delta_{e_{j-r}}h_r^{\perp}\\
	 \label{eq:ejek} e_j^\perp \Theta_{e_k} & =\sum_{r=0}^j \Theta_{e_{k-j+r}} e_r^\perp \Delta_{h_{j-r}}\\
	  \label{eq:hjhk} h_j^\perp \Theta_{h_k} & =\sum_{r=0}^j\Delta_{h_{j-r}} \Theta_{h_{k-j+r}}h_r^\perp.
	  \end{align}
  \end{theorem}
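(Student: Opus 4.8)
The plan is to derive all three identities uniformly: I would expand suitable operator identities as formal power series in $u$ and $z$, specialize $v\to 1$, and match the coefficient of $u^j z^k$. The first identity I read off the main identity \eqref{eq:mainidentity}, the second off \eqref{eq:TmuThetaz}, and the third off \eqref{eq:TuThetamz}. Everything rests on a short dictionary for the generating operators: by \eqref{eq:defTu} we have $\T_u=\sum_{j\ge 0}u^j h_j^\perp$, whereas $\T_u^{-1}=\T_{-u}=\sum_{j\ge 0}(-u)^j e_j^\perp$ (this is the plethystic content of \eqref{eq:defTY}, since $s_\mu[-u]$ vanishes unless $\mu=(1^j)$, and equivalently it is the operator form of $\sum_{i+j=n}(-1)^ie_ih_j=\delta_{n,0}$); by \eqref{eq:defPu} together with the homogeneity of $h_n$, the series $\Pc_{-z/M}$ and $\Pc_{z/M}$ expand as $\sum_k(-z)^k\underline{e_k^*}$ and $\sum_k z^k\underline{h_k^*}$; and finally $\Delta_{uzv}=\sum_n(-uzv)^n\Delta_{e_n}$, $\Delta_{uzv}^{-1}=\sum_n(uzv)^n\Delta_{h_n}$.

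The crucial point, already recorded after the definition of $\tTheta(z,v)$, is that the limit $v\to 1$ is legitimate and converts $\tTheta(z,v)^{\pm1}$ into the generating series of the Theta operators: coefficientwise $\tTheta(z,1)=\sum_{k\ge 0}(-z)^k\Theta_{e_k}$ (from the $\underline{e_k^*}$ expansion of $\Pc_{-z/M}$) and $\tTheta(z,1)^{-1}=\sum_{k\ge 0}z^k\Theta_{h_k}$ (from the $\underline{h_k^*}$ expansion of $\Pc_{z/M}$). Because each graded piece of every operator involved is rational in $v$ with a finite value at $v=1$ — the apparent pole of the inner $\Delta_v^{-1}$ cancelling the zero of the outer $\Delta_v$ — the substitution $v=1$ commutes with composition and with extraction of coefficients, so I may pass to $v=1$ first and then expand.

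Concretely, for \eqref{eq:hjek} I would left-multiply \eqref{eq:mainidentity} by $\tTheta(z,v)$ to obtain $\T_u\tTheta(z,v)=\tTheta(z,v)\Exp[uz(v-1)/M]\Delta_{uzv}\T_u$, set $v=1$ (which kills the exponential), and compare the coefficient of $u^jz^k$: the left side gives $(-1)^k h_j^\perp\Theta_{e_k}$, while on the right the constraints $n+r=j$ and $k'+n=k$ force $n=j-r$, $k'=k-j+r$, with total sign $(-1)^{k'+n}=(-1)^k$, producing $\sum_{r=0}^j\Theta_{e_{k-j+r}}\Delta_{e_{j-r}}h_r^\perp$. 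For \eqref{eq:ejek} I run the same extraction on \eqref{eq:TmuThetaz} at $v=1$, namely $\T_u^{-1}\tTheta(z,1)=\tTheta(z,1)\T_u^{-1}\Delta_{uz}^{-1}$; here $\T_u^{-1}$ contributes $e^\perp$ and $\Delta_{uz}^{-1}$ contributes $\Delta_h$, and the identical bookkeeping yields $e_j^\perp\Theta_{e_k}=\sum_{r=0}^j\Theta_{e_{k-j+r}}e_r^\perp\Delta_{h_{j-r}}$. For \eqref{eq:hjhk} I use \eqref{eq:TuThetamz} at $v=1$, namely $\T_u\tTheta(z,1)^{-1}=\Delta_{uz}^{-1}\tTheta(z,1)^{-1}\T_u$; now $\tTheta(z,1)^{-1}$ contributes $\Theta_h$ with no signs, and matching gives $h_j^\perp\Theta_{h_k}=\sum_{r=0}^j\Delta_{h_{j-r}}\Theta_{h_{k-j+r}}h_r^\perp$.

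The computations are routine once the dictionary is fixed, so I expect the only real obstacle to be bookkeeping. The key pitfall is the plethystic sign hidden in $\T_u^{-1}$: it is tempting but wrong to expand it in the $h_j^\perp$, and it is precisely the correct $e_j^\perp$ expansion that makes \eqref{eq:ejek} carry $e^\perp$ rather than $h^\perp$. The second pitfall is keeping the three sign sources straight — the $(-1)^k$ from $\tTheta(z,v)$, the $(-1)^j$ from $\T_u^{-1}$, and the $(-1)^n$ from $\Delta_{uzv}$ — and verifying that under $n=j-r$, $k'=k-j+r$ they cancel to leave the clean, sign-free statements. I would also be careful to take $v\to 1$ on the full operator identities, which is justified by the cancellation noted above, rather than on the individual factors $\Delta_v^{\pm 1}$ that are singular there.
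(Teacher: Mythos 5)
Your proposal is correct and follows essentially the same route as the paper: the paper likewise obtains \eqref{eq:hjek} by setting $v\mapsto 1$ in \eqref{eq:mainidentity} and extracting the coefficient of $u^jz^k$, and deduces \eqref{eq:ejek} and \eqref{eq:hjhk} in the same way from \eqref{eq:TmuThetaz} and \eqref{eq:TuThetamz}, with exactly the sign cancellations you record. Your extra care about the expansion $\T_u^{-1}=\sum_j(-u)^je_j^\perp$ and the legitimacy of the limit $v\to 1$ (justified by the paper's remark following the definition of $\tTheta(z,v)$) simply makes explicit what the paper leaves implicit.
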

\begin{proof}
Identity \eqref{eq:hjek} is obtained from \eqref{eq:mainidentity} by letting $v\mapsto 1$ and picking the coefficient of $u^jz^k$, getting
\[ (-1)^kh_j^\perp\Theta_{e_k}=\sum_{r=0}^j(-1)^{k-j+r} \Theta_{e_{k-j+r}}(-1)^{j-r}\Delta_{e_{j-r}}h_r^{\perp}, \]
which is just another way to write \eqref{eq:hjek}. Identities \eqref{eq:ejek} and \eqref{eq:hjhk} are deduced in a similar way from \eqref{eq:TmuThetaz} and \eqref{eq:TuThetamz} respectively.
\end{proof}
We can also deduce the ``inverse'' identities.
\begin{corollary}
	We have
	\begin{align}
	\label{eq:ThetamzTmu} \tTheta(z,v)^{-1}\T_{u}^{-1} & = \Exp\left[\frac{uz(1-v)}{M}\right]  \T_{u}^{-1} \Delta_{uzv}^{-1}\tTheta(z,v)^{-1} \\
	\label{eq:ThetamzTu} \tTheta(z,v)^{-1}\T_u & = \Exp\left[\frac{uz(v-1)}{M}\right]\Delta_{uzv}\T_u \tTheta(z,v)^{-1}\\
	\label{eq:ThetazTmu} \tTheta(z,v) \T_{u}^{-1} & = \Exp\left[\frac{uz(v-1)}{M}\right] \T_{u}^{-1}\tTheta(z,v)\Delta_{uzv}.
	\end{align}
\end{corollary}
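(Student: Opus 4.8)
The plan is to obtain all three identities by pure operator algebra from the main identity \eqref{eq:mainidentity} together with its two companions \eqref{eq:TmuThetaz} and \eqref{eq:TuThetamz}, exactly in the spirit of the previous corollary. Throughout I will use freely that the prefactor $\Exp[uz(v-1)/M]$ is a scalar power series in $u,z,v,q,t$, so that it commutes with every operator and inverts to $\Exp[uz(1-v)/M]$; the only things to keep track of are the order of the noncommuting factors $\tTheta(z,v)^{\pm1}$, $\T_u^{\pm1}$, $\Delta_{uzv}^{\pm1}$, and the flip of the sign in the exponent each time I pass to an inverse. As a preliminary step I would record the inverse of \eqref{eq:mainidentity}, namely
\[\tTheta(z,v)^{-1} \T_u^{-1} \tTheta(z,v) = \Exp\left[\frac{uz(1-v)}{M}\right]\T_u^{-1} \Delta_{uzv}^{-1},\]
which follows by inverting both sides of \eqref{eq:mainidentity}: the conjugate $\tTheta^{-1}\T_u\tTheta$ inverts to $\tTheta^{-1}\T_u^{-1}\tTheta$, while the right-hand side inverts factor by factor.

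To prove \eqref{eq:ThetamzTu} I would simply compose \eqref{eq:mainidentity} on the right by $\tTheta(z,v)^{-1}$; the trailing $\tTheta\,\tTheta^{-1}$ cancels, collapsing the left-hand side to $\tTheta^{-1}\T_u$ and leaving precisely the claimed right-hand side. Identity \eqref{eq:ThetamzTmu} is obtained in the same way, but starting from the inverse identity displayed above: composing it on the right by $\tTheta(z,v)^{-1}$ again cancels the central $\tTheta\,\tTheta^{-1}$ and yields $\tTheta^{-1}\T_u^{-1}$ on the left with the correct scalar and operator factors on the right.

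For \eqref{eq:ThetazTmu} I would instead take \eqref{eq:TuThetamz} and invert it outright. The left-hand side $\T_u\tTheta^{-1}$ inverts to $\tTheta\,\T_u^{-1}$, while the right-hand side $\Exp[uz(1-v)/M]\,\Delta_{uzv}^{-1}\tTheta^{-1}\T_u$ inverts factorwise to $\Exp[uz(v-1)/M]\,\T_u^{-1}\tTheta\,\Delta_{uzv}$, which is exactly the desired relation. Equivalently, one can rearrange \eqref{eq:TmuThetaz} by multiplying on the left by $\T_u$ and on the right by $\Delta_{uzv}$, then applying $\T_u^{-1}$ on the left to isolate $\tTheta\,\T_u^{-1}$; either route is a single line.

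I do not expect a genuine obstacle: every step is a one-line manipulation of invertible operators, and the entire content already resides in \eqref{eq:mainidentity}. The one place demanding care is the bookkeeping, since a misplaced factor would silently interchange $v-1$ with $1-v$ or reverse the order of $\T_u$ and $\Delta_{uzv}$ when inverting a product. A convenient sanity check is to let $v\mapsto1$, so that $\Exp[uz(v-1)/M]\mapsto1$ and the factor $(1-v)^{-1}$ in $\Delta_v^{-1}$ cancels against the $(1-v)$ in $\Delta_v$, whereupon each of \eqref{eq:ThetamzTmu}, \eqref{eq:ThetamzTu}, \eqref{eq:ThetazTmu} degenerates to a well-defined relation among $\Theta_{e_k}$, $h_k^\perp$ and $\Delta_{e_k}$ that can be matched against \eqref{eq:hjek}.
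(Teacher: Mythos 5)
Your proposal is correct and takes essentially the same route as the paper, which obtains \eqref{eq:ThetamzTmu}, \eqref{eq:ThetamzTu} and \eqref{eq:ThetazTmu} simply by taking the inverses of \eqref{eq:mainidentity}, \eqref{eq:TmuThetaz} and \eqref{eq:TuThetamz} respectively; your inversions and right-compositions by $\tTheta(z,v)^{-1}$ are the same one-line operator manipulations, with all orderings and sign flips handled correctly. One peripheral remark: your closing sanity check is slightly misaimed, since at $v\mapsto 1$ the coefficients of $\tTheta(z,v)^{-1}$ become the operators $\Theta_{h_k}$, so these identities degenerate to \eqref{eq:Thkej}--\eqref{eq:Tekej} rather than to \eqref{eq:hjek} --- but this plays no role in the proof itself.
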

\begin{proof}
Identities \eqref{eq:ThetamzTmu}, \eqref{eq:ThetamzTu} and \eqref{eq:ThetazTmu} are obtained simply by taking the inverses of \eqref{eq:mainidentity}, \eqref{eq:TmuThetaz} and \eqref{eq:TuThetamz} respectively.
\end{proof}
\begin{corollary} \label{cor:otheridentities}
	We have
\begin{align}
	  \label{eq:Thkej} \Theta_{h_k} e_j^\perp& =\sum_{r=0}^j(-1)^{j-r}e_r^{\perp}\Delta_{h_{j-r}} \Theta_{h_{k-j+r}}\\
	  \label{eq:Thkhj} \Theta_{h_k} h_j^\perp & =\sum_{r=0}^j(-1)^{j-r}\Delta_{e_{j-r}} h_r^\perp\Theta_{h_{k-j+r}}\\
	\label{eq:Tekej} \Theta_{e_k}e_j^\perp  & =\sum_{r=0}^j(-1)^{j-r}e_r^\perp \Theta_{e_{k-j+r}}\Delta_{e_{j-r}} .
	\end{align}
\end{corollary}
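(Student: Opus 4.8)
The plan is to mirror the proof of Theorem~\ref{thm:mainconseq1}: I would extract the coefficient of $u^jz^k$ from each of the three ``inverse'' operator identities \eqref{eq:ThetamzTmu}, \eqref{eq:ThetamzTu} and \eqref{eq:ThetazTmu} after specializing $v\mapsto 1$. The correspondence is forced by which adjoint operator ($h_j^\perp$ versus $e_j^\perp$) and which Theta family ($\Theta_{e}$ versus $\Theta_{h}$) sits on each side: I expect \eqref{eq:Thkej} to come from \eqref{eq:ThetamzTmu}, \eqref{eq:Thkhj} from \eqref{eq:ThetamzTu}, and \eqref{eq:Tekej} from \eqref{eq:ThetazTmu}.

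First I would record the four generating-function dictionaries that make the signs transparent. On the translation side, \eqref{eq:defTu} gives $\T_u=\sum_{j\geq 0}u^jh_j^\perp$, while its inverse $\T_u^{-1}=\T_{-u}$ expands as $\T_u^{-1}=\sum_{j\geq 0}(-u)^je_j^\perp$; this is the $\perp$-dual of \eqref{eq:defTu} and follows at once from $\T_u\T_u^{-1}=1$ together with the classical relation $\sum_{i+j=n}(-1)^jh_ie_j=\chi(n=0)$. On the Theta side, since $\Pc_{-z/M}=\sum_k(-z)^k\underline{e}_k^*$ by \eqref{eq:defPu} and $\Pc_{z/M}=\sum_kz^k\underline{h}_k^*$ (from $\Exp[zX/M]=\sum_kz^kh_k^*[X]$), the coefficient of $z^k$ in $\tTheta(z,v)$ is $(-1)^k\Delta_v\underline{e}_k^*\Delta_v^{-1}$ and in $\tTheta(z,v)^{-1}$ is $\Delta_v\underline{h}_k^*\Delta_v^{-1}$; letting $v\mapsto 1$ these tend to $(-1)^k\Theta_{e_k}$ and to $\Theta_{h_k}$ respectively, the limit being legitimate exactly as explained after the definition of $\tTheta$ and already exploited to prove \eqref{eq:hjek} and \eqref{eq:hjhk}. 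Finally $\Delta_{uzv}=\sum_a(-uzv)^a\Delta_{e_a}$ and $\Delta_{uzv}^{-1}=\sum_a(uzv)^a\Delta_{h_a}$, while every prefactor $\Exp[\pm uz(1-v)/M]\to 1$ as $v\mapsto 1$.

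With these dictionaries each identity becomes a one-line coefficient extraction. For example, setting $v=1$ in \eqref{eq:ThetazTmu} gives $\tTheta(z,1)\T_u^{-1}=\T_u^{-1}\tTheta(z,1)\Delta_{uz}$, and reading off $u^jz^k$ on both sides produces $(-1)^{j+k}\Theta_{e_k}e_j^\perp=\sum_a(-1)^{j+k-a}e_{j-a}^\perp\Theta_{e_{k-a}}\Delta_{e_a}$; cancelling $(-1)^{j+k}$ and reindexing $r=j-a$ yields precisely \eqref{eq:Tekej}. The remaining two are identical in spirit: \eqref{eq:ThetamzTu} at $v=1$ reads $\tTheta(z,1)^{-1}\T_u=\Delta_{uz}\T_u\tTheta(z,1)^{-1}$, whose $u^jz^k$-coefficient gives \eqref{eq:Thkhj}, and \eqref{eq:ThetamzTmu} at $v=1$ reads $\tTheta(z,1)^{-1}\T_u^{-1}=\T_u^{-1}\Delta_{uz}^{-1}\tTheta(z,1)^{-1}$, whose $u^jz^k$-coefficient gives \eqref{eq:Thkej} after the same substitution $r=j-a$.

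The only genuinely delicate point, and hence the main obstacle, is the bookkeeping of three independent sources of signs: the factor $(-1)^j$ from $\T_u^{-1}=\sum(-u)^je_j^\perp$, the factor $(-1)^k$ attached to $\Theta_{e_k}$ inside $\tTheta(z,1)$, and the factor $(-1)^a$ from $\Delta_{uzv}=\sum(-uzv)^a\Delta_{e_a}$. One must check that these combine to leave exactly the single factor $(-1)^{j-r}$ displayed in the statements, which they do because the global sign works out to $(-1)^{j+k}$ on both sides of each equation and therefore cancels. I would also remark that this sign pattern is the structural reason the three formulas split as they do: the two $\Theta_h$ identities \eqref{eq:Thkej} and \eqref{eq:Thkhj} necessarily arise from the $\tTheta(z,v)^{-1}$ relations, whose $z^k$-coefficient is an $\underline{h}_k^*$-operator, whereas the $\Theta_e$ identity \eqref{eq:Tekej} arises from the $\tTheta(z,v)$ relation, whose $z^k$-coefficient is an $\underline{e}_k^*$-operator.
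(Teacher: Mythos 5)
Your proposal is correct and takes exactly the paper's route: the paper likewise derives \eqref{eq:Thkej}, \eqref{eq:Thkhj} and \eqref{eq:Tekej} from \eqref{eq:ThetamzTmu}, \eqref{eq:ThetamzTu} and \eqref{eq:ThetazTmu} respectively, by letting $v\mapsto 1$ and extracting the coefficient of $u^jz^k$, just as in Theorem~\ref{thm:mainconseq1} (your extra justification of the expansions $\T_u^{-1}=\sum_j(-u)^je_j^\perp$ and $\tTheta(z,v)^{-1}=\sum_k z^k\,\Delta_v\underline{h}_k^*\Delta_v^{-1}$ is left implicit in the paper). One small caveat: your side remark that the cancelling global sign is $(-1)^{j+k}$ in all three cases holds only for \eqref{eq:Tekej} --- it is $(-1)^j$ for \eqref{eq:Thkej} and trivial for \eqref{eq:Thkhj} --- but this does not affect the derivations, whose residual factor $(-1)^{j-r}$ comes out correctly in each case.
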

\begin{proof}
Identity \eqref{eq:Thkej} is obtained from \eqref{eq:ThetamzTmu} by letting $v\mapsto 1$ and picking the coefficient of $u^jz^k$, getting
\[ (-1)^j\Theta_{h_k} e_j^\perp=\sum_{r=0}^j (-1)^re_r^{\perp}\Delta_{h_{j-r}} \Theta_{h_{k-j+r}}, \]
which is just another way to write \eqref{eq:Thkej}. Identities \eqref{eq:Thkhj} and \eqref{eq:Tekej} are deduced in a similar way from \eqref{eq:ThetamzTu} and \eqref{eq:ThetazTmu} respectively.
\end{proof}

\section{Some consequences of Theorem~\ref{thm:Thetareciprocity}}

\emph{In the rest of this article we will make extensive use of the following two basic facts without further reference to them: (1) the fact that the adjoint of the multiplication by $\omega f^*$ with respect to the star scalar product is $f^\perp$ (cf.\ \eqref{eq:hperp_estar_adjoint}), and (2) that all the operators that are diagonal with respect to the Macdonald polynomials $\wH_\mu$ are self adjoint with respect to the star scalar product (cf.\ \eqref{eq:H_orthogonality}).}

\medskip

We start with a few easy applications of \eqref{eq:Thetareciprocity}. Since they will be crucial in all the other applications, we will call them theorems.
\begin{theorem} \label{thm:mainconseq2}
	For every partition $\mu\vdash k$ and every $F\in\Lambda^{(n)}$ we have
\begin{equation} \label{eq:ThetaMBmu}
\<h_{k}^\perp\Theta_{e_{n}} \wH_\mu , F\>_* =F[MB_\mu]. 
\end{equation}
\end{theorem}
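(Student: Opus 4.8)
The plan is to reduce the star scalar product to a plethystic evaluation by first simplifying the operator expression on the left. The key observation is that $h_k^\perp\Theta_{e_n}\wH_\mu$ is exactly the object computed in Theorem~\ref{thm:Thetareciprocity}: taking $m=n$ in \eqref{eq:Thetareciprocity} gives
\[ h_k^\perp\Theta_{e_n}\wH_\mu = e_n[XB_\mu], \]
an explicit element of $\Lambda^{(n)}$ (the degree count works out, since $\Theta_{e_n}$ raises degree by $n$ while $h_k^\perp$ lowers it by $k=|\mu|$, landing in $\Lambda^{(n)}$ to match $F$). Thus the claim becomes the purely symmetric-function identity
\[ \<e_n[XB_\mu],F\>_* = F[MB_\mu]\qquad\text{for all }F\in\Lambda^{(n)}. \]
Since Theorem~\ref{thm:Thetareciprocity} was itself obtained from Tesler's identity (Theorem~\ref{thm:Teslersidentity}) via Theorem~\ref{thm:TeslerTheta}, this is where the hypothesis ``also use Tesler's identity'' enters; no further appeal to it is needed below.

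Next I would establish the reproducing-kernel identity $\<e_n[XY/M],F[X]\>_{*,X}=F[Y]$ for every $F\in\Lambda^{(n)}$ and a formal alphabet $Y$. Expanding the kernel by the Macdonald Cauchy identity \eqref{eq:Mac_Cauchy} as $e_n[XY/M]=\sum_{\lambda\vdash n}w_\lambda^{-1}\wH_\lambda[X]\wH_\lambda[Y]$ and pairing in $X$ against $F=\sum_{\lambda}c_\lambda\wH_\lambda$, the orthogonality relations \eqref{eq:H_orthogonality} collapse the sum, since $\<\wH_\lambda,F\>_*=c_\lambda w_\lambda$, leaving $\sum_\lambda c_\lambda\wH_\lambda[Y]=F[Y]$. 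Specializing the free alphabet to $Y=MB_\mu$ turns $e_n[XY/M]$ into $e_n[XB_\mu]$ and the right-hand side into $F[MB_\mu]$, which is precisely the identity reduced to in the previous step, completing the argument.

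I do not expect a serious obstacle, as the computation is short once the reproducing-kernel property is isolated. The only points demanding a little care are purely formal: checking that the reproducing property holds for an arbitrary $F\in\Lambda^{(n)}$ and not merely for the basis elements $\wH_\lambda$, which is immediate by linearity, and verifying that the plethystic substitution $Y\mapsto MB_\mu$ is legitimate, i.e.\ that both sides of the kernel identity are genuine symmetric functions in $Y$ that may be evaluated at the concrete expression $MB_\mu(q,t)$. Essentially all of the real work has been front-loaded into Theorem~\ref{thm:Thetareciprocity}.
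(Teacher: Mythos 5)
Your proposal is correct and follows essentially the same route as the paper: reduce to $h_k^\perp\Theta_{e_n}\wH_\mu = e_n[XB_\mu]$ via Theorem~\ref{thm:Thetareciprocity}, then expand the kernel by the Macdonald Cauchy identity \eqref{eq:Mac_Cauchy} and collapse the pairing using the orthogonality relations \eqref{eq:H_orthogonality}. Your isolation of the reproducing-kernel property with a free alphabet $Y$ before specializing $Y=MB_\mu$ is just a slightly more explicit phrasing of the paper's one-line computation.
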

\begin{proof}
	Using \eqref{eq:Thetareciprocity} and \eqref{eq:Mac_Cauchy} we have
\[\<h_{k}^\perp\Theta_{e_{n}} \wH_\mu , F\>_*=\<e_n[XB_\mu] , F\>_*=\sum_{\lambda\vdash n} \<\wH_\lambda[X], F\>_* \wH_\lambda[MB_\mu] /w_\lambda=F[MB_\mu]. \]
\end{proof}
\begin{theorem}
	Given $m,d\in\NN$, for any $A\in \Lambda^{(d)}$ and any $F\in\Lambda^{(k)}$ we have
	\begin{equation} \label{eq:AperpDeltaomegaA}
	A^\perp h_k^\perp \Theta_{e_m} F = h_k^\perp \Theta_{e_{m-d}}\Delta_{\omega A}F. 
	\end{equation}
\end{theorem}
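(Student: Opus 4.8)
The plan is to verify the operator identity on the modified Macdonald basis. Since both sides of \eqref{eq:AperpDeltaomegaA} are linear in $F$ and $\{\wH_\mu\}_{\mu\vdash k}$ is a basis of $\Lambda^{(k)}$, it suffices to take $F=\wH_\mu$ with $\mu\vdash k$. On the right-hand side, $\Delta_{\omega A}$ acts diagonally, $\Delta_{\omega A}\wH_\mu=(\omega A)[B_\mu]\,\wH_\mu$, so pulling out this scalar and applying Theorem~\ref{thm:Thetareciprocity} gives
\[ h_k^\perp \Theta_{e_{m-d}}\Delta_{\omega A}\wH_\mu=(\omega A)[B_\mu]\,e_{m-d}[XB_\mu]. \]
On the left-hand side, Theorem~\ref{thm:Thetareciprocity} gives $h_k^\perp\Theta_{e_m}\wH_\mu=e_m[XB_\mu]$, so the claim reduces to the purely plethystic identity
\[ A^\perp\, e_m[XB_\mu]=(\omega A)[B_\mu]\,e_{m-d}[XB_\mu], \]
where $A^\perp$ acts on the $X$ alphabet and $B_\mu$ is treated as a scalar alphabet.

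To prove this last identity I would package the $e_m[XB_\mu]$ into a single generating series. Using that $e_m$ is homogeneous of degree $m$ and that $\sum_{j\geq 0}e_j[Y]=\Exp[-\epsilon Y]$, I get
\[ \sum_{m\geq 0} z^m e_m[XB_\mu]=\sum_{m\geq 0}e_m[zB_\mu X]=\Exp[-\epsilon zB_\mu X]=\Exp[XZ],\qquad Z\coloneq -\epsilon zB_\mu. \]
By the Cauchy identity \eqref{eq:Cauchy_identities} the kernel $\Exp[XZ]=\sum_\lambda s_\lambda[X]s_\lambda[Z]$ is reproducing, so $\langle\Exp[XZ],G\rangle=G[Z]$ for every $G\in\Lambda$; hence, for any $G$,
\[ \langle A^\perp\Exp[XZ],G\rangle=\langle\Exp[XZ],A\,G\rangle=(AG)[Z]=A[Z]\,G[Z]=\langle A[Z]\Exp[XZ],G\rangle, \]
which yields the eigenrelation $A^\perp\Exp[XZ]=A[Z]\,\Exp[XZ]$. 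Finally, using \eqref{eq:minusepsilon} and the homogeneity of $A$, one has $A[Z]=A[-\epsilon zB_\mu]=(\omega A)[zB_\mu]=z^d(\omega A)[B_\mu]$, so that $A^\perp\sum_{m}z^m e_m[XB_\mu]=z^d(\omega A)[B_\mu]\sum_{m}z^m e_m[XB_\mu]$; extracting the coefficient of $z^m$ gives exactly the reduced identity (and for $m<d$ both sides vanish, consistent with $e_{m-d}=0$).

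The steps are all routine once the reduction to $\wH_\mu$ is made; the only point requiring care is the plethystic bookkeeping in the last paragraph---in particular correctly reading off the $\epsilon$-sign when passing from $A^\perp$ acting on the Cauchy kernel to the factor $(\omega A)[B_\mu]$, and tracking the single extra power $z^d$ that shifts $e_m$ to $e_{m-d}$. This is where I expect the main (though modest) obstacle to lie.
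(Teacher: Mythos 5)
Your proof is correct, but it takes a route that differs from the paper's in its key mechanism. The paper also reduces to $F=\wH_\gamma$, but then argues \emph{weakly}: both sides lie in $\Lambda^{(m-d)}$, so it pairs them against $\wH_\alpha$, $\alpha\vdash m-d$, under the star scalar product, moves $A^\perp$ across as multiplication by $(\omega A)^*$ via \eqref{eq:hperp_estar_adjoint}, and invokes Theorem~\ref{thm:mainconseq2} (i.e.\ $\<h_k^\perp\Theta_{e_n}\wH_\mu,F\>_*=F[MB_\mu]$) twice --- once in each direction --- so that $A^\perp$ is never actually applied to anything. You instead evaluate both sides on $\wH_\mu$ directly via Theorem~\ref{thm:Thetareciprocity} and prove the \emph{pointwise} plethystic identity $A^\perp e_m[XB_\mu]=(\omega A)[B_\mu]\,e_{m-d}[XB_\mu]$, using the Hall-product reproducing kernel and the eigenrelation $A^\perp\Exp[XZ]=A[Z]\Exp[XZ]$ with $Z=-\epsilon zB_\mu$; your $\epsilon$-bookkeeping via \eqref{eq:minusepsilon} and the degree shift $z^d$ are both handled correctly, as are the edge cases $m<d$ and $m=d$ (where $\Theta_{e_0}$ is the identity). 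What each approach buys: the paper's argument is a four-line chain once Theorem~\ref{thm:mainconseq2} is in place, staying entirely inside the star-product/Macdonald--Cauchy framework of \eqref{eq:Mac_Cauchy}; yours avoids the star scalar product altogether, needs only the ordinary Cauchy identity \eqref{eq:Cauchy_identities}, and produces as a by-product the general skewing formula $f^\perp e_m[XY]=(\omega f)[Y]\,e_{m-\deg f}[XY]$, which is essentially the translation operator \eqref{eq:defTY} applied to $e_m[XB_\mu]$ and is of independent use. The two proofs are comparable in length and rest on the same underlying reciprocity \eqref{eq:Thetareciprocity}, but yours delivers a strong identity where the paper's delivers a dual one.
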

\begin{proof}
	For any $\gamma\vdash k$ and any $\alpha\vdash m-d$, using \eqref{eq:ThetaMBmu}, we have
	\begin{align*}
	\<A^\perp h_k^\perp \Theta_{e_m} \wH_\gamma,\wH_\alpha\>_* &  = \< h_k^\perp \Theta_{e_m} \wH_\gamma,(\omega A)^*\wH_\alpha\>_*= (\omega A)[B_\gamma]  \wH_\alpha[MB_\gamma] \\
	& = (\omega A)[B_\gamma]  \<h_k^\perp \Theta_{e_{m-d}} \wH_\gamma,\wH_\alpha\>_*= \<h_k^\perp \Theta_{e_{m-d}} \Delta_{\omega A}\wH_\gamma,\wH_\alpha\>_*.
	\end{align*}
\end{proof}
\begin{theorem}
	Given $k,m,\ell\in\mathbb{N}$, $m\geq 1$, for any $G\in\Lambda^{(k)}$ and $F\in\Lambda^{(\ell)}$ we have
	\begin{equation}  \label{eq:newthm}
	h_{k+\ell}^\perp \Theta_{e_m}\Theta_FG=\Delta_F h_k^\perp \Theta_{e_m}G. 
	\end{equation}
\end{theorem}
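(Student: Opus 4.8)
The plan is to reduce to the case $G=\wH_\gamma$ with $\gamma\vdash k$ by linearity (both sides are linear in $G$, since $\Theta_F$, $h^\perp$, $\Theta_{e_m}$ and $\Delta_F$ are all linear), and then to verify the resulting identity in $\Lambda^{(m)}$ by pairing both sides against the Macdonald basis $\{\wH_\alpha\}_{\alpha\vdash m}$ in the star scalar product; nondegeneracy of $\langle\cdot,\cdot\rangle_*$ makes this sufficient. The case $k=0$ is trivial, since then $\Theta_F$ annihilates the degree-$0$ space when $\ell\ge 1$ and $\Theta_{e_m}$ annihilates degree $0$, so both sides vanish. Hence I assume $k\ge 1$, so $\gamma$ is nonempty, and I note that $\alpha$ is nonempty because $m\ge 1$; this is exactly what is needed to invoke Macdonald reciprocity \eqref{eq:MacRec}.

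The right-hand side is immediate: using self-adjointness of $\Delta_F$ and \eqref{eq:ThetaMBmu} (equivalently \eqref{eq:Thetareciprocity}),
\[ \langle \Delta_F h_k^\perp\Theta_{e_m}\wH_\gamma,\wH_\alpha\rangle_* = F[B_\alpha]\,\langle h_k^\perp\Theta_{e_m}\wH_\gamma,\wH_\alpha\rangle_* = F[B_\alpha]\,\wH_\alpha[MB_\gamma]. \]
So it remains to show the left-hand pairing equals $F[B_\alpha]\wH_\alpha[MB_\gamma]$ as well.

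The main obstacle is that $\Theta_F\wH_\gamma$ is a genuine combination of Macdonald polynomials rather than a single $\wH_\mu$, so \eqref{eq:ThetaMBmu} cannot be applied in closed form termwise. I would resolve this by expanding in the Macdonald basis and applying \eqref{eq:ThetaMBmu} linearly, writing
\[ \langle h_{k+\ell}^\perp\Theta_{e_m}\Theta_F\wH_\gamma,\wH_\alpha\rangle_* = \Big\langle \Theta_F\wH_\gamma,\ \textstyle\sum_{\mu\vdash k+\ell}\tfrac{\wH_\alpha[MB_\mu]}{w_\mu}\wH_\mu\Big\rangle_*. \]
Then I would substitute the definition $\Theta_F\wH_\gamma=\PPi F^*\PPi^{-1}\wH_\gamma$ (from \eqref{eq:def_Deltaf}), move $\PPi$ onto the kernel by self-adjointness, and use reciprocity in the form $\Pi_\mu\wH_\alpha[MB_\mu]=\Pi_\alpha\wH_\mu[MB_\alpha]$ from \eqref{eq:MacRec}. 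The kernel then becomes $\Pi_\alpha\sum_{\mu}\tfrac{\wH_\mu[MB_\alpha]}{w_\mu}\wH_\mu=\Pi_\alpha\,e_{k+\ell}[XB_\alpha]$ by the Macdonald--Cauchy identity \eqref{eq:Mac_Cauchy}. Using $\PPi^{-1}\wH_\gamma=\Pi_\gamma^{-1}\wH_\gamma$, this collapses the left-hand pairing to $\Pi_\alpha\Pi_\gamma^{-1}\langle F^*\wH_\gamma,\ e_{k+\ell}[XB_\alpha]\rangle_*$.

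Finally I would pass $F^*$ to the other factor as its star-adjoint $(\omega F)^\perp$ (by \eqref{eq:hperp_estar_adjoint}) and apply the plethystic lowering lemma $(\omega F)^\perp e_{k+\ell}[XB_\alpha]=F[B_\alpha]\,e_k[XB_\alpha]$. This lemma follows from the Cauchy reproducing property $f^\perp\Exp[XY]=f[Y]\Exp[XY]$: taking $Y=-\epsilon B_\alpha$, so that $\Exp[X(-\epsilon B_\alpha)]=\sum_n e_n[XB_\alpha]$, applying $(\omega F)^\perp$, extracting the $X$-degree-$k$ component, and simplifying $(\omega F)[-\epsilon B_\alpha]=F[B_\alpha]$ via \eqref{eq:minusepsilon}. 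This yields $\Pi_\alpha\Pi_\gamma^{-1}F[B_\alpha]\,\langle\wH_\gamma,e_k[XB_\alpha]\rangle_* = \Pi_\alpha\Pi_\gamma^{-1}F[B_\alpha]\,\wH_\gamma[MB_\alpha]$ (again by \eqref{eq:Mac_Cauchy}), which equals $F[B_\alpha]\wH_\alpha[MB_\gamma]$ by a second use of reciprocity \eqref{eq:MacRec}. This matches the right-hand side, completing the proof. The delicate points to watch are the nonemptiness hypotheses ensuring $\Pi_\alpha,\Pi_\gamma\neq 0$ and the validity of reciprocity, both guaranteed by $m\ge 1$ and the reduction to $k\ge 1$.
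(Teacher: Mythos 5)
Your proof is correct, and it takes a genuinely different route from the paper's. The paper's own proof is a three-line adjointness computation: it pairs both sides against an arbitrary $A\in\Lambda^{(m)}$, uses star-adjointness to rewrite the left pairing as $\< \PPi^{-1}G,(\omega F)^\perp h_m^\perp \Theta_{e_{k+\ell}} \PPi A\>_*$, and then invokes the immediately preceding identity \eqref{eq:AperpDeltaomegaA} (with $\omega F$ in the role of $A$) to convert $(\omega F)^\perp h_m^\perp \Theta_{e_{k+\ell}}$ into $h_m^\perp \Theta_{e_k}\Delta_F$, before adjointing back. You bypass \eqref{eq:AperpDeltaomegaA} entirely and instead evaluate both pairings in closed form on the Macdonald basis, using \eqref{eq:ThetaMBmu} applied linearly, the Cauchy kernel \eqref{eq:Mac_Cauchy}, two applications of Macdonald reciprocity \eqref{eq:MacRec}, and a reproducing-kernel lowering lemma; all steps check out, including the points the paper leaves implicit (the reduction to $k\geq 1$, nonemptiness of $\alpha$, $\gamma$, $\mu$ needed for $\PPi^{-1}$ and for \eqref{eq:MacRec}), and there is no circularity since \eqref{eq:MacRec} is established in Section~3, before the present statement. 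It is worth noting that your lemma $(\omega F)^\perp e_{k+\ell}[XB_\alpha] = F[B_\alpha]\,e_k[XB_\alpha]$ is exactly \eqref{eq:AperpDeltaomegaA} evaluated on $\wH_\gamma$ (in view of \eqref{eq:Thetareciprocity}), so you have in effect re-derived the special case you need from the Cauchy reproducing property $f^\perp\Exp[XY]=f[Y]\Exp[XY]$ rather than citing it. What the paper's operator-level argument buys is brevity and reuse of just-proved machinery, with the symmetry $m\leftrightarrow k+\ell$ doing all the work invisibly; what your argument buys is an explicit eigenvalue-level verification in which the role of Macdonald reciprocity is completely transparent, at the cost of being several times longer and requiring two invocations of \eqref{eq:MacRec} that the paper avoids.
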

\begin{proof}
	Using \eqref{eq:AperpDeltaomegaA}, for any $A \in\Lambda^{(m)}$ we have
	\begin{align*}
	\<h_{k+\ell}^\perp \Theta_{e_m}\Theta_FG,A\>_*& =  \< \PPi ^{-1}G,(\omega F)^\perp h_m^\perp \Theta_{e_{k+\ell}} \PPi A\>_*\\
\text{(using \eqref{eq:AperpDeltaomegaA})}	& = \< \PPi ^{-1}G,h_m^\perp \Theta_{e_{k}}\Delta_F \PPi A\>_* = \< \Delta_F h_k^\perp \Theta_{e_m}G, A\>_*.
	\end{align*}
\end{proof}

\begin{theorem}
	Given postive $m,k\in\NN$, for every $F\in\Lambda^{(k)}$ we have
	\begin{equation} \label{eq:DeltaFPiemstar}
	h_k^\perp \Theta_{e_m}\PPi F^* = \Delta_F \PPi e_m^*.
	\end{equation}
\end{theorem}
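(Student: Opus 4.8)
The plan is to verify \eqref{eq:DeltaFPiemstar} by pairing both sides against the modified Macdonald basis in the star scalar product and then invoking Macdonald--Koornwinder reciprocity \eqref{eq:MacRec}. First I would check degrees: since $F\in\Lambda^{(k)}$ we have $\PPi F^*\in\Lambda^{(k)}$, hence $\Theta_{e_m}\PPi F^*\in\Lambda^{(k+m)}$ and $h_k^\perp\Theta_{e_m}\PPi F^*\in\Lambda^{(m)}$, while $\Delta_F\PPi e_m^*\in\Lambda^{(m)}$ as well. As $\{\wH_\alpha\}_{\alpha\vdash m}$ is an orthogonal basis of $\Lambda^{(m)}$ for the star scalar product (see \eqref{eq:H_orthogonality}), it suffices to show that $\<h_k^\perp\Theta_{e_m}\PPi F^*,\wH_\alpha\>_*=\<\Delta_F\PPi e_m^*,\wH_\alpha\>_*$ for every $\alpha\vdash m$.

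For the right-hand side, since $\Delta_F$ and $\PPi$ act diagonally on the $\wH$-basis they are self-adjoint for the star scalar product, so
\[\<\Delta_F\PPi e_m^*,\wH_\alpha\>_*=F[B_\alpha]\,\Pi_\alpha\,\<e_m^*,\wH_\alpha\>_*.\]
Setting the second alphabet to $1$ in \eqref{eq:Mac_Cauchy} gives $e_m^*=e_m[X/M]=\sum_{\mu\vdash m}\wH_\mu[X]\wH_\mu[1]/w_\mu$, whence $\<e_m^*,\wH_\alpha\>_*=\wH_\alpha[1]$; and $\wH_\alpha[1]=1$ by putting $v=0$ in \eqref{eq:H1mv}. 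Thus the right-hand side equals $\Pi_\alpha F[B_\alpha]$.

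For the left-hand side, I would expand $F^*=\sum_{\gamma\vdash k}c_\gamma\wH_\gamma$ (all $\gamma$ nonempty because $k\geq 1$), apply $\PPi$ diagonally to get $\PPi F^*=\sum_\gamma c_\gamma\Pi_\gamma\wH_\gamma$, and then use Theorem~\ref{thm:mainconseq2} term by term:
\[\<h_k^\perp\Theta_{e_m}\PPi F^*,\wH_\alpha\>_*=\sum_{\gamma\vdash k}c_\gamma\Pi_\gamma\,\<h_k^\perp\Theta_{e_m}\wH_\gamma,\wH_\alpha\>_*=\sum_{\gamma\vdash k}c_\gamma\Pi_\gamma\,\wH_\alpha[MB_\gamma].\]
Now comes the key step: since $k,m\geq 1$ all the $\gamma$ and $\alpha$ are nonempty, so \eqref{eq:MacRec} applies and $\Pi_\gamma\wH_\alpha[MB_\gamma]=\Pi_\alpha\wH_\gamma[MB_\alpha]$. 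Pulling $\Pi_\alpha$ out of the sum and recognizing $\sum_\gamma c_\gamma\wH_\gamma[MB_\alpha]=F^*[MB_\alpha]=F[B_\alpha]$, the left-hand side collapses to $\Pi_\alpha F[B_\alpha]$, matching the right-hand side and finishing the proof.

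The only genuine content is the reciprocity swap \eqref{eq:MacRec}; everything else is adjointness of diagonal operators plus the two Cauchy-type evaluations above. The hypotheses $m,k\geq 1$ are used precisely to keep all partitions appearing (the $\gamma\vdash k$ and $\alpha\vdash m$) nonempty, which is what makes $\PPi^{-1}$, the operator $\Theta_{e_m}$, and \eqref{eq:MacRec} all legitimate here; I expect this nonemptiness bookkeeping, rather than any hard computation, to be the main point to get right. One could alternatively begin from the direct simplification $\Theta_{e_m}\PPi F^*=\PPi e_m^*\PPi^{-1}\PPi F^*=\PPi\big((e_mF)^*\big)$ coming straight from the definition \eqref{eq:def_Deltaf}, but this does not shorten the scalar-product computation above.
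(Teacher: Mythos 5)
Your proof is correct: the degree bookkeeping, the evaluation $\<\Delta_F\PPi e_m^*,\wH_\alpha\>_*=\Pi_\alpha F[B_\alpha]$, the term-by-term use of Theorem~\ref{thm:mainconseq2}, and the reciprocity swap all check out, and the hypotheses $m,k\geq 1$ are used exactly where you say they are. The paper's own proof shares your skeleton --- pair both sides against $\wH_\lambda$ for $\lambda\vdash m$ in the star scalar product and evaluate via \eqref{eq:ThetaMBmu} --- but it replaces your Macdonald-basis expansion of $F^*$ and the appeal to \eqref{eq:MacRec} with a single adjointness swap: since $\Theta_{e_m}\PPi F^*=\PPi(e_m^*F^*)$, the facts that $h_j^\perp$ is adjoint to multiplication by $e_j^*$ and that $\PPi$ is self-adjoint give $\<h_k^\perp\Theta_{e_m}\PPi F^*,\wH_\lambda\>_*=\<F^*,h_m^\perp\Theta_{e_k}\PPi\wH_\lambda\>_*=\Pi_\lambda F[B_\lambda]$ in one line, for general $F$, with no basis expansion and no reciprocity corollary. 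The two mechanisms are in fact equivalent: specializing the paper's swap to $F^*=\wH_\gamma$ recovers precisely $\Pi_\gamma\wH_\lambda[MB_\gamma]=\Pi_\lambda\wH_\gamma[MB_\lambda]$, which is \eqref{eq:MacRec}; conversely your argument reconstructs the swap component by component. What the paper's version buys is economy and a lighter dependency graph --- it uses only the two standing adjointness facts of Section~5 plus \eqref{eq:ThetaMBmu} --- whereas you import \eqref{eq:MacRec}, which the paper derives from the Tesler-identity machinery, as a black box; this is harmless since that identity is classical, but it is an extra ingredient. Your evaluation $\<e_m^*,\wH_\alpha\>_*=\wH_\alpha[1]=1$ via the Cauchy identity and $v=0$ in \eqref{eq:H1mv} is also fine; the paper gets the same constant from the $k=0$ case of \eqref{eq:hkstarenmkstar1}.
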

\begin{proof}
	For every $\lambda\vdash m$ we have
	\begin{align*}
	\<h_k^\perp \Theta_{e_m}\PPi F^*,\wH_\lambda\>_* & =\<F^*,h_m^\perp\Theta_{e_k}\PPi \wH_\lambda\>_*\\
	\text{(using \eqref{eq:ThetaMBmu})}& =\Pi_\lambda F[B_\lambda] \\
	\text{(using \eqref{eq:hkstarenmkstar})}& = \<\Delta_F\PPi e_m^*,\wH_{\lambda}\>_*.
	\end{align*}
\end{proof}
The following result is an easy extension of Theorem~\ref{thm:Thetareciprocity} which will be relevant in further applications.
\begin{corollary}
	For any $k,m,\ell\in\mathbb{N}$, $m\geq 1$, $\mu\vdash k$ and $F\in\Lambda^{(\ell)}$ we have
	\begin{equation} \label{eq:DeltaFem}
	h_{k+\ell}^\perp \Theta_{e_m}\Theta_F\widetilde{H}_\mu=\Delta_{F}e_m[XB_\mu]. 
	\end{equation}
\end{corollary}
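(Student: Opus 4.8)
The plan is to obtain this directly from the two results established immediately above, namely identity \eqref{eq:newthm} and Theorem~\ref{thm:Thetareciprocity} (identity \eqref{eq:Thetareciprocity}). Since the statement is billed as a corollary, I expect it to follow by a short specialization rather than by any new argument; the whole content is to recognize that $\widetilde{H}_\mu$ is an admissible input for \eqref{eq:newthm} and then to evaluate the resulting inner factor via \eqref{eq:Thetareciprocity}.

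First I would note that, because $\mu\vdash k$, the Macdonald polynomial $\widetilde{H}_\mu$ lies in $\Lambda^{(k)}$, so it is a legitimate choice of $G$ in identity \eqref{eq:newthm}. Applying that identity with $G=\widetilde{H}_\mu$ (keeping the same $F\in\Lambda^{(\ell)}$ and the hypothesis $m\geq 1$, which is exactly what \eqref{eq:newthm} requires) yields
\[ h_{k+\ell}^\perp \Theta_{e_m}\Theta_F\widetilde{H}_\mu=\Delta_F\, h_k^\perp \Theta_{e_m}\widetilde{H}_\mu. \]
Next I would apply Theorem~\ref{thm:Thetareciprocity} to the factor $h_k^\perp \Theta_{e_m}\widetilde{H}_\mu$ on the right-hand side, which by \eqref{eq:Thetareciprocity} equals $e_m[XB_\mu]$. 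Substituting this back gives
\[ h_{k+\ell}^\perp \Theta_{e_m}\Theta_F\widetilde{H}_\mu=\Delta_F\, e_m[XB_\mu], \]
which is precisely the asserted identity \eqref{eq:DeltaFem}.

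There is essentially no obstacle in this proof: the only point deserving a moment's care is the bookkeeping of homogeneous degrees, ensuring that the hypothesis of \eqref{eq:newthm} that $G$ be homogeneous of degree $k$ is met by $\widetilde{H}_\mu$ with $\mu\vdash k$, and that $m\geq 1$ as demanded there. Once these are checked, the corollary is a one-line composition of the two quoted identities.
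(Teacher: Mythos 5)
Your proof is correct and coincides with the paper's own argument, which consists precisely of combining \eqref{eq:newthm} (applied with $G=\widetilde{H}_\mu\in\Lambda^{(k)}$) and \eqref{eq:Thetareciprocity}. Your degree-checking remarks are a harmless elaboration of what the paper leaves implicit.
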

\begin{proof}
	Combine \eqref{eq:newthm} and \eqref{eq:Thetareciprocity}.
\end{proof}

\subsection{Some applications}

We recast here in terms of Theta operators two applications of Macdonald reciprocity, both due to Haglund. The first one is \cite[Equation~(2.58)]{Haglund-Schroeder-2004}.

\begin{corollary}
For every non empty partition $\nu\vdash k$, $A\in\Lambda^{(d)}$ and non-constant $F\in\Lambda^{(m)}$ we have
\[ \sum_{\mu\vdash k+d}\Pi_{\mu}d_{\mu,\nu}^{A^*}F[MB_\mu] = \Pi_\nu(\Delta_{A}F)[MB_\nu]. \]
\end{corollary}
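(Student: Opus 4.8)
The plan is to read $d_{\mu,\nu}^{A^*}$ as the coefficient in the Macdonald expansion $A^*\wH_\nu=\sum_{\mu\vdash k+d}d_{\mu,\nu}^{A^*}\wH_\mu$, which makes sense since $A^*\in\Lambda^{(d)}$ and $\wH_\nu\in\Lambda^{(k)}$ force the product to lie in $\Lambda^{(k+d)}$. The key first step is to recognize the $\Pi$-weighted sum of these structure constants as a single Theta operator applied to $\wH_\nu$. Indeed, applying $\PPi$ to the expansion and using $\PPi\wH_\mu=\Pi_\mu\wH_\mu$ gives $\PPi(A^*\wH_\nu)=\sum_{\mu}\Pi_\mu d_{\mu,\nu}^{A^*}\wH_\mu$, while by definition $\Theta_A\wH_\nu=\PPi A^*\PPi^{-1}\wH_\nu=\Pi_\nu^{-1}\PPi(A^*\wH_\nu)$ (here $\nu\neq\varnothing$, so $\wH_\nu$ has positive degree $k$ and the nontrivial branch of \eqref{eq:def_Deltaf} applies). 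Hence
\[ \sum_{\mu\vdash k+d}\Pi_\mu d_{\mu,\nu}^{A^*}\wH_\mu=\Pi_\nu\,\Theta_A\wH_\nu. \]

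Next I would convert the left-hand side of the corollary into a star scalar product. Writing $F[MB_\mu]=\<h_{k+d}^\perp\Theta_{e_m}\wH_\mu,F\>_*$ via \eqref{eq:ThetaMBmu} and pulling the sum inside the pairing, the displayed identity above yields
\[ \sum_{\mu\vdash k+d}\Pi_\mu d_{\mu,\nu}^{A^*}F[MB_\mu]=\Pi_\nu\<h_{k+d}^\perp\Theta_{e_m}\Theta_A\wH_\nu,F\>_*. \]
At this point I invoke \eqref{eq:DeltaFem} with its inner symmetric function taken to be $A$ (of degree $d$) and its partition taken to be $\nu\vdash k$, obtaining $h_{k+d}^\perp\Theta_{e_m}\Theta_A\wH_\nu=\Delta_A e_m[XB_\nu]$; the hypothesis that $F$ is non-constant, i.e. $m\geq 1$, is exactly the condition that identity requires.

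Finally I would unwind the right-hand side. Since $\Delta_A$ is diagonal on the Macdonald basis it is self-adjoint for $\<\,,\>_*$, so $\<\Delta_A e_m[XB_\nu],F\>_*=\<e_m[XB_\nu],\Delta_A F\>_*$. Then \eqref{eq:Thetareciprocity} replaces $e_m[XB_\nu]$ by $h_k^\perp\Theta_{e_m}\wH_\nu$, and a second application of \eqref{eq:ThetaMBmu}, now with the partition $\nu\vdash k$ and the degree-$m$ function $\Delta_A F$, evaluates the pairing as $(\Delta_A F)[MB_\nu]$. Reassembling the chain gives $\sum_{\mu}\Pi_\mu d_{\mu,\nu}^{A^*}F[MB_\mu]=\Pi_\nu(\Delta_A F)[MB_\nu]$, as claimed.

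I expect the only genuinely substantive step to be the first one, namely realizing that the $\Pi$-weighted sum of the structure constants $d_{\mu,\nu}^{A^*}$ is precisely $\Pi_\nu\,\Theta_A\wH_\nu$; after this the argument is a short bookkeeping chain through \eqref{eq:DeltaFem}, \eqref{eq:Thetareciprocity}, and two uses of \eqref{eq:ThetaMBmu}. The main thing to watch is keeping the two partitions ($\mu\vdash k+d$ versus $\nu\vdash k$) and the homogeneous degrees straight when applying \eqref{eq:ThetaMBmu}, and checking that the nonemptiness hypotheses ($\nu\neq\varnothing$ and $F$ non-constant) match the branch of the definition of $\Theta_A$ and the requirement $m\geq 1$ in \eqref{eq:DeltaFem}.
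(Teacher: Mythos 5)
Your proof is correct and follows essentially the same route as the paper: both identify $\sum_{\mu}\Pi_\mu d_{\mu,\nu}^{A^*}\wH_\mu$ with $\Pi_\nu\Theta_A\wH_\nu$, reduce to the expression $h_{k+d}^\perp\Theta_{e_m}\Theta_A\wH_\nu$ paired against $F$, pull out $\Delta_A$ (you via the packaged identity \eqref{eq:DeltaFem}, the paper via \eqref{eq:newthm}, which are the same step since \eqref{eq:DeltaFem} is just \eqref{eq:newthm} combined with \eqref{eq:Thetareciprocity}), and finish with self-adjointness of $\Delta_A$ and \eqref{eq:ThetaMBmu}. Your $\<\,\cdot\,,F\>_*$ formulation is merely the paper's $(\omega\phi F)^\perp$ written in scalar-product notation, so the two arguments coincide step for step.
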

\begin{proof}
\begin{align*} 
\sum_{\mu\vdash k+d}\frac{\Pi_{\mu}}{\Pi_\nu}d_{\mu,\nu}^{A^*}F[MB_\mu] & =h_{k+d}^\perp\Delta_{\phi F}\Theta_A\wH_\nu \\
\text{(using \eqref{eq:AperpDeltaomegaA})}& =(\omega \phi F)^\perp h_{k+d}^\perp\Theta_{e_m}\Theta_A\wH_\nu \\
\text{(using \eqref{eq:newthm})}& =(\omega \phi F)^\perp \Delta_A h_{k}^\perp\Theta_{e_m} \wH_\nu \\
& = \<h_{k}^\perp \Theta_{e_m} \wH_\nu[X], \Delta_{A} F\>_*\\
\text{(using \eqref{eq:ThetaMBmu})} & = (\Delta_{A}F)[MB_\nu].  
\end{align*}
\end{proof}

\begin{lemma}
Given positive $n,k\in\NN$, for every $P\in\Lambda^{(n)}$ and $Q\in \Lambda^{(k)}$ we have
\begin{equation} \label{eq:HaglundRec}
h_k^\perp\Delta_{P}\PPi  Q^*=  h_n^\perp\Delta_{Q}\PPi  P^*.
\end{equation}
\end{lemma}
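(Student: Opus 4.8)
The plan is to show that both sides of \eqref{eq:HaglundRec} are equal to the single star-scalar product $\langle \PPi(e_kP)^*,(e_nQ)^*\rangle_*$, which is manifestly symmetric under exchanging the pair $(P,n)$ with $(Q,k)$; that symmetry is precisely the asserted identity. The engine of the reduction is \eqref{eq:DeltaFPiemstar}, proved just above, together with the defining property \eqref{eq:def_Deltaf} of $\Theta$ and the self-adjointness, for the star product, of operators that are diagonal on the $\wH_\mu$ (cf.\ \eqref{eq:H_orthogonality}).

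To reduce the left-hand side, I would first observe that $\Delta_P\PPi Q^*\in\Lambda^{(k)}$, so $h_k^\perp$ produces a scalar, and by the adjointness \eqref{eq:hperp_estar_adjoint} with $(\omega h_k)^*=e_k^*$ (and $\langle 1,1\rangle_*=1$) one gets $h_k^\perp\Delta_P\PPi Q^* = \langle \Delta_P\PPi Q^*,e_k^*\rangle_*$. Since $\Delta_P$ and $\PPi$ are self-adjoint for the star product, this equals $\langle \Delta_P\PPi e_k^*,Q^*\rangle_*$. The crucial move is to apply \eqref{eq:DeltaFPiemstar} with $F=P\in\Lambda^{(n)}$ and $m=k$ (legitimate since $n,k\geq 1$), rewriting $\Delta_P\PPi e_k^* = h_n^\perp\Theta_{e_k}\PPi P^*$. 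Now $\Theta_{e_k}\PPi P^*$ simplifies directly from \eqref{eq:def_Deltaf}: the nonzero branch applies because $\PPi P^*$ has positive degree $n$, giving $\Theta_{e_k}\PPi P^* = \PPi e_k^*\PPi^{-1}(\PPi P^*) = \PPi(e_k^*P^*) = \PPi(e_kP)^*$, where the last step uses that $f\mapsto f^*=f[X/M]$ is a ring homomorphism. Feeding this back and transferring $h_n^\perp$ across the star product (its star-adjoint is multiplication by $(\omega h_n)^*=e_n^*$), together with $e_n^*Q^*=(e_nQ)^*$, yields $h_k^\perp\Delta_P\PPi Q^* = \langle \PPi(e_kP)^*,(e_nQ)^*\rangle_*$.

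Running the identical computation with $P\leftrightarrow Q$ and $n\leftrightarrow k$ expresses the right-hand side $h_n^\perp\Delta_Q\PPi P^*$ as $\langle \PPi(e_nQ)^*,(e_kP)^*\rangle_*$, which by the self-adjointness of $\PPi$ and the symmetry of $\langle\,,\,\rangle_*$ is the same scalar, completing the proof. I expect the only delicate point to be the bookkeeping in the two applications of adjointness—keeping straight which operator's star-adjoint is which multiplication—and verifying the degree and positivity hypotheses needed to invoke \eqref{eq:DeltaFPiemstar} and the nonzero branch of \eqref{eq:def_Deltaf}; the assumption $n,k\geq 1$ in the statement is exactly what makes both steps valid.
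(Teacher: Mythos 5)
Your proof is correct, and it reaches the same symmetric pivot as the paper but by a genuinely different route. The paper's argument never isolates the symmetric kernel: it applies \eqref{eq:AperpDeltaomegaA} with $A=\omega P$ to trade $\Delta_P$ for $\Theta_{e_n}$, writing $h_k^\perp\Delta_P\PPi Q^*=(\omega P)^\perp h_k^\perp\Theta_{e_n}\PPi Q^*=\<P^*,h_k^\perp\Theta_{e_n}\PPi Q^*\>_*$, then flips to $\<h_n^\perp\Theta_{e_k}\PPi P^*,Q^*\>_*$ in a single adjointness step and runs \eqref{eq:AperpDeltaomegaA} backwards with $A=\omega Q$. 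You instead move $\Delta_P\PPi$ across the pairing first, invoke \eqref{eq:DeltaFPiemstar} (with $F=P$, $m=k$) as the pivot lemma, and then unpack $\Theta_{e_k}\PPi P^*=\PPi(e_kP)^*$ directly from the definition \eqref{eq:def_Deltaf}, so that both sides of \eqref{eq:HaglundRec} are exhibited as the manifestly symmetric scalar $\<\PPi(e_kP)^*,(e_nQ)^*\>_*$. The two approaches are close cousins — the paper's middle equality is, implicitly, exactly your kernel computation — but the trade-offs differ: your version makes the $P\leftrightarrow Q$ symmetry structurally transparent and needs only one prior result plus the raw definition of $\Theta$, while the paper's chain is notationally shorter and showcases \eqref{eq:AperpDeltaomegaA} as a reusable rewriting rule. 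Your attention to the hypotheses is also sound: positivity of $n$ and $k$ is precisely what licenses both the application of \eqref{eq:DeltaFPiemstar} and the nonconstant branch of \eqref{eq:def_Deltaf}, and the multiplicativity of $f\mapsto f[X/M]$ justifies $e_k^*P^*=(e_kP)^*$.
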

\begin{proof}
Using \eqref{eq:AperpDeltaomegaA} we have
\begin{align*} 
h_k^\perp\Delta_{ P}\PPi  Q^* & =(\omega P)^\perp  h_k^\perp\Theta_{e_n}\PPi  Q^*= \<  P^* ,h_k^\perp\Theta_{e_n}\PPi  Q^*\>_* \\
& = \< h_n^\perp \Theta_{e_k} \PPi   P^* , Q^*\>_* = (\omega Q)^\perp  h_n^\perp \Theta_{e_k} \PPi  P^* =  h_n^\perp \Delta_{Q} \PPi P^*.
\end{align*}
\end{proof}

The following result is \cite[Theorem~1]{Garsia_Haglund_Romero2019}.
\begin{corollary}
Given $n,k\in\NN$, for every $P\in\Lambda^{(n)}$ and $Q\in \Lambda^{(k)}$ we have
\[ \<\Delta_{ P}\alpha_kp_k, \omega Q\> = \<\Delta_{ Q}\alpha_np_n,\omega P\>. \]
\end{corollary}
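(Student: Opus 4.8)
The plan is to exploit the fact that the desired identity is symmetric under the exchange $(P,n)\leftrightarrow(Q,k)$, and that Lemma \eqref{eq:HaglundRec} already supplies a symmetric statement of exactly the right shape. So it suffices to establish the single (non-symmetric) evaluation
\[ \langle \Delta_{P}\alpha_k p_k,\omega Q\rangle = M\, h_k^\perp\Delta_{P}\PPi Q^*, \]
since then the same identity with $(P,n)$ and $(Q,k)$ interchanged reads $\langle \Delta_{Q}\alpha_n p_n,\omega P\rangle = M\, h_n^\perp\Delta_{Q}\PPi P^*$, and Lemma \eqref{eq:HaglundRec} equates the two right-hand sides, giving the corollary at once.

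First I would pass from the Hall to the star scalar product. Since $\langle f,g\rangle_*=\langle f,\omega\phi g\rangle$ and $Q^*=\phi^{-1}Q$, one has $\langle f,\omega Q\rangle=\langle f,Q^*\rangle_*$ for every $f$ of the appropriate degree; applying this with $f=\Delta_{P}\alpha_k p_k$ rewrites the left-hand side as $\langle \Delta_{P}\alpha_k p_k,Q^*\rangle_*$. Next I would invoke the two standing facts recalled at the start of this section: $\Delta_{P}$ and $\PPi$ are both diagonal on the $\wH_\mu$, hence self-adjoint for $\langle\cdot,\cdot\rangle_*$, and they commute. Substituting $\alpha_k p_k=M\,\PPi e_k^*$ from \eqref{eq:Pien} and sliding $\Delta_{P}$ and $\PPi$ across the pairing turns the expression into $M\langle e_k^*,\Delta_{P}\PPi Q^*\rangle_*$.

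To conclude I would recognize this remaining star pairing as an application of $h_k^\perp$. By \eqref{eq:hperp_estar_adjoint} the operator $h_k^\perp$ is the star-adjoint of multiplication by $(\omega h_k)^*=e_k^*$; pairing against the unit and using $\langle 1,1\rangle_*=1$ yields $\langle e_k^*,G\rangle_*=h_k^\perp G$ for any $G\in\Lambda^{(k)}$. Taking $G=\Delta_{P}\PPi Q^*\in\Lambda^{(k)}$ produces precisely $M\,h_k^\perp\Delta_{P}\PPi Q^*$, which is the displayed evaluation; combined with its $(P,n)\leftrightarrow(Q,k)$ counterpart and Lemma \eqref{eq:HaglundRec}, the two Hall pairings coincide.

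All the steps are one-line manipulations, so I do not expect a genuine obstacle; the only delicate points are bookkeeping. I must track the three intertwined twists $\omega$, $\phi$, and $(\cdot)^*$ correctly when moving between the Hall and star products, and I must respect the degree constraints so that the terminal object lands in $\Lambda^{(0)}$ and can legitimately be read off as the scalar $h_k^\perp(\cdots)$. Finally, since $\alpha_n,\alpha_k$ and Lemma \eqref{eq:HaglundRec} presuppose $n,k\geq 1$, the argument is carried out under that (implicit) hypothesis.
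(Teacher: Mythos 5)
Your proposal is correct and follows essentially the same route as the paper: both convert the Hall pairing to the star pairing, substitute $\alpha_kp_k=M\PPi e_k^*$ via \eqref{eq:Pien}, move the self-adjoint diagonal operators across, read off the degree-$k$ pairing as $Mh_k^\perp\Delta_P\PPi Q^*$ via \eqref{eq:hperp_estar_adjoint}, and conclude with Lemma \eqref{eq:HaglundRec}. The only difference is presentational — you prove one evaluation and invoke the $(P,n)\leftrightarrow(Q,k)$ symmetry, while the paper writes the identical steps as a single chain of equalities.
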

\begin{proof}
Using \eqref{eq:Pien} and \eqref{eq:HaglundRec} we have
\begin{align*}
\<\Delta_{ P}\alpha_kp_k, \omega Q\> & = \<\Delta_{ P}M\PPi e_k^*,  Q^*\>_*= M\<e_k^*, \Delta_{ P}\PPi  Q^*\>_*\\
&  =  M h_k^\perp \Delta_{ P}\PPi  Q^* = Mh_n^\perp\Delta_{ Q}\PPi  P^*= \<\Delta_{ Q}\alpha_np_n, \omega P\>.
\end{align*}
\end{proof}
We give here a few applications of the last result. The following result is \cite[Corollary~1]{Garsia_Haglund_Romero2019} and first appeared in \cite{Haglund-Schroeder-2004}.
\begin{corollary}
	Given positive $n,k\in\NN$, for every $Q\in\Lambda^{(n)}$ we have
	\begin{equation} \label{eq:HaglundLem}
	\<\Delta_{e_{k-1}}e_n, \omega Q\> = \<\Delta_{ Q}e_k,h_k\>. 
	\end{equation}
\end{corollary}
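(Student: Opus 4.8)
The plan is to derive \eqref{eq:HaglundLem} as a special case of the previous corollary, choosing $P$ so that $\alpha_n p_n$ or $\alpha_k p_k$ collapses to something elementary, and then matching the two sides. Concretely, I would apply the reciprocity
\[ \<\Delta_{P}\alpha_k p_k, \omega Q\> = \<\Delta_{Q}\alpha_n p_n,\omega P\> \]
with a suitable $P$ in degree $k$. The natural choice is to pick $P$ so that $\Delta_P \alpha_k p_k$ on the left produces $\Delta_{e_{k-1}}e_n$ (up to the plethystic normalizations), which suggests taking $P = e_k$ on the left-hand role and reading the identity "backwards."

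First I would set up the degrees: in \eqref{eq:HaglundLem} we have $Q\in\Lambda^{(n)}$, the left side pairs $\Delta_{e_{k-1}}e_n$ against $\omega Q$, and the right side pairs $\Delta_Q e_k$ against $h_k$. I would apply the corollary with the roles $P\leftrightarrow e_k$ (degree $k$) and $Q\leftrightarrow Q$ (degree $n$), giving
\[ \<\Delta_{Q}\alpha_k p_k,\omega e_k\> = \<\Delta_{e_k}\alpha_n p_n,\omega Q\>. \]
The next step is to simplify each side using the identities \eqref{eq:Pien}, \eqref{eq:Pienen}, \eqref{eq:Pienhn}, and \eqref{eq:omegapn} that relate $\alpha_n p_n = [n]_q[n]_t M\,\PPi e_n^*$ to the operators $\Delta_{e_1}$, $\Delta_{e_n}^{-1}\Delta_{e_{n-1}}$ acting on $e_n^*$. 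In particular, I expect $\alpha_n p_n$ to rewrite via $M\PPi e_n^*$, and then $\Delta_{e_k}$ applied to this, paired against $\omega Q$, should reduce to a Delta operator acting on an elementary symmetric function after using \eqref{eq:Pienen} or \eqref{eq:Pienhn} to convert $\PPi e_n^*$ into $e_n$ or $h_n$. The left side, involving $\omega e_k = h_k$, should directly yield a pairing of the form $\<\Delta_Q(\text{something}), h_k\>$.

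The main obstacle I anticipate is correctly tracking the scalar normalizations $\alpha_n = (-1)^{n-1}/([n]_q[n]_t)$ and the factors of $M$, together with the passage between the Hall inner product $\<\,,\>$ and the star inner product $\<\,,\>_*$ via $\<f,g\>_* = \<\omega\phi f,g\>$. These conversions are where sign and $q,t$-factor errors creep in. To manage this, I would work systematically on one side: rewrite $\Delta_{e_k}\alpha_n p_n = [n]_q[n]_t M\,\Delta_{e_k}\PPi e_n^*$, note that $\Delta_{e_k}$ is diagonal hence self-adjoint for $\<\,,\>_*$, and move it across the pairing; then use \eqref{eq:Pienen}, which gives $\Delta_{e_1}\PPi e_n^* = e_n/M$, to collapse the $\PPi e_n^*$ factor. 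Matching the resulting $q,t$-prefactors on both sides against the clean right-hand side $\<\Delta_Q e_k, h_k\>$ of \eqref{eq:HaglundLem} should force everything into place, confirming the identity.
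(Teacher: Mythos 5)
Your overall strategy --- specializing the reciprocity $\<\Delta_{P}\alpha_kp_k,\omega Q\>=\<\Delta_{Q}\alpha_np_n,\omega P\>$ at a cleverly chosen $P$ --- is exactly the paper's route (the paper works directly with the equivalent star-scalar-product form \eqref{eq:HaglundRec}, which is related to the reciprocity by \eqref{eq:Pien}). But your specific choice $P=e_k$ fails, and the failure is not a matter of tracking normalizations. Delta operators are multiplicative over \emph{products} of symmetric functions, $\Delta_{fg}=\Delta_f\Delta_g$, and the collapse you want from \eqref{eq:Pienen} requires an explicit $\Delta_{e_1}$ factor: $M\Delta_{e_1}\PPi e_n^*=e_n$. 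Since $e_k\neq e_{k-1}e_1$, we have $\Delta_{e_k}\neq\Delta_{e_{k-1}}\Delta_{e_1}$, so $\Delta_{e_k}\,\alpha_np_n=M\Delta_{e_k}\PPi e_n^*=M\PPi(h_k^*e_{n-k}^*)$ by \eqref{eq:hkstarenmkstar}, which is \emph{not} $\Delta_{e_{k-1}}e_n$. Likewise your claim that the side involving $\omega e_k=h_k$ ``directly'' yields $\<\Delta_Q e_k,h_k\>$ is wrong: with $P=e_k$ that side is $\<\Delta_Q\alpha_kp_k,h_k\>=\alpha_k\<\Delta_Qp_k,h_k\>$, a genuinely different functional. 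So $P=e_k$ produces a true but different identity, not \eqref{eq:HaglundLem}.

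The fix, which is what the paper does, is to take $P=e_{k-1}e_1\in\Lambda^{(k)}$. Then $\Delta_{e_{k-1}e_1}=\Delta_{e_{k-1}}\Delta_{e_1}$, so by \eqref{eq:Pien} and \eqref{eq:Pienen}
\begin{equation*}
\Delta_{e_{k-1}e_1}\,\alpha_np_n=M\Delta_{e_{k-1}}\Delta_{e_1}\PPi e_n^*=\Delta_{e_{k-1}}e_n,
\end{equation*}
which gives the left-hand side of \eqref{eq:HaglundLem}. The other side then requires the \emph{same} trick a second time, which your proposal omits: $\omega(e_{k-1}e_1)=h_{k-1}h_1$, and one rewrites
\begin{equation*}
\<\Delta_Q\alpha_kp_k,h_{k-1}h_1\>=M\<\Delta_Q\PPi e_k^*,(e_{k-1}e_1)^*\>_*
=M\<\Delta_Q\Delta_{e_1}\PPi e_k^*,e_k^*\>_*=\<\Delta_Qe_k,h_k\>,
\end{equation*}
using $(e_{k-1}e_1)^*=e_{k-1}^*h_1^*=\Delta_{e_1}e_k^*$ (the case $n=k$, $k=1$ of \eqref{eq:hkstarenmkstar}), self-adjointness of the diagonal operators, and \eqref{eq:Pienen} again. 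This is precisely the paper's chain $\<\Delta_{e_{k-1}}e_n,\omega Q\>=Mh_n^\perp\Delta_{e_{k-1}e_1}\PPi Q^*=Mh_k^\perp\Delta_Q\PPi(e_{k-1}e_1)^*=\<\Delta_Qe_k,h_k\>$. In short: right framework, but the insertion of the $e_1$ factor --- choosing $P=e_{k-1}e_1$ rather than $e_k$ --- is the one idea the argument cannot do without, on both sides of the reciprocity.
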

\begin{proof}
	Using \eqref{eq:Pienen} we have
	\begin{align*}
		\<\Delta_{e_{k-1}}e_n, \omega Q\> & = M\<\Delta_{e_{k-1}e_1}\PPi e_n^*,  Q^*\>_*\\
		& = Mh_n^\perp \Delta_{e_{k-1}e_1}\PPi  Q^* \\
	\text{(using \eqref{eq:HaglundRec})}	&  = Mh_k^\perp \Delta_{ Q}\PPi (e_{k-1}e_1)^* \\
	\text{(using \eqref{eq:hkstarenmkstar})}	&  = Mh_k^\perp \Delta_{ Q}\Delta_{e_1}\PPi (e_{k})^* \\
	\text{(using \eqref{eq:Pienen})} & = \<\Delta_{ Q} e_k,h_k\>.
	\end{align*}
\end{proof}
We reproduce here a quick application of the last result, which first appeared in \cite[Theorem~4.27]{DAdderio-Iraci-VandenWyngaerd-Bible}, as we will use it below.
\begin{corollary}
Given $m,n,k\in\NN$ with $n\geq k$ we have
\begin{equation} \label{eq:scalarDeltaprimehh}
\<\Delta_{h_n}\Delta_{e_{m-k}}'e_{m+1},h_{m+1}\> = \<\Delta_{e_{m+n-k-1}}'e_{m+n},h_mh_n\>. 
\end{equation}
\end{corollary}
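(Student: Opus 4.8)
The plan is to expand both primed Delta operators into ordinary ones, apply Haglund's reciprocity \eqref{eq:HaglundLem} to each side separately, and reduce everything to one and the same sum; the two sums will turn out to have identical summands and differ only in their range, with the extra terms vanishing for a transparent reason. To begin, I would use $e_d[B_\mu-1]=\sum_{i=0}^d(-1)^{d-i}e_i[B_\mu]$ (from $e_d[A+C]=\sum_i e_i[A]e_{d-i}[C]$ with $e_r[-1]=(-1)^r$) together with $\Delta_f\Delta_g=\Delta_{fg}$ to write, as operators on $\Lambda$,
\[ \Delta_{e_{m-k}}'=\sum_{i=0}^{m-k}(-1)^{m-k-i}\Delta_{e_i}, \qquad \Delta_{e_{m+n-k-1}}'=\sum_{j=0}^{m+n-k-1}(-1)^{m+n-k-1-j}\Delta_{e_j}. \]
Assume first $n\geq 1$. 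On the left-hand side this gives $\sum_{i}(-1)^{m-k-i}\<\Delta_{h_ne_i}e_{m+1},h_{m+1}\>$, and applying \eqref{eq:HaglundLem} to each term with $Q=h_ne_i\in\Lambda^{(n+i)}$ (so that $\omega Q=e_nh_i$) converts it to $\<\Delta_{e_m}e_{n+i},e_nh_i\>$.

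Next I would bring in the factor $e_n$ using the elementary adjunction \eqref{eq:hkstarenmkstar}, reading $e_nh_i=e_nh_{(n+i)-n}$: since $\Delta_{e_m}e_{n+i}\in\Lambda^{(n+i)}$ we get
\[ \<\Delta_{e_m}e_{n+i},e_nh_i\>=\<\Delta_{e_n}\Delta_{e_m}e_{n+i},h_{n+i}\>=\<\Delta_{e_m}\Delta_{e_n}e_{n+i},h_{n+i}\>. \]
Reindexing by $s=n+i$ turns the left-hand side into $\sum_{s=n}^{m+n-k}(-1)^{m+n-k-s}\<\Delta_{e_m}\Delta_{e_n}e_s,h_s\>$. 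For the right-hand side I would instead apply \eqref{eq:HaglundLem} to each term $\<\Delta_{e_j}e_{m+n},h_mh_n\>$ with $Q=e_me_n\in\Lambda^{(m+n)}$ (so $\omega Q=h_mh_n$), obtaining $\<\Delta_{e_me_n}e_{j+1},h_{j+1}\>=\<\Delta_{e_m}\Delta_{e_n}e_{j+1},h_{j+1}\>$; reindexing by $s=j+1$ gives $\sum_{s=1}^{m+n-k}(-1)^{m+n-k-s}\<\Delta_{e_m}\Delta_{e_n}e_s,h_s\>$.

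The two expressions have the same summand $(-1)^{m+n-k-s}\<\Delta_{e_m}\Delta_{e_n}e_s,h_s\>$ and differ only in the lower limit of $s$ (namely $n$ on the left versus $1$ on the right), so their difference is $\sum_{s=1}^{n-1}(-1)^{m+n-k-s}\<\Delta_{e_m}\Delta_{e_n}e_s,h_s\>$. I expect the one genuinely substantive point to be the vanishing of these extra terms: for $s<n$ every $\mu\vdash s$ has $|\mu|<n$, so $e_n[B_\mu]=0$ (an elementary symmetric function of degree exceeding the number of its letters), whence $\Delta_{e_n}e_s=0$ and each such term is zero. This identifies the two sides. The boundary cases are harmless: when $m<k$ both reindexed sums are empty (all $s<n$), and the remaining case $n=0$ reduces to $\<\Delta_{e_{m-k}}'e_{m+1},h_{m+1}\>=\<\Delta_{e_{m-k-1}}'e_m,h_m\>$, which can be checked directly. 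The conceptual crux is the symmetric use of \eqref{eq:HaglundLem} on the two sides followed by \eqref{eq:hkstarenmkstar}, which is exactly what makes both sides collapse to the same $\Delta_{e_m}\Delta_{e_n}$-sum.
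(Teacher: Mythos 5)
Your proof is correct and uses essentially the same approach as the paper: both arguments rest on exactly the combination of Haglund's reciprocity \eqref{eq:HaglundLem}, the adjunction \eqref{eq:hkstarenmkstar}, and the primed/unprimed conversion \eqref{eq:deltaprime}. The only difference is organizational --- the paper first proves the unprimed identity $\<\Delta_{h_n}\Delta_{e_{m-k}}e_{m+1},h_{m+1}\> = \<\Delta_{e_{m+n-k-1}}e_{m+n},h_mh_n\>$ by the chain \eqref{eq:HaglundLem}--\eqref{eq:hkstarenmkstar}--\eqref{eq:HaglundLem} and then alternately sums, whereas you expand the primed operators first and collapse both sides onto the common sum $\sum_s(-1)^{m+n-k-s}\<\Delta_{e_m}\Delta_{e_n}e_s,h_s\>$, making explicit the range-reconciling vanishing $\Delta_{e_n}e_s=0$ for $s<n$ that the paper's ``we can easily conclude'' leaves implicit.
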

\begin{proof}
Using \eqref{eq:HaglundLem} we have
\begin{align*}
\<\Delta_{h_n}\Delta_{e_{m-k}}e_{m+1},h_{m+1}\> & = \<\Delta_{e_{m}}e_{m+n-k},e_n h_{m-k}\>\\
\text{(using \eqref{eq:hkstarenmkstar})}& = \<\Delta_{e_{m}e_{n}}e_{m+n-k},h_{m+n-k}\>\\
\text{(using \eqref{eq:HaglundLem})}& = \<\Delta_{e_{m+n-k-1}}e_{m+n},h_{m}h_{n}\>.
\end{align*}
Now using \eqref{eq:deltaprime} we can easily conclude.
\end{proof}
The following result is \cite[Corollary~2]{Garsia_Haglund_Romero2019}.
\begin{corollary}
	Given positive $n,k\in\NN$, for every $Q\in\Lambda^{(n)}$ we have
	\[ \<\Delta_{h_{k-1}}e_n, \omega Q\> = (-qt)^{k-1} \<\Delta_{ Q}h_k,e_k\>. \]
\end{corollary}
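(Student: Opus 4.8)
The plan is to mimic the proof of the immediately preceding corollary \eqref{eq:HaglundLem}, replacing the role of \eqref{eq:Pienen} by the ``$h$-version'' \eqref{eq:Pienhn}. The target identity differs from \eqref{eq:HaglundLem} precisely in that the index on the Theta/Delta operator is $h_{k-1}$ rather than $e_{k-1}$, and the inner products on the right involve $h_k,e_k$ instead of $e_k,h_k$, together with an extra factor $(-qt)^{k-1}$; since \eqref{eq:Pienhn} carries exactly such a $(-qt)^{1-n}$ prefactor and an $h$ in place of an $e$, this is the natural substitution to make.

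First I would rewrite the left-hand side using \eqref{eq:Pien}, exactly as in the proof of \eqref{eq:HaglundLem}: write $\omega(p_n)$ via $\PPi e_n^*$ so that
\[
\<\Delta_{h_{k-1}}e_n,\omega Q\> = M\<\Delta_{h_{k-1}}\PPi e_n^*,\, Q^*\>_* = M\, h_n^\perp \Delta_{h_{k-1}}\PPi Q^*,
\]
using that $h_n^\perp$ is adjoint to multiplication by $e_n^*=(\omega h_n)^*$ under $\<\ ,\ \>_*$. The key move is then to apply the Haglund reciprocity \eqref{eq:HaglundRec} with $P=h_{k-1}e_1\in\Lambda^{(k)}$, swapping the roles of the two arguments, to obtain
\[
M\, h_n^\perp \Delta_{h_{k-1}e_1}\PPi Q^* = M\, h_k^\perp \Delta_{Q}\PPi (h_{k-1}e_1)^*.
\]
(Here I insert the harmless factor $e_1$ so that $\Delta_{e_1}$ can later be produced; note $\Delta_{e_1}$ acts on $\PPi e_k^*\in\Lambda^{(k)}$.)

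Next I would simplify the multiplier $(h_{k-1}e_1)^*=h_{k-1}^* e_1^*$. Rather than the $h_k^*e_{n-k}^*$ rewriting used before, the relevant tool is \eqref{eq:hkstarenmkstar} read as $\Delta_{e_1}e_k^*=h_1^*e_{k-1}^*$ and its analogues, which let me express $h_{k-1}^*e_1^*$ in a form to which \eqref{eq:Pienhn} applies. Concretely, the goal is to massage $\Delta_Q \PPi (h_{k-1}e_1)^*$ into $(-qt)^{k-1}\Delta_Q\,\Delta_{e_k}^{-1}\Delta_{e_{k-1}}\PPi e_k^* \cdot(\text{correction})$ so that \eqref{eq:Pienhn} yields $\tfrac1M\Delta_Q h_k$, producing the claimed $(-qt)^{k-1}$ and the inner product $\<\Delta_Q h_k,e_k\>$ after pairing against $h_k^\perp$ (which is adjoint to multiplication by $e_k^*$). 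I expect the main obstacle to be exactly this bookkeeping step: matching $(h_{k-1}e_1)^*$ against the combination $\Delta_{e_k}^{-1}\Delta_{e_{k-1}}\PPi e_k^*$ appearing in \eqref{eq:Pienhn}, keeping careful track of the normalizations of $\PPi$ and of the star scalar product, and verifying that all the $\Delta$-operators involved act on homogeneous pieces of the correct degree so that the inverses $\Delta_{e_k}^{-1}$ make sense. Everything else is a direct transcription of the $e$-version proof with the $(-qt)$ twist supplied by \eqref{eq:Pienhn}.
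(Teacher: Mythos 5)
Your proposal is correct and follows essentially the same route as the paper's proof: rewrite the left side via \eqref{eq:Pienen} as $M h_n^\perp \Delta_{h_{k-1}e_1}\PPi Q^*$, apply the reciprocity \eqref{eq:HaglundRec} with $P=h_{k-1}e_1$, use \eqref{eq:hkstarenmkstar} to recognize $(h_{k-1}e_1)^* = \Delta_{e_{k-1}}e_k^*$, insert $\Delta_{e_k}\Delta_{e_k}^{-1}$ so that \eqref{eq:Pienhn} produces $(-qt)^{k-1}\tfrac1M h_k$, and finally strip the extra $\Delta_{e_k}$ across the pairing via \eqref{eq:hkstarenmkstar} to land on $(-qt)^{k-1}\<\Delta_Q h_k, e_k\>$. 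One small correction: your first display drops the needed $\Delta_{e_1}$ and cites \eqref{eq:Pien} — note that $M\PPi e_n^* = \alpha_n p_n$, not $e_n$, so the factor $e_1$ is not a ``harmless insertion'' but is forced from the outset by \eqref{eq:Pienen}; with that fixed, the bookkeeping you flag goes through exactly as in the paper.
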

\begin{proof}
	Using \eqref{eq:Pienen} we have
	\begin{align*}
	\<\Delta_{h_{k-1}}e_n, \omega Q\> & = M\<\Delta_{h_{k-1}e_1}\PPi e_n^*,   Q^*\>_*\\
	& = Mh_n^\perp \Delta_{ h_{k-1} e_{1}}\PPi  Q^* \\
	\text{(using \eqref{eq:HaglundRec})}	&  = Mh_k^\perp \Delta_{ Q}\PPi (h_{k-1}e_1)^* \\
	\text{(using \eqref{eq:hkstarenmkstar})}	&  = Mh_k^\perp \Delta_{e_{k}}\Delta_{ Q}\Delta_{e_{k}}^{-1}\Delta_{e_{k-1}}\PPi (e_{k})^* \\
		\text{(using \eqref{eq:Pienhn})}	&  = (-qt)^{k-1}h_k^\perp \Delta_{e_{k}}\Delta_{ Q}h_k \\
     & = (-qt)^{k-1}\<\Delta_{e_k}\Delta_{ Q} h_k, h_k\>\\
	\text{(using \eqref{eq:hkstarenmkstar})} & = (-qt)^{k-1}\<\Delta_{ Q} h_k,e_k\>.
	\end{align*}
\end{proof}

\section{Some consequences of Corollary~\ref{cor:otheridentities}}

In this section we list some consequences of Corollary~\ref{cor:otheridentities}: several are new, others already appeared in the literature, in which case our new proofs are usually drastically shorter.
\begin{lemma} \label{lem:hkperpThetamFzero}
Given $m,\ell,k\in\NN$, with $m\geq 1$ and $k>\ell$, for any $F\in\Lambda^{(\ell)}$ we have
\[h_k^\perp \Theta_{e_m}F=0. \]
\end{lemma}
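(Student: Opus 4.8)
The plan is to reduce the statement to the commutation identity \eqref{eq:hjek}, which rewrites $h_k^\perp\Theta_{e_m}$ as a sum of operators in which a Delta operator sits between a lowering operator $h_r^\perp$ and a Theta operator. Concretely, I would apply \eqref{eq:hjek} with its index ``$j$'' specialized to our $k$ and its index ``$k$'' specialized to our $m$, which yields
\[ h_k^\perp\Theta_{e_m}=\sum_{r=0}^k\Theta_{e_{m-k+r}}\Delta_{e_{k-r}}h_r^\perp. \]
It then suffices to check that every summand annihilates a homogeneous $F\in\Lambda^{(\ell)}$.

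Next I would sort the terms by degree. Since $h_r^\perp$ lowers degree by $r$ and $F$ has degree $\ell$, the factor $h_r^\perp F$ vanishes as soon as $r>\ell$; because $k>\ell$, all these discarded indices also satisfy $r<k$, so nothing is lost at the boundary of the sum. For the surviving indices $0\le r\le\ell$ the element $h_r^\perp F$ lies in $\Lambda^{(\ell-r)}$, and the whole question is whether $\Delta_{e_{k-r}}$ kills this space.

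This is exactly where the hypothesis $k>\ell$ enters, and it is the only substantive point: as recalled just after \eqref{eq:deltaprime}, the operator $\Delta_{e_j}$ vanishes on $\Lambda^{(n)}$ whenever $j>n$. Here $j=k-r$ and $n=\ell-r$, and the inequality $k>\ell$ gives precisely $k-r>\ell-r$, so $\Delta_{e_{k-r}}h_r^\perp F=0$ for each surviving $r$. Hence every summand is zero and $h_k^\perp\Theta_{e_m}F=0$, as claimed; the standing assumption $m\ge 1$ is only there to guarantee that the $\Theta_{e_{m-k+r}}$ appearing are genuine Theta operators in the sense of \eqref{eq:def_Deltaf}.

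I do not expect a real obstacle: once \eqref{eq:hjek} is available, the argument is a one-line degree count, and the main thing to get right is recognizing that it is the factor $\Delta_{e_{k-r}}$ (rather than $h_r^\perp$ or the Theta operator) that is forced to vanish on $\Lambda^{(\ell-r)}$.
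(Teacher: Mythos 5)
Your proof is correct and takes essentially the same route as the paper: both apply \eqref{eq:hjek} with $j=k$ and then annihilate every summand by the degree count, since $h_r^\perp F\in\Lambda^{(\ell-r)}$ and $\Delta_{e_{k-r}}$ vanishes on $\Lambda^{(\ell-r)}$ because $k-r>\ell-r$. Your treatment is in fact slightly more careful than the paper's one-line remark, as you explicitly dispose of the boundary indices $r>\ell$ (where $h_r^\perp F=0$ outright, covering in particular $r=k$ where $\Delta_{e_0}$ is the identity), but the substance is identical.
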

\begin{proof}
Using \eqref{eq:hjek} we have
\begin{align*}
h_k^\perp \Theta_{e_m}F & = \sum_{r=0}^k\Theta_{e_{m-k+r}}\Delta_{e_{k-r}}h_r^\perp F,
\end{align*}
but now $h_r^\perp F\in\Lambda^{(\ell-r)}$ so that $\Delta_{e_{k-r}}h_r^\perp F=0$ for all $r$.
\end{proof}

The following result first appeared in \cite[Lemma~6.1]{dadderio2019theta}.
\begin{theorem}
Given $n,k,\ell\in\NN$, $n\geq \ell$, for any $F\in\Lambda^{(n-\ell)}$
\begin{equation} \label{eq:lemThetaekDeltahk} 
\<\Theta_{e_\ell} F, h_k e_{n-k}\>= \<\Delta_{h_\ell} F, h_k e_{n-k-\ell}\>. 
\end{equation}
\end{theorem}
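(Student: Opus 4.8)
The statement to prove is
\[
\langle \Theta_{e_\ell} F, h_k e_{n-k}\rangle = \langle \Delta_{h_\ell} F, h_k e_{n-k-\ell}\rangle
\]
for $F\in\Lambda^{(n-\ell)}$. My plan is to convert both Hall scalar products into star scalar products, where the adjunction machinery from \eqref{eq:hperp_estar_adjoint} and the reciprocity identity \eqref{eq:ThetaMBmu} become available, and then match the two sides on the Macdonald basis.

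First I would rewrite the left-hand Hall pairing against $h_k e_{n-k}$ using the identity \eqref{eq:hkstarenmkstar}, which says $\langle f, e_k h_{n-k}\rangle = \langle \Delta_{e_k} f, h_n\rangle$ for $f\in\Lambda^{(n)}$; equivalently it controls how pairings against the products $h_k e_{n-k}$ behave. The key move is to realize $\langle G, h_n\rangle = h_n^\perp G$ evaluated as a scalar, and to pass to the star scalar product via $\langle h_n^\perp G, 1\rangle$-type rewritings. More cleanly, I expect the cleanest route is to expand $F$ in the Macdonald basis, $F=\sum_{\mu\vdash n-\ell}c_\mu \wH_\mu$, and compute each side coefficientwise. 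For a single $\wH_\mu$ the operator $\Theta_{e_\ell}$ acts as $\PPi e_\ell^* \PPi^{-1}$, and I would use Theorem~\ref{thm:mainconseq2}, i.e.\ \eqref{eq:ThetaMBmu}, namely $\langle h_k^\perp \Theta_{e_n}\wH_\mu, F\rangle_* = F[MB_\mu]$, to evaluate the relevant pairings as polynomial evaluations $G[MB_\mu]$ or $e_j[B_\mu]$. The right-hand side, involving $\Delta_{h_\ell}\wH_\mu = h_\ell[B_\mu]\wH_\mu$, then contributes a factor $h_\ell[B_\mu]$ times the pairing $\langle \wH_\mu, h_k e_{n-k-\ell}\rangle$, which by Corollary~\ref{cor:ehMacId} equals $e_k[B_\mu]$ (after matching degrees $|\mu|=n-\ell$ with the partition shape $(n-\ell-k,1^k)$ as appropriate).

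Concretely I would establish, for each $\mu\vdash n-\ell$, the scalar identity
\[
\langle \Theta_{e_\ell}\wH_\mu, h_k e_{n-k}\rangle = h_\ell[B_\mu]\,\langle \wH_\mu, h_k e_{n-k-\ell}\rangle,
\]
and then sum against the coefficients $c_\mu$. Reducing the left side amounts to showing $\langle \Theta_{e_\ell}\wH_\mu, h_k e_{n-k}\rangle = e_{k}[B_\mu]\, e_\ell[B_\mu]$ or the analogous product, using that pairing against $h_k e_{n-k}$ extracts the coefficient $e_k[B_\mu]$ in the degree-$n$ setting where $\Theta_{e_\ell}\wH_\mu$ lives in $\Lambda^{(n)}$, together with the factorization of $e_\ell^*$ through $\PPi$. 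Here the interplay $e_\ell[B_\mu]$ versus $h_\ell[B_\mu]$ is where the $\Theta$-to-$\Delta$ passage happens: the $\PPi$-conjugation in $\Theta_{e_\ell}$ is exactly what turns an $e_\ell$-evaluation into an $h_\ell$-evaluation via $\omega$, so I would invoke \eqref{eq:Pien} and \eqref{eq:hkstarenmkstar1} to track this sign/involution bookkeeping.

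The main obstacle I anticipate is precisely this bookkeeping: correctly matching the two degree shifts (the product $h_ke_{n-k}$ against a degree-$n$ function on the left versus $h_ke_{n-k-\ell}$ against a degree-$(n-\ell)$ function on the right) while controlling the $\omega$/$\PPi$ conjugation inside $\Theta_{e_\ell}$, so that the polynomial evaluations $e_k[B_\mu]$ and $h_\ell[B_\mu]$ emerge with the right arguments and no stray sign. An alternative, possibly cleaner, approach avoiding basis expansion would be to push everything into the star scalar product directly: write $\langle \Theta_{e_\ell}F, h_k e_{n-k}\rangle$ as a star pairing using $\langle f, h_n\rangle = \langle f, \cdots\rangle_*$ conversions, then use the adjointness $(\omega h)^*$ of $h^\perp$ from \eqref{eq:hperp_estar_adjoint} to move $h_k^\perp$ and $e_{n-k}^\perp$ onto the Theta side, and finally apply \eqref{eq:ThetaMBmu} together with \eqref{eq:newthm} or \eqref{eq:DeltaFem}; this operator-level computation is likely what the authors intend, and it would make the $\Delta_{h_\ell}$ on the right appear naturally from the reciprocity rather than from an ad hoc $h_\ell[B_\mu]$ factor.
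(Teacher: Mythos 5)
Your proposal has the right general shape (reduce to $\wH_\mu$ and compare evaluations), but the central step is left unproved, and the mechanism you offer for it is not correct. First, an indexing error: by Corollary~\ref{cor:ehMacId}, for $\mu\vdash n-\ell$ the right-hand pairing is $\<\wH_\mu,h_ke_{n-k-\ell}\>=e_{n-k-\ell}[B_\mu]$, not $e_k[B_\mu]$, so the per-$\mu$ identity you must establish is $\<\Theta_{e_\ell}\wH_\mu,h_ke_{n-k}\>=h_\ell[B_\mu]\,e_{n-k-\ell}[B_\mu]$ --- which is exactly the theorem itself for $F=\wH_\mu$; your guessed factorization ``$e_k[B_\mu]e_\ell[B_\mu]$ or the analogous product'' is not it, and the hedge signals the computation was never carried out. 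More seriously, the mechanism you invoke --- that the $\PPi$-conjugation in $\Theta_{e_\ell}$ ``turns an $e_\ell$-evaluation into an $h_\ell$-evaluation via $\omega$,'' citing \eqref{eq:Pien} --- does not correspond to any identity: \eqref{eq:Pien} only computes $\PPi e_n^*$, and unwinding $\Theta_{e_\ell}\wH_\mu=\Pi_\mu^{-1}\PPi(e_\ell^*\wH_\mu)$ yields the sum $\Pi_\mu^{-1}\sum_{\lambda\vdash n}d_{\lambda,\mu}^{e_\ell^*}\,\Pi_\lambda\,e_{n-k}[B_\lambda]$, with no $\omega$-twist anywhere. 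Your fallback sketch also stalls: \eqref{eq:ThetaMBmu}, \eqref{eq:newthm} and \eqref{eq:DeltaFem} all require a skewing operator $h_d^\perp$ of \emph{full} degree acting on the image of $\Theta_{e_m}$, whereas here the pairing against $h_ke_{n-k}$ produces the mixed operator $h_k^\perp e_{n-k}^\perp$, to which none of those results applies directly.

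What actually closes the gap is one of two mechanisms, neither of which appears in your proposal. The paper works at the operator level with the commutation relations of Theorem~\ref{thm:mainconseq1}: applying \eqref{eq:ejek} and then \eqref{eq:hjek} gives $h_k^\perp e_{n-k}^\perp\Theta_{e_\ell}=\sum_{r=0}^{n-k}\sum_{s=0}^{k}\Theta_{e_{\ell-n+r+s}}\Delta_{e_{k-s}}h_s^\perp e_r^\perp\Delta_{h_{n-k-r}}$, and on $\Lambda^{(n-\ell)}$ degree constraints force $r+s=n-\ell$ while any term with $s<k$ dies because $\Delta_{e_{k-s}}$ then acts on a constant; the sole survivor is $h_k^\perp e_{n-k-\ell}^\perp\Delta_{h_\ell}$. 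Note that the $\Delta_{h_\ell}$ enters through the $\Delta_{h_{j-r}}$ factors of \eqref{eq:ejek}, i.e.\ ultimately through $\Delta_v^{-1}=\sum_{n\geq 0}v^n\Delta_{h_n}$ --- the ingredient your sketch never touches. Alternatively, your Macdonald-expansion route can be completed, but by a different route than you describe: the Haglund summation formula (the first corollary of Section~5, applied with $A=e_\ell$ and $F=e_{n-k}^*$) collapses the sum $\Pi_\mu^{-1}\sum_\lambda d_{\lambda,\mu}^{e_\ell^*}\Pi_\lambda e_{n-k}[B_\lambda]$ to $(\Delta_{e_\ell}e_{n-k}^*)[MB_\mu]$, and then \eqref{eq:hkstarenmkstar}, in the form $\Delta_{e_\ell}e_{n-k}^*=h_\ell^*e_{n-k-\ell}^*$, is what genuinely converts the $e_\ell$ into an $h_\ell$, giving $h_\ell[B_\mu]e_{n-k-\ell}[B_\mu]$ as required. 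Without one of these two mechanisms, the key step of your argument has no proof.
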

\begin{proof}
Using \eqref{eq:ejek} and \eqref{eq:hjek} we have
\begin{align*}
h_k^\perp  e_{n-k}^\perp \Theta_{e_\ell}  & = \sum_{r=0}^{n-k}h_k^\perp\Theta_{e_{\ell-n+k+r}}e_{r}^\perp \Delta_{h_{n-k-r}} = \sum_{r=0}^{n-k}\sum_{s=0}^k \Theta_{e_{\ell-n+r+s}}\Delta_{e_{k-s}}h_s^\perp e_{r}^\perp \Delta_{h_{n-k-r}}.
\end{align*}
Acting on a symmetric function of degree $n-\ell$, the terms of this sum can be nonzero only if $r+s=n-\ell$: we need $\ell-n+r+s \geq 0$ for the Theta operator and $s+r \leq n-\ell$ from applying $h_s^\perp e_{r}^\perp$. Now if this is the case, but $s<k$, then $\Delta_{e_{k-s}}$ would apply to a constant, giving zero. Hence the only remaining term is the one with $s=k$ and $r=n-k-\ell$, which is $h_k^\perp  e_{n-k-\ell}^\perp \Delta_{h_{\ell}}$, as claimed.
\end{proof}

\begin{lemma}
For $n,r\in\NN$, $r>1$ we have
\begin{align}
\label{eq:hrperppn} h_r^\perp p_n & = \delta_{r,n}\\
\label{eq:hrperpen} h_r^\perp e_n & = \delta_{r,1}e_{n-1}.
\end{align}
\end{lemma}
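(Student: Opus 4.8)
The plan is to prove both identities by testing against the standard basis of power-sum symmetric functions, using the fact that $h_r^\perp$ is the adjoint of multiplication by $h_r$ with respect to the Hall scalar product. Since $\{p_\lambda\}_\lambda$ forms an orthogonal basis, it suffices to understand how $h_r^\perp$ acts, and then expand the target symmetric functions $p_n$ and $e_n$ appropriately.

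For \eqref{eq:hrperppn}, first I would recall that in plethystic/generating-function terms, $h_r^\perp$ acts as a skewing operator and that $p_n$ is primitive: since $p_n = p_n[X]$ generates a one-dimensional piece, the cleanest route is to compute $\<h_r^\perp p_n, g\> = \<p_n, h_r g\>$ for an arbitrary $g$. Taking $g$ homogeneous of degree $n-r$, we use $\<p_n, h_r g\>$; but $p_n$ is orthogonal to any product $h_r g$ with $r \geq 1$ unless that product has a nonzero $p_n$-component, and a classical fact (for instance via the coproduct $\Delta p_n = p_n \otimes 1 + 1 \otimes p_n$) gives $h_r^\perp p_n = 0$ for $1 < r \le n-1$ and $= 1$ only when $r = n$. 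Concretely, since the coproduct of $p_n$ is primitive, the skewing $f^\perp p_n = \<f, p_n\rangle \cdot 1 + \<f, 1\rangle \cdot p_n$ for any $f$; applying this with $f = h_r$ and $r > 1$ yields $h_r^\perp p_n = \<h_r, p_n\> = \delta_{r,n}$ (the term $\<h_r,1\>p_n$ vanishes since $r \geq 1$). This is the whole content for $r > 1$.

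For \eqref{eq:hrperpen}, the plan is analogous but uses the coproduct of $e_n$, namely $\Delta e_n = \sum_{i=0}^n e_i \otimes e_{n-i}$, so that for any $f$ one has $f^\perp e_n = \sum_{i} \<f, e_i\> e_{n-i}$. Setting $f = h_r$ and using the well-known scalar products $\<h_r, e_i\> = \delta_{r,0}$ or more precisely $\<h_r, e_i\> = 0$ unless the relevant pairing forces $i = r$ with value governed by $\<h_r,e_r\>$, I would extract that only the $i=1$ term survives for $r>1$ via the identity $\<h_r, e_i\> = \chi(i=1)\chi(r=1)+\dots$; the clean statement is that $h_r^\perp e_n = \delta_{r,1} e_{n-1}$, consistent with the Pieri-type skewing rule $h_r^\perp e_n = e_{n-1}$ exactly when $r=1$ and zero when $r>1$ because $e_n$ skewed by $h_r$ for $r\geq 2$ gives no elementary term.

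The main obstacle, and the step to state carefully, is justifying the skewing-via-coproduct formula and computing the elementary scalar products $\<h_r, e_i\>$ correctly; these are standard (see \cite{Macdonald-Book-1995} or \cite{Stanley-Book-1999}) but one must be careful that the hypothesis $r > 1$ is what kills all the stray terms, leaving exactly the stated Kronecker deltas. Once the coproduct formulas $\Delta p_n = p_n\otimes 1 + 1\otimes p_n$ and $\Delta e_n = \sum_i e_i\otimes e_{n-i}$ are in hand, both identities follow by a one-line extraction of coefficients, so the proof is genuinely short.
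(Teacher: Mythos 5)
Your proof is correct in substance, but it takes a different route from the paper's, which disposes of the lemma in one line: for \eqref{eq:hrperppn} the paper expands $h_r=\sum_{\lambda\vdash r}p_\lambda/z_\lambda$ and uses the orthogonality of the power sums (so that $p_\lambda^\perp p_n$ vanishes unless $\lambda=(n)$, forcing $r=n$), and for \eqref{eq:hrperpen} it simply invokes the Pieri rule -- equivalently, $h_r^\perp e_n = s_{(1^n)/(r)}$, and a horizontal $r$-strip inside a single column exists only for $r\leq 1$. Your coproduct argument is the Hopf-algebraic repackaging of the same facts: primitivity of $p_n$ gives $h_r^\perp p_n=\<h_r,p_n\>=\delta_{r,n}$ directly, and $\Delta e_n=\sum_i e_i\otimes e_{n-i}$ together with $\<h_r,e_i\>=\delta_{i,r}\<h_r,e_r\>=\delta_{i,r}\delta_{r,1}$ (which follows from $\<h_r,e_r\>=\<s_{(r)},s_{(1^r)}\>$) gives \eqref{eq:hrperpen}. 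What your approach buys is uniformity -- one mechanism (skewing via the coproduct) handles both identities -- whereas the paper's approach is more elementary, needing only the $p$-expansion of $h_r$ and the Pieri rule. Two cautions on your write-up: the parenthetical claim that ``only the $i=1$ term survives for $r>1$'' is misstated -- degree forces $i=r$, and that term \emph{vanishes} for $r>1$, which is exactly why the answer is $\delta_{r,1}e_{n-1}$ -- and the formula ``$\<h_r,e_i\>=\delta_{r,0}$'' as written is garbled; you should state and use $\<h_r,e_r\>=\delta_{r,1}$ explicitly, since that single evaluation is the crux of the second identity. With those two sentences repaired, your proof is complete and fully rigorous.
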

\begin{proof}
Identity \eqref{eq:hrperppn} follows immediately from the well-know formula
\( h_r=\sum_{\lambda\vdash r}p_\lambda/z_\lambda\),
while \eqref{eq:hrperpen} follows from Pieri rule.
\end{proof}

\begin{theorem}
	Given $k,m,n\in \mathbb{N}$, $n\geq 1$ we have
	\begin{align} \label{eq:hkThetaemen}
	h_{k}^\perp\Theta_{e_{m}} e_{n} 
	& = \Theta_{e_{m-k}}\Delta_{e_{k}}   e_{n} + \Theta_{e_{m-k+1}}\Delta_{e_{k-1}} e_{n-1} .
	\end{align}
\end{theorem}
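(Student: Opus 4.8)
The plan is to apply the general identity \eqref{eq:hjek} from Theorem~\ref{thm:mainconseq1} with $j=k$, and then simplify the sum using the special action of $h_r^\perp$ on $e_n$ recorded in \eqref{eq:hrperpen}. First I would write
\[ h_k^\perp \Theta_{e_m} e_n = \sum_{r=0}^k \Theta_{e_{m-k+r}}\Delta_{e_{k-r}} h_r^\perp e_n. \]
By \eqref{eq:hrperpen}, $h_r^\perp e_n$ vanishes unless $r=0$ (where $h_0^\perp e_n = e_n$) or $r=1$ (where $h_1^\perp e_n = e_{n-1}$), since for $r>1$ we have $h_r^\perp e_n = 0$. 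This collapses the sum to exactly two surviving terms, namely the $r=0$ term $\Theta_{e_{m-k}}\Delta_{e_k} e_n$ and the $r=1$ term $\Theta_{e_{m-k+1}}\Delta_{e_{k-1}} e_{n-1}$, which is precisely the claimed right-hand side of \eqref{eq:hkThetaemen}.

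The argument is essentially a one-line specialization, so there is no serious obstacle; the only point requiring a little care is the boundary case $r=0$, where one must note that $h_0^\perp$ acts as the identity so that the term $\Theta_{e_{m-k}}\Delta_{e_k}e_n$ appears with the correct coefficient, and the convention $e_0=h_0=1$ together with the vanishing conventions $e_j=h_j=0$ for $j<0$ ensure that any formally out-of-range terms (for instance when $k>m$ or $k=0$) are automatically handled. I would also remark that $\Theta_{e_0}$ reduces to the identity operator by \eqref{eq:def_Deltaf}, so the formula remains valid without extra hypotheses on the relative sizes of $k$ and $m$.

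Thus the only genuine input is Theorem~\ref{thm:mainconseq1}, whose proof already extracts \eqref{eq:hjek} as a coefficient of $u^j z^k$ in the main identity \eqref{eq:mainidentity}; feeding $e_n$ into that identity and invoking \eqref{eq:hrperpen} to kill all but two terms yields the result immediately.
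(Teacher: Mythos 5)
Your proof is correct and is essentially identical to the paper's own argument: both apply \eqref{eq:hjek} with $j=k$ and then use \eqref{eq:hrperpen} to collapse the sum to the $r=0$ and $r=1$ terms. Your additional remarks about the boundary conventions ($h_0^\perp$ acting as the identity, $\Theta_{e_0}$ being the identity, and negative indices vanishing) are accurate and harmless, though the paper leaves them implicit.
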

\begin{proof}
Using \eqref{eq:hjek} we have
\begin{align*}
h_{k}^\perp\Theta_{e_{m}} e_{n} & = \sum_{r=0}^k\Theta_{e_{m-k+r}}\Delta_{e_{k-r}}h_r^\perp e_n\\
\text{(using \eqref{eq:hrperpen})}& =  \Theta_{e_{m-k}}\Delta_{e_{k}} e_n+\Theta_{e_{m-k+1}}\Delta_{e_{k-1}}e_{n-1}.
\end{align*}
\end{proof}
The following result first appeared in \cite[Theorem~3.1]{dadderio2019theta}.

\begin{theorem}
Given $n,k\in\NN$, $n>k$ we have
	\[ \Delta_{e_{n-k}}e_n = \Theta_{e_k}\Delta_{e_{n-k}} e_{n-k} + \Theta_{e_{k+1}}\Delta_{e_{n-k-1}}e_{n-k-1} \]hence
	\begin{equation} \label{eq:Deltaprimeen}
	\Delta_{e_{n-k-1}}'e_n = \Theta_{e_k} \Delta_{e_{n-k}} e_{n-k}. 
	\end{equation}
\end{theorem}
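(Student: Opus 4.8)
The plan is to evaluate the single expression $h_{n-k}^{\perp}\Theta_{e_n}e_{n-k}$ in two ways: one evaluation reproduces the two-term right-hand side, the other collapses it to a single Delta term. (Beware that the index ``$k$'' appearing in \eqref{eq:hkThetaemen} is played by $n-k$ here.) For the right-hand side, I apply \eqref{eq:hkThetaemen} with its $m$ equal to $n$, its internal index equal to $n-k$, and acting on $e_{n-k}$, which gives
\[ h_{n-k}^{\perp}\Theta_{e_n}e_{n-k}=\Theta_{e_k}\Delta_{e_{n-k}}e_{n-k}+\Theta_{e_{k+1}}\Delta_{e_{n-k-1}}e_{n-k-1}, \]
since there $\Theta_{e_{m-k}}=\Theta_{e_k}$ and $\Delta_{e_k}=\Delta_{e_{n-k}}$, and the second summand shifts every index down by one. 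This is exactly the right-hand side of the first displayed identity.

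To evaluate the left-hand side in closed form, I use \eqref{eq:Thetaene1} to write $e_{n-k}=\Theta_{e_{n-k-1}}e_1$, so that $\Theta_{e_n}e_{n-k}=\Theta_{e_n}\Theta_{e_{n-k-1}}e_1$, and then apply the factorisation identity \eqref{eq:newthm} with $G=e_1$ and $F=e_{n-k-1}$ to pull the inner Theta operator out as a Delta operator:
\[ h_{n-k}^{\perp}\Theta_{e_n}\Theta_{e_{n-k-1}}e_1=\Delta_{e_{n-k-1}}\,h_1^{\perp}\Theta_{e_n}e_1. \]
Now \eqref{eq:Thetaene1} gives $\Theta_{e_n}e_1=e_{n+1}$ and \eqref{eq:hrperpen} gives $h_1^{\perp}e_{n+1}=e_n$, so the whole expression collapses to $\Delta_{e_{n-k-1}}e_n$; alternatively \eqref{eq:DeltaFem} applied with $\mu=(1)$ produces this value in one step, since $B_{(1)}=1$. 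Equating the two evaluations of $h_{n-k}^{\perp}\Theta_{e_n}e_{n-k}$ then yields the first displayed identity (with its left-hand side being $\Delta_{e_{n-k-1}}e_n$).

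For the ``hence'' I would argue by downward induction on $k$, using \eqref{eq:deltaprime} in the form $\Delta_{e_{n-k-1}}=\Delta_{e_{n-k-1}}'+\Delta_{e_{n-k-2}}'$ on $\Lambda^{(n)}$. The base case $k=n-1$ is immediate, as $\Theta_{e_{n-1}}\Delta_{e_1}e_1=\Theta_{e_{n-1}}e_1=e_n=\Delta_{e_0}'e_n$. For the inductive step I solve the first identity for its first summand, writing $\Theta_{e_k}\Delta_{e_{n-k}}e_{n-k}=\Delta_{e_{n-k-1}}e_n-\Theta_{e_{k+1}}\Delta_{e_{n-k-1}}e_{n-k-1}$; the induction hypothesis at $k+1$ replaces the subtracted term by $\Delta_{e_{n-k-2}}'e_n$, and \eqref{eq:deltaprime} then converts $\Delta_{e_{n-k-1}}e_n-\Delta_{e_{n-k-2}}'e_n$ into $\Delta_{e_{n-k-1}}'e_n$, which is exactly \eqref{eq:Deltaprimeen}.

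Every step is degree bookkeeping. The two points that demand care are the index substitution matching the two-term expression to \eqref{eq:hkThetaemen}, and the edge case $n-k-1=0$ in the factorisation step, where $F=e_0=1$ and $\Theta_F$ is the identity so that \eqref{eq:newthm} degenerates harmlessly. I expect the main obstacle to be exactly this index bookkeeping rather than any conceptual difficulty.
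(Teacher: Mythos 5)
Your proof is correct, and it is a variant of the paper's argument rather than a copy of it. Both proofs pivot on the two-term expansion \eqref{eq:hkThetaemen}; the difference lies in how the closed-form side is produced. The paper starts from $\Delta_{e_{n-k}}e_n$, writes $e_n = M\Delta_{e_1}\PPi e_n^*$ via \eqref{eq:Pienen}, and uses \eqref{eq:DeltaFPiemstar} and \eqref{eq:hkstarenmkstar} to arrive at the intermediate identity $\Delta_{e_{n-k}}e_n = h_{n-k+1}^\perp\Theta_{e_n}e_{n-k+1}$, then expands by \eqref{eq:hkThetaemen}. You instead evaluate $h_{n-k}^\perp\Theta_{e_n}e_{n-k}$ and collapse it via $e_{n-k}=\Theta_{e_{n-k-1}}e_1$ together with \eqref{eq:newthm} (or, in one step, \eqref{eq:DeltaFem} at $\mu=(1)$, since $\wH_{(1)}=e_1$ and $B_{(1)}=1$), obtaining $\Delta_{e_{n-k-1}}e_n$; this is the paper's intermediate identity under $k\mapsto k+1$, reached without any explicit $\PPi e_n^*$ manipulation, which is arguably cleaner. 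More importantly, your index bookkeeping is the right one: the paper's displayed first identity, with left-hand side $\Delta_{e_{n-k}}e_n$, is false as printed (for $n=2$, $k=1$ its right-hand side equals $e_2$, whereas $\Delta_{e_1}e_2 = s_2+(1+q+t)s_{11}$), and indeed the paper's own proof concludes with the shifted right-hand side $\Theta_{e_{k-1}}\Delta_{e_{n-k+1}}e_{n-k+1}+\Theta_{e_k}\Delta_{e_{n-k}}e_{n-k}$, which does not match the display. Your version, $\Delta_{e_{n-k-1}}e_n = \Theta_{e_k}\Delta_{e_{n-k}}e_{n-k}+\Theta_{e_{k+1}}\Delta_{e_{n-k-1}}e_{n-k-1}$, is that conclusion under $k\mapsto k+1$ and is exactly what the ``hence'' \eqref{eq:Deltaprimeen} needs, so flagging the corrected left-hand side was the right call. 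Finally, your explicit downward induction for the ``hence'' (base $k=n-1$, where $\Delta'_{e_0}$ is the identity) spells out the telescoping via \eqref{eq:deltaprime} that the paper leaves to the reader; the paper's implicit version runs upward from $k=0$ using $\Delta_{e_n}=\Delta'_{e_{n-1}}$ on $\Lambda^{(n)}$, and either direction works.
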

\begin{proof}
Using \eqref{eq:Pienen}  we have
	\begin{align*}
	\Delta_{e_{n-k}}e_n & = M\Delta_{e_{n-k}e_1}\PPi e_n^*\\
	\text{(using \eqref{eq:DeltaFPiemstar})}& =Mh_{n-k+1}^\perp\Theta_{e_{n}} \PPi  e_{n-k}^*e_1^*\\
	\text{(using \eqref{eq:hkstarenmkstar})} & =Mh_{n-k+1}^\perp\Theta_{e_{n}} \Delta_{e_1}\PPi  e_{n-k+1}^*\\
	\text{(using \eqref{eq:Pienen})}& =h_{n-k+1}^\perp\Theta_{e_{n}} e_{n-k+1}\\
	 \text{(using \eqref{eq:hkThetaemen})}& = \Theta_{e_{k-1}}\Delta_{e_{n-k+1}} e_{n-k+1} + \Theta_{e_{k}}\Delta_{e_{n-k}} e_{n-k}.
	\end{align*}
	The last assertion follows now from \eqref{eq:deltaprime}.
\end{proof}

We take the chance to give a reformulation (using \eqref{eq:Pien}) of \cite[Theorem~3.3]{dadderio2019theta}.
\begin{theorem}
	Given $n,k\in\NN$ with $n>k$ we have
	\begin{equation}  \label{eq:Deltapn}
	\Delta_{e_{n-k}}\PPi e_n^* = \Theta_{e_k} \Delta_{e_{n-k}}  \PPi e_{n-k}^*. 
	\end{equation}
\end{theorem}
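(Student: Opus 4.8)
The plan is to read the key identity \eqref{eq:DeltaFPiemstar} from right to left, so as to rewrite the left-hand side of \eqref{eq:Deltapn} as $h^\perp$ applied to a single Theta operator, and then to expand that $h^\perp\Theta$ product by the commutation rule \eqref{eq:hjek}, showing that all but one term die.

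First I would apply \eqref{eq:DeltaFPiemstar} with $F=e_{n-k}$ and $m=n$ (both hypotheses hold, since $n\geq 1$ and $n-k\geq 1$ because $n>k$), which reads
\[ \Delta_{e_{n-k}}\PPi e_n^* = h_{n-k}^\perp\Theta_{e_n}\PPi e_{n-k}^*. \]
Thus it suffices to prove $h_{n-k}^\perp\Theta_{e_n}\PPi e_{n-k}^* = \Theta_{e_k}\Delta_{e_{n-k}}\PPi e_{n-k}^*$. To this end I would expand the operator $h_{n-k}^\perp\Theta_{e_n}$ using \eqref{eq:hjek} (with Theta-index $n$ and $j=n-k$), getting
\[ h_{n-k}^\perp\Theta_{e_n} = \sum_{r=0}^{n-k}\Theta_{e_{k+r}}\Delta_{e_{n-k-r}}h_r^\perp, \]
and apply this to $\PPi e_{n-k}^*$.

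The heart of the argument is that only the $r=0$ summand survives. By \eqref{eq:Pien} we have $\PPi e_{n-k}^* = \tfrac1M\alpha_{n-k}p_{n-k}$, so \eqref{eq:hrperppn} gives $h_r^\perp\PPi e_{n-k}^* = \tfrac1M\alpha_{n-k}\,\delta_{r,n-k}$ for every $r\geq 1$ (the case $r=1$ with $n-k\geq 2$ being an immediate orthogonality check, since \eqref{eq:hrperppn} is stated only for $r>1$). Hence in the sum only $r=0$ and $r=n-k$ can contribute. The $r=0$ term is exactly $\Theta_{e_k}\Delta_{e_{n-k}}\PPi e_{n-k}^*$, the desired right-hand side; and in the $r=n-k$ term the factor $h_{n-k}^\perp\PPi e_{n-k}^*$ is a scalar (degree $0$), $\Delta_{e_0}$ is the identity, and the surviving operator is $\Theta_{e_n}$ applied to a constant, which vanishes by the definition \eqref{eq:def_Deltaf} of the Theta operators since $n\geq 1$. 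This leaves precisely the $r=0$ term, proving \eqref{eq:Deltapn}.

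The main obstacle is the bookkeeping that forces the spurious terms to disappear: one must notice both that $\PPi e_{n-k}^*$ is proportional to a single power sum (so that $h_r^\perp$ annihilates it unless $r\in\{0,n-k\}$) and that the one not-obviously-vanishing term, at $r=n-k$, is killed by the degree condition built into the definition of $\Theta_{e_n}$. As an alternative I could instead pair both sides of \eqref{eq:Deltapn} against $\wH_\mu$ in the star scalar product: using the self-adjointness of $\PPi$ and $\Delta_{e_{n-k}}$ together with $\<e_n^*,\wH_\mu\>_*=1$ one obtains $\Pi_\mu\,e_{n-k}[B_\mu]$ from the left side, and the same value from the right side after moving $\Theta_{e_k}$ across (its star-adjoint being conjugation by $\PPi$ followed by $h_k^\perp$) and invoking \eqref{eq:hkstarenmkstar} and \eqref{eq:hkstarenmkstar1}; but the operator-identity route above is shorter and stays entirely within the displayed results.
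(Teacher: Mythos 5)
Your argument is correct, including the two delicate points: the extension of \eqref{eq:hrperppn} to the case $r=1$, and the vanishing of the boundary term $r=n-k$, where $h_{n-k}^\perp\PPi e_{n-k}^*$ is a constant that $\Theta_{e_n}$ annihilates by the degree convention in \eqref{eq:def_Deltaf}. However, your route is genuinely different from the paper's. The paper proves \eqref{eq:Deltapn} in one line, entirely at the level of plethystic manipulation: since $\Delta_{e_{n-k}}$ is diagonal on the $\wH_\mu$ it commutes past $\PPi$, so \eqref{eq:hkstarenmkstar} gives $\Delta_{e_{n-k}}\PPi e_n^* = \PPi(h_{n-k}^*e_k^*)$, which by the very definition \eqref{eq:def_Deltaf} of $\Theta_{e_k}$ equals $\Theta_{e_k}\PPi h_{n-k}^*$, and a second application of \eqref{eq:hkstarenmkstar} (with $e_0^*=1$) turns $\PPi h_{n-k}^*$ into $\Delta_{e_{n-k}}\PPi e_{n-k}^*$. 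You instead route the computation through two of the paper's heavier results: \eqref{eq:DeltaFPiemstar} to rewrite the left-hand side as $h_{n-k}^\perp\Theta_{e_n}\PPi e_{n-k}^*$, and then the commutation rule \eqref{eq:hjek} together with $\PPi e_{n-k}^*\propto p_{n-k}$ to isolate the $r=0$ term. This is non-circular, since both \eqref{eq:DeltaFPiemstar} and \eqref{eq:hjek} are established before this statement and independently of it; indeed it is the same ``single surviving term'' mechanism the paper itself uses to prove the nearby lemma giving \eqref{eq:hkperpThetaem} --- though there the index $r$ stays strictly below the degree of the power sum, so no boundary term arises, whereas in your setting the boundary term does fire and you correctly kill it with the $\Theta$-on-constants rule. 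What each approach buys: the paper's proof is shorter and needs only \eqref{eq:hkstarenmkstar} plus the definition of the Theta operators, while yours exhibits the identity as a degenerate instance of the general expansion \eqref{eq:hjek}, at the cost of more machinery. Your sketched alternative, pairing both sides against $\wH_\mu$ in the star scalar product, is also sound and is essentially how the paper proves \eqref{eq:DeltaFPiemstar} itself.
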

\begin{proof}
	Using \eqref{eq:hkstarenmkstar}	we have
	\[ \Delta_{e_{n-k}}\PPi e_n^* = \PPi (h_{n-k}^*e_k^*)=\Theta_{e_k}\PPi h_{n-k}^*=\Theta_{e_k} \Delta_{e_{n-k}} \PPi e_{n-k}^*. \]
\end{proof}

\subsection{Some applications}

The following lemma will be useful.
\begin{lemma}
	Given $k,m,n\in \mathbb{N}$, $m,n\geq 1$ we have
	\begin{align} \label{eq:hkperpThetaem}
	h_{k}^\perp\Theta_{e_{m}} \PPi e_n^* 
	& = \PPi (h_k^*e_{m-k}^*e_{n-k}^*) .
	\end{align}
\end{lemma}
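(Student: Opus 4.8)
The plan is to reduce everything to the expansion of $h_k^\perp\Theta_{e_m}$ furnished by \eqref{eq:hjek}, and then to exploit the fact that $\PPi e_n^*$ is essentially a single power sum. Applying \eqref{eq:hjek} (which is a genuine operator identity on all of $\Lambda$, so there is no obstruction to feeding it the degree-$n$ element $\PPi e_n^*$) gives
\[ h_k^\perp\Theta_{e_m}\PPi e_n^* = \sum_{r=0}^k \Theta_{e_{m-k+r}}\Delta_{e_{k-r}}h_r^\perp\PPi e_n^* , \]
so the whole computation hinges on understanding the skewed functions $h_r^\perp\PPi e_n^*$. Here I would invoke \eqref{eq:Pien}, which says $\PPi e_n^* = \tfrac{1}{M}\alpha_n p_n$, together with \eqref{eq:hrperppn} (the case $r=1$ being immediate), giving $h_r^\perp\PPi e_n^* = \tfrac{1}{M}\alpha_n\,\delta_{r,n}$ for $r\geq 1$ and $h_0^\perp\PPi e_n^* = \PPi e_n^*$.

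The main point, and the step I expect to require the most care, is showing that every term with $r\geq 1$ drops out; this is exactly where the degree-$0$ conventions for $\Theta$ and $\Delta$ come into play. For $1\leq r\leq k$ the factor $h_r^\perp\PPi e_n^*$ is the constant $\tfrac{1}{M}\alpha_n\delta_{r,n}$, so the term vanishes unless $r=n$; and when $r=n$ the operator $\Delta_{e_{k-n}}$ applied to a constant vanishes unless $k=n$, in which case the surviving factor is $\Theta_{e_m}$ applied to a constant, which is $0$ because $m\geq 1$ (recall that $\Theta_f$ kills $\Lambda^{(0)}$ when $\deg f\geq 1$, by \eqref{eq:def_Deltaf}). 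Hence only the $r=0$ term survives, and
\[ h_k^\perp\Theta_{e_m}\PPi e_n^* = \Theta_{e_{m-k}}\Delta_{e_k}\PPi e_n^* . \]

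To finish, I would commute $\Delta_{e_k}$ past $\PPi$ (both act diagonally on the Macdonald basis, hence commute) and use \eqref{eq:hkstarenmkstar} in the form $\Delta_{e_k}e_n^* = h_k^* e_{n-k}^*$ to obtain
\[ \Theta_{e_{m-k}}\Delta_{e_k}\PPi e_n^* = \Theta_{e_{m-k}}\PPi\bigl(h_k^* e_{n-k}^*\bigr) . \]
Since $h_k^* e_{n-k}^*$ has positive degree $n\geq 1$, the definition of the Theta operator \eqref{eq:def_Deltaf} gives $\Theta_{e_{m-k}}\PPi(h_k^* e_{n-k}^*) = \PPi e_{m-k}^*\,\PPi^{-1}\PPi(h_k^* e_{n-k}^*) = \PPi\bigl(h_k^* e_{m-k}^* e_{n-k}^*\bigr)$, which is the claimed right-hand side. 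Both hypotheses $m,n\geq 1$ are used: $n\geq 1$ guarantees that $\PPi e_n^*$ and $h_k^* e_{n-k}^*$ lie in positive degree, while $m\geq 1$ is precisely what forces the stray $r=n=k$ term to vanish.
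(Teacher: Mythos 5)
Your proof is correct and follows essentially the same route as the paper: expand via \eqref{eq:hjek}, kill the $r\geq 1$ terms using \eqref{eq:Pien} and $h_r^\perp p_n=\delta_{r,n}$, then finish with \eqref{eq:hkstarenmkstar} and the definition \eqref{eq:def_Deltaf}. The only difference is cosmetic: the paper splits off the cases $k>n$ (both sides vanish) and $k=n$ (citing \eqref{eq:DeltaFPiemstar}) before running this computation for $k<n$, whereas your degree-zero bookkeeping --- $\Delta_{e_{k-n}}$ and $\Theta_{e_m}$ annihilating constants --- handles all cases uniformly, which is a slight streamlining rather than a different argument.
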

\begin{proof}
	If $k>n$, we get $0$ on both sides. If $k = n$, this is a simple consequence of \eqref{eq:DeltaFPiemstar}. Now using \eqref{eq:hjek}, we have for $k <n$,
	\begin{align*}
	h_{k}^\perp\Theta_{e_{m}} \PPi e_n^* & =\sum_{r=0}^k\Theta_{e_{m-k+r}}\Delta_{e_{k-r}}h_r^\perp \PPi e_n^*\\
	\text{(using \eqref{eq:Pien} and \eqref{eq:hrperppn})} & =\Theta_{e_{m-k}}\Delta_{e_{k}}\PPi e_n^*\\
	\text{(using \eqref{eq:hkstarenmkstar})} & =\Theta_{e_{m-k}}\PPi (h_k^*e_{n-k}^*)\\
	& = \PPi (h_k^*e_{m-k}^*e_{n-k}^*).
	\end{align*}
\end{proof}
The following result first appeared in \cite[Theorem~4.4]{DAdderio_Iraci_newdinv}.
\begin{corollary}
	Given $m,n,k\in\NN$ with $n\geq 1$ and $m> k$ we have
\[ \<\Delta_{e_{m+n-k}} e_{m+n-k},e_kh_{n-k}h_{m-k}\> = \<\Delta_{h_n}\Delta_{e_{m-k}}'e_{m+1},h_{m+1}\>. \]
\end{corollary}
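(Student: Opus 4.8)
The plan is to transform the right-hand side into a pairing involving a Theta operator and then identify it with the left-hand side through a single general pairing identity. Assuming $n\geq k$ so that \eqref{eq:scalarDeltaprimehh} applies (for $n<k$ the left-hand side vanishes, as $h_{n-k}=0$), I would first use \eqref{eq:scalarDeltaprimehh} to rewrite $\<\Delta_{h_n}\Delta_{e_{m-k}}'e_{m+1},h_{m+1}\>=\<\Delta_{e_{m+n-k-1}}'e_{m+n},h_mh_n\>$, and then \eqref{eq:Deltaprimeen} (with $n$ replaced by $m+n$, which is legitimate since $m+n>k$) to get $\Delta_{e_{m+n-k-1}}'e_{m+n}=\Theta_{e_k}\Delta_{e_{m+n-k}}e_{m+n-k}$. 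Writing $N=m+n-k$, the claim then becomes the equality $\<\Theta_{e_k}\Delta_{e_N}e_N,h_mh_n\>=\<\Delta_{e_N}e_N,e_kh_{n-k}h_{m-k}\>$.

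This last equality is an instance of the general identity
\[ \<\Theta_{e_k}G,h_mh_n\>=\<G,e_kh_{n-k}h_{m-k}\>\qquad\text{for all }G\in\Lambda^{(m+n-k)}, \]
valid whenever $m,n\geq 1$, which I would prove by checking that both sides equal $\<e_k^*\PPi^{-1}G,\PPi(e_m^*e_n^*)\>_*$. For the right-hand side I convert to the star scalar product via $\<G,e_kh_{n-k}h_{m-k}\>=\<G,h_k^*e_{n-k}^*e_{m-k}^*\>_*$, move $\PPi$ onto the product using its self-adjointness for the star product, apply the key Lemma \eqref{eq:hkperpThetaem} to rewrite $\PPi(h_k^*e_{m-k}^*e_{n-k}^*)=h_k^\perp\Theta_{e_m}\PPi e_n^*$, shift $h_k^\perp$ across (its star-adjoint being multiplication by $e_k^*=(\omega h_k)^*$), and finally use $\Theta_{e_m}\PPi e_n^*=\PPi(e_m^*e_n^*)$, which is immediate since $\Theta_{e_m}H=\PPi(e_m^*\cdot\PPi^{-1}H)$. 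For the left-hand side I convert $\<\Theta_{e_k}G,h_mh_n\>=\<\Theta_{e_k}G,e_m^*e_n^*\>_*$, write $\Theta_{e_k}G=\PPi(e_k^*\cdot\PPi^{-1}G)$, and move $\PPi$ across by self-adjointness, landing on the same expression. Specializing $G=\Delta_{e_N}e_N$ then finishes the proof.

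The one genuine input is Lemma \eqref{eq:hkperpThetaem}: it is exactly the device that turns the product $\PPi(h_k^*e_{m-k}^*e_{n-k}^*)$ appearing on the left into a $\Theta$-operator expression, and without it the two sides would not visibly match, since $\PPi$ is not multiplicative. The main care required is otherwise purely bookkeeping: correctly translating between the Hall and star scalar products through $\<F,P\>=\<F,(\omega P)^*\>_*$, using that the star-adjoint of $h_k^\perp$ is multiplication by $e_k^*$ and that $\PPi$ and the operators $\Delta_{e_j}$ are both self-adjoint for the star product and mutually commuting, and applying Lemma \eqref{eq:hkperpThetaem} with the three factors ordered to match its statement.
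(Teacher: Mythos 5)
Your proof is correct and takes essentially the same route as the paper's: the general pairing identity $\<\Theta_{e_k}G,h_mh_n\>=\<G,e_kh_{n-k}h_{m-k}\>$ you isolate is exactly the middle of the paper's chain (conversion to the star scalar product, self-adjointness of $\PPi$, Lemma \eqref{eq:hkperpThetaem}, star-adjointness of $h_k^\perp$, and $\Theta_{e_m}\PPi e_n^*=\PPi(e_m^*e_n^*)$), applied to $G=\Delta_{e_{m+n-k}}e_{m+n-k}$, followed by the same appeals to \eqref{eq:Deltaprimeen} and \eqref{eq:scalarDeltaprimehh}. The only differences are cosmetic: you traverse the chain from the right-hand side, and you flag explicitly the hypothesis $n\geq k$ needed for \eqref{eq:scalarDeltaprimehh}, which the paper uses silently.
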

\begin{proof}
We have
\begin{align*}
\<\Delta_{e_{m+n-k}} e_{m+n-k},e_kh_{n-k}h_{m-k}\> & = \<\PPi^{-1}\Delta_{e_{m+n-k}} e_{m+n-k},\PPi (h_k^*e_{n-k}^*e_{m-k}^*)\>_*\\
\text{(using \eqref{eq:hkperpThetaem})}& = \<\PPi^{-1}\Delta_{e_{m+n-k}} e_{m+n-k},h_k^\perp \Theta_{e_{m}}\PPi  e_n^*\>_*\\
 & = \<\Theta_{e_k}\Delta_{e_{m+n-k}} e_{m+n-k},e_{m}^*e_{n}^*\>_*\\
\text{(using \eqref{eq:Deltaprimeen})}& = \<\Delta_{e_{m+n-k-1}}' e_{m+n},h_{m}h_{n}\>\\
\text{(using \eqref{eq:scalarDeltaprimehh})} & = \<\Delta_{h_n}\Delta_{e_{m-k}}'e_{m+1},h_{m+1}\>.
\end{align*}
\end{proof}

The following theorem first appeared in \cite[Theorem~4.22 and Corollary 4.24]{DAdderio-Iraci-VandenWyngaerd-Bible}.
\begin{theorem}
Given positive $a,b,k\in\NN$ with $a\geq k$ we have
\begin{equation}
\<\Delta_{e_a}\Delta_{e_{a+b-k-1}}'e_{a+b},h_{a+b}\>= \<\Delta_{h_k}\Delta_{e_{a-k}}e_{a+b-k},e_{a+b-k}\>,
\end{equation}
hence
\begin{equation}
\<\Delta_{e_a}'\Delta_{e_{a+b-k-1}}'e_{a+b},h_{a+b}\>= \<\Delta_{h_k}\Delta_{e_{a-k}}'e_{a+b-k},e_{a+b-k}\>.
\end{equation}
\end{theorem}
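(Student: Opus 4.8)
The plan is to prove the first displayed identity and then deduce the primed version from the Pieri relation \eqref{eq:deltaprime} exactly as in the earlier corollaries. The shape of the target statement strongly suggests that the proof will run through the $\PPi$-world: I want to rewrite both sides as star scalar products of $\PPi$-images, apply the key lemma \eqref{eq:hkperpThetaem}, and then transport everything through \eqref{eq:Deltaprimeen} and the reciprocity-type identity \eqref{eq:scalarDeltaprimehh} that was proved precisely for this kind of manipulation. Concretely, I would start from the right-hand side $\<\Delta_{h_k}\Delta_{e_{a-k}}e_{a+b-k},e_{a+b-k}\>$ and convert the bound symmetric function $e_{a+b-k}$ into a $\PPi$-expression via \eqref{eq:Pienen}, so that the scalar product becomes a star scalar product against a $\PPi e^*$ term.

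First I would use \eqref{eq:Pienen} to write $e_{a+b-k}=M\Delta_{e_1}\PPi e_{a+b-k+1}^*$ (or, more usefully, express $e_{a+b-k}$ inside the scalar product so that one factor becomes $\PPi(\cdots)^*$), converting the Hall product into a star product. Then the plan is to move the operator $\Delta_{h_k}$ across the star product using self-adjointness of Macdonald-diagonal operators, and to recognize a factor of the form $\PPi(h_k^* e_{a-k}^* e_{a+b-k}^*)$, which \eqref{eq:hkperpThetaem} identifies with $h_k^\perp \Theta_{e_a}\PPi e_{a+b-k}^*$ (with the index matching $m=a$, $n=a+b-k$, using $a\geq k$). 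At that point I can peel off $h_k^\perp$ as the adjoint of multiplication by $h_k^*$, and use \eqref{eq:Deltaprimeen} in the form $\Theta_{e_k}\Delta_{e_{j}}e_{j}=\Delta_{e_{j-1}}'e_{j+k}$ to collapse a $\Theta_{e_k}$ acting on a $\Delta_e e$ expression into a primed Delta applied to a single $e$. Matching the homogeneous degrees, this should turn the right-hand side into $\<\Delta_{e_a}\Delta_{e_{a+b-k-1}}'e_{a+b},h_{a+b}\>$, which is the left-hand side; the intermediate appearance of $\Delta_{e_a}$ comes from the surviving $\Delta$-operator after the $\Theta\to\Delta'$ collapse, and I would use \eqref{eq:scalarDeltaprimehh} to rewrite any leftover $\Delta_{h_n}\Delta_{e}'e$-type pairing into the desired $h_{a+b}$-pairing.

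The main obstacle will be bookkeeping the indices and the star-versus-Hall conversions so that everything lines up in the right homogeneous degree: the Theta and Delta operators shift degrees, and the substitution $e_{a+b-k}=M\Delta_{e_1}\PPi e^*$ introduces $\Delta_{e_1}$ and $\PPi$ factors whose interaction with $\Delta_{h_k}$ must be tracked carefully via \eqref{eq:hkstarenmkstar} and \eqref{eq:hperp_estar_adjoint}. I expect the cleanest route is to follow the template of the preceding corollary almost verbatim: express each side through $\PPi e^*$ images, apply \eqref{eq:hkperpThetaem} to introduce $h_k^\perp\Theta_{e_a}$, and then invoke \eqref{eq:Deltaprimeen} and \eqref{eq:scalarDeltaprimehh} in sequence. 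The final step, passing from the unprimed to the primed identity, is immediate from \eqref{eq:deltaprime} since $\Delta_{e_a}=\Delta_{e_{a-1}}'+\Delta_{e_{a-2}}'+\cdots$ telescopes against the analogous expansion on the other side, so no further work is required there beyond invoking the Pieri relation as in the earlier proofs.
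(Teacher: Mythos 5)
Your plan has a genuine gap: it never accounts for the one step that actually produces the $\Delta_{h_k}$ on the right-hand side. The paper's proof is a four-step chain requiring no $\PPi$-machinery at all: $\<\Delta_{e_a}\Delta_{e_{a+b-k-1}}'e_{a+b},h_{a+b}\>=\<\Delta_{e_{a+b-k-1}}'e_{a+b},e_ah_b\>$ by \eqref{eq:hkstarenmkstar}; then $\Delta_{e_{a+b-k-1}}'e_{a+b}=\Theta_{e_k}\Delta_{e_{a+b-k}}e_{a+b-k}$ by \eqref{eq:Deltaprimeen}; then --- and this is the crux --- $\<\Theta_{e_k}\Delta_{e_{a+b-k}}e_{a+b-k},e_ah_b\>=\<\Delta_{h_k}\Delta_{e_{a+b-k}}e_{a+b-k},e_{a-k}h_b\>$ by \eqref{eq:lemThetaekDeltahk}; and finally \eqref{eq:hkstarenmkstar} again gives $\<\Delta_{h_k}\Delta_{e_{a-k}}e_{a+b-k},e_{a+b-k}\>$. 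The exchange $\Theta_{e_k}\leftrightarrow\Delta_{h_k}$ under a pairing of the form $\<\,\cdot\,,h_be_a\>$ is exactly \eqref{eq:lemThetaekDeltahk}, and none of the tools you list performs it. In particular \eqref{eq:scalarDeltaprimehh}, which you propose as the closer, has the wrong shape: both of its sides pair a \emph{primed} Delta expression against complete homogeneous functions ($h_{m+1}$, resp.\ $h_mh_n$), so neither side can be matched to $\<\Delta_{h_k}\Delta_{e_{a-k}}e_{a+b-k},e_{a+b-k}\>$, where the surviving Delta is unprimed and the pairing partner is a single elementary $e_{a+b-k}$. You are in effect following the template of the \emph{preceding} corollary (which genuinely does reduce to \eqref{eq:scalarDeltaprimehh}), but the present theorem does not; without \eqref{eq:lemThetaekDeltahk} the $\PPi$-route you sketch does not close. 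Note also that your appeal to \eqref{eq:hkperpThetaem} with $m=a$, $n=a+b-k$ produces $\PPi(h_k^*e_{a-k}^*e_{a+b-2k}^*)$ (since $n-k=a+b-2k$), which does not match the degree pattern of $\Delta_{h_k}\Delta_{e_{a-k}}e_{a+b-k}$; unlike $\Delta_{e_j}$, the operator $\Delta_{h_k}$ admits no product formula of the type $\Delta_{e_j}\PPi e_n^*=\PPi(h_j^*e_{n-j}^*)$, so it cannot be absorbed this way.

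A second, smaller error concerns the primed deduction: you misquote \eqref{eq:deltaprime}, which on $\Lambda^{(n)}$ reads $\Delta_{e_a}=\Delta_{e_a}'+\Delta_{e_{a-1}}'$ (two terms, not the full expansion $\Delta_{e_{a-1}}'+\Delta_{e_{a-2}}'+\cdots$ you write); inverting it gives $\Delta_{e_a}'=\sum_{i\geq 0}(-1)^i\Delta_{e_{a-i}}$. The deduction is then not a mere telescoping of one identity: as in the paper, one must replace $(a,b)$ by $(a-i,b+i)$ in the unprimed identity --- keeping $a+b$ fixed so the left pairing stays in degree $a+b$ while the right-hand operator shifts to $\Delta_{e_{a-i-k}}$ on $\Lambda^{(a+b-k)}$ --- and sum over $i$ with alternating signs, turning $\Delta_{e_a}$ into $\Delta_{e_a}'$ and $\Delta_{e_{a-k}}$ into $\Delta_{e_{a-k}}'$ simultaneously. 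Your version gestures at the right idea but, as written, neither the expansion nor the index bookkeeping is correct.
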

\begin{proof}
	For the first identity we have
\begin{align*}
\<\Delta_{e_a}\Delta_{e_{a+b-k-1}}'e_{a+b},h_{a+b}\> & = \< \Delta_{e_{a+b-k-1}}'e_{a+b},e_ah_{b}\>\\
\text{(using \eqref{eq:Deltaprimeen})} & = \< \Theta_{e_k}\Delta_{e_{a+b-k}} e_{a+b-k},e_{a}h_{b}\>\\
\text{(using \eqref{eq:lemThetaekDeltahk})}& = \< \Delta_{h_k}\Delta_{e_{a+b-k}} e_{a+b-k},e_{a-k}h_{b}\>\\
\text{(using \eqref{eq:hkstarenmkstar})} & = \< \Delta_{h_k}\Delta_{e_{a-k}} e_{a+b-k},e_{a+b-k}\>.
\end{align*}
Now the second identity follows easily from \eqref{eq:deltaprime}: replace $a$ by $a-i$ and $b$ by $b+i$ in the first identity, and then sum over $i\in\NN$ with alternating signs.
\end{proof}

The following theorem is \cite[Theorem~2]{Garsia_Haglund_Romero2019}.
\begin{corollary}
	Given positive $n,k,j\in\NN$, $n\geq k$, $n>j$, we have
	\begin{equation} \label{eq:GHRthm2}
	h_j^\perp \Delta_{e_{n-k}}\alpha_n p_n = \Delta_{e_{n-k-j}h_j}\alpha_{n-j}p_{n-j}. 
	\end{equation}
\end{corollary}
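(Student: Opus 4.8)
The plan is to reduce the claim to an identity purely about the operator $\PPi$ applied to elementary symmetric functions, and then to run a short chain of the reciprocity-type lemmas already proved in this section. By \eqref{eq:Pien} we have $\alpha_n p_n = M\PPi e_n^*$ and $\alpha_{n-j}p_{n-j} = M\PPi e_{n-j}^*$, and since $M$ is a scalar, cancelling it turns \eqref{eq:GHRthm2} into the equivalent statement
\[ h_j^\perp \Delta_{e_{n-k}}\PPi e_n^* = \Delta_{e_{n-k-j}h_j}\PPi e_{n-j}^*. \]
I would prove this assuming $n>k$ (the boundary case $n=k$ is treated separately at the end).

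First I would apply \eqref{eq:DeltaFPiemstar} with $F=e_{n-k}\in\Lambda^{(n-k)}$ and index $m=n$ to rewrite the input on the left as $\Delta_{e_{n-k}}\PPi e_n^* = h_{n-k}^\perp \Theta_{e_n}\PPi e_{n-k}^*$. The left-hand side then has the exact shape $A^\perp h_{n-k}^\perp \Theta_{e_n}F$ of \eqref{eq:AperpDeltaomegaA}, with $A=h_j$ (so $d=j$ and $\omega A = e_j$) and $F=\PPi e_{n-k}^*$, which yields
\[ h_j^\perp\, h_{n-k}^\perp \Theta_{e_n}\PPi e_{n-k}^* = h_{n-k}^\perp \Theta_{e_{n-j}}\Delta_{e_j}\PPi e_{n-k}^*. \]
Since $\Delta_{e_j}$ and $\PPi$ are both diagonal on the Macdonald basis they commute, and \eqref{eq:hkstarenmkstar} gives $\Delta_{e_j}e_{n-k}^* = h_j^* e_{n-k-j}^*$, so $\Delta_{e_j}\PPi e_{n-k}^* = \PPi(h_j e_{n-k-j})^*$. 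Substituting this and applying \eqref{eq:DeltaFPiemstar} once more, now with $F=h_j e_{n-k-j}\in\Lambda^{(n-k)}$ and index $m=n-j$, produces $h_{n-k}^\perp \Theta_{e_{n-j}}\PPi(h_j e_{n-k-j})^* = \Delta_{h_j e_{n-k-j}}\PPi e_{n-j}^*$, which is exactly the right-hand side of the reduced identity.

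I do not expect any single step of this chain to be a genuine obstacle; the real care lies in the degree bookkeeping and in the degenerate ranges, which is where I would be most attentive. When $n-k<j$ one has $e_{n-k-j}=0$, and both sides vanish: the right through $\Delta_0$, and the left because $\Delta_{e_j}$ annihilates $\Lambda^{(n-k)}$. The excluded case $n=k$ must be checked directly, where the reduced identity reads $h_j^\perp \PPi e_n^* = 0$, which holds by \eqref{eq:Pien} and \eqref{eq:hrperppn} since $j<n$, matching the vanishing right-hand side. These boundary verifications, rather than the main computation, are the only place requiring genuine vigilance.
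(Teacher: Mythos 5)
Your proof is correct and takes essentially the same route as the paper's: the identical chain of \eqref{eq:Pien}, \eqref{eq:DeltaFPiemstar}, \eqref{eq:AperpDeltaomegaA}, \eqref{eq:hkstarenmkstar}, \eqref{eq:DeltaFPiemstar}, and \eqref{eq:Pien}, applied in the same order with the same substitutions. Your explicit treatment of the degenerate cases $n=k$ and $n-k<j$ is a minor addition that the paper's proof leaves implicit.
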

\begin{proof}
	Using \eqref{eq:Pien} we have
	\begin{align*}
	h_j^\perp \Delta_{e_{n-k}}\alpha_n p_n & =Mh_j^\perp \Delta_{e_{n-k}}\PPi e_n^*\\
	\text{(using \eqref{eq:DeltaFPiemstar})}& =Mh_j^\perp h_{n-k}^\perp \Theta_{e_n} \PPi e_{n-k}^*\\
	\text{(using \eqref{eq:AperpDeltaomegaA})}& =Mh_{n-k}^\perp \Theta_{e_{n-j}} \Delta_{e_j}\PPi e_{n-k}^*\\
	\text{(using \eqref{eq:hkstarenmkstar})}& =Mh_{n-k}^\perp \Theta_{e_{n-j}} \PPi (h_j^*e_{n-k-j}^*)\\
	\text{(using \eqref{eq:DeltaFPiemstar})}& =M \Delta_{h_je_{n-k-j}}\PPi e_{n-j}^*\\
	\text{(using \eqref{eq:Pien})}& =\Delta_{h_je_{n-k-j}} \alpha_{n-j} p_{n-j}.
	\end{align*}
\end{proof}
The following theorem is \cite[Theorem~3]{Garsia_Haglund_Romero2019}.
\begin{corollary}
	Given positive $n,k,j\in\NN$, $n\geq k$, $n>j$, we have
	\[ h_j^\perp \Delta_{h_{n-k}}\alpha_n p_n =  h_{n-k}^\perp \Delta_{h_{j}} \Delta_{e_{n-k-j}}\alpha_{2n-k-j}p_{2n-k-j}. \]
\end{corollary}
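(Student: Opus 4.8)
The plan is to sidestep the $\Theta$-operator computations entirely and instead obtain the identity by applying the previous corollary, namely \eqref{eq:GHRthm2}, \emph{twice}. Before doing so I would dispose of the degenerate case $n=k$: there $e_{n-k-j}=e_{-j}=0$ (as $j\geq 1$), so $\Delta_{e_{n-k-j}}=0$ and the right-hand side vanishes, while on the left $\Delta_{h_{n-k}}=\Delta_{h_0}$ is the identity and $h_j^\perp p_n=0$ because $j\neq n$ (cf.\ \eqref{eq:hrperppn}), so the left-hand side vanishes as well. Thus I may assume $n>k$, which makes $n-k\geq 1$ and keeps every index appearing below inside the range demanded by \eqref{eq:GHRthm2}.

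Assuming $n>k$, I would start from the right-hand side and first use the multiplicativity of the Delta operators, $\Delta_{h_j}\Delta_{e_{n-k-j}}=\Delta_{h_je_{n-k-j}}=\Delta_{e_{n-k-j}h_j}$, to write it as $h_{n-k}^\perp\Delta_{e_{n-k-j}h_j}\alpha_{2n-k-j}p_{2n-k-j}$. The inner factor $\Delta_{e_{n-k-j}h_j}\alpha_{2n-k-j}p_{2n-k-j}$ is exactly the right-hand side of \eqref{eq:GHRthm2} under the substitution $(n,k,j)\mapsto(2n-k,\,n,\,j)$; reading that identity from right to left rewrites it as $h_j^\perp\Delta_{e_{n-k}}\alpha_{2n-k}p_{2n-k}$. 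After this first application the expression has become $h_{n-k}^\perp h_j^\perp\Delta_{e_{n-k}}\alpha_{2n-k}p_{2n-k}$.

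For the second step I would commute the two adjoint operators, $h_{n-k}^\perp h_j^\perp=h_j^\perp h_{n-k}^\perp$, and then apply \eqref{eq:GHRthm2} again, now with $(n,k,j)\mapsto(2n-k,\,n,\,n-k)$. This collapses $h_{n-k}^\perp\Delta_{e_{n-k}}\alpha_{2n-k}p_{2n-k}$ into $\Delta_{e_0h_{n-k}}\alpha_n p_n=\Delta_{h_{n-k}}\alpha_n p_n$ (using $e_0=1$), so that what remains is $h_j^\perp\Delta_{h_{n-k}}\alpha_n p_n$, which is precisely the left-hand side. This closes the chain of equalities.

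The only genuine work, and the step I expect to require care, is verifying the hypotheses of \eqref{eq:GHRthm2} at each of the two invocations: one must check positivity of all substituted parameters together with the inequalities $2n-k\geq n$ (equivalent to $n\geq k$), $2n-k>j$, and $2n-k>n-k$, the last two being automatic once $n>k$ and $n>j$. This is exactly where the hypothesis $n>k$ is needed, and it is the reason the case $n=k$ has to be handled separately as above.
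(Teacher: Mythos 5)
Your proof is correct, but it takes a genuinely different route from the paper. The paper proves the identity directly inside the Theta-operator framework: writing $\alpha_n p_n = M\PPi e_n^*$ via \eqref{eq:Pien}, it runs a chain through \eqref{eq:DeltaFPiemstar}, \eqref{eq:hkstarenmkstar}, and two applications of the commutation rule \eqref{eq:AperpDeltaomegaA} to move $h^\perp$'s past $\Theta_{e}$-operators, then folds everything back with \eqref{eq:DeltaFPiemstar} and \eqref{eq:Pien} --- the same machinery used for the neighbouring corollaries. You instead bootstrap entirely from the preceding corollary \eqref{eq:GHRthm2}, applying it twice, with $(n,k,j)\mapsto(2n-k,\,n,\,j)$ and then $(2n-k,\,n,\,n-k)$, together with multiplicativity of Delta operators and commutativity of the $h_a^\perp$; I checked both substituted instances and your parameter verifications, and they are all legitimate. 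What your route buys: it is shorter, it exhibits the statement (Theorem~3 of Garsia--Haglund--Romero) as a purely formal consequence of Theorem~2 (i.e., of \eqref{eq:GHRthm2}), and it is in fact more careful at the boundary --- the paper's chain silently needs $n>k$, since its second step invokes \eqref{eq:DeltaFPiemstar} with $F=h_{n-k}$, which requires positive degree, whereas you isolate and dispose of the case $n=k$ explicitly. What the paper's route buys is uniformity: every corollary in that section is derived by the same $\Theta$/$\PPi$ calculus, independent of the order in which the corollaries are listed. One pedantic point in your degenerate case: \eqref{eq:hrperppn} is stated in the paper only for $r>1$, so when $j=1$ (and $n=k\geq 2$) you should instead note directly that $h_1^\perp p_n = p_1^\perp p_n = 0$; this is immediate and does not affect the correctness of your argument.
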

\begin{proof}
	Using \eqref{eq:Pien} we have
	\begin{align*}
	h_j^\perp \Delta_{h_{n-k}}\alpha_n p_n & =Mh_j^\perp \Delta_{h_{n-k}}\PPi e_n^* \\
	\text{(using \eqref{eq:DeltaFPiemstar})} & =Mh_j^\perp h_{n-k}^\perp \Theta_{e_n} \PPi  h_{n-k}^* \\
	\text{(using \eqref{eq:hkstarenmkstar})}& =Mh_j^\perp h_{n-k}^\perp \Theta_{e_n}\Delta_{e_{n-k}}\PPi e_{n-k}^*\\
	\text{(using \eqref{eq:AperpDeltaomegaA})}& =Mh_{n-k}^\perp  h_j^\perp h_{n-k}^\perp   \Theta_{e_{2n-k}}\PPi e_{n-k}^*\\
	\text{(using \eqref{eq:AperpDeltaomegaA})}& =Mh_{n-k}^\perp  h_{n-k}^\perp   \Theta_{e_{2n-k-j}}\Delta_{e_j}\PPi e_{n-k}^*\\
	\text{(using \eqref{eq:hkstarenmkstar})}& =Mh_{n-k}^\perp  h_{n-k}^\perp   \Theta_{e_{2n-k-j}} \PPi (h_j^*e_{n-k-j}^*)\\
	\text{(using \eqref{eq:DeltaFPiemstar})}& =Mh_{n-k}^\perp \Delta_{h_j e_{n-k-j}} \PPi e_{2n-k-j}^*\\
	\text{(using \eqref{eq:Pien})}& =  h_{n-k}^\perp \Delta_{h_{j}} \Delta_{e_{n-k-j}}\alpha_{2n-k-j}p_{2n-k-j}.
	\end{align*}
\end{proof}

\section{Simplifying a long proof}

In this section we provide a new proof of \cite[Theorem~4.6]{DAdderio-Iraci-VandenWyngaerd-Bible} which is drastically shorter than the original one. In fact, we will reprove also some of the auxiliary results needed, as their proofs are much shorter as well.

\subsection{A useful lemma}

The following lemma will be useful.

\begin{lemma}
	For any $n,j\in\NN$ we have
	\begin{align} \label{eq:ejperpHn}
	e_j^\perp \widetilde{H}_{(n)} & = q^{\binom{j}{2}} \qbinom{n}{j}_q \widetilde{H}_{(n-j)} \\ \label{eq:hjperpHn}
	h_j^\perp \widetilde{H}_{(n)} & = \qbinom{n}{j}_q \widetilde{H}_{(n-j)} .
	\end{align}
\end{lemma}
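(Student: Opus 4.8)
The plan is to reduce both identities to the single plethystic formula $\wH_{(n)} = (q;q)_n h_n[X/(1-q)]$ from \eqref{eq:Hnhn}, and then to compute the action of $h_j^\perp$ and $e_j^\perp$ through the translation operator $\T_Y = \sum_\mu s_\mu[Y]s_\mu^\perp$ of \eqref{eq:defTY}. Pairing $\T_Y F$ against a symmetric function $g[Y]$ in the auxiliary alphabet $Y$ extracts $g^\perp F$, that is $g^\perp F[X] = \langle F[X+Y], g[Y]\rangle_Y$, since $\langle s_\mu[Y], g[Y]\rangle = \langle s_\mu, g\rangle$. I would apply this with $g = h_j$ for \eqref{eq:hjperpHn} and $g = e_j$ for \eqref{eq:ejperpHn}.

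First I would expand, using \eqref{eq:Hnhn} and the additivity of the complete homogeneous symmetric functions under the plethystic sum,
\[ \wH_{(n)}[X+Y] = (q;q)_n\, h_n\!\left[\tfrac{X}{1-q}+\tfrac{Y}{1-q}\right] = (q;q)_n\sum_{i=0}^n h_i\!\left[\tfrac{X}{1-q}\right] h_{n-i}\!\left[\tfrac{Y}{1-q}\right]. \]
Taking the $Y$-pairing against $h_j[Y]$ (resp. $e_j[Y]$), the whole computation collapses to the single scalar $\langle h_{n-i}[Y/(1-q)], h_j[Y]\rangle_Y$ (resp. with $e_j$ in place of $h_j$). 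Expanding $h_m[Y/(1-q)] = \sum_{\lambda\vdash m} z_\lambda^{-1}\prod_\ell (1-q^{\lambda_\ell})^{-1} p_\lambda[Y]$ in the power-sum basis and using the orthogonality $\langle p_\lambda, p_\mu\rangle = z_\mu\chi(\lambda=\mu)$ together with $h_j = \sum_{\mu\vdash j} p_\mu/z_\mu$ and $e_j = \sum_{\mu\vdash j}(-1)^{j-\ell(\mu)}p_\mu/z_\mu$, these pairings vanish unless $m=j$ and otherwise equal $h_j[1/(1-q)]$ and $e_j[1/(1-q)]$ respectively. Hence only the term $i = n-j$ survives.

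Assembling the surviving term gives $h_j^\perp\wH_{(n)} = (q;q)_n\, h_{n-j}[X/(1-q)]\, h_j[1/(1-q)]$ and $e_j^\perp \wH_{(n)} = (q;q)_n\, h_{n-j}[X/(1-q)]\, e_j[1/(1-q)]$. I would then rewrite $(q;q)_n h_{n-j}[X/(1-q)] = \tfrac{(q;q)_n}{(q;q)_{n-j}}\wH_{(n-j)}$ via \eqref{eq:Hnhn} again, and substitute the principal specializations $h_j[1/(1-q)] = 1/(q;q)_j$ from \eqref{eq:h_q_prspec} and $e_j[1/(1-q)] = q^{\binom{j}{2}}/(q;q)_j$ from \eqref{eq:e_q_prspec}. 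The resulting prefactor $\tfrac{(q;q)_n}{(q;q)_{n-j}(q;q)_j} = \qbinom{n}{j}_q$ yields \eqref{eq:hjperpHn}, and the extra factor $q^{\binom{j}{2}}$ in the elementary case yields \eqref{eq:ejperpHn}.

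The one step deserving care, and the only real obstacle, is the plethystic pairing $\langle h_m[Y/(1-q)], g[Y]\rangle_Y$: one must confirm both that the cross terms with $m\neq j$ genuinely vanish and that the surviving scalar is exactly the principal specialization $g[1/(1-q)]$. This is precisely where the power-sum expansion and the diagonality of the plethysm $p_k\mapsto p_k/(1-q^k)$ (equivalently, its self-adjointness for the Hall pairing) enter; everything else is bookkeeping with \eqref{eq:Hnhn}, \eqref{eq:h_q_prspec} and \eqref{eq:e_q_prspec}.
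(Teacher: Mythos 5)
Your proof is correct and takes essentially the same approach as the paper: both expand $\wH_{(n)}[X+Y]$ via \eqref{eq:Hnhn} and the addition formula \eqref{eq:SchurSummation}, then collapse the auxiliary alphabet to the principal specializations \eqref{eq:h_q_prspec} and \eqref{eq:e_q_prspec} to produce $\qbinom{n}{j}_q$ and the extra $q^{\binom{j}{2}}$. The only cosmetic difference is the extraction mechanism --- the paper specializes the translation alphabet to a single variable $u$, uses $\T_u=\sum_{k\geq 0}u^kh_k^\perp$ and reads off the coefficient of $u^j$ (with a similar computation for $e_j^\perp$), whereas you keep a full alphabet $Y$ and extract $g^\perp$ by Hall-pairing against $g[Y]$, justifying the key pairing by power-sum orthogonality.
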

\begin{proof}
Using \eqref{eq:Hnhn}, we get 
\begin{align*}
\T_u \wH_{(n)} & =(q;q)_n h_n\left[ \frac{X+u}{1-q} \right]\\
\text{(using \eqref{eq:SchurSummation})} & = \sum_{j=0}^n u^j h_{n-j}\left[\frac{X}{1-q} \right]  (q;q)_n h_j\left[ \frac{1}{1-q} \right]    \\
\text{(using \eqref{eq:h_q_prspec})} & =  \sum_{j=0}^n u^j (q;q)_{n-j} h_{n-j}\left[\frac{X}{1-q} \right] \qbinom{n}{j}_q.
\end{align*}
Taking the coefficient of $u^j$, we get the second identity \eqref{eq:hjperpHn}. A similar computation gives the first identity \eqref{eq:ejperpHn}.
\end{proof}

\subsection{Reformulating and recovering some known results}

We start with a lemma.
\begin{lemma}
For $k,j\in\NN$ with $j\geq 1$ and $k\geq j$ we have
\begin{equation}
\Theta_{e_{k-j}} \wH_{(j)} =\sum_{r=0}^j (-1)^{j-r}q^{r-jr}q^{\binom{r}{2}}\qbinom{j}{r}_qh_{r}^\perp\Theta_{e_{k}} \wH_{(r)}.
\end{equation}
\end{lemma}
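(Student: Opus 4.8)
The plan is to recognize the statement as a $q$-binomial inversion relating two families of elements of $\Lambda^{(k)}$. Let $f_s=\Theta_{e_{k-s}}\wH_{(s)}$ and $g_r=h_r^\perp\Theta_{e_k}\wH_{(r)}$ for $0\le s,r\le j$; the claimed identity is exactly $f_j=\sum_{r=0}^j(-1)^{j-r}q^{r-jr}q^{\binom r2}\qbinom jr_q\, g_r$. Rather than computing either family in closed form, I would first establish the \emph{forward} relation expressing each $g_{j'}$ in terms of $f_0,\dots,f_{j'}$, and then invert the resulting triangular system (whose diagonal coefficient $q^{\binom{j'}2}$ is nonzero, so it is invertible).

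To obtain the forward relation, I would apply \eqref{eq:hjek} to $\wH_{(j')}$:
\[ g_{j'}=h_{j'}^\perp\Theta_{e_k}\wH_{(j')}=\sum_{r=0}^{j'}\Theta_{e_{k-j'+r}}\Delta_{e_{j'-r}}h_r^\perp\wH_{(j')}. \]
Here $h_r^\perp\wH_{(j')}=\qbinom{j'}{r}_q\wH_{(j'-r)}$ by \eqref{eq:hjperpHn}, and since $B_{(j'-r)}=[j'-r]_q$ by \eqref{eq:qBmu_Tmu}, the eigenvalue of $\Delta_{e_{j'-r}}$ on $\wH_{(j'-r)}$ is $e_{j'-r}[[j'-r]_q]=q^{\binom{j'-r}2}$ by \eqref{eq:e_q_binomial}. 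Recognizing $\Theta_{e_{k-j'+r}}\wH_{(j'-r)}=f_{j'-r}$ and reindexing by $s=j'-r$ gives $g_{j'}=\sum_{s=0}^{j'}q^{\binom s2}\qbinom{j'}{s}_q f_s$. The degenerate terms cause no trouble, since $k\ge j\ge1$ forces $f_0=\Theta_{e_k}1=0$ and likewise $g_0=0$.

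Finally I would invert. Setting $\widetilde f_s=q^{\binom s2}f_s$, the relation becomes the plain $q$-binomial transform $g_{j'}=\sum_s\qbinom{j'}{s}_q\widetilde f_s$, so the $q$-binomial inversion yields $\widetilde f_j=\sum_r(-1)^{j-r}q^{\binom{j-r}2}\qbinom jr_q\, g_r$; dividing by $q^{\binom j2}$ gives $f_j=\sum_r(-1)^{j-r}q^{\binom{j-r}2-\binom j2}\qbinom jr_q\, g_r$. The claim then follows from the exponent identity $\binom{j-r}2-\binom j2=\binom r2+r-jr$. The main obstacle is packaging the $q$-binomial inversion cleanly: it is itself a consequence of the $q$-binomial theorem \eqref{eq:qbinom} via the orthogonality $\sum_k(-1)^{j-k}q^{\binom{j-k}2}\qbinom jk_q\qbinom km_q=\delta_{jm}$ (reducing to $(1;q)_{j-m}=\delta_{jm}$), and verifying this together with the short exponent bookkeeping is the only nonroutine point, the symmetric-function input being entirely supplied by \eqref{eq:hjek}, \eqref{eq:hjperpHn}, and the $\Delta$-eigenvalue above.
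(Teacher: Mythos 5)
Your proposal is correct and follows essentially the same route as the paper: the paper likewise derives the forward relation $h_r^\perp\Theta_{e_k}\wH_{(r)}=\sum_{s=0}^r q^{\binom{s}{2}}\qbinom{r}{s}_q\Theta_{e_{k-s}}\wH_{(s)}$ from \eqref{eq:hjek}, \eqref{eq:hjperpHn} and \eqref{eq:qBmu_Tmu}, and then inverts it using exactly the orthogonality $\sum_{r}(-1)^{j-r}q^{\binom{j-r}{2}}\qbinom{j-s}{r-s}_q=\delta_{j,s}$ coming from the $q$-binomial theorem \eqref{eq:qbinom}. The only difference is presentational: you package the second step as an abstract $q$-binomial inversion of a triangular system, whereas the paper verifies the claimed combination directly by multiplying by the inverse coefficients, swapping the summations, and applying \eqref{eq:qbinom} inline.
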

\begin{proof}
We start by using \eqref{eq:hjek} to get
\begin{align*}
	h_r^\perp \Theta_{e_k}\wH_{(r)} & = \sum_{s=0}^r\Theta_{e_{k-r+s}}\Delta_{e_{r-s}}h_s^{\perp} \wH_{(r)} \\
\text{(using \eqref{eq:hjperpHn} and \eqref{eq:qBmu_Tmu})}	& = \sum_{s=0}^rq^{\binom{r-s}{2}}\qbinom{r}{r-s}_q\Theta_{e_{k-r+s}} \wH_{(r-s)} \\
	& = \sum_{s=0}^rq^{\binom{s}{2}}\qbinom{r}{s}_q\Theta_{e_{k-s}} \wH_{(s)} .
\end{align*}
Now multiplying by $(-1)^{j-r}q^{r-jr}q^{\binom{r}{2}}\qbinom{j}{r}_q$ and summing over $r$ gives
\begin{align*}
	&  \sum_{r=0}^j (-1)^{j-r}q^{r-jr}q^{\binom{r}{2}}\qbinom{j}{r}_qh_{r}^\perp\Theta_{e_{k}} \wH_{(r)}=\\
	& = \sum_{r=0}^j (-1)^{j-r}q^{r-jr}q^{\binom{r}{2}}\qbinom{j}{r}_q\sum_{s=0}^rq^{\binom{s}{2}}\qbinom{r}{s}_q\Theta_{e_{k-s}} \wH_{(s)} \\
	& = \sum_{s=0}^j \sum_{r=s}^jq^{\binom{s}{2}}q^{-\binom{j}{2}} (-1)^{j-r}q^{\binom{j}{2}+(r-jr)+\binom{r}{2}}\qbinom{j}{r}_q\qbinom{r}{s}_q\Theta_{e_{k-s}} \wH_{(s)} \\
	& = \sum_{s=0}^j q^{\binom{s}{2}-\binom{j}{2}}\qbinom{j}{s}_q\sum_{r=s}^j (-1)^{j-r}q^{\binom{j-r}{2}}\qbinom{j-s}{r-s}_q\Theta_{e_{k-s}} \wH_{(s)} \\
\text{(using \eqref{eq:qbinom})}	& = \sum_{s=0}^j q^{\binom{s}{2}-\binom{j}{2}}\qbinom{j}{s}_q\delta_{j,s}\Theta_{e_{k-s}} \wH_{(s)} \\
	& = \Theta_{e_{k-j}} \wH_{(j)} 
\end{align*}
as we wanted.
\end{proof}
Applying \eqref{eq:newthm} we get immediately the following corollary, which is a reformulation of  \cite[Proposition~2.6]{Garsia-Hicks-Stout-2011}.
\begin{corollary}
	For $k,j\in\NN$ with $j\geq 1$ and $k\geq j$ we have
	\begin{equation} \label{eq:thetaHj}
	\Theta_{e_{k-j}} \wH_{(j)} =\sum_{r=0}^j (-1)^{j-r}q^{r-jr}q^{\binom{r}{2}}\qbinom{j}{r}_q e_k[X[r]_q].
	\end{equation}
\end{corollary}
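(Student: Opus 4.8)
The plan is to feed the reciprocity identity of Theorem~\ref{thm:Thetareciprocity} into the preceding lemma, one summand at a time. The right-hand side of the lemma is a linear combination of the quantities $h_r^\perp\Theta_{e_k}\wH_{(r)}$ for $0\le r\le j$, and each of these is precisely the left-hand side of \eqref{eq:Thetareciprocity} once we take the partition to be $\mu=(r)\vdash r$ and the Theta-index to be $e_k$ (so that the role played by $k$ in Theorem~\ref{thm:Thetareciprocity} is here played by $r$, and the role of $m$ by our $k$). Thus I would invoke \eqref{eq:Thetareciprocity} to rewrite
\[ h_r^\perp\Theta_{e_k}\wH_{(r)} = e_k[X B_{(r)}], \]
and then use \eqref{eq:qBmu_Tmu}, which records $B_{(r)}=[r]_q$, to identify this with $e_k[X[r]_q]$.

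Substituting this evaluation back into the lemma immediately gives
\[ \Theta_{e_{k-j}}\wH_{(j)} = \sum_{r=0}^j (-1)^{j-r} q^{r-jr} q^{\binom{r}{2}} \qbinom{j}{r}_q\, e_k[X[r]_q], \]
which is exactly \eqref{eq:thetaHj}. No further manipulation is needed, so the whole proof is a single term-by-term replacement.

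There is essentially no obstacle here: all of the substance lives in the lemma itself and in the reciprocity theorem, and the deduction is a one-line substitution. The only point I would pause to verify is that the hypotheses of \eqref{eq:Thetareciprocity} genuinely cover every index in the sum. For $1\le r\le j$ the shape $(r)$ is a legitimate partition of $r$ and the theorem applies verbatim. The boundary term $r=0$ is also harmless: there $\wH_{(0)}=1$ has degree $0$, so by the defining convention \eqref{eq:def_Deltaf} we have $\Theta_{e_k}\cdot 1=0$ (since $k\ge j\ge 1$), which matches $e_k[X[0]_q]=e_k[0]=0$; equivalently one may read \eqref{eq:Thetareciprocity} directly with $\mu=\emptyset$ and $B_\emptyset=0$. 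Hence the replacement is valid across the entire range of summation, and the corollary follows.
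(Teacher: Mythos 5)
Your proof is correct and is essentially the paper's own derivation: the paper likewise obtains \eqref{eq:thetaHj} by the one-line substitution $h_r^\perp\Theta_{e_k}\wH_{(r)}=e_k[XB_{(r)}]=e_k[X[r]_q]$ (Theorem~\ref{thm:Thetareciprocity} together with $B_{(r)}=[r]_q$ from \eqref{eq:qBmu_Tmu}) into the preceding lemma, and your verification that the $r=0$ term vanishes on both sides via the convention \eqref{eq:def_Deltaf} is a harmless extra precision. Note only that the paper's text cites \eqref{eq:newthm} at this point, but the operative identity is indeed the reciprocity \eqref{eq:Thetareciprocity} that you invoked.
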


The following theorem is due to Haglund \cite[Equation~(7.86)]{Haglund-Book-2008}.
\begin{theorem}
	Given positive $k,j\in\NN$ we have
	\begin{equation} \label{eq:nablaEnk}
	\Delta_{e_k} E_{k,j}=t^{k-j}\Theta_{h_{k-j}}\wH_{(j)}. 
	\end{equation}
\end{theorem}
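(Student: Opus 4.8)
The plan is to transport the already-established formula \eqref{eq:thetaHj} for $\Theta_{e_{k-j}}\wH_{(j)}$ through the involution $\overline\omega$ and the operator $\nabla$. Since both sides lie in $\Lambda^{(k)}$, where $\nabla=(-1)^k\Delta_{e_k}$, the left-hand side is $\Delta_{e_k}E_{k,j}=(-1)^k\nabla E_{k,j}$. The heart of the matter is to rewrite $\Theta_{h_{k-j}}$ in terms of $\Theta_{e_{k-j}}$. For this I would first prove the operator identity
\[
\Theta_{h_m}=(qt)^{-m}\,\nabla\,\overline\omega\,\Theta_{e_m}\,\overline\omega\,\nabla^{-1}\qquad\text{on }\Lambda^{(j)},\ j\geq 1 .
\]
Recall $\Theta_{e_m}=\PPi\,\underline{e_m^*}\,\PPi^{-1}$. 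I would combine \eqref{eq:PiomegabarPi}, rewritten as $\overline\omega\PPi=-\nabla^{-1}\PPi\,\overline\omega$ (so also $\overline\omega\PPi^{-1}\overline\omega=-\PPi^{-1}\nabla$), with the plethystic evaluation $\overline\omega(e_m^*)=(qt)^m h_m^*$, which follows from \eqref{eq:minusepsilon} after the substitution $q,t\mapsto 1/q,1/t$ (under which $M\mapsto M/(qt)$, so $e_m[X/M]\mapsto (qt)^m e_m[X/M]$, and then $\omega$ sends $e_m\mapsto h_m$). Conjugating $\Theta_{e_m}$ by $\overline\omega$ and moving the two occurrences of $\PPi$ across $\overline\omega$ produces $\Theta_{h_m}$ up to the scalar $(qt)^m$ and the $\nabla$-conjugation displayed above.

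Next I would apply this to $\wH_{(j)}$. Since $\nabla\overline\omega$ is an involution fixing $\wH_\mu$ up to the sign $(-1)^{|\mu|}$, equation \eqref{eq:nablaomegabar} gives $\overline\omega\nabla^{-1}\wH_{(j)}=(-1)^j\wH_{(j)}$, whence
\[
t^{k-j}\,\Theta_{h_{k-j}}\wH_{(j)}=(-1)^j q^{-(k-j)}\,\nabla\,\overline\omega\big(\Theta_{e_{k-j}}\wH_{(j)}\big).
\]
Comparing this with $\Delta_{e_k}E_{k,j}=(-1)^k\nabla E_{k,j}$ and cancelling the invertible operator $\nabla$, the theorem becomes equivalent to the purely symmetric-function identity
\[
\overline\omega\big(\Theta_{e_{k-j}}\wH_{(j)}\big)=(-1)^{k-j}q^{k-j}E_{k,j}.
\]

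To finish, I would expand both sides in the family $\{h_k[X[r]_q]\}_{0\le r\le j}$. For the left-hand side I insert \eqref{eq:thetaHj} and apply $\overline\omega$ termwise, using $\overline\omega(e_k[X[r]_q])=q^{k(1-r)}h_k[X[r]_q]$ — which comes from $[r]_{1/q}=q^{1-r}[r]_q$ as alphabets together with $\omega(e_k[XY])=h_k[XY]$ (Cauchy, \eqref{eq:Cauchy_identities}) — along with $\qbinom{j}{r}_{1/q}=q^{-r(j-r)}\qbinom{j}{r}_q$ for the scalar coefficients; a short exponent computation collapses the sum to $\sum_{r=0}^{j}(-1)^{j-r}q^{\binom r2+k(1-r)}\qbinom jr_q h_k[X[r]_q]$. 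For the right-hand side I use \eqref{eq:Enkformula} together with $e_k\!\left[X\tfrac{1-q^{-r}}{1-q}\right]=(-1)^kq^{-rk}h_k[X[r]_q]$, valid because $\tfrac{1-q^{-r}}{1-q}$ is the \emph{plethystic} negative of $\{q^{-1},\dots,q^{-r}\}$ (its power sums are $p_m\mapsto -\sum_{i=1}^r q^{-im}$), so that $e_k[-Z]=(-1)^k h_k[Z]$ applies; the prefactor $(-1)^{k-j}q^{k-j}$ then turns $E_{k,j}$ into exactly the same sum. The two expansions coincide, establishing the identity.

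The main obstacle is entirely in the plethystic bookkeeping: correctly tracking the sign and the $\nabla^{-1}$ in \eqref{eq:PiomegabarPi} as $\overline\omega$ is pushed past $\PPi$, and keeping the powers of $q$ and $qt$ straight through the two applications of $\overline\omega$. Once the operator identity and the two evaluations $\overline\omega(e_k[X[r]_q])$ and $e_k\!\left[X\tfrac{1-q^{-r}}{1-q}\right]$ are pinned down, the remaining step is a direct exponent match requiring no deep summation (in particular, it does not call on the technical lemmas \eqref{eq:lemmaelem}--\eqref{eq:lemelem3}).
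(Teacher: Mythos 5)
Your proposal is correct and follows essentially the same route as the paper: both proofs transport the expansion \eqref{eq:thetaHj} of $\Theta_{e_{k-j}}\wH_{(j)}$ through $\overline{\omega}$, using \eqref{eq:PiomegabarPi}, the evaluation $\overline{\omega}(e_m^*)=(qt)^m h_m^*$, and \eqref{eq:nablaomegabar} to produce $(-1)^j(qt)^{k-j}\nabla^{-1}\Theta_{h_{k-j}}\wH_{(j)}$, and then identify the transformed sum with $E_{k,j}$ via \eqref{eq:Enkformula}. The only differences are cosmetic: you package the $\overline{\omega}$-conjugation as the operator identity $\Theta_{h_m}=(qt)^{-m}\nabla\overline{\omega}\,\Theta_{e_m}\,\overline{\omega}\nabla^{-1}$ and compare both sides in the basis $h_k[X[r]_q]$, whereas the paper performs the same manipulations directly on $\Theta_{e_{k-j}}\wH_{(j)}$ and reads off $E_{k,j}$ from the terms $e_k\bigl[X\tfrac{1-q^{-r}}{1-q}\bigr]$ --- and your exponent computations check out.
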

\begin{proof}
We compute
\begin{align*}
&\overline{\omega}\sum_{r=0}^j (-1)^{j-r}q^{r-jr}q^{\binom{r}{2}}\qbinom{j}{r}_q e_k[X[r]_q]=\\
& =\sum_{r=0}^j (-1)^{j-r}q^{-r+jr}q^{-\binom{r}{2}}\qbinom{j}{r}_qq^{r^2-jr} \overline{\omega}e_k[X[r]_q]\\
& =\sum_{r=0}^j (-1)^{j-r} q^{r^2-r-\binom{r}{2}}\qbinom{j}{r}_q  h_k\left[X\frac{1-q^{-r}}{1-q^{-1}}\right]\\
& =\sum_{r=0}^j (-1)^{j-r} q^{\binom{r}{2}}\qbinom{j}{r}_q (-q)^k e_k\left[X\frac{1-q^{-r}}{1-q}\right]
\end{align*}
while we also have
\begin{align*}
\overline{\omega}	\Theta_{e_{k-j}} \wH_{(j)} & = \overline{\omega}\PPi  (e_{k-j}^* \PPi ^{-1}\wH_{(j)} )\\
& = \overline{\omega}\PPi  \overline{\omega}^2(e_{k-j}^* \PPi ^{-1}\wH_{(j)} )\\
& = \overline{\omega}\PPi  \overline{\omega}\left(  (qt)^{k-j}h_{k-j}^* \overline{\omega}\PPi ^{-1}\wH_{(j)} \right)\\
\text{(using \eqref{eq:PiomegabarPi})}& = (qt)^{k-j}\overline{\omega}\PPi  \overline{\omega}\left(  h_{k-j}^* (-1)\PPi ^{-1}\nabla \overline{\omega} \wH_{(j)} \right)\\
\text{(using \eqref{eq:nablaomegabar})}  & = (-1)^{j-1}(qt)^{k-j}\overline{\omega}\PPi  \overline{\omega}\left(  h_{k-j}^*\PPi ^{-1} \wH_{(j)} \right)\\
\text{(using \eqref{eq:PiomegabarPi})} & = (-1)^{j}(qt)^{k-j} \nabla^{-1} \PPi  \left(  h_{k-j}^*\PPi ^{-1} \wH_{(j)} \right)\\
& = (-1)^{j}(qt)^{k-j}\nabla^{-1}\Theta_{h_{k-j}}\wH_{(j)} ,
\end{align*}
so \eqref{eq:thetaHj} gives
\[ (-1)^{j}(qt)^{k-j}\nabla^{-1}\Theta_{h_{k-j}}\wH_{(j)}[X] = \sum_{r=0}^j (-1)^{j-r} q^{\binom{r}{2}}\qbinom{j}{r}_q (-q)^k e_k\left[X\frac{1-q^{-r}}{1-q}\right]\]
or better
\[  (-1)^k\nabla^{-1}t^{k-j}\Theta_{h_{k-j}}\wH_{(j)}[X]= q^j\sum_{r=0}^j (-1)^{r} q^{\binom{r}{2}}\qbinom{j}{r}_q  e_k\left[X\frac{1-q^{-r}}{1-q}\right].\]
But observe that the right hand side is precisely $E_{k,j}[X]$ (see \eqref{eq:Enkformula}), completing our proof.
\end{proof}
The following proposition first appeared in \cite[Proposition~4.9]{DAdderio-Iraci-VandenWyngaerd-Bible}.
\begin{proposition}
Given positive $k,j\in\NN$ we have
\begin{equation} \label{eq:nablaThetaejHi}
\Delta_{e_k} \Theta_{e_{k-j}} \wH_{(j)} =\sum_{s=1}^{k}q^{\binom{j}{2}}\qbinom{s-1}{j-1}_q t^{k-s}\Theta_{h_{k-s}}\wH_{(s)}.
\end{equation}
\end{proposition}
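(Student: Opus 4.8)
The plan is to reduce the statement to a single $q$-binomial identity by feeding the already-established expansion \eqref{eq:thetaHj} for $\Theta_{e_{k-j}}\wH_{(j)}$ into $\Delta_{e_k}$ and then re-expanding everything in the basis $E_{k,s}$. Concretely, I would start from
\[
\Delta_{e_k}\Theta_{e_{k-j}}\wH_{(j)} = \sum_{r=0}^{j}(-1)^{j-r}q^{r-jr}q^{\binom{r}{2}}\qbinom{j}{r}_q\,\Delta_{e_k}\,e_k[X[r]_q],
\]
using \eqref{eq:thetaHj}. Next I would rewrite each $e_k[X[r]_q]$ via \eqref{eq:enEnkformula} as $\sum_{s=1}^{k}\qbinom{s+r-1}{s}_q E_{k,s}$, and then apply \eqref{eq:nablaEnk} in the form $\Delta_{e_k}E_{k,s}=t^{k-s}\Theta_{h_{k-s}}\wH_{(s)}$. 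The point of this chain is that it converts the left-hand side into a double sum whose summands are already multiples of the target building blocks $t^{k-s}\Theta_{h_{k-s}}\wH_{(s)}$.

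After interchanging the two summations, I would factor out $t^{k-s}\Theta_{h_{k-s}}\wH_{(s)}$ to obtain
\[
\Delta_{e_k}\Theta_{e_{k-j}}\wH_{(j)} = \sum_{s=1}^{k} t^{k-s}\Theta_{h_{k-s}}\wH_{(s)}\left(\sum_{r=0}^{j}(-1)^{j-r}q^{\binom{r}{2}+r-jr}\qbinom{j}{r}_q\qbinom{s+r-1}{s}_q\right),
\]
so that matching with the claimed right-hand side reduces to showing that the inner coefficient equals $q^{\binom{j}{2}}\qbinom{s-1}{j-1}_q$ for each $s$. This is a purely combinatorial $q$-identity, and I expect it to be the main content of the proof; the operator manipulations above are routine bookkeeping of signs and $q$-powers.

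To dispatch that inner sum I would invoke Lemma~\eqref{eq:lemmaelem} with $a=0$: after rewriting $\qbinom{s+r-1}{s}_q=\qbinom{s+r-1}{r-1}_q$, the substitution $i=j$, $c=r$, $b=s$ matches the left-hand side of \eqref{eq:lemmaelem} term by term (note $\qbinom{j}{j-r}_q=\qbinom{j}{r}_q$ and that the $r=0$ term vanishes since $\qbinom{s-1}{s}_q=0$), and its right-hand side is exactly $q^{\binom{j}{2}}\qbinom{s-1}{j-1}_q$. So the hard part is really just recognizing the correct specialization of an appendix lemma; once that is identified the whole computation collapses. As a sanity check I would verify the claim directly for $j=1,2$ (where the inner sum telescopes to $1$ and $q[s-1]_q$ respectively), and I note that one could alternatively cancel the invertible operator $\Delta_{e_k}$ on $\Lambda^{(k)}$ (its eigenvalues $T_\mu$ are nonzero) to first prove $\Theta_{e_{k-j}}\wH_{(j)}=q^{\binom{j}{2}}\sum_{s=1}^{k}\qbinom{s-1}{j-1}_q E_{k,s}$, but this routes through the same identity and so offers no real simplification.
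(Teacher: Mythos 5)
Your proposal is correct and follows essentially the same route as the paper's own proof: apply $\Delta_{e_k}$ to \eqref{eq:thetaHj}, expand $e_k[X[r]_q]$ via \eqref{eq:enEnkformula}, convert $\Delta_{e_k}E_{k,s}$ into $t^{k-s}\Theta_{h_{k-s}}\wH_{(s)}$ by \eqref{eq:nablaEnk}, swap the summations, and evaluate the inner $q$-binomial sum by the appendix identity \eqref{eq:lemmaelem}. Your specialization $a=0$, $i=j$, $c=r$, $b=s$ (together with the observations $\qbinom{s+r-1}{s}_q=\qbinom{s+r-1}{r-1}_q$, $\qbinom{j}{j-r}_q=\qbinom{j}{r}_q$, and the vanishing of the $r=0$ term) is exactly the application the paper makes, so nothing is missing.
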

\begin{proof}
Applying $\Delta_{e_k}$ to \eqref{eq:thetaHj} we get
\begin{align*}
\Delta_{e_k} \Theta_{e_{k-j}} \wH_{(j)} & =\sum_{r=0}^j (-1)^{j-r}q^{r-jr}q^{\binom{r}{2}}\qbinom{j}{r}_q \Delta_{e_k} e_k[X[r]_q]\\
\text{(using \eqref{eq:enEnkformula})}&=\sum_{r=0}^j (-1)^{j-r}q^{r-jr}q^{\binom{r}{2}}\qbinom{j}{r}_q \sum_{s=1}^{k}\qbinom{r+s-1}{s}_q\Delta_{e_k} E_{k,s}[X]\\
\text{(using \eqref{eq:nablaEnk})}&=\sum_{s=1}^{k}\sum_{r=0}^j (-1)^{j-r}q^{r-jr}q^{\binom{r}{2}}\qbinom{j}{r}_q \qbinom{r+s-1}{s}_q t^{k-s}\Theta_{h_{k-s}}\wH_{(s)}\\
\text{(using \eqref{eq:lemmaelem})}&=\sum_{s=1}^{k}q^{\binom{j}{2}}\qbinom{s-1}{j-1}_q t^{k-s}\Theta_{h_{k-s}}\wH_{(s)}
\end{align*}
as we wanted.
\end{proof}

We are now able to give a much simpler proof of \cite[Theorem~4.6]{DAdderio-Iraci-VandenWyngaerd-Bible}, which can be reformulated as follows (cf.\ the corollary below). The special case $\ell=0$ was a theorem of Haglund \cite[Equation~(2.38)]{Haglund-Schroeder-2004}.
\begin{theorem}
Given $m,k,\ell\in\NN$, $m\geq 1$ we have
\begin{equation} \label{eq:mysummation}
h_{k+\ell}^\perp\Theta_{e_m}\Theta_{e_{\ell}}\widetilde{H}_{(k)} = \sum_{r=0}^{k}q^{\binom{r}{2}}\qbinom{k}{r}_q\sum_{b=1}^{\ell+r}\qbinom{k-r+b-1}{k-1}_q  t^{\ell+r-b}\Theta_{h_{\ell+r-b}}\Theta_{e_{m-\ell-r}}\widetilde{H}_{(b)}.
\end{equation}
\end{theorem}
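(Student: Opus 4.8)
The plan is to push the operator $h_{k+\ell}^\perp$ all the way inside by two successive applications of the commutation rule \eqref{eq:hjek}, reduce everything to the action of Delta operators on $\wH_{(k)}$ via \eqref{eq:hjperpHn} and \eqref{eq:e_q_binomial}, feed the result into \eqref{eq:nablaThetaejHi}, and finally recognize the emerging inner sum as an instance of $q$-Chu-Vandermonde. First I would apply \eqref{eq:hjek} to $h_{k+\ell}^\perp\Theta_{e_m}$, obtaining
\[ h_{k+\ell}^\perp\Theta_{e_m}=\sum_{s=0}^{k+\ell}\Theta_{e_{m-k-\ell+s}}\Delta_{e_{k+\ell-s}}h_s^\perp, \]
and let this act on $\Theta_{e_\ell}\wH_{(k)}$.

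Next I would expand each $h_s^\perp\Theta_{e_\ell}$ again by \eqref{eq:hjek} and evaluate it on $\wH_{(k)}$: using \eqref{eq:hjperpHn} to get $h_p^\perp\wH_{(k)}=\qbinom{k}{p}_q\wH_{(k-p)}$, and then \eqref{eq:e_q_binomial} with \eqref{eq:qBmu_Tmu} to get $\Delta_{e_{s-p}}\wH_{(k-p)}=q^{\binom{s-p}{2}}\qbinom{k-p}{s-p}_q\wH_{(k-p)}$, this yields
\[ h_s^\perp\Theta_{e_\ell}\wH_{(k)}=\sum_{p=0}^s\qbinom{k}{p}_q q^{\binom{s-p}{2}}\qbinom{k-p}{s-p}_q\,\Theta_{e_{\ell-s+p}}\wH_{(k-p)}. \]
Since $\Theta_{e_{\ell-s+p}}\wH_{(k-p)}\in\Lambda^{(k+\ell-s)}$, the operator $\Delta_{e_{k+\ell-s}}$ is the top Delta there, so I can invoke \eqref{eq:nablaThetaejHi} with $n=k+\ell-s$ and $j=k-p$ (note $n-j=\ell-s+p$) to rewrite $\Delta_{e_{k+\ell-s}}\Theta_{e_{\ell-s+p}}\wH_{(k-p)}$ as $\sum_{b\geq 1}q^{\binom{k-p}{2}}\qbinom{b-1}{k-p-1}_q t^{k+\ell-s-b}\Theta_{h_{k+\ell-s-b}}\wH_{(b)}$.

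At this point the outer operator $\Theta_{e_{m-k-\ell+s}}$ still sits in front of $\Theta_{h_{k+\ell-s-b}}\wH_{(b)}$; since Theta operators commute (both orders equal $\Theta_{e_{m-k-\ell+s}h_{k+\ell-s-b}}$, because conjugation by $\PPi$ of multiplication is multiplicative), I would reorder them into the form $\Theta_{h_{k+\ell-s-b}}\Theta_{e_{m-k-\ell+s}}\wH_{(b)}$ that appears on the right. Collecting everything produces a triple sum over $s,p,b$; the terms with $s>k$ vanish since $\qbinom{k-p}{s-p}_q=0$ there, so after the substitution $r=k-s$ the Theta factors $\Theta_{h_{\ell+r-b}}\Theta_{e_{m-\ell-r}}\wH_{(b)}$, the range $1\le b\le \ell+r$, and the power $t^{\ell+r-b}$ all match the statement.

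The only remaining, and main, obstacle is to collapse the inner sum over $p$. Factoring $\qbinom{k}{p}_q\qbinom{k-p}{s-p}_q=\qbinom{k}{s}_q\qbinom{s}{p}_q$ pulls out $\qbinom{k}{s}_q=\qbinom{k}{r}_q$, and reduces the claim to
\[ \sum_{p=0}^{s}q^{\binom{s-p}{2}+\binom{k-p}{2}}\qbinom{s}{p}_q\qbinom{b-1}{k-p-1}_q=q^{\binom{k-s}{2}}\qbinom{s+b-1}{k-1}_q. \]
I expect this to follow directly from $q$-Chu-Vandermonde \eqref{eq:qChuVander} (taken with $n=s$, $m=b-1$, summation index $p$), once one verifies the elementary exponent identity $\binom{s-p}{2}+\binom{k-p}{2}=\binom{k-s}{2}+(s-p)(k-1-p)$, which is just $\binom{a}{2}+\binom{c}{2}-a(c-1)=\binom{c-a}{2}$ with $a=s-p$ and $c=k-p$. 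Since $q^{\binom{k-s}{2}}=q^{\binom{r}{2}}$ and $\qbinom{s+b-1}{k-1}_q=\qbinom{k-r+b-1}{k-1}_q$, this delivers exactly the coefficient $q^{\binom{r}{2}}\qbinom{k}{r}_q\qbinom{k-r+b-1}{k-1}_q$ of the statement, finishing the argument.
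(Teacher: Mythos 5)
Your proposal is correct and follows essentially the same route as the paper's own proof: a double application of \eqref{eq:hjek}, evaluation on $\wH_{(k)}$ via \eqref{eq:hjperpHn} and \eqref{eq:e_q_binomial}, insertion of \eqref{eq:nablaThetaejHi}, the trinomial revision $\qbinom{k}{p}_q\qbinom{k-p}{s-p}_q=\qbinom{k}{s}_q\qbinom{s}{p}_q$, the same exponent identity, and a final $q$-Chu--Vandermonde collapse \eqref{eq:qChuVander}. The only differences are cosmetic (variable names, the point at which you substitute $r=k-s$) plus the welcome touch that you justify explicitly the commutation of the Theta operators, which the paper uses tacitly.
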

\begin{proof}
Using \eqref{eq:hjek} we have
\begin{align*}
	h_{k+\ell}^\perp\Theta_{e_m}\Theta_{e_{\ell}}\widetilde{H}_{(k)} & = \sum_{r=0}^{k+\ell}\Theta_{e_{m-(k+\ell)+r}} \Delta_{e_{k+\ell-r}} h_r^\perp \Theta_{e_{\ell}}\widetilde{H}_{(k)}\\
	 & = \sum_{r=0}^{k+\ell}\Theta_{e_{m-(k+\ell)+r}} \Delta_{e_{k+\ell-r}} \sum_{s=0}^r\Theta_{e_{\ell-r+s}}\Delta_{e_{r-s}} h_s^\perp \widetilde{H}_{(k)}\\
\text{(using \eqref{eq:hjperpHn} and \eqref{eq:qBmu_Tmu})}	 & = \sum_{r=0}^{k+\ell}\Theta_{e_{m-(k+\ell)+r}} \Delta_{e_{k+\ell-r}} \sum_{s=0}^r\Theta_{e_{\ell-r+s}}q^{\binom{r-s}{2}}\qbinom{k-s}{r-s}_q \qbinom{k}{s}_q\widetilde{H}_{(k-s)}\\
	 & = \sum_{r=0}^{k+\ell}\Theta_{e_{m-(k+\ell)+r}}\sum_{s=0}^rq^{\binom{r-s}{2}}\qbinom{k-s}{r-s}_q \qbinom{k}{s}_q  \Delta_{e_{k+\ell-r}} \Theta_{e_{\ell-r+s}}\widetilde{H}_{(k-s)}.
\end{align*}
Now using \eqref{eq:nablaThetaejHi} we get
\begin{align*}
	& h_{k+\ell}^\perp\Theta_{e_m}\Theta_{e_{\ell}}\widetilde{H}_{(k)} =\\
	& = \sum_{r=0}^{k+\ell}\Theta_{e_{m-(k+\ell)+r}}\sum_{s=0}^rq^{\binom{r-s}{2}}\qbinom{k-s}{r-s}_q \qbinom{k}{s}_q \sum_{b=1}^{k+\ell-r}q^{\binom{k-s}{2}}\qbinom{b-1}{k-s-1}_qt^{k+\ell-r-b}\Theta_{h_{k+\ell-r-b}}\widetilde{H}_{(b)} \\
	& = \sum_{r=0}^{k+\ell}\Theta_{e_{m-(k+\ell)+r}}\sum_{b=1}^{k+\ell-r}\sum_{s=0}^rq^{\binom{r-s}{2}}\qbinom{r}{s}_q \qbinom{k}{r}_q q^{\binom{k-s}{2}}\qbinom{b-1}{k-s-1}_qt^{k+\ell-r-b}\Theta_{h_{k+\ell-r-b}}\widetilde{H}_{(b)} \\
	& = \sum_{r=0}^{k+\ell}q^{\binom{k-r}{2}}\qbinom{k}{k-r}_q\Theta_{e_{m-(k+\ell)+r}}\sum_{b=1}^{k+\ell-r}\sum_{s=0}^rq^{(k-s-1)(r-s)}\qbinom{r}{s}_q  \qbinom{b-1}{k-s-1}_qt^{k+\ell-r-b}\Theta_{h_{k+\ell-r-b}}\widetilde{H}_{(b)}  \\
	& = \sum_{r=0}^{k}q^{\binom{r}{2}}\qbinom{k}{r}_q\Theta_{e_{m-\ell-r}}\sum_{b=1}^{\ell+r}\sum_{s=0}^{k }q^{(k-s-1)(k-r-s)}\qbinom{k-r}{s}_q  \qbinom{b-1}{k-s-1}_qt^{\ell+r-b}\Theta_{h_{\ell+r-b}}\widetilde{H}_{(b)} \\
	& = \sum_{r=0}^{k}q^{\binom{r}{2}}\qbinom{k}{r}_q\Theta_{e_{m-\ell-r}}\sum_{b=1}^{\ell+r}\sum_{s=0}^{ k}q^{(s-1)(s-r)}\qbinom{k-r}{k-s}_q  \qbinom{b-1}{s-1}_qt^{\ell+r-b}\Theta_{h_{\ell+r-b}}\widetilde{H}_{(b)}\\
	& = \sum_{r=0}^{k}q^{\binom{r}{2}}\qbinom{k}{r}_q\Theta_{e_{m-\ell-r}}\sum_{b=1}^{\ell+r}\qbinom{k-r+b-1}{k-1}_q  t^{\ell+r-b}\Theta_{h_{\ell+r-b}}\widetilde{H}_{(b)},
\end{align*}
where in the last equality we used the well-known \eqref{eq:qChuVander}, and above we used the elementary
\begin{align*}
\binom{r-s}{2}+\binom{k-s}{2} 
& =\binom{k-r}{2}+(k-s-1)(r-s).
\end{align*}
\end{proof}
\begin{corollary}
	Given $m,k,\ell\in\NN$, $m\geq 1$ we have
	\begin{equation} \label{eq:corDeltaelemXk}
	\Delta_{e_{\ell}}e_m[X[k]_q] = \sum_{r=0}^{k}q^{\binom{r}{2}}\qbinom{k}{r}_q\sum_{b=1}^{\ell+r}\qbinom{k-r+b-1}{k-1}_q   \Theta_{e_{m-\ell-r}}\Delta_{e_{\ell+r}} E_{\ell+r,b} .
	\end{equation}
\end{corollary}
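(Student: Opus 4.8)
The plan is to obtain \eqref{eq:corDeltaelemXk} as an essentially formal consequence of the Theorem that immediately precedes it, namely identity \eqref{eq:mysummation}; no new summation identity is needed.

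First I would identify the two left-hand sides. Applying \eqref{eq:DeltaFem} with $F=e_\ell$ and $\mu=(k)$, and using $B_{(k)}=[k]_q$ from \eqref{eq:qBmu_Tmu}, gives $h_{k+\ell}^\perp\Theta_{e_m}\Theta_{e_\ell}\wH_{(k)}=\Delta_{e_\ell}e_m[X[k]_q]$. Thus the left-hand side of \eqref{eq:mysummation} is exactly the left-hand side of \eqref{eq:corDeltaelemXk}, and it remains only to match the right-hand sides. Since the $q$-binomial factors $q^{\binom{r}{2}}\qbinom{k}{r}_q\qbinom{k-r+b-1}{k-1}_q$ are identical in the two sums, it suffices to prove the term-by-term identity $t^{\ell+r-b}\Theta_{h_{\ell+r-b}}\Theta_{e_{m-\ell-r}}\wH_{(b)}=\Theta_{e_{m-\ell-r}}\Delta_{e_{\ell+r}}E_{\ell+r,b}$ for each $r$ and each $1\le b\le \ell+r$.

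For this term-by-term step I would invoke Haglund's identity \eqref{eq:nablaEnk} in the form $\Delta_{e_{\ell+r}}E_{\ell+r,b}=t^{\ell+r-b}\Theta_{h_{\ell+r-b}}\wH_{(b)}$, which is legitimate because $1\le b\le \ell+r$ forces $\ell+r\ge 1$, so both indices appearing are positive. Substituting this into the right-hand side reduces the desired identity to the commutation relation $\Theta_{e_{m-\ell-r}}\Theta_{h_{\ell+r-b}}\wH_{(b)}=\Theta_{h_{\ell+r-b}}\Theta_{e_{m-\ell-r}}\wH_{(b)}$. This commutation is immediate from the definition \eqref{eq:def_Deltaf}: on inputs of positive degree each Theta operator acts as $\PPi\,\underline{g^*}\,\PPi^{-1}$, so a composite of two of them equals $\PPi$ conjugating multiplication by the product of the two (plethystically starred) symmetric functions, and multiplication of symmetric functions is commutative.

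The only point requiring genuine care---and the closest thing to an obstacle---is the degree bookkeeping that guarantees we never fall into the degenerate branch of \eqref{eq:def_Deltaf}. Here $\wH_{(b)}$ has degree $b\ge 1$; applying either $\Theta_{h_{\ell+r-b}}$ or $\Theta_{e_{m-\ell-r}}$ keeps us in positive degree (the intermediate result has degree $\ell+r\ge 1$ or $b+(m-\ell-r)\ge 1$ respectively), so the plethystic formula $\PPi\,\underline{g^*}\,\PPi^{-1}$ is valid at every stage and the commutation is justified. Finally, the terms with $m-\ell-r<0$ contribute nothing on either side, since there $e_{m-\ell-r}=0$ and hence $\Theta_{e_{m-\ell-r}}=0$; discarding them completes the matching of the two right-hand sides and proves \eqref{eq:corDeltaelemXk}.
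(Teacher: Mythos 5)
Your proof is correct and is essentially the paper's own argument: the paper likewise derives \eqref{eq:corDeltaelemXk} by identifying the left-hand side through \eqref{eq:DeltaFem} (with $F=e_\ell$, $\mu=(k)$) and substituting \eqref{eq:nablaEnk} into \eqref{eq:mysummation}. The only difference is that you spell out the commutation of the Theta operators via the definition \eqref{eq:def_Deltaf}, together with the attendant degree bookkeeping and the vanishing of terms with $m-\ell-r<0$, all of which the paper's one-line proof leaves implicit.
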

\begin{proof}
Just use \eqref{eq:DeltaFem} and \eqref{eq:nablaEnk} in \eqref{eq:mysummation} to conclude.
\end{proof}

\subsection{More applications}

The following result first appeared in \cite[Theorem~3.7]{DAdderio-Iraci-VandenWyngaerd-GenDeltaSchroeder}. The case $s=0$ already appeared in \cite[Proposition~4.26]{DAdderio-Iraci-VandenWyngaerd-Bible}.
\begin{theorem}
	Given $s,\ell,m,j\in\NN$, $m\geq 1$, we have
	\begin{equation}
		\sum_{k=1}^st^{s-k}\<\Delta_{h_{s-k}}\Delta_{e_{\ell}}e_m[X[k]_q],e_jh_{m-j}\> = \< \Delta_{h_j}\Delta_{e_{s-1}}'e_{s+\ell},e_{m-j}h_{s+\ell+j-m}\> .
	\end{equation}
\end{theorem}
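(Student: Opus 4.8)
The plan is to reduce both sides to the same kind of scalar product, namely a pairing of a chain of Theta operators applied to $\widetilde H_{(k)}$ against a product $e_\bullet h_\bullet$, and then to match after a $q,t$-binomial summation. The two genuinely useful structural inputs are \eqref{eq:DeltaFem} together with \eqref{eq:newthm} on the left, and \eqref{eq:Deltaprimeen} on the right.

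For the left-hand side I would first strip the Delta operators off the inner symmetric function. By \eqref{eq:DeltaFem} (with $\mu=(k)$, $F=e_\ell$, using $B_{(k)}=[k]_q$) we have $\Delta_{e_\ell}e_m[X[k]_q]=h_{k+\ell}^\perp\Theta_{e_m}\Theta_{e_\ell}\widetilde H_{(k)}$, and then \eqref{eq:newthm} (applied with $F=h_{s-k}$ and $G=\Theta_{e_\ell}\widetilde H_{(k)}\in\Lambda^{(k+\ell)}$) absorbs the remaining $\Delta_{h_{s-k}}$ into an extra Theta operator, giving $\Delta_{h_{s-k}}\Delta_{e_\ell}e_m[X[k]_q]=h_{s+\ell}^\perp\Theta_{e_m}\Theta_{h_{s-k}}\Theta_{e_\ell}\widetilde H_{(k)}$. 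Hence the left-hand side becomes $\sum_{k=1}^s t^{s-k}\langle h_{s+\ell}^\perp\Theta_{e_m}\Theta_{h_{s-k}}\Theta_{e_\ell}\widetilde H_{(k)},\,e_jh_{m-j}\rangle$, an explicit Theta-chain paired against $e_jh_{m-j}$.

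For the right-hand side I would use \eqref{eq:Deltaprimeen} (with $n=s+\ell$ and $k=\ell$) to write $\Delta_{e_{s-1}}'e_{s+\ell}=\Theta_{e_\ell}\Delta_{e_s}e_s$, so that the right-hand side reads $\langle\Delta_{h_j}\Theta_{e_\ell}\Delta_{e_s}e_s,\,e_{m-j}h_{s+\ell+j-m}\rangle$. Both sides are now scalar products against a single elementary times a single complete symmetric function, so I would simplify each by repeatedly converting the pairing with \eqref{eq:hkstarenmkstar} and transferring outer Theta operators across the pairing by \eqref{eq:lemThetaekDeltahk} (which trades a $\Theta_e$ for a $\Delta_h$). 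The $\perp$-operators produced this way I would push onto $\widetilde H_{(k)}$ using the commutation identities of Theorem~\ref{thm:mainconseq1} and Corollary~\ref{cor:otheridentities}, together with the explicit evaluations \eqref{eq:ejperpHn}, \eqref{eq:hjperpHn} and \eqref{eq:qBmu_Tmu}.

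What remains is to check that the two resulting multi-index $q,t$-binomial sums coincide. Concretely I would expand the left-hand side fully by \eqref{eq:corDeltaelemXk} (equivalently \eqref{eq:mysummation}) and \eqref{eq:nablaEnk}, carefully collect the powers of $t$ coming both from the weights $t^{s-k}$ and from \eqref{eq:nablaEnk}, and collapse the sums over $k$ and the internal summation indices using the $q$-Chu--Vandermonde identity \eqref{eq:qChuVander} and the elementary Lemmas \eqref{eq:lemmaelem}, \eqref{eq:lemelem2} and \eqref{eq:lemelem3}. The main obstacle is exactly this bookkeeping: the operator $\Delta_{h_{s-k}}$ (equivalently the middle factor $\Theta_{h_{s-k}}$ after folding) sits outside the operators to which \eqref{eq:lemThetaekDeltahk} applies, so one must either commute it inward or carry it as an explicit index throughout the summation, and then verify that the nested $q$-binomials collapse to reproduce precisely $\Delta_{h_j}\Delta_{e_{s-1}}'e_{s+\ell}$. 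I expect the verification of this final collapse to be the most delicate step, while the structural reductions above are routine applications of the identities already established.
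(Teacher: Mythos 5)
Your structural reductions are exactly the ones in the paper's proof: converting the pairing via \eqref{eq:hkstarenmkstar}, folding the Delta operators into Theta operators via \eqref{eq:DeltaFem} and \eqref{eq:newthm} so that the left side becomes $\sum_{k=1}^s t^{s-k}\, h_m^\perp h_{s+\ell+j}^\perp \Theta_{e_m}\Theta_{e_j}\Theta_{e_\ell}\Theta_{h_{s-k}}\wH_{(k)}$ (you keep $e_j$ outside the perp chain, the paper absorbs it as $\Theta_{e_j}$ -- an immaterial difference), rewriting the right side with \eqref{eq:Deltaprimeen} as a pairing against $\Theta_{e_\ell}\Delta_{e_s}e_s$, and trading $\Delta_{h_j}$ for $\Theta_{e_j}$ via \eqref{eq:lemThetaekDeltahk} at the end. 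Up to that point your plan is sound and coincides with the paper's.

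The genuine gap is the bridge between the two reduced expressions, namely the collapse of the sum over $k$, which you leave as an unverified ``delicate bookkeeping'' step to be done by full expansion via \eqref{eq:mysummation}/\eqref{eq:corDeltaelemXk} together with \eqref{eq:qChuVander} and the appendix lemmas. This misses the one-line mechanism that makes the theorem easy: by Haglund's identity \eqref{eq:nablaEnk}, read from right to left, the factor $t^{s-k}\Theta_{h_{s-k}}\wH_{(k)}$ is exactly $\Delta_{e_s}E_{s,k}$, so the weighted sum over $k$ becomes $\Delta_{e_s}\sum_{k=1}^s E_{s,k}=\Delta_{e_s}e_s$ by the case $j=1$ of \eqref{eq:enEnkformula} (equivalently $z\mapsto q$ in \eqref{eq:Enkdef}); no $q$-binomial manipulation occurs at all, after which \eqref{eq:Deltaprimeen} yields $\Theta_{e_\ell}\Delta_{e_s}e_s=\Delta_{e_{s-1}}'e_{s+\ell}$ and your own endgame finishes the proof. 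Your fallback route is moreover doubtful as stated: the appendix lemmas \eqref{eq:lemmaelem}, \eqref{eq:lemelem2} and \eqref{eq:lemelem3} are pure-$q$ identities tailored to the expansions \eqref{eq:nablaThetaejHi} and \eqref{eq:newsummationhjperp}, and you offer no mechanism for eliminating the powers of $t$ beyond ``careful collection''; the $E_{n,k}$ telescoping is precisely what handles them, and without invoking it in the clean direction your multi-index sums have no evident reason to collapse to $\Delta_{h_j}\Delta_{e_{s-1}}'e_{s+\ell}$. So the proposal is correctly oriented at both ends but is missing its central step.
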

\begin{proof}
	We have
	\begin{align*}
		\sum_{k=1}^st^{s-k}\<\Delta_{h_{s-k}}\Delta_{e_{\ell}}e_m[X[k]_q],e_jh_{m-j}\> & = \sum_{k=1}^st^{s-k}h_m^\perp \Delta_{e_j}\Delta_{h_{s-k}}\Delta_{e_{\ell}}e_m[X[k]_q] \\
		\text{(using \eqref{eq:DeltaFem})}& = \sum_{k=1}^st^{s-k}h_m^\perp h_{s+\ell+j}^\perp \Theta_{e_m}\Theta_{e_j}\Theta_{e_\ell}\Theta_{h_{s-k}}\wH_{(k)} \\
		\text{(using \eqref{eq:nablaEnk})}& = \sum_{k=1}^s h_m^\perp h_{s+\ell+j}^\perp \Theta_{e_m}\Theta_{e_j}\Theta_{e_\ell}\Delta_{e_s}E_{s,k} \\
		\text{(using \eqref{eq:enEnkformula})}& =  h_m^\perp h_{s+\ell+j}^\perp \Theta_{e_m}\Theta_{e_j}\Theta_{e_\ell}\Delta_{e_s}e_s \\
		\text{(using \eqref{eq:Deltaprimeen})}& =  h_m^\perp h_{s+\ell+j}^\perp \Theta_{e_m}\Theta_{e_j}\Delta_{e_{s-1}}'e_{s+\ell} \\
		\text{(using \eqref{eq:AperpDeltaomegaA})}& =  h_{s+\ell+j}^\perp \Delta_{e_m}\Theta_{e_j}\Delta_{e_{s-1}}'e_{s+\ell} \\
		\text{(using \eqref{eq:hkstarenmkstar})}& =  \< \Theta_{e_j}\Delta_{e_{s-1}}'e_{s+\ell},e_mh_{s+\ell+j-m}\>\\ 
		\text{(using \eqref{eq:lemThetaekDeltahk})}& =  \< \Delta_{h_j}\Delta_{e_{s-1}}'e_{s+\ell},e_{m-j}h_{s+\ell+j-m}\> .
	\end{align*}
\end{proof}
The following result first appeared in \cite[Theorem~4.7]{DAdderio-Iraci-VandenWyngaerd-Delta-Square}.
\begin{theorem}
	Given $s,\ell,m,j\in\NN$, $m\geq 1$, we have
	\begin{equation}
		\sum_{k=1}^s\frac{[s+\ell]_q}{[k]_q}t^{s-k}\<\Delta_{h_{s-k}}\Delta_{e_{\ell}}e_m[X[k]_q],e_jh_{m-j}\> =\frac{[s]_t}{[s+\ell]_t} \< \Delta_{h_j}\Delta_{e_{s}}\omega(p_{s+\ell}),e_{m-j}h_{s+\ell+j-m}\> .
	\end{equation}
\end{theorem}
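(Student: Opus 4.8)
The plan is to run, essentially verbatim, the proof of the preceding theorem, carrying the extra factor $\frac{[s+\ell]_q}{[k]_q}$ through the whole computation and only adjusting the two places where it genuinely interacts with the identities. First I would rewrite the pairing $\langle \Delta_{h_{s-k}}\Delta_{e_\ell}e_m[X[k]_q], e_jh_{m-j}\rangle$ as $h_m^\perp\Delta_{e_j}\Delta_{h_{s-k}}\Delta_{e_\ell}e_m[X[k]_q]$ using \eqref{eq:hkstarenmkstar}, and then absorb the three Delta operators into a chain of Theta operators exactly as before, via \eqref{eq:newthm} and \eqref{eq:DeltaFem} (the Delta operators commute, which fixes their ordering as $\Theta$'s). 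This turns each summand into $h_m^\perp h_{s+\ell+j}^\perp\Theta_{e_m}\Theta_{e_j}\Theta_{e_\ell}\Theta_{h_{s-k}}\wH_{(k)}$. Applying \eqref{eq:nablaEnk} in the form $t^{s-k}\Theta_{h_{s-k}}\wH_{(k)}=\Delta_{e_s}E_{s,k}$ collapses the innermost factor, so that the entire left-hand side becomes $h_m^\perp h_{s+\ell+j}^\perp\Theta_{e_m}\Theta_{e_j}\Theta_{e_\ell}\Delta_{e_s}\bigl(\sum_{k=1}^s \frac{[s+\ell]_q}{[k]_q}E_{s,k}\bigr)$.

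The first new ingredient is the evaluation of the inner sum. Where the preceding theorem had coefficient $1$ and used $\sum_k E_{s,k}=e_s$, here the factor $\frac{[s+\ell]_q}{[k]_q}$ lets me recognize, through \eqref{eq:omegapn}, that $\sum_{k=1}^s \frac{[s+\ell]_q}{[k]_q}E_{s,k}=\frac{[s+\ell]_q}{[s]_q}\sum_{k=1}^s \frac{[s]_q}{[k]_q}E_{s,k}=\frac{[s+\ell]_q}{[s]_q}\,\omega(p_s)$. The second new ingredient replaces the role that $\Theta_{e_\ell}\Delta_{e_s}e_s=\Delta_{e_{s-1}}'e_{s+\ell}$ (via \eqref{eq:Deltaprimeen}) played before: instead I would use \eqref{eq:Deltapn} with $n=s+\ell$ and $k=\ell$, namely $\Delta_{e_s}\PPi e_{s+\ell}^*=\Theta_{e_\ell}\Delta_{e_s}\PPi e_s^*$, and rewrite each $\PPi e_n^*$ as $\frac{\omega(p_n)}{[n]_q[n]_t M}$ using \eqref{eq:Pien} (equivalently \eqref{eq:omegapn}); this yields $\Theta_{e_\ell}\Delta_{e_s}\omega(p_s)=\frac{[s]_q[s]_t}{[s+\ell]_q[s+\ell]_t}\Delta_{e_s}\omega(p_{s+\ell})$. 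Substituting both facts, the $q$-factors cancel and produce the clean scalar $\frac{[s]_t}{[s+\ell]_t}$, leaving $\frac{[s]_t}{[s+\ell]_t}h_m^\perp h_{s+\ell+j}^\perp\Theta_{e_m}\Theta_{e_j}\Delta_{e_s}\omega(p_{s+\ell})$.

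Finally I would close the argument just as in the preceding theorem: apply \eqref{eq:AperpDeltaomegaA} with $A=h_m$ (so that $h_m^\perp\Theta_{e_m}$ becomes $\Delta_{e_m}$ and $\Theta_{e_0}$ disappears), use \eqref{eq:hkstarenmkstar} to rewrite $h_{s+\ell+j}^\perp\Delta_{e_m}(\,\cdot\,)$ as the pairing $\langle\Theta_{e_j}\Delta_{e_s}\omega(p_{s+\ell}), e_mh_{s+\ell+j-m}\rangle$, and then invoke \eqref{eq:lemThetaekDeltahk} (with $\ell\mapsto j$) to move $\Theta_{e_j}$ across to $\Delta_{h_j}$, landing on $\frac{[s]_t}{[s+\ell]_t}\langle\Delta_{h_j}\Delta_{e_s}\omega(p_{s+\ell}), e_{m-j}h_{s+\ell+j-m}\rangle$, which is exactly the right-hand side. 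I expect the main difficulty to be bookkeeping rather than conceptual: ensuring the scalar coming out of \eqref{eq:Deltapn} cancels correctly against the $\frac{[s+\ell]_q}{[s]_q}$ produced by the $E_{s,k}$-sum, and separately treating the degenerate case $s=0$, where the left-hand sum is empty and the right-hand side vanishes because $[s]_t=0$ (the hypothesis $s\geq 1$ being precisely what makes $n=s+\ell>k=\ell$ in \eqref{eq:Deltapn}).
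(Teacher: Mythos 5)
Your proposal is correct and follows the paper's own proof essentially step for step: the same reduction via \eqref{eq:hkstarenmkstar}, \eqref{eq:newthm}/\eqref{eq:DeltaFem} and \eqref{eq:nablaEnk}, then \eqref{eq:omegapn} together with \eqref{eq:Deltapn} to produce the scalar $\frac{[s]_t}{[s+\ell]_t}$, and the same closing moves \eqref{eq:AperpDeltaomegaA}, \eqref{eq:hkstarenmkstar} and \eqref{eq:lemThetaekDeltahk}. The only difference is cosmetic bookkeeping --- the paper carries $M[s+\ell]_q[s]_t\,\PPi e_s^*$ as the intermediate object where you carry $\omega(p_s)$, both via \eqref{eq:omegapn} --- and your separate treatment of the degenerate case $s=0$ is a harmless extra precaution.
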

\begin{proof}
	We have
	\begin{align*}
		& \sum_{k=1}^s\frac{[s+\ell]_q}{[k]_q}t^{s-k}\<\Delta_{h_{s-k}}\Delta_{e_{\ell}}e_m[X[k]_q],e_jh_{m-j}\>=\\
		& = \sum_{k=1}^s \frac{[s+\ell]_q}{[k]_q}t^{s-k}h_m^\perp \Delta_{e_j}\Delta_{h_{s-k}}\Delta_{e_{\ell}}e_m[X[k]_q] \\
		\text{(using \eqref{eq:DeltaFem})}& = \sum_{k=1}^s\frac{[s+\ell]_q}{[k]_q}t^{s-k}h_m^\perp h_{s+\ell+j}^\perp \Theta_{e_m}\Theta_{e_j}\Theta_{e_\ell}\Theta_{h_{s-k}}\wH_{(k)} \\
		\text{(using \eqref{eq:nablaEnk})}& = \sum_{k=1}^s \frac{[s+\ell]_q}{[k]_q}h_m^\perp h_{s+\ell+j}^\perp \Theta_{e_m}\Theta_{e_j}\Theta_{e_\ell}\Delta_{e_s}E_{s,k} \\
		\text{(using \eqref{eq:omegapn})}& = M[s+\ell]_q[s]_t  h_m^\perp h_{s+\ell+j}^\perp \Theta_{e_m}\Theta_{e_j}\Theta_{e_\ell}\Delta_{e_s}\PPi e_s^* \\
		\text{(using \eqref{eq:Deltapn})}& =  M[s+\ell]_q[s]_t h_m^\perp h_{s+\ell+j}^\perp \Theta_{e_m}\Theta_{e_j}\Delta_{e_{s}}\PPi e_{s+\ell}^* \\
		\text{(using \eqref{eq:omegapn})}& =  \frac{[s]_t}{[s+\ell]_t} h_m^\perp h_{s+\ell+j}^\perp \Theta_{e_m}\Theta_{e_j}\Delta_{e_{s}} \omega(p_{s+\ell}) \\
		\text{(using \eqref{eq:AperpDeltaomegaA})}& =  \frac{[s]_t}{[s+\ell]_t}h_{s+\ell+j}^\perp \Delta_{e_m}\Theta_{e_j}\Delta_{e_{s}} \omega(p_{s+\ell}) \\
		\text{(using \eqref{eq:hkstarenmkstar})}& =  \frac{[s]_t}{[s+\ell]_t}\< \Theta_{e_j}\Delta_{e_{s}} \omega(p_{s+\ell}),e_mh_{s+\ell+j-m}\>\\ 
		\text{(using \eqref{eq:lemThetaekDeltahk})}& =  \frac{[s]_t}{[s+\ell]_t}\< \Delta_{h_j}\Delta_{e_{s}} \omega(p_{s+\ell}),e_{m-j}h_{s+\ell+j-m}\> .
	\end{align*}
\end{proof}

\section{More new identities}
We can extend \eqref{eq:nablaThetaejHi}.
\begin{proposition}
Given $j,r,\ell,s,k\in\NN$, we have
\begin{align} \label{eq:DeltaThetaeHk}
\notag &\Delta_{e_{j-r}} \Theta_{e_{\ell-r+s}} \wH_{(k-s)} =\\ & = \sum_{a=0}^{k-s}q^{\binom{a}{2}}\qbinom{k-s}{a}_q\sum_{b=1}^{j-r+a}\qbinom{b-1}{k-s-1}_qq^{\binom{k-s}{2}-a(k-s-1)}  \Theta_{e_{\ell+k-j-a}}\Delta_{e_{j-r+a}} E_{j-r+a,b} .
\end{align}
\end{proposition}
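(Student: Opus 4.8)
The plan is to reduce the identity to two ingredients already available in the paper: the special evaluation \eqref{eq:corDeltaelemXk}, which computes $\Delta_{e_\ell}e_m[X[k]_q]$ in exactly the shape $\Theta\,\Delta\,E$ needed here, and the purely $q$-enumerative Lemma giving \eqref{eq:lemmaelem}. To keep the bookkeeping transparent I would set $J=j-r$, $K=k-s$ and $L=\ell-r+s$, and observe that the degree-raising index satisfies $\ell+k-j=L+K-J$, so that the asserted identity is
\[\Delta_{e_J}\Theta_{e_L}\wH_{(K)}=\sum_{a=0}^{K}q^{\binom{a}{2}}\qbinom{K}{a}_q\sum_{b=1}^{J+a}\qbinom{b-1}{K-1}_q\,q^{\binom{K}{2}-a(K-1)}\,\Theta_{e_{L+K-J-a}}\Delta_{e_{J+a}}E_{J+a,b},\]
depending only on $J$, $K$, $L$. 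This presentation makes clear that \eqref{eq:DeltaThetaeHk} is the genuine extension of \eqref{eq:nablaThetaejHi}: taking $J=k$, $L=k-j$, $K=j$ forces $L+K-J-a=-a$, so only $a=0$ survives and one recovers \eqref{eq:nablaThetaejHi} after a single application of \eqref{eq:nablaEnk}.

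First I would expand $\Theta_{e_L}\wH_{(K)}$ by \eqref{eq:thetaHj} (instantiated with $j\mapsto K$, $k\mapsto L+K$), obtaining
\[\Theta_{e_L}\wH_{(K)}=\sum_{c=0}^{K}(-1)^{K-c}q^{c-Kc}q^{\binom{c}{2}}\qbinom{K}{c}_q\,e_{L+K}[X[c]_q].\]
Then I would apply $\Delta_{e_J}$ termwise and rewrite each inner factor $\Delta_{e_J}e_{L+K}[X[c]_q]$ using \eqref{eq:corDeltaelemXk} with $\ell\mapsto J$, $m\mapsto L+K$, $k\mapsto c$; this is the conceptual heart of the argument, since it is \eqref{eq:corDeltaelemXk}, rather than the cruder \eqref{eq:enEnkformula} used for \eqref{eq:nablaThetaejHi}, that produces precisely the operators $\Theta_{e_{L+K-J-a}}\Delta_{e_{J+a}}E_{J+a,b}$ occurring on the right.

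After these two substitutions the result is a triple sum with $c$ outermost and internal indices $a$ (running $0\le a\le c$) and $b$. Interchanging the order so that $a$ is outermost, $c$ then runs from $a$ to $K$, and for the fixed operator $\Theta_{e_{L+K-J-a}}\Delta_{e_{J+a}}E_{J+a,b}$ the whole theorem collapses to the coefficient identity
\[\sum_{c=a}^{K}(-1)^{K-c}q^{c-Kc}q^{\binom{c}{2}}\qbinom{K}{c}_q\qbinom{c}{a}_q\qbinom{c-a+b-1}{c-1}_q=\qbinom{K}{a}_q\qbinom{b-1}{K-1}_q\,q^{\binom{K}{2}-a(K-1)}.\]
Applying the subset-of-subset relation $\qbinom{K}{c}_q\qbinom{c}{a}_q=\qbinom{K}{a}_q\qbinom{K-a}{c-a}_q$ and the symmetry $\qbinom{K-a}{c-a}_q=\qbinom{K-a}{K-c}_q$, I factor out $\qbinom{K}{a}_q$ and recognize the surviving sum $\sum_{c=a}^{K}\qbinom{K-a}{K-c}_q\qbinom{c-a+b-1}{c-1}_q\,q^{\binom{c}{2}+c-Kc}(-1)^{K-c}$ as exactly \eqref{eq:lemmaelem} with $i=K$, which evaluates it to $\qbinom{b-1}{K-1}_q q^{\binom{K}{2}-a(K-1)}$.

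The only point demanding care, and the step I would flag as the main subtlety rather than a difficulty, is the range of $b$: the Lemma giving \eqref{eq:lemmaelem} is stated for $b\ge a$, whereas \eqref{eq:corDeltaelemXk} sums $1\le b\le J+a$. For $b<a$ one has $a\ge1$, hence $c\ge a\ge1$ and $c-a+b-1<c-1$ with $c-1\ge0$, so $\qbinom{c-a+b-1}{c-1}_q=0$ for every $c$ and the inner sum vanishes; simultaneously $\qbinom{b-1}{K-1}_q=0$ because $b-1\le a-2<K-1$, so both sides are $0$ and these terms are harmless. Thus only $a\le b\le J+a$ genuinely contribute, precisely the regime of the Lemma, and collecting terms gives \eqref{eq:DeltaThetaeHk}. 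All the substantive $q$-series work is already encapsulated in \eqref{eq:corDeltaelemXk} and \eqref{eq:lemmaelem}, so the remaining effort is the (routine) instantiation of \eqref{eq:thetaHj}, the interchange of summation, and this short vanishing check.
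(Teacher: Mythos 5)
Your proof is correct and follows essentially the same route as the paper's: expand $\Theta_{e_{\ell-r+s}}\wH_{(k-s)}$ via \eqref{eq:thetaHj}, apply \eqref{eq:corDeltaelemXk} termwise, interchange the summations, and collapse the inner $c$-sum with \eqref{eq:lemmaelem} after the subset-of-subset factorization $\qbinom{K}{c}_q\qbinom{c}{a}_q=\qbinom{K}{a}_q\qbinom{K-a}{K-c}_q$. Your explicit verification that the terms with $b<a$ vanish on both sides (so the hypothesis $b\geq a$ of \eqref{eq:lemmaelem} is harmless) is a detail the paper passes over silently; otherwise the two arguments coincide.
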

\begin{proof}
Applying the Delta operator to \eqref{eq:thetaHj} we get
\begin{align*}
\Delta_{e_{j-r}} \Theta_{e_{\ell-r+s}} \wH_{(k-s)} & =\sum_{c=0}^{k-s} (-1)^{k-s-c}q^{c-(k-s)c}q^{\binom{c}{2}}\qbinom{k-s}{c}_q \Delta_{e_{j-r}} e_{\ell+k-r}[X[c]_q]
\end{align*}
Now using \eqref{eq:corDeltaelemXk} we have
\begin{align*}
&\sum_{c=0}^{k-s} (-1)^{k-s-c}q^{c-(k-s)c}q^{\binom{c}{2}}\qbinom{k-s}{c}_q \Delta_{e_{j-r}} e_{\ell+k-r}[X[c]_q]=\\
& = \sum_{c=0}^{k-s} (-1)^{k-s-c}q^{c-(k-s)c}q^{\binom{c}{2}}\qbinom{k-s}{c}_q \sum_{a=0}^{c}q^{\binom{a}{2}}\qbinom{c}{a}_q\times\\
& \times \sum_{b=1}^{j-r+a}\qbinom{c-a+b-1}{c-1}_q   \Theta_{e_{\ell+k-j-a}}\Delta_{e_{j-r+a}} E_{j-r+a,b} \\
& = \sum_{a=0}^{k-s}q^{\binom{a}{2}}\qbinom{k-s}{a}_q\sum_{b=1}^{j-r+a}\sum_{c=a}^{k-s} (-1)^{k-s-c}q^{c-(k-s)c}q^{\binom{c}{2}} \qbinom{k-s-a}{k-s-c}_q\times\\
& \times\qbinom{c-a+b-1}{c-1}_q   \Theta_{e_{\ell+k-j-a}}\Delta_{e_{j-r+a}} E_{j-r+a,b} \\
& = \sum_{a=0}^{k-s}q^{\binom{a}{2}}\qbinom{k-s}{a}_q\sum_{b=1}^{j-r+a}\qbinom{b-1}{k-s-1}_qq^{\binom{k-s}{2}-a(k-s-1)}  \Theta_{e_{\ell+k-j-a}}\Delta_{e_{j-r+a}} E_{j-r+a,b} 
\end{align*}
where in the last equality we used \eqref{eq:lemmaelem}. 
\end{proof}
We can now extend our \eqref{eq:mysummation}. 
\begin{theorem} \label{thm:mygeneralsummation}
Given $j,m,\ell,k\in\NN$, $k\geq 1$ we have
\begin{align} \label{eq:newsummationhjperp}
\notag & h_{j}^\perp\Theta_{e_m}\Theta_{e_{\ell}}\widetilde{H}_{(k)}=\\
& = \sum_{r=0}^{j}\qbinom{k}{r}_q\sum_{a=0}^{k}\sum_{b=1}^{j-r+a} q^{\binom{k-r-a}{2}}\qbinom{b-1}{a}_q  \qbinom{b+r-a-1}{k-a-1}_q \Theta_{e_{m-j+r}}\Theta_{e_{\ell+k-j-a}}\Delta_{e_{j-r+a}}  E_{j-r+a,b} \\
\notag & + \sum_{r=0}^{j}\qbinom{k}{r}_q\sum_{a=0}^{k}\sum_{b=1}^{j-r+a} q^{\binom{k-r-a+1}{2}}\qbinom{b-1}{a-1}_q \qbinom{b+r-a}{k-a}_q  \Theta_{e_{m-j+r}}\Theta_{e_{\ell+k-j-a}}\Delta_{e_{j-r+a}} E_{j-r+a,b} .
\end{align}
\end{theorem}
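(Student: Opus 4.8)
The plan is to mirror the strategy used for \eqref{eq:mysummation}, but to route the computation through the extended identity \eqref{eq:DeltaThetaeHk} rather than \eqref{eq:nablaThetaejHi}. First I would peel off the two outer Theta operators by applying \eqref{eq:hjek} twice: once to $h_j^\perp\Theta_{e_m}$, and then to the resulting $h_r^\perp\Theta_{e_\ell}$. This turns the left-hand side into
\[ \sum_{r=0}^{j}\Theta_{e_{m-j+r}}\Delta_{e_{j-r}}\sum_{s=0}^{r}\Theta_{e_{\ell-r+s}}\Delta_{e_{r-s}}h_s^\perp\widetilde{H}_{(k)}. \]

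Next I would evaluate the innermost factor. By \eqref{eq:hjperpHn} we have $h_s^\perp\widetilde{H}_{(k)}=\qbinom{k}{s}_q\widetilde{H}_{(k-s)}$, and since $B_{(k-s)}=[k-s]_q$ by \eqref{eq:qBmu_Tmu}, formula \eqref{eq:e_q_binomial} gives $\Delta_{e_{r-s}}\widetilde{H}_{(k-s)}=q^{\binom{r-s}{2}}\qbinom{k-s}{r-s}_q\widetilde{H}_{(k-s)}$. After pulling these scalars out, the double sum becomes $\sum_r\Theta_{e_{m-j+r}}\sum_s q^{\binom{r-s}{2}}\qbinom{k-s}{r-s}_q\qbinom{k}{s}_q\,\Delta_{e_{j-r}}\Theta_{e_{\ell-r+s}}\widetilde{H}_{(k-s)}$, and I would apply \eqref{eq:DeltaThetaeHk} to the operator block $\Delta_{e_{j-r}}\Theta_{e_{\ell-r+s}}\widetilde{H}_{(k-s)}$. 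Crucially, the surviving operator string $\Theta_{e_{m-j+r}}\Theta_{e_{\ell+k-j-a}}\Delta_{e_{j-r+a}}E_{j-r+a,b}$ produced by \eqref{eq:DeltaThetaeHk} is independent of the summation index $s$, so all the $s$-dependence is confined to the scalar coefficients.

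The heart of the argument is then to collapse the $s$-sum. I would use the classical rearrangement $\qbinom{k}{s}_q\qbinom{k-s}{r-s}_q=\qbinom{k}{r}_q\qbinom{r}{s}_q$ (together with $\qbinom{r}{s}_q=\qbinom{r}{r-s}_q$) to factor out $\qbinom{k}{r}_q$ and write the accumulated scalar in the exact shape of the left-hand side of \eqref{eq:lemelem2}. After interchanging the order of summation so that $\sum_s$ sits innermost (extending the range of $a$ to $\{0,\dots,k\}$ is harmless since $\qbinom{k-s}{a}_q=0$ whenever $a>k-s$, which also reconciles the effective upper limit of the $s$-sum with the value $r$ required by the lemma), the inner sum is precisely the left-hand side of \eqref{eq:lemelem2}, which evaluates to
\[ q^{\binom{k-r-a}{2}}\qbinom{b-1}{a}_q\qbinom{b+r-a-1}{k-a-1}_q+q^{\binom{k-r-a+1}{2}}\qbinom{b-1}{a-1}_q\qbinom{b+r-a}{k-a}_q. \]
Splitting this into its two summands gives exactly the two triple sums of \eqref{eq:newsummationhjperp}, finishing the proof.

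I expect the main obstacle to be this last step: organizing the bookkeeping of the several summation variables so that the inner $s$-sum matches the hypothesis of \eqref{eq:lemelem2} verbatim, and in particular spotting that the two factors $\qbinom{k}{s}_q\qbinom{k-s}{r-s}_q$ must be recombined into $\qbinom{k}{r}_q\qbinom{r}{r-s}_q$ before the lemma can be invoked. Everything else is a direct substitution of previously established identities.
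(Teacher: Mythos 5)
Your proposal is correct and follows essentially the same route as the paper's own proof: two applications of \eqref{eq:hjek}, evaluation via \eqref{eq:hjperpHn} and \eqref{eq:qBmu_Tmu}, expansion by \eqref{eq:DeltaThetaeHk}, the recombination $\qbinom{k}{s}_q\qbinom{k-s}{r-s}_q=\qbinom{k}{r}_q\qbinom{r}{r-s}_q$, and finally collapsing the $s$-sum with \eqref{eq:lemelem2}. Your remark that extending the $a$-range to $\{0,\dots,k\}$ is harmless because $\qbinom{k-s}{a}_q=0$ for $a>k-s$ is a detail the paper leaves implicit, and you handle it correctly.
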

\begin{proof}
Using twice \eqref{eq:hjek} we have
\begin{align*}
	& h_{j}^\perp\Theta_{e_m}\Theta_{e_{\ell}}\widetilde{H}_{(k)}=\\ 
	& = \sum_{r=0}^{j}\Theta_{e_{m-j+r}} \Delta_{e_{j-r}} h_r^\perp \Theta_{e_{\ell}}\widetilde{H}_{(k)}\\
	& = \sum_{r=0}^{j}\Theta_{e_{m-j+r}} \Delta_{e_{j-r}} \sum_{s=0}^r\Theta_{e_{\ell-r+s}}\Delta_{e_{r-s}} h_s^\perp \widetilde{H}_{(k)}\\
	\text{(using \eqref{eq:hjperpHn} and \eqref{eq:qBmu_Tmu})}	 & = \sum_{r=0}^{j}\Theta_{e_{m-j+r}} \Delta_{e_{j-r}} \sum_{s=0}^r\Theta_{e_{\ell-r+s}}q^{\binom{r-s}{2}}\qbinom{k-s}{r-s}_q \qbinom{k}{s}_q\widetilde{H}_{(k-s)}\\
	& = \sum_{r=0}^{j}\Theta_{e_{m-j+r}}\sum_{s=0}^rq^{\binom{r-s}{2}}\qbinom{k-s}{r-s}_q \qbinom{k}{s}_q  \Delta_{e_{j-r}} \Theta_{e_{\ell-r+s}}\widetilde{H}_{(k-s)}.
\end{align*}
Now using \eqref{eq:DeltaThetaeHk} we finally get
\begin{align*}
& h_{j}^\perp\Theta_{e_m}\Theta_{e_{\ell}}\widetilde{H}_{(k)}=\\ 
& = \sum_{r=0}^{j}\Theta_{e_{m-j+r}}\sum_{s=0}^rq^{\binom{r-s}{2}}\qbinom{k-s}{r-s}_q \qbinom{k}{s}_q  \sum_{a=0}^{k-s}q^{\binom{a}{2}}\qbinom{k-s}{a}_q\times\\
& \times\sum_{b=1}^{j-r+a}\qbinom{b-1}{k-s-1}_qq^{\binom{k-s}{2}-a(k-s-1)}  \Theta_{e_{\ell+k-j-a}}\Delta_{e_{j-r+a}} E_{j-r+a,b} \\ 
& = \sum_{r=0}^{j}\qbinom{k}{r}_q\sum_{s=0}^rq^{\binom{r-s}{2}}\qbinom{r}{r-s}_q   \sum_{a=0}^{k-s}q^{\binom{a}{2}}\qbinom{k-s}{a}_q\sum_{b=1}^{j-r+a}q^{\binom{k-s}{2}-a(k-s-1)}\qbinom{b-1}{k-s-1}_q  \times\\
& \times \Theta_{e_{m-j+r}}\Theta_{e_{\ell+k-j-a}}\Delta_{e_{j-r+a}} E_{j-r+a,b} \\
& = \sum_{r=0}^{j}\qbinom{k}{r}_q\sum_{a=0}^{k}\sum_{b=1}^{j-r+a} q^{\binom{k-r-a}{2}}\qbinom{b-1}{a}_q  \qbinom{b+r-a-1}{k-a-1}_q \Theta_{e_{m-j+r}}\Theta_{e_{\ell+k-j-a}}\Delta_{e_{j-r+a}}  E_{j-r+a,b} \\
& + \sum_{r=0}^{j}\qbinom{k}{r}_q\sum_{a=0}^{k}\sum_{b=1}^{j-r+a} q^{\binom{k-r-a+1}{2}}\qbinom{b-1}{a-1}_q \qbinom{b+r-a}{k-a}_q  \Theta_{e_{m-j+r}}\Theta_{e_{\ell+k-j-a}}\Delta_{e_{j-r+a}} E_{j-r+a,b} ,
\end{align*}
where in the last equality we used \eqref{eq:lemelem2}.
\end{proof}
The case $\ell=m$ of the following corollary is a reformulation of \cite[Theorem~7.6]{dadderio2019theta}.
\begin{corollary}
	\begin{align*}
	& h_{j}^\perp\Delta_{e_{\ell}}e_m[X[k]_q] =  \sum_{a=0}^{k}\sum_{b=1}^{j+a}q^{\binom{k-a}{2}} \qbinom{k}{a}_q  \qbinom{b-1}{k-1}_q t^{j+a-b}\Delta_{h_{j+a-b}}\Delta_{e_{\ell+k-j-a}}e_{m-j}[X[b]_q]  .
	\end{align*}
\end{corollary}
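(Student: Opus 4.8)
The plan is to deduce this corollary from Theorem~\ref{thm:mygeneralsummation} by applying one further adjoint operator and simplifying. First I would rewrite the left-hand side using \eqref{eq:DeltaFem} with $\mu=(k)$ and $F=e_\ell$ (so that $B_\mu=[k]_q$): this reads $\Delta_{e_\ell}e_m[X[k]_q]=h_{k+\ell}^\perp\Theta_{e_m}\Theta_{e_\ell}\widetilde{H}_{(k)}$, whence, since adjoints of commuting multiplication operators commute,
\[ h_j^\perp\Delta_{e_\ell}e_m[X[k]_q]=h_{k+\ell}^\perp\bigl(h_j^\perp\Theta_{e_m}\Theta_{e_\ell}\widetilde{H}_{(k)}\bigr). \]
Now I would substitute the right-hand side of \eqref{eq:newsummationhjperp} for the inner factor $h_j^\perp\Theta_{e_m}\Theta_{e_\ell}\widetilde{H}_{(k)}$, reducing everything to evaluating $h_{k+\ell}^\perp$ on each summand $\Theta_{e_{m-j+r}}\Theta_{e_{\ell+k-j-a}}\Delta_{e_{j-r+a}}E_{j-r+a,b}$.

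The conceptual heart of the argument is that this extra $h_{k+\ell}^\perp$ collapses the sum over $r$ to the single term $r=0$. Each summand has the shape $\Theta_{e_{m-j+r}}Y_r$, where I set $Y_r=\Theta_{e_{\ell+k-j-a}}\Delta_{e_{j-r+a}}E_{j-r+a,b}\in\Lambda^{(\ell+k-r)}$ (the degrees add up to $(\ell+k-j-a)+(j-r+a)=\ell+k-r$). Since $k+\ell>\ell+k-r$ precisely when $r>0$, Lemma~\ref{lem:hkperpThetamFzero} forces $h_{k+\ell}^\perp\Theta_{e_{m-j+r}}Y_r=0$ for every $r>0$ whenever $m-j+r\ge 1$; the remaining degenerate cases vanish trivially (if $m-j+r<0$ the outer $\Theta$ is zero, and if $m-j+r=0$ the surviving factor $Y_r$ has degree $\ell+k-r<k+\ell$). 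Thus only the $r=0$ part of \eqref{eq:newsummationhjperp} contributes.

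For the surviving $r=0$ summands I would first apply \eqref{eq:nablaEnk} to write $\Delta_{e_{j+a}}E_{j+a,b}=t^{j+a-b}\Theta_{h_{j+a-b}}\widetilde{H}_{(b)}$, so that, using $\Theta_f\Theta_g=\Theta_{fg}$, the summand becomes $t^{j+a-b}\Theta_{e_{m-j}}\Theta_{e_{\ell+k-j-a}h_{j+a-b}}\widetilde{H}_{(b)}$. Applying \eqref{eq:DeltaFem} once more, now with $\mu=(b)$ and $F=e_{\ell+k-j-a}h_{j+a-b}$ (whose degree $\ell+k-b$ makes the required index $b+\deg F=k+\ell$ match $h_{k+\ell}^\perp$ exactly — this matching is automatic precisely because $r=0$), yields $t^{j+a-b}\Delta_{e_{\ell+k-j-a}h_{j+a-b}}e_{m-j}[X[b]_q]=t^{j+a-b}\Delta_{h_{j+a-b}}\Delta_{e_{\ell+k-j-a}}e_{m-j}[X[b]_q]$, which is exactly the summand of the claimed identity. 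It then remains only to assemble the scalars: at $r=0$ the two families in \eqref{eq:newsummationhjperp} carry $q^{\binom{k-a}{2}}\qbinom{b-1}{a}_q\qbinom{b-a-1}{k-a-1}_q$ and $q^{\binom{k-a+1}{2}}\qbinom{b-1}{a-1}_q\qbinom{b-a}{k-a}_q$, whose sum is $q^{\binom{k-a}{2}}\qbinom{k}{a}_q\qbinom{b-1}{k-1}_q$ by \eqref{eq:lemelem3}. This is precisely the coefficient in the statement, and the ranges $0\le a\le k$, $1\le b\le j+a$ carry over directly.

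The main obstacle is the collapse $r>0\Rightarrow 0$: it is what converts the double family over $(r,a,b)$ of Theorem~\ref{thm:mygeneralsummation} into the single-index family here, and it hinges on reading the vanishing Lemma~\ref{lem:hkperpThetamFzero} through the degree bookkeeping $\deg Y_r=\ell+k-r$. Once that is in hand everything else is routine: two applications of \eqref{eq:DeltaFem}, the conversion \eqref{eq:nablaEnk}, the algebra facts $\Theta_f\Theta_g=\Theta_{fg}$ and $\Delta_f\Delta_g=\Delta_{fg}$, and the purely $q$-binomial identity \eqref{eq:lemelem3}. The only care needed is with the small ranges (notably $m\ge j$, below which both sides degenerate), which I would dispatch separately.
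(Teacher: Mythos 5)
Your proposal is correct and follows essentially the same route as the paper's own proof: rewrite the left-hand side as $h_{k+\ell}^\perp h_j^\perp\Theta_{e_m}\Theta_{e_\ell}\widetilde{H}_{(k)}$, substitute Theorem~\ref{thm:mygeneralsummation}, kill all $r>0$ terms with Lemma~\ref{lem:hkperpThetamFzero} via the degree count $\deg Y_r=\ell+k-r<k+\ell$, convert the $r=0$ terms with \eqref{eq:nablaEnk} and \eqref{eq:DeltaFem}, and merge the two coefficient families by \eqref{eq:lemelem3}. The only cosmetic difference is that you invoke $\Theta_f\Theta_g=\Theta_{fg}$ and a single application of \eqref{eq:DeltaFem} where the paper iterates its Theta identities, and you spell out the degenerate cases $m-j+r\le 0$ that the paper leaves implicit.
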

\begin{proof}
	Using \eqref{eq:Thetareciprocity}, \eqref{eq:AperpDeltaomegaA}, \eqref{eq:newsummationhjperp} and Lemma~\ref{lem:hkperpThetamFzero} we have
\begin{align*}
& h_{j}^\perp\Delta_{e_{\ell}}e_m[X[k]_q] = h_{j}^\perp \Delta_{e_{\ell}}h_{k}^\perp \Theta_{e_m} \widetilde{H}_{(k)} = h_{k+\ell}^\perp h_{j}^\perp\Theta_{e_m}\Theta_{e_{\ell}}\widetilde{H}_{(k)}=\\
& = \sum_{r=0}^{j}\qbinom{k}{r}_q\sum_{a=0}^{k}\sum_{b=1}^{j-r+a} q^{\binom{k-r-a}{2}}\qbinom{b-1}{a}_q  \qbinom{b+r-a-1}{k-a-1}_q h_{k+\ell}^\perp\Theta_{e_{m-j+r}}\Theta_{e_{\ell+k-j-a}}\Delta_{e_{j-r+a}}  E_{j-r+a,b} \\
& + \sum_{r=0}^{j}\qbinom{k}{r}_q\sum_{a=0}^{k}\sum_{b=1}^{j-r+a} q^{\binom{k-r-a+1}{2}}\qbinom{b-1}{a-1}_q \qbinom{b+r-a}{k-a}_q  h_{k+\ell}^\perp\Theta_{e_{m-j+r}}\Theta_{e_{\ell+k-j-a}}\Delta_{e_{j-r+a}} E_{j-r+a,b}\\
& = \sum_{a=0}^{k}\sum_{b=1}^{j+a} q^{\binom{k-a}{2}}\qbinom{b-1}{a}_q  \qbinom{b-a-1}{k-a-1}_q h_{k+\ell}^\perp\Theta_{e_{m-j}}\Theta_{e_{\ell+k-j-a}} t^{j+a-b}\Theta_{h_{j+a-b}}  \wH_{(b)} \\
& + \sum_{a=0}^{k}\sum_{b=1}^{j+a} q^{\binom{k-a+1}{2}}\qbinom{b-1}{a-1}_q \qbinom{b-a}{k-a}_q  h_{k+\ell}^\perp\Theta_{e_{m-j}}\Theta_{e_{\ell+k-j-a}}t^{j+a-b}\Theta_{h_{j+a-b}}  \wH_{(b)} \\
& = \sum_{a=0}^{k}\sum_{b=1}^{j+a} q^{\binom{k-a}{2}}\qbinom{b-1}{a}_q  \qbinom{b-a-1}{k-a-1}_q t^{j+a-b}\Delta_{h_{j+a-b}}\Delta_{e_{\ell+k-j-a}}e_{m-j}[X[b]_q] \\
& + \sum_{a=0}^{k}\sum_{b=1}^{j+a} q^{\binom{k-a+1}{2}}\qbinom{b-1}{a-1}_q \qbinom{b-a}{k-a}_q  t^{j+a-b}\Delta_{h_{j+a-b}}\Delta_{e_{\ell+k-j-a}}e_{m-j}[X[b]_q]\\
& = \sum_{a=0}^{k}\sum_{b=1}^{j+a}q^{\binom{k-a}{2}} \qbinom{k}{a}_q  \qbinom{b-1}{k-1}_q t^{j+a-b}\Delta_{h_{j+a-b}}\Delta_{e_{\ell+k-j-a}}e_{m-j}[X[b]_q]  .
\end{align*}
where in the last equality we used \eqref{eq:lemelem3}.
\end{proof}

\section{Even more new identities}

We start with the following new identity.

\begin{theorem}
Given $j,m,p,k$ with $k\geq 1$ we have
\begin{equation} \label{eq:thmimplygenDelta}
h_{j}^\perp t^p\Theta_{h_p}\Theta_{e_m} \widetilde{H}_{(k)} =\sum_{s=0}^{j}t^{j-s}\sum_{r=0}^sq^{\binom{s-r}{2}}\qbinom{k-r}{s-r}_q\qbinom{k}{r}_q\Delta_{h_{j-s}}t^{p-j+s}\Theta_{h_{p-j+s}}\Theta_{e_{m-s+r}}\widetilde{H}_{(k-r)}.
\end{equation}
\end{theorem}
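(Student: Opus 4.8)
The plan is to strip off the two Theta operators one at a time using the commutation identities of Theorem~\ref{thm:mainconseq1}, following the same overall strategy as the proof of \eqref{eq:mysummation}, and then to evaluate the remaining operators on $\wH_{(k)}$ by a direct computation. Concretely, I would first apply \eqref{eq:hjhk} (with the roles $k\mapsto p$ and the summation index renamed to $s$) to commute $h_j^\perp$ past $\Theta_{h_p}$. Since $t^p$ is a scalar it commutes freely with $h_j^\perp$, so that
\[ h_{j}^\perp t^p\Theta_{h_p}\Theta_{e_m}\wH_{(k)} = t^p\sum_{s=0}^{j}\Delta_{h_{j-s}}\Theta_{h_{p-j+s}}\,h_s^\perp\Theta_{e_m}\wH_{(k)}. \]
Next I would expand the inner $h_s^\perp\Theta_{e_m}$ using \eqref{eq:hjek} (with $j\mapsto s$, $k\mapsto m$), turning it into $\sum_{r=0}^{s}\Theta_{e_{m-s+r}}\Delta_{e_{s-r}}h_r^\perp$. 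This leaves a double sum in $s$ and $r$ whose building block is $\Theta_{e_{m-s+r}}\Delta_{e_{s-r}}h_r^\perp\wH_{(k)}$.

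The final step is to evaluate this building block. By \eqref{eq:hjperpHn} we have $h_r^\perp\wH_{(k)}=\qbinom{k}{r}_q\wH_{(k-r)}$, and then, using $B_{(k-r)}=[k-r]_q$ from \eqref{eq:qBmu_Tmu} together with \eqref{eq:e_q_binomial}, the Delta operator acts as the scalar $\Delta_{e_{s-r}}\wH_{(k-r)}=e_{s-r}\big[[k-r]_q\big]\wH_{(k-r)}=q^{\binom{s-r}{2}}\qbinom{k-r}{s-r}_q\wH_{(k-r)}$. Collecting the three scalars $\qbinom{k}{r}_q$, $q^{\binom{s-r}{2}}$ and $\qbinom{k-r}{s-r}_q$, and splitting $t^p=t^{j-s}\cdot t^{p-j+s}$ so that the factor $t^{p-j+s}$ stays attached to $\Theta_{h_{p-j+s}}$, yields precisely the right-hand side of \eqref{eq:thmimplygenDelta}.

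Unlike the proof of \eqref{eq:newsummationhjperp}, I do not expect to need any auxiliary $q$-binomial summation (such as \eqref{eq:lemelem2}): the claimed right-hand side is already in the collected form produced directly by the two commutations and the evaluation on $\wH_{(k)}$, with no residual sum to simplify. The only point requiring care is index bookkeeping, which is also what I would flag as the main (though minor) obstacle—one should verify that the conventions $e_{<0}=h_{<0}=0$ (hence $\Theta_{e_{<0}}=\Theta_{h_{<0}}=0$, and $\Delta$ of a negative-degree elementary symmetric function vanishes) automatically suppress all out-of-range terms, so that the surviving ranges are exactly $0\le r\le s\le j$ with no spurious contributions.
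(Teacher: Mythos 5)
Your proposal is correct and follows exactly the paper's own proof: commute $h_j^\perp$ past $\Theta_{h_p}$ via \eqref{eq:hjhk}, expand $h_s^\perp\Theta_{e_m}$ via \eqref{eq:hjek}, then evaluate on $\wH_{(k)}$ using \eqref{eq:hjperpHn} and $\Delta_{e_{s-r}}\wH_{(k-r)}=q^{\binom{s-r}{2}}\qbinom{k-r}{s-r}_q\wH_{(k-r)}$ from \eqref{eq:qBmu_Tmu} and \eqref{eq:e_q_binomial}, splitting $t^p=t^{j-s}t^{p-j+s}$ at the end. You are also right that, unlike \eqref{eq:newsummationhjperp}, no auxiliary $q$-binomial summation is needed here.
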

\begin{proof}
Using \eqref{eq:hjhk} we have
\begin{align*}
h_{j}^\perp t^p\Theta_{h_p}\Theta_{e_m} \widetilde{H}_{(k)}& =t^p\sum_{s=0}^{j}\Delta_{h_{j-s}}\Theta_{h_{p-j+s}}h_s^\perp \Theta_{e_m} \widetilde{H}_{(k)}\\ 
\text{(using \eqref{eq:hjek})}	&=t^p\sum_{s=0}^{j}\Delta_{h_{j-s}}\Theta_{h_{p-j+s}}\sum_{r=0}^s\Theta_{e_{m-s+r}}\Delta_{e_{s-r}}h_r^\perp\widetilde{H}_{(k)}\\ 
\text{(using \eqref{eq:hjperpHn})}	&=\sum_{s=0}^{j}t^{j-s}\sum_{r=0}^sq^{\binom{s-r}{2}}\qbinom{k-r}{s-r}_q\qbinom{k}{r}_q\Delta_{h_{j-s}}t^{p-j+s}\Theta_{h_{p-j+s}}\Theta_{e_{m-s+r}}\widetilde{H}_{(k-r)}.
\end{align*}
\end{proof}

The following result can be used to deduce a combinatorial interpretation of $\Delta_{h_{\ell}}\Delta_{e_n} E_{n,k}$ from the one of $\Delta_{e_n} E_{n,k}$ given by the shuffle theorem \cite{Carlsson-Mellit-ShuffleConj-2015}. For example for $m=0$ it can be interpreted as ``pushing in'' the $j$ big cars into zeros, when the big cars are not on the diagonal, and removing them if they are on the diagonal (see \cite[Theorem~7.5]{dadderio2019theta}). This would provide a simplification of the argument used in \cite{dadderio2019theta} to prove the \emph{generalized shuffle conjecture}. For details, we refer to a forthcoming article\footnote{Private communication.} in which Iraci and Vanden Wyngaerd show how in fact the following identity can be used to prove that the generalized valley Delta conjecture of Qiu and Wilson \cite{Qiu_Wilson} (and hence its square version in \cite{IraciVandenWyngaerd_valleyDeltaSquare}) is implied by the valley Delta conjecture \cite{Haglund-Remmel-Wilson-2015} (which is still open).
\begin{corollary} \label{cor:genDeltaId}
	Given $j,m,p,k$ with $k\geq 1$ we have
\begin{equation}
h_j^\perp \Theta_{e_m} \Delta_{e_{p+k}} E_{p+k,k} = \sum_{s=0}^{j}t^{j-s}\sum_{r=0}^sq^{\binom{s-r}{2}}\qbinom{k-r}{s-r}_q\qbinom{k}{r}_q \Delta_{h_{j-s}} \Theta_{e_{m-s+r}} \Delta_{e_{p-j+s+k-r}} E_{p-j+s+k-r,k-r} .
\end{equation}	
\end{corollary}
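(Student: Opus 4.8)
The plan is to read this corollary as nothing more than the image of the immediately preceding Theorem, equation~\eqref{eq:thmimplygenDelta}, under the dictionary furnished by \eqref{eq:nablaEnk}. The only genuinely new ingredient I would need to isolate first is that the Theta operators commute among themselves: directly from the definition \eqref{eq:def_Deltaf}, for arguments of positive degree one has $\Theta_f\Theta_g = \PPi f^*\PPi^{-1}\PPi g^*\PPi^{-1} = \PPi f^*g^*\PPi^{-1} = \Theta_g\Theta_f$, since the multiplication operators $\underline{f^*}$ and $\underline{g^*}$ commute. In particular $\Theta_{e_a}\Theta_{h_b}=\Theta_{h_b}\Theta_{e_a}$ on each homogeneous component appearing in \eqref{eq:thmimplygenDelta}.

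Next I would rewrite \eqref{eq:nablaEnk} in the solved-for shape $t^{n-k}\Theta_{h_{n-k}}\wH_{(k)} = \Delta_{e_n}E_{n,k}$, which is exactly what trades a ``$\Theta_h$ sitting on a one-row Macdonald polynomial'' for a ``$\Delta_e$ on an $E$''. Applying this to the left-hand side of \eqref{eq:thmimplygenDelta}, I slide $\Theta_{e_m}$ to the outside using commutativity and then invoke \eqref{eq:nablaEnk} with $n=p+k$ and second index $k$:
\[ h_j^\perp t^p\Theta_{h_p}\Theta_{e_m}\wH_{(k)} = h_j^\perp\Theta_{e_m}\bigl(t^p\Theta_{h_p}\wH_{(k)}\bigr) = h_j^\perp\Theta_{e_m}\Delta_{e_{p+k}}E_{p+k,k}, \]
which is precisely the left-hand side of the corollary.

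The right-hand sides I would then match summand by summand. In the generic term of \eqref{eq:thmimplygenDelta} I commute $\Theta_{e_{m-s+r}}$ past $\Theta_{h_{p-j+s}}$ and apply \eqref{eq:nablaEnk} with $n = p-j+s+k-r$ and second index $k-r$, converting $t^{p-j+s}\Theta_{h_{p-j+s}}\wH_{(k-r)}$ into $\Delta_{e_{p-j+s+k-r}}E_{p-j+s+k-r,k-r}$. The scalar $t^{p-j+s}$ produced by \eqref{eq:nablaEnk} is exactly the factor already standing in front of the term, and the coefficients $q^{\binom{s-r}{2}}\qbinom{k-r}{s-r}_q\qbinom{k}{r}_q$ and the outer $\Delta_{h_{j-s}}$, $t^{j-s}$ are untouched; hence each summand of \eqref{eq:thmimplygenDelta} turns into the corresponding summand of the corollary, and the identity follows.

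The one step to watch—and the place where I expect the only real care is needed—is the boundary behavior of \eqref{eq:nablaEnk}, which is stated for positive indices, whereas the double sum contains terms with $k-r=0$ (arising from $r=k\le s$) and possibly a degree-zero argument. Here I would verify that such terms vanish compatibly on both sides: if $p-j+s>0$ then $\Theta_{h_{p-j+s}}\wH_{(0)}=0$ by the first clause of \eqref{eq:def_Deltaf}, while on the corollary side $E_{n,0}=0$ under the convention implicit in \eqref{eq:Enkdef} (whose sum begins at index $1$); the lone fully degenerate case $p-j+s=0=k-r$ is then checked by hand. No computation beyond this bookkeeping is required, so the corollary is a direct transcription of Theorem~\ref{thm:mygeneralsummation}'s companion identity \eqref{eq:thmimplygenDelta} via commutativity and \eqref{eq:nablaEnk}.
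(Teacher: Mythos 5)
Your proof is correct and is essentially the paper's own argument: the paper's proof reads simply ``Combine \eqref{eq:thmimplygenDelta} and \eqref{eq:nablaEnk}.'' You have merely made explicit the two points the paper leaves implicit---the commutativity $\Theta_f\Theta_g=\PPi f^*g^*\PPi^{-1}=\Theta_g\Theta_f$ on positive-degree arguments and the vanishing of the boundary terms with $k-r=0$---both of which check out.
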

\begin{proof}
Combine \eqref{eq:thmimplygenDelta} and \eqref{eq:nablaEnk}.
\end{proof}
The following theorem seems new.
\begin{theorem}
Given $m,k\in\NN$, $k\geq 1$,
\begin{equation} \label{eq:Deltat0}
\Theta_{e_m}\wH_{(k)}=\left.\Delta_{e_{k-1}}'e_{m+k}\right|_{t=0}.
\end{equation}
\end{theorem}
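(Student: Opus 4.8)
The plan is to start from \eqref{eq:Deltaprimeen}, applied with $n$ replaced by $m+k$ and $k$ replaced by $m$ (so that the hypothesis $n>k$ becomes $k\geq 1$); it reads
\[
\Delta_{e_{k-1}}'e_{m+k}=\Theta_{e_m}\Delta_{e_k}e_k .
\]
The idea is then to expand $\Delta_{e_k}e_k$ in a way that exposes a factor of $t$ in all but one term, apply $\Theta_{e_m}$, and set $t=0$.

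First I would expand $e_k$ in the basis $\{E_{k,j}\}$. Putting the parameter equal to $1$ in \eqref{eq:enEnkformula} (the single-variable specialization $[1]_q=1$) gives $e_k=\sum_{j=1}^k E_{k,j}$, since $\qbinom{j}{j}_q=1$. Applying \eqref{eq:nablaEnk} term by term then yields
\[
\Delta_{e_k}e_k=\sum_{j=1}^k\Delta_{e_k}E_{k,j}=\sum_{j=1}^k t^{k-j}\Theta_{h_{k-j}}\wH_{(j)},
\]
and hence, by $\QQ(q,t)$-linearity of $\Theta_{e_m}$,
\[
\Delta_{e_{k-1}}'e_{m+k}=\Theta_{e_m}\Delta_{e_k}e_k=\sum_{j=1}^k t^{k-j}\,\Theta_{e_m}\Theta_{h_{k-j}}\wH_{(j)} .
\]

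Next I would set $t=0$. The summand with $j=k$ is $\Theta_{e_m}\Theta_{h_0}\wH_{(k)}=\Theta_{e_m}\wH_{(k)}$, because $\Theta_{h_0}$ is the identity on $\Lambda^{(k)}$ (indeed $\PPi\,1\cdot\PPi^{-1}=\mathrm{id}$); moreover \eqref{eq:thetaHj} shows that $\Theta_{e_m}\wH_{(k)}$ is a polynomial in $q$ alone, so its specialization at $t=0$ is itself. Each summand with $j<k$ carries a factor $t^{k-j}$, and so vanishes at $t=0$ \emph{provided} $\Theta_{e_m}\Theta_{h_{k-j}}\wH_{(j)}$ has no pole at $t=0$. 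Establishing this regularity is the one real point of the argument.

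I would argue that every operator in sight preserves regularity at $t=0$. The scalars $\Pi_\mu$ entering $\PPi^{\pm1}$ (and the $w_\mu$, $M$ entering the $\Theta$'s) are regular and nonzero at $t=0$: for any $\mu$ each factor $1-q^{a'_\mu(c)}t^{l'_\mu(c)}$ of $\Pi_\mu$ equals $1$ at $t=0$ when $l'_\mu(c)\geq 1$ and equals $1-q^{a'_\mu(c)}$ when $l'_\mu(c)=0$, so $\Pi_\mu|_{t=0}=\prod_{i=1}^{\mu_1-1}(1-q^i)\neq 0$; likewise $f^*=f[X/M]$ is regular at $t=0$ since $1/M|_{t=0}=1/(1-q)$. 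Finally, regularity at $t=0$ is insensitive to the choice of the Schur or the modified Macdonald basis, because $\wH_\mu=\sum_\lambda \wK_{\lambda\mu}s_\lambda$ with $\wK_{\lambda\mu}$ polynomial, while conversely the coefficients $\langle\,\cdot\,,\wH_\mu\rangle_*/w_\mu$ are regular at $t=0$ (the star pairing and $w_\mu^{-1}$ are). Hence $\Theta_{e_m}\Theta_{h_{k-j}}\wH_{(j)}$ is regular at $t=0$, only the $j=k$ term survives the specialization, and we obtain $\Delta_{e_{k-1}}'e_{m+k}\big|_{t=0}=\Theta_{e_m}\wH_{(k)}$, as claimed. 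The main obstacle is precisely this verification that the $\Theta$-operators introduce no spurious pole at $t=0$; everything else is bookkeeping with \eqref{eq:Deltaprimeen}, \eqref{eq:nablaEnk} and \eqref{eq:thetaHj}.
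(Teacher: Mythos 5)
Your proof is correct, but it diverges from the paper's at the decisive step, and the divergence is substantive. Both arguments begin identically: \eqref{eq:Deltaprimeen} gives $\Delta_{e_{k-1}}'e_{m+k}=\Theta_{e_m}\Delta_{e_k}e_k$, and \eqref{eq:enEnkformula} at $j=1$ gives $e_k=\sum_{j=1}^k E_{k,j}$. At that point the paper invokes the touching refinement of the Delta conjecture, proved in \cite{DAdderio_Mellit2020}: $\Theta_{e_m}\Delta_{e_k}E_{k,r}$ is a sum of monomial weights whose $t$-exponent is the area of a labelled decorated Dyck path with $r$ touch points, and area is strictly positive when $r<k$, so those terms vanish at $t=0$ outright. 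You instead make the $t$-power visible algebraically via Haglund's identity \eqref{eq:nablaEnk}, writing $\Theta_{e_m}\Delta_{e_k}E_{k,j}=t^{k-j}\,\Theta_{e_m}\Theta_{h_{k-j}}\wH_{(j)}$, and then verify that $\Theta_{e_m}\Theta_{h_{k-j}}\wH_{(j)}$ has no pole at $t=0$, so the explicit factor $t^{k-j}$ forces the vanishing. Your regularity check is the right one and is sound: $\Pi_\mu|_{t=0}=\prod_{i=1}^{\mu_1-1}(1-q^i)\neq 0$ controls $\PPi^{-1}$, and the inverse transition to the Macdonald basis is governed by $1/w_\mu$, which is likewise regular at $t=0$ since each factor of $w_\mu$ specializes to $q^{a_\mu(c)}$ or $1$, respectively $-q^{a_\mu(c)+1}$ or $1-q^{a_\mu(c)+1}$, all nonzero (one small imprecision: ``$1/M|_{t=0}=1/(1-q)$'' should really be the observation that $p_r[X/M]=p_r/((1-q^r)(1-t^r))$ is regular at $t=0$ for every $r$, but this changes nothing). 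The trade-off is clear: the paper's route is shorter and encodes the stronger positivity fact that all $t$-powers in $\Theta_{e_m}\Delta_{e_k}E_{k,r}$ are nonnegative, but it leans on the full strength of the proven Delta conjecture; your route is elementary and self-contained within the paper's algebraic toolkit, since \eqref{eq:nablaEnk} and \eqref{eq:thetaHj} are established there by pure symmetric-function manipulations, at the cost of the pole analysis, which you correctly identified as the one genuinely delicate point.
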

\begin{proof}
The touching refinement of the Delta conjecture (see \cite[Conjecture~5.2]{dadderio2019theta}, also for the missing definitions), now proved in \cite{DAdderio_Mellit2020}, states that $\Theta_{e_m}\Delta_{e_k} E_{k,r}$ is the sum of certain weights of labelled Dyck paths of size $k+m$ with $m$ decorated rises touching the main diagonal $y=x$ at precisely $r$ points (ignoring $(k+m,k+m)$). The weights are monic monomials in the variables $q,t,x_1,x_2,\dots$, whose power of $t$ is given by the area of the decorated Dyck path. It is obvious from the definitions that if $r<k$ then the area of such a path must be positive. Therefore
\[ \left.\Theta_{e_m}\Delta_{e_k} E_{k,r}\right|_{t=0}=0\quad \text{if }r<k. \]

Now combining this with \eqref{eq:Enkformula} and \eqref{eq:Deltaprimeen}, and using \eqref{eq:nablaEnk} we get \[\left.\Delta_{e_{k-1}}'e_{m+k}\right|_{t=0}=\sum_{r=1}^{k}\left.\Theta_{e_m}\Delta_{e_k} E_{k,r}\right|_{t=0}=\left.\Theta_{e_m}\wH_{(k)}\right|_{t=0}=\Theta_{e_m}\wH_{(k)},\]
where the last equality follows from \eqref{eq:thetaHj}.
\end{proof}
As a corollary, we get the following identity, which is a reformulation of \cite[Proposition~9.2]{DAdderio-Iraci-VandenWyngaerd-Delta-Square}.
\begin{corollary}
	Given $j,m,k$ with $k\geq 1$ we have
	\begin{equation} 
	h_{j}^\perp \left.\Delta_{e_{k-1}}'e_{m+k}\right|_{t=0}  =\sum_{r=0}^jq^{\binom{j-r}{2}}\qbinom{k-r}{j-r}_q\qbinom{k}{r}_q \left.\Delta_{e_{k-r-1}}'e_{m-j+k}\right|_{t=0} .
	\end{equation}
\end{corollary}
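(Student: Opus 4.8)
The plan is to reduce the claim to the single-row expansion of $h_j^\perp\Theta_{e_m}\wH_{(k)}$ and then translate back and forth through the $t=0$ reformulation \eqref{eq:Deltat0}. Since \eqref{eq:Deltat0} is an identity of symmetric functions, I may apply the linear operator $h_j^\perp$ to both of its sides, obtaining
\[ h_j^\perp \left.\Delta_{e_{k-1}}'e_{m+k}\right|_{t=0} = h_j^\perp \Theta_{e_m}\wH_{(k)} , \]
so it suffices to expand the right-hand side.

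For that expansion I would apply the operator identity \eqref{eq:hjek} and then evaluate on $\wH_{(k)}$, which gives
\[ h_j^\perp\Theta_{e_m}\wH_{(k)} = \sum_{r=0}^j \Theta_{e_{m-j+r}}\Delta_{e_{j-r}}h_r^\perp\wH_{(k)} . \]
Here \eqref{eq:hjperpHn} yields $h_r^\perp\wH_{(k)} = \qbinom{k}{r}_q\wH_{(k-r)}$, and $\Delta_{e_{j-r}}$ acts on $\wH_{(k-r)}$ by the scalar $e_{j-r}[B_{(k-r)}]$. Since $B_{(k-r)}=[k-r]_q$ by \eqref{eq:qBmu_Tmu}, identity \eqref{eq:e_q_binomial} evaluates this scalar as $e_{j-r}[[k-r]_q]=q^{\binom{j-r}{2}}\qbinom{k-r}{j-r}_q$. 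Collecting the scalar factors produces
\[ h_j^\perp\Theta_{e_m}\wH_{(k)} = \sum_{r=0}^j q^{\binom{j-r}{2}}\qbinom{k-r}{j-r}_q\qbinom{k}{r}_q\,\Theta_{e_{m-j+r}}\wH_{(k-r)} . \]

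To finish, I would feed each summand back through \eqref{eq:Deltat0}, now read with parameters $m\mapsto m-j+r$ and $k\mapsto k-r$; because $(m-j+r)+(k-r)=m-j+k$, this says $\Theta_{e_{m-j+r}}\wH_{(k-r)} = \left.\Delta_{e_{k-r-1}}'e_{m-j+k}\right|_{t=0}$, turning the displayed sum into exactly the asserted right-hand side. (As an alternative derivation, the very same identity is the $p=0$ specialization of \eqref{eq:thmimplygenDelta}: there $t^{p}\Theta_{h_p}=1$, and every summand with $s<j$ is killed because $\Theta_{h_{s-j}}=0$, leaving only $s=j$ with $\Delta_{h_0}=1$.)

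The computation is essentially bookkeeping, so the only delicate point is the range of the summation index together with the boundary terms. One should check that terms with $r>k$ or with $j>k$ are annihilated by the $q$-binomial coefficients, so the index never leaves the regime $k-r\ge 1$ where \eqref{eq:Deltat0} is applicable; and in the extremal case $k-r=0$ one verifies, using the standing conventions $e_{<0}=0$ and $\Theta_{e_{\ge 1}}(\text{constant})=0$ together with the vanishing of $\qbinom{k-r}{j-r}_q$, that both sides vanish simultaneously. No genuine obstacle arises beyond confirming these index conventions line up.
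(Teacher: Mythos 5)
Your proposal is correct and follows essentially the same route as the paper: the paper's proof is precisely the combination you mention in your parenthetical remark, namely the $p=0$ specialization of \eqref{eq:thmimplygenDelta} (whose own proof uses exactly your chain \eqref{eq:hjek}, \eqref{eq:hjperpHn}, and the eigenvalue $e_{j-r}[[k-r]_q]=q^{\binom{j-r}{2}}\qbinom{k-r}{j-r}_q$ from \eqref{eq:qBmu_Tmu} and \eqref{eq:e_q_binomial}) together with \eqref{eq:Deltat0} applied on both ends. Your unrolled main derivation and the paper's citation-style proof are the same argument, so no further comparison is needed.
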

\begin{proof}
Just combine the special case $p=0$ of \eqref{eq:thmimplygenDelta} with \eqref{eq:Deltat0}.
\end{proof}

\subsection{Further applications}
The following result first appeared in \cite[Lemma~3.7]{Haglund-Rhoades-Shimozono-Advances}.
\begin{corollary}
	Given $m,k,j\in\NN$, we have
\begin{align}
\notag & e_j^\perp q^{\binom{m+k}{2}-\binom{m+1}{2}}\overline{\omega}\left.\Delta_{e_{k-1}}'e_{m+k}\right|_{t=0}=\\
& = \sum_{r=0}^jq^{\binom{j}{2}+r(m-j+r)}\qbinom{k-r}{j-r}_q\qbinom{k}{r}_q   q^{\binom{m+k-j}{2}-\binom{m-j+r+1}{2}}\overline{\omega}\left.\Delta_{e_{k-r-1}}'e_{m+k-j}\right|_{t=0}.
\end{align}
\end{corollary}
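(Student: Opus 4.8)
The plan is to reduce this statement to the preceding corollary (the identity for $h_j^\perp\left.\Delta_{e_{k-1}}'e_{m+k}\right|_{t=0}$) by conjugating through $\overline{\omega}$ and carefully tracking the powers of $q$. The first ingredient is the commutation rule $e_j^\perp\,\overline{\omega}=\overline{\omega}\,h_j^\perp$. To see this, recall that $\omega$ is an isometry for the Hall scalar product and $e_j=\omega h_j$, whence $(\omega h_j)^\perp=\omega h_j^\perp\omega$; since $\omega^2=\mathrm{id}$ this gives $e_j^\perp\omega=\omega h_j^\perp$. As $h_j^\perp$ acts only on the $X$-alphabet and commutes with the substitution $q\mapsto 1/q$, $t\mapsto 1/t$, the definition $\overline{\omega}F[X;q,t]=\omega F[X;1/q,1/t]$ immediately yields $e_j^\perp\overline{\omega}=\overline{\omega}h_j^\perp$. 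Moreover $e_j^\perp$ is $\QQ(q,t)$-linear, so the scalar $q^{\binom{m+k}{2}-\binom{m+1}{2}}$ may be pulled outside of it.

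Combining these observations, the left-hand side equals
\[
q^{\binom{m+k}{2}-\binom{m+1}{2}}\,\overline{\omega}\Big(h_j^\perp\left.\Delta_{e_{k-1}}'e_{m+k}\right|_{t=0}\Big),
\]
and I would now apply the preceding corollary to rewrite the inner expression as $\sum_{r=0}^j q^{\binom{j-r}{2}}\qbinom{k-r}{j-r}_q\qbinom{k}{r}_q\left.\Delta_{e_{k-r-1}}'e_{m-j+k}\right|_{t=0}$. The remaining step is to push $\overline{\omega}$ through this finite sum. The key point to handle with care is that $\overline{\omega}$ inverts $q$ in the coefficients as well as in the symmetric functions: using $q^{\binom{j-r}{2}}\mapsto q^{-\binom{j-r}{2}}$ together with the $q$-binomial inversion $\qbinom{n}{i}_{1/q}=q^{-i(n-i)}\qbinom{n}{i}_q$ (so that $\qbinom{k-r}{j-r}_{1/q}=q^{-(j-r)(k-j)}\qbinom{k-r}{j-r}_q$ and $\qbinom{k}{r}_{1/q}=q^{-r(k-r)}\qbinom{k}{r}_q$), each term contributes the factor $\overline{\omega}\left.\Delta_{e_{k-r-1}}'e_{m+k-j}\right|_{t=0}$ (note $m-j+k=m+k-j$), which is precisely the symmetric-function factor appearing on the right-hand side.

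It then remains only to verify that the accumulated power of $q$ in front of the $r$-th term agrees with the target exponent, i.e. that
\[
\binom{m+k}{2}-\binom{m+1}{2}-\binom{j-r}{2}-(j-r)(k-j)-r(k-r)=\binom{j}{2}+r(m-j+r)+\binom{m+k-j}{2}-\binom{m-j+r+1}{2}.
\]
This is a routine polynomial identity in $m,k,j,r$: expanding $\binom{m+k}{2}-\binom{m+k-j}{2}=(m+k)j-\binom{j+1}{2}$ and $\binom{m-j+r+1}{2}-\binom{m+1}{2}=-(j-r)(m+1)+\binom{j-r+1}{2}$, and using $\binom{j+1}{2}+\binom{j}{2}=j^2$ together with $\binom{j-r+1}{2}-\binom{j-r}{2}=j-r$, one finds that after clearing the binomials every monomial cancels. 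The main (indeed only) obstacle is bookkeeping: correctly remembering that $\overline{\omega}$ conjugates the $q$-binomial coefficients, and then matching the two exponents; once the commutation rule $e_j^\perp\overline{\omega}=\overline{\omega}h_j^\perp$ is in hand, the argument is essentially mechanical.
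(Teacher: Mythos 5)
Your proposal is correct and takes essentially the same route as the paper: the paper's one-line proof applies $\overline{\omega}$ to \eqref{eq:thmimplygenDelta} with $p=0$ and then uses \eqref{eq:Deltat0}, which is precisely your reduction to the preceding corollary via the commutation $e_j^\perp\overline{\omega}=\overline{\omega}h_j^\perp$. Your explicit bookkeeping---pulling the scalar out, inverting the $q$-binomials via $\qbinom{n}{i}_{1/q}=q^{-i(n-i)}\qbinom{n}{i}_q$, and checking the exponent identity (which does hold)---just fills in the routine details the paper leaves implicit.
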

\begin{proof}
Just apply $\overline{\omega}$ to \eqref{eq:thmimplygenDelta} with $p=0$, and then use \eqref{eq:Deltat0}.
\end{proof}
The following result first appered in \cite[Lemma~3.1]{Haglund_Rhoades_Shimozono_SchurDelta}.
\begin{corollary}
	Given $m,k,j\in\NN$, we have
\begin{equation}
e_j^\perp \left.\Delta_{e_{k-1}}'e_{m+k}\right|_{t=0} = \sum_{r=0}^j q^{\binom{r}{2}}\qbinom{k}{r}_q\qbinom{k+j-r-1}{j-r}_q\left.\Delta_{e_{k-r-1}}'e_{m+k-j}\right|_{t=0}.
\end{equation}
\end{corollary}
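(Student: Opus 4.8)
The plan is to reduce the statement to the single-row Macdonald polynomial $\wH_{(k)}$ and then apply the $e_j^\perp$ version of the commutation relation, namely \eqref{eq:ejek}, in place of the $h_j^\perp$ version \eqref{eq:hjek} that produced the sibling corollary. First I would use \eqref{eq:Deltat0} to rewrite the left-hand side as $e_j^\perp \Theta_{e_m}\wH_{(k)}$, so that the whole problem becomes the evaluation of $e_j^\perp$ applied to $\Theta_{e_m}\wH_{(k)}$.

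Next I would commute $e_j^\perp$ past the Theta operator by \eqref{eq:ejek}, obtaining
\[ e_j^\perp \Theta_{e_m}\wH_{(k)} = \sum_{r=0}^j \Theta_{e_{m-j+r}}\, e_r^\perp\, \Delta_{h_{j-r}}\wH_{(k)}. \]
The two operators now act on $\wH_{(k)}$ completely explicitly. Since $B_{(k)}=[k]_q$ by \eqref{eq:qBmu_Tmu}, the Delta operator is scalar on $\wH_{(k)}$, and \eqref{eq:h_q_binomial} gives
\[ \Delta_{h_{j-r}}\wH_{(k)} = h_{j-r}[[k]_q]\,\wH_{(k)} = \qbinom{k+j-r-1}{j-r}_q \wH_{(k)}, \]
while \eqref{eq:ejperpHn} gives $e_r^\perp \wH_{(k)} = q^{\binom{r}{2}}\qbinom{k}{r}_q \wH_{(k-r)}$. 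Substituting these and recognizing $\Theta_{e_{m-j+r}}\wH_{(k-r)} = \Delta_{e_{k-r-1}}'e_{m+k-j}\big|_{t=0}$ by a second application of \eqref{eq:Deltat0} (with $m\mapsto m-j+r$, $k\mapsto k-r$) yields precisely the claimed identity.

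I expect no genuine obstacle here: the computation is short and mechanical, and the only real choice is to deploy \eqref{eq:ejek} rather than \eqref{eq:hjek}, after which the single-row structure of $\wH_{(k)}$ makes every remaining operator act by a closed $q$-binomial factor. The only points requiring a moment of care are the boundary terms: when $r>k$ the factor $\qbinom{k}{r}_q$ vanishes, and when $m-j+r<0$ the operator $\Theta_{e_{m-j+r}}$ is zero, so these terms drop out on both sides; one should also confirm that the two invocations of \eqref{eq:Deltat0} are legitimate, i.e.\ that $k\geq 1$ and $k-r\geq 1$ hold for the surviving terms. These edge cases I expect to be routine rather than substantive, so the heart of the argument is the three-line evaluation above.
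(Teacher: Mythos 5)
Your proof is correct and follows essentially the same route as the paper's: rewrite via \eqref{eq:Deltat0}, commute with \eqref{eq:ejek}, evaluate $\Delta_{h_{j-r}}$ and $e_r^\perp$ on $\wH_{(k)}$ by the single-row formulas, and apply \eqref{eq:Deltat0} again. Your handling is in fact slightly more careful than the paper's, which silently uses \eqref{eq:ejperpHn} and the evaluation $h_{j-r}[[k]_q]$ while citing \eqref{eq:hjperpHn}, and which does not mention the boundary terms you flag.
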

\begin{proof}
	Using \eqref{eq:Deltat0} we have
\begin{align*}
e_j^\perp \left.\Delta_{e_{k-1}}'e_{m+k}\right|_{t=0} & = e_j^\perp \Theta_{e_m}\wH_{(k)}\\
\text{(using \eqref{eq:ejek})} & = \sum_{r=0}^j \Theta_{e_{m-j+r}} e_r^\perp \Delta_{h_{j-r}}\wH_{(k)}\\
\text{(using \eqref{eq:hjperpHn})}& = \sum_{r=0}^j q^{\binom{r}{2}}\qbinom{k}{r}_q\qbinom{k+j-r-1}{j-r}_q \Theta_{e_{m-j+r}} \wH_{(k-r)}\\
\text{(using \eqref{eq:Deltat0})}& = \sum_{r=0}^j q^{\binom{r}{2}}\qbinom{k}{r}_q\qbinom{k+j-r-1}{j-r}_q \left.\Delta_{e_{k-r-1}}'e_{m+k-j}\right|_{t=0}.
\end{align*}
\end{proof}

The following result first appeared in \cite[Theorem~3.4]{DAdderio-Iraci-VandenWyngaerd-GenDeltaSchroeder}, though the special case $j=0$ already appeared in \cite[Theorem~4.17]{DAdderio-Iraci-VandenWyngaerd-Bible}.
\begin{theorem}
Given $m,k,\ell,j,p\in\NN$, $m\geq 1$ we have
\begin{align}
 & t^p\<\Delta_{h_p}\Delta_{e_{\ell}}e_m[X[k]_q],e_jh_{m-j}\> =\\
 \notag &=t^p \sum_{r=0}^{k}q^{\binom{r}{2}}\qbinom{k}{r}_q\sum_{b=1}^{j+r}\qbinom{k-r+b-1}{k-1}_q t^{j+r-b}\<\Delta_{h_{j+r-b}}\Delta_{e_{m-j-r}}e_{p+\ell}[X[b]_q],e_p h_\ell\> .
\end{align}
\end{theorem}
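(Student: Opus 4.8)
The plan is to recognize the right-hand side as the image of identity~\eqref{eq:mysummation} (with $\ell$ replaced by $j$) under a suitable linear functional, and then to identify that functional applied to the left-hand side of \eqref{eq:mysummation} with the scalar product we start from. First I observe that the coefficients $q^{\binom{r}{2}}\qbinom{k}{r}_q\qbinom{k-r+b-1}{k-1}_q t^{j+r-b}$ appearing on the right of the statement are \emph{exactly} those of \eqref{eq:mysummation} after the substitution $\ell\mapsto j$, whose left-hand side is $h_{k+j}^\perp\Theta_{e_m}\Theta_{e_j}\widetilde{H}_{(k)}=\Delta_{e_j}e_m[X[k]_q]$ (the last equality by \eqref{eq:DeltaFem}), an element of $\Lambda^{(m)}$. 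Accordingly I introduce the linear map $\Psi\colon \Lambda^{(m)}\to\QQ(q,t)$ defined by $\Psi(G)=h_{p+\ell}^\perp h_{p+m}^\perp\Theta_{e_{p+\ell}}\Theta_{e_p}G$.

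The first step is to check that $\Psi$ turns each summand of the $\ell\mapsto j$ specialization of \eqref{eq:mysummation} into the corresponding summand on the right of the statement, i.e. that for $c=j+r-b$ and $d=m-j-r$ (so that $b+c+d=m$) one has $\Psi(\Theta_{h_c}\Theta_{e_d}\widetilde{H}_{(b)})=\<\Delta_{h_c}\Delta_{e_d}e_{p+\ell}[X[b]_q],e_ph_\ell\>$. This is a routine reversal: starting from the scalar product, absorb $e_p$ via \eqref{eq:hkstarenmkstar} to get $h_{p+\ell}^\perp\Delta_{e_ph_ce_d}e_{p+\ell}[X[b]_q]$, then apply \eqref{eq:DeltaFem} with $\mu=(b)$ and $F=e_ph_ce_d$ (using $\Delta_f\Delta_g=\Delta_{fg}$ and $\Theta_f\Theta_g=\Theta_{fg}$) to reach precisely $\Psi(\Theta_{h_c}\Theta_{e_d}\widetilde{H}_{(b)})$. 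Consequently $\Psi$ applied to the right-hand side of the $\ell\mapsto j$ specialization of \eqref{eq:mysummation} is the right-hand side of the theorem, up to the global factor $t^p$.

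The crux is then to show $\Psi(\Delta_{e_j}e_m[X[k]_q])=\<\Delta_{h_p}\Delta_{e_\ell}e_m[X[k]_q],e_jh_{m-j}\>$. I would isolate this as the following claim: for every $V\in\Lambda^{(m)}$,
\[ h_{p+\ell}^\perp h_{p+m}^\perp\Theta_{e_{p+\ell}}\Theta_{e_p}V=h_m^\perp\Delta_{h_p}\Delta_{e_\ell}V, \]
and then specialize to $V=\Delta_{e_j}e_m[X[k]_q]$ (commuting the $\Delta$'s and re-absorbing $e_j$ by \eqref{eq:hkstarenmkstar}). To prove the claim, peel the outer $h_{p+\ell}^\perp$ against $\Theta_{e_{p+\ell}}$ using \eqref{eq:AperpDeltaomegaA} (which leaves $\Theta_{e_0}$, the identity), reducing the left side to $\<\Delta_{e_{p+\ell}}\Theta_{e_p}V,h_{p+m}\>$; apply \eqref{eq:hkstarenmkstar} to rewrite this as $\<\Theta_{e_p}V,e_{p+\ell}h_{m-\ell}\>$; convert $\Theta_{e_p}$ into $\Delta_{h_p}$ by \eqref{eq:lemThetaekDeltahk} to obtain $\<\Delta_{h_p}V,e_\ell h_{m-\ell}\>$; and finally re-absorb $e_\ell$ by \eqref{eq:hkstarenmkstar} to land on $\<\Delta_{e_\ell}\Delta_{h_p}V,h_m\>=h_m^\perp\Delta_{h_p}\Delta_{e_\ell}V$.

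I expect this claim — in particular the degree bookkeeping that lets \eqref{eq:lemThetaekDeltahk} trade the operator $\Theta_{e_p}$ for $\Delta_{h_p}$ — to be the main obstacle; once it is in place, applying $\Psi$ to both sides of the $\ell\mapsto j$ specialization of \eqref{eq:mysummation} and multiplying by $t^p$ yields the statement. The only points needing a word of care are the boundary cases $p+\ell=0$ (where the invocation of \eqref{eq:DeltaFem} is vacuous) and $\ell>m$ (where both sides vanish, since $\Delta_{e_\ell}$ kills $\Lambda^{(m)}$), both of which are immediate.
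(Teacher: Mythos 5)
Your proposal is correct and follows essentially the same route as the paper: both sides of the theorem are obtained by applying one fixed linear functional (which the paper writes as $t^p\,e_p^\perp h_\ell^\perp h_m^\perp\Theta_{e_{p+\ell}}$ and which, by your Claim, agrees with your $t^p\Psi=t^p\,h_m^\perp\Delta_{h_p}\Delta_{e_\ell}$ on $\Lambda^{(m)}$) to the two sides of \eqref{eq:mysummation} with $\ell\mapsto j$, using \eqref{eq:DeltaFem}, \eqref{eq:hkstarenmkstar} and \eqref{eq:AperpDeltaomegaA} for the conversions. The only cosmetic difference is that your functional carries an extra $\Theta_{e_p}$ which you later trade for $\Delta_{h_p}$ via \eqref{eq:lemThetaekDeltahk}, whereas the paper peels $e_p^\perp h_\ell^\perp$ in a single application of \eqref{eq:AperpDeltaomegaA} with $A=e_p h_\ell$.
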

\begin{proof}
We have 
\begin{align*}
& t^p\<\Delta_{h_p}\Delta_{e_{\ell}}e_m[X[k]_q],e_jh_{m-j}\> = \\
\text{(using \eqref{eq:hkstarenmkstar})}& = t^p\<\Delta_{e_j}\Delta_{h_p}\Delta_{e_{\ell}}e_m[X[k]_q],h_{m}\>  \\
& =t^ph_m^\perp \Delta_{h_p}\Delta_{e_{\ell}}\Delta_{e_j}e_m[X[k]_q] \\
\text{(using \eqref{eq:AperpDeltaomegaA})}& = t^p e_p^\perp h_\ell^\perp h_m^\perp  \Theta_{e_{p+\ell}}\Delta_{e_j}e_m[X[k]_q]\\
\text{(using \eqref{eq:DeltaFem})}& =t^p e_p^\perp h_\ell^\perp h_m^\perp\Theta_{e_{p+\ell}} h_{k+j}^\perp \Theta_{e_m} \Theta_{e_j} \widetilde{H}_{(k)}\\
\text{(using \eqref{eq:mysummation})}& = t^p e_p^\perp h_\ell^\perp h_m^\perp \Theta_{e_{p+\ell}}  \sum_{r=0}^{k}q^{\binom{r}{2}}\qbinom{k}{r}_q\sum_{b=1}^{j+r}\qbinom{k-r+b-1}{k-1}_q  t^{j+r-b}\Theta_{h_{j+r-b}}\Theta_{e_{m-j-r}}\widetilde{H}_{(b)}\\
\text{(using \eqref{eq:DeltaFem})}& = t^p \sum_{r=0}^{k}q^{\binom{r}{2}}\qbinom{k}{r}_q\sum_{b=1}^{j+r}\qbinom{k-r+b-1}{k-1}_q t^{j+r-b}\<\Delta_{h_{j+r-b}}\Delta_{e_{m-j-r}}e_{p+\ell}[X[b]_q],e_p h_\ell\> .
\end{align*}
\end{proof}

\section{Appendix: proofs of elementary lemmas}

In this appendix we prove some of the elementary lemmas that we used in the text.

\subsection{Proof of \eqref{eq:lemmaelem}}
From the summation formula \eqref{eq:SchurSummation} for homogeneous symmetric functions, we get 
\begin{align*}
h_{i-1}\left[  q^{i-a} [b-i+1]_q  \right] 
&  =    h_{i-1}\left[ [b-a+1]_q  - [i-a]_q \right]  \\
&  =     \sum_{c=1}^i h_{c-1}\left[ [ b-a+1]_q \right] h_{i-1-(c-1)}\left[  -[i-a ]_q \right]  \\ 
&  =     \sum_{c=1}^i h_{c-1}\left[ [ b-a+1]_q \right] e_{i-c}\left[ [i-a   ]_q \right]  (-1)^{i-c}. \\ 
\end{align*}
 Now using the evaluations \eqref{eq:h_q_binomial} and \eqref{eq:e_q_binomial}, we have 
 \begin{align*}
q^{(i-a)(i-1)} { b-1 \brack i-1 }_q
 =    \sum_{c=1}^i { c-a+b-1  \brack  c-1 }_q   { i-a  \brack i-c   }_q  q^{\binom{i-c}{2}}  (-1)^{i-c} 
\end{align*}
The sum can start at $c=a$ since the second binomial would otherwise be $0$, and multiplying both sides by $q^{-\binom{i}{2}}$ we get the lemma.

\begin{remark} \label{rem:elLemma}
Notice that \eqref{eq:first_qlemma} can be proved with a similar argument, starting with $h_{i+a}[q^{i-1}[s-i+1]_q]=h_{i+a}[[s]_q-[i-1]_q]$.
\end{remark}

\subsection{Proof of \eqref{eq:lemelem2}}

	Using \eqref{eq:qrecurrence} we have
	\begin{align*}
	& \sum_{s=0}^rq^{\binom{r-s}{2}}\qbinom{r}{r-s}_q   q^{\binom{a}{2}}\qbinom{k-s}{a}_qq^{\binom{k-s}{2}-a(k-s-1)}\qbinom{b-1}{k-s-1}_q =\\
	& =\sum_{s=0}^rq^{\binom{r-s}{2}+\binom{a}{2}+\binom{k-s}{2}-a(k-s-1)}\qbinom{r}{s}_q   \qbinom{k-s-1}{a}_q\qbinom{b-1}{k-s-1}_q\\
	& +\sum_{s=0}^rq^{\binom{r-s}{2}+\binom{a}{2}+\binom{k-s}{2}-a(k-s-1)}\qbinom{r}{s}_q   q^{k-s-a}\qbinom{k-s-1}{a-1}_q\qbinom{b-1}{k-s-1}_q\\
	& =\qbinom{b-1}{a}_q\sum_{s=0}^rq^{\binom{r-s}{2}+\binom{a}{2}+\binom{k-s}{2}-a(k-s-1)}\qbinom{r}{s}_q   \qbinom{b-a-1}{k-s-a-1}_q\\
	& +\qbinom{b-1}{a-1}_q\sum_{s=0}^rq^{\binom{r-s}{2}+\binom{a}{2}+\binom{k-s}{2}-a(k-s-1)+(k-s-a)}\qbinom{r}{s}_q   \qbinom{b-a}{k-s-a}_q.
	\end{align*}
	Now using the elementary
	\begin{align*}
	%& (r-s)(k-s-a-1)=\\
	%& = kr-ks-rs+s^2-ar+as-r+s\\
	& \binom{r-s}{2}+\binom{a}{2}+\binom{k-s}{2}-a(k-s-1) =\\
	& = \frac{1}{2}(r^2-2rs+s^2-r+s +a^2-a +k^2+s^2-2ks-k+s-2ak+2as+2a)\\
	& = s^2-ks-rs+as+s-r-ar+kr +\frac{1}{2}(r^2+r+2ar +a^2+a-2kr +k^2-k-2ak)\\
	& = (r-s)(k-s-a-1) +\binom{k-r-a}{2},
	\end{align*}
	so that
	\begin{align*}
	%& (r-s)(k-s-a-1)=\\
	%& = kr-ks-rs+s^2-ar+as-r+s\\
	& \binom{r-s}{2}+\binom{a}{2}+\binom{k-s}{2}-a(k-s-1)+(k-s-a) =\\
	& = (r-s)(k-s-a-1)+(k-s-a) +\binom{k-r-a}{2}\\
	& = (r-s)(k-s-a)+(k-r-a) +\binom{k-r-a}{2}\\
	& = (r-s)(k-s-a)+\binom{k-r-a+1}{2},
	\end{align*}
	we get
	\begin{align*}
	& \qbinom{b-1}{a}_q\sum_{s=0}^rq^{\binom{r-s}{2}+\binom{a}{2}+\binom{k-s}{2}-a(k-s-1)}\qbinom{r}{s}_q   \qbinom{b-a-1}{k-s-a-1}_q+\\
	& +\qbinom{b-1}{a-1}_q\sum_{s=0}^rq^{\binom{r-s}{2}+\binom{a}{2}+\binom{k-s}{2}-(a-1)(k-s)}\qbinom{r}{s}_q   \qbinom{b-a}{k-s-a}_q=\\
	& =q^{\binom{k-r-a}{2}}\qbinom{b-1}{a}_q\sum_{s=0}^rq^{(r-s)(k-s-a-1)}\qbinom{r}{s}_q   \qbinom{b-a-1}{k-s-a-1}_q+\\
	& +q^{\binom{k-r-a+1}{2}}\qbinom{b-1}{a-1}_q\sum_{s=0}^rq^{ (r-s)(k-s-a)}\qbinom{r}{s}_q   \qbinom{b-a}{k-s-a}_q\\
	& =q^{\binom{k-r-a}{2}}\qbinom{b-1}{a}_q  \qbinom{b+r-a-1}{k-a-1}_q+q^{\binom{k-r-a+1}{2}}\qbinom{b-1}{a-1}_q \qbinom{b+r-a}{k-a}_q,
	\end{align*}
	where in the last equality we used \eqref{eq:qChuVander}.	
\subsection{Proof of \eqref{eq:lemelem3}}

We have
	\begin{align*}
	& q^{\binom{k-a}{2}}\qbinom{b-1}{a}_q  \qbinom{b-a-1}{k-a-1}_q + q^{\binom{k-a+1}{2}}\qbinom{b-1}{a-1}_q \qbinom{b-a}{k-a}_q =\\
	& = q^{\binom{k-a}{2}}\qbinom{b-1}{a}_q  \qbinom{b-a-1}{k-a}_q \left(\frac{[k-a]_q}{[b-k]_q}+q^{k-a}\frac{[a]_q}{[b-a]_q}\frac{[b-a]_q}{[b-k]_q}\right) \\
	& = q^{\binom{k-a}{2}}\frac{[k-1]_q!}{[k-1]_q!}\frac{[b-1]_q!}{[a]_q![k-a]_q![b-k-1]_q!} \frac{[k]_q}{[b-k]_q}\\
	& = q^{\binom{k-a}{2}} \qbinom{k}{a}_q  \qbinom{b-1}{k-1}_q.
	\end{align*}

% Bibliography
\bibliographystyle{amsalpha}
\bibliography{Biblebib}

\end{document}